\setlist[enumerate]{format=\normalfont}
\let\save@mathaccent\mathaccent
\newcommand*\if@single[3]{%
  \setbox0\hbox{${\mathaccent"0362{#1}}^H$}%
  \setbox2\hbox{${\mathaccent"0362{\kern0pt#1}}^H$}%
  \ifdim\ht0=\ht2 #3\else #2\fi
  }
\newcommand*\rel@kern[1]{\kern#1\dimexpr\macc@kerna}
\newcommand*\widebar[1]{\@ifnextchar^{{\wide@bar{#1}{0}}}{\wide@bar{#1}{1}}}
\newcommand*\wide@bar[2]{\if@single{#1}{\wide@bar@{#1}{#2}{1}}{\wide@bar@{#1}{#2}{2}}}
\newcommand*\wide@bar@[3]{%
  \begingroup
  \def\mathaccent##1##2{%
    \let\mathaccent\save@mathaccent
    \if#32 \let\macc@nucleus\first@char \fi
    \setbox\z@\hbox{$\macc@style{\macc@nucleus}_{}$}%
    \setbox\tw@\hbox{$\macc@style{\macc@nucleus}{}_{}$}%
    \dimen@\wd\tw@
    \advance\dimen@-\wd\z@
    \divide\dimen@ 3
    \@tempdima\wd\tw@
    \advance\@tempdima-\scriptspace
    \divide\@tempdima 10
    \advance\dimen@-\@tempdima
    \ifdim\dimen@>\z@ \dimen@0pt\fi
    \rel@kern{0.6}\kern-\dimen@
    \if#31
      \overline{\rel@kern{-0.6}\kern\dimen@\macc@nucleus\rel@kern{0.4}\kern\dimen@}%
      \advance\dimen@0.4\dimexpr\macc@kerna
      \let\final@kern#2%
      \ifdim\dimen@<\z@ \let\final@kern1\fi
      \if\final@kern1 \kern-\dimen@\fi
    \else
      \overline{\rel@kern{-0.6}\kern\dimen@#1}%
    \fi
  }%
  \macc@depth\@ne
  \let\math@bgroup\@empty \let\math@egroup\macc@set@skewchar
  \mathsurround\z@ \frozen@everymath{\mathgroup\macc@group\relax}%
  \macc@set@skewchar\relax
  \let\mathaccentV\macc@nested@a
  \if#31
    \macc@nested@a\relax111{#1}%
  \else
    \def\gobble@till@marker##1\endmarker{}%
    \futurelet\first@char\gobble@till@marker#1\endmarker
    \ifcat\noexpand\first@char A\else
      \def\first@char{}%
    \fi
    \macc@nested@a\relax111{\first@char}%
  \fi
  \endgroup
}
\renewcommand{\@marginparreset}{%
  \reset@font\scriptsize
  \raggedright
  \@setminipage
}
\tikzset{
        cvertex/.style={circle,draw=black,inner sep=1pt,outer sep=3pt},
        vertex/.style={circle,fill=black,inner sep=1pt,outer sep=3pt},
        star/.style={circle,fill=yellow,inner sep=0.75pt,outer sep=0.75pt},
        tvertex/.style={inner sep=1pt,font=\scriptsize},
        gap/.style={inner sep=0.5pt,fill=white}}
\newtheorem{thm}{Theorem}[section]
\newtheorem{prop}[thm]{Proposition}
\newtheorem{lemma}[thm]{Lemma}
\newtheorem{cor}[thm]{Corollary}
\newtheorem{assum}[thm]{Assumption}
\theoremstyle{definition} 
\newtheorem{defin}[thm]{Definition}
\newtheorem{example}[thm]{Example}
\newtheorem{setup}[thm]{Setup}
\newtheorem{remark}[thm]{Remark}
\newtheorem{notation}[thm]{Notation}
\newtheorem{construction}[thm]{Construction}
\newtheorem{property}[thm]{Property}
\numberwithin{equation}{section}
\DeclareMathAlphabet{\mathbbm}{U}{bbm}{m}{n}
\newcommand{\rmA}{\mathrm{A}}
\newcommand{\Acon}{\rmA_{\textnormal{con}}}
\newcommand{\Bcon}{\mathrm{B}_{\textnormal{con}}}
\newcommand{\Ccon}{\mathrm{C}_{\textnormal{con}}}
\newcommand{\DG}{{\textnormal{DG}}}
\newcommand{\Ob}{\operatorname{Ob}}
\newcommand{\Hom}{\textnormal{Hom}}
\newcommand{\HH}{\textnormal{H}}
\newcommand{\Ext}{\textnormal{Ext}}
\newcommand{\Tor}{\textnormal{Tor}}
\newcommand{\End}{\textnormal{End}}
\newcommand{\RHom}{\textnormal{RHom}}
\def\Spec{\mathop{\rm Spec}\nolimits}
\def\mod{\mathop{\textnormal{mod}}\nolimits}
\def\coh{\mathop{\textnormal{coh}}\nolimits}
\def\Ker{\mathop{\rm Ker}\nolimits}
\def\Im{\mathop{\rm Im}\nolimits}
\newcommand{\image}{\operatorname{Im}}
\newcommand{\Id}{\operatorname{Id}}
\let\oldtocsection=\tocsection
\let\oldtocsubsection=\tocsubsection
\let\oldtocsubsubsection=\tocsubsubsection
\renewcommand{\tocsection}[2]{\hspace{0em}\oldtocsection{#1}{#2}}
\renewcommand{\tocsubsection}[2]{\hspace{1em}\oldtocsubsection{#1}{#2}}
\renewcommand{\tocsubsubsection}[2]{\hspace{2em}\oldtocsubsubsection{#1}{#2}}
\newcommand{\scrA}{\EuScript{A}}
\newcommand{\scrB}{\EuScript{B}}
\newcommand{\scrE}{\EuScript{E}}
\newcommand{\scrM}{\EuScript{M}}
\newcommand{\scrN}{\EuScript{N}}
\newcommand{\scrO}{\EuScript{O}}
\newcommand{\scrP}{\EuScript{P}}
\newcommand{\scrQ}{\EuScript{Q}}
\newcommand{\scrT}{\EuScript{T}}
\newcommand{\scrV}{\EuScript{V}}
\def\scrEnd{\mathop{\EuScript{E}\rm nd}\nolimits}
\def\scrHom{\mathop{\EuScript{H}\rm om}\nolimits}
\newcommand{\sigE}{{^{\upsigma}\kern -1pt\scrE}}
\newcommand{\nsigE}{{^{{\scriptstyle-}\upsigma}\kern -1pt\scrE}}
\def\Db{\mathop{\rm{D}^b}\nolimits}
\numberwithin{equation}{section}
\newcommand{\PP}{\mathrm{P}}
\newcommand*\bigcdot{\mathpalette\bigcdot@{.5}}
\newcommand*\bigcdot@[2]{\mathbin{\vcenter{\hbox{\scalebox{#2}{$\m@th#1\bullet$}}}}}
\begin{document}
\title[Trivial extension and unitally positive algebras]{Trivial extension DG-algebras, unitally positive $A_\infty$-algebras, and applications}
\author{Joseph Karmazyn}
\author{Emma Lepri}
\author{Michael Wemyss}
\begin{abstract}
To any periodic module over any algebra, this paper introduces an associated trivial extension $\DG$-algebra $\scrT$.  After first passing to a strictly unital $A_\infty$ minimal model, it then constructs a particular $A_\infty$-algebra $\scrN$, called the unitally positive $A_\infty$-algebra, which roughly speaking describes the identity in degree zero and all the positive cohomology.  The object $\scrN$ is fundamental, and can be constructed for any $\DG$-category satisfying very mild assumptions.

The main application is to birational geometry.  When applied to contraction algebras, the construction gives a simple and direct proof of the Donovan--Wemyss conjecture, namely that smooth irreducible $3$-fold flops are classified by their contraction algebras, and thus by noncommutative data. 
\end{abstract}
\maketitle

\section{Introduction}

Surgeries that preserve the birational class of a variety are fundamental to algebraic geometry.  In dimension two, contractions of ($-1$)-curves in smooth surfaces were known to the Italian School in the 1890s. Dimension three is much harder: it took a further eighty years, at the advent of the modern minimal model programme in the late 1970s, for the next fundamentally new surgeries to be discovered, namely smooth $3$-fold flops.  

Over 45 years on from that discovery, this paper is devoted to giving a simple and direct  proof of the conjecture \cite{DW1}, which predicted that smooth $3$-fold flops can in fact be classified, using noncommutative data.  The novelty in this paper is to prove the conjecture by constructing and controlling two new (derived) objects: first the trivial extension $\DG$-algebra $\scrT$, then a particular `unitally positive' $A_\infty$-algebra $\scrN$. 

Happily, $\scrT$ can be constructed in very general periodic settings, and the main character $\scrN$ can be defined for very general $\DG$-categories. As such, neither construction is specific to birational geometry, and we so begin by outlining both in detail.

\subsection{The trivial extension $\DG$-algebra}\label{subsec: intro TEDGA} Consider any algebra $\Upgamma$, and any $M\in\mod\Upgamma$ such that there exists a complex of finitely generated projective $\Upgamma$-modules
$\PP=(P_{n-1} \to P_{n-2} \to \hdots \to P_{1} \to P_0)$ where
\[
0 \to M \xrightarrow{\upbeta}  P_{n-1} \to P_{n-2} \to \dots \to P_{1} \to P_0 \xrightarrow{\upalpha} M  \to 0
\]
is exact.  In other words, consider any periodic $\Upgamma$-module $M$.  The complex $\PP$ is said to have length~$n$. Mildly abusing notation, by shifting the complex $\PP $ and using the connecting map $d_0\colonequals \upbeta\circ\upalpha$, consider next the complex
\[
\scrP\colonequals\quad \dots\to  \PP [2n] \xrightarrow{d_0[n]} \PP [n] \xrightarrow{d_0} \PP  \to 0
\]
where each component $\PP [n\ell]$ represents a length $n$ complex of projective $\Upgamma$-modules.  The complex $\scrP$ gives a (possibly non-minimal) projective resolution of $M$.  From this, consider the usual $\DG$-algebra 
$\scrE\colonequals \scrEnd_{\Upgamma}(\scrP)$,
and the element $\upsigma\in \scrE$ defined as
\[
\upsigma\colonequals
\begin{tikzcd}
\phantom{\PP [8]}  \arrow{r}\arrow{rd}{\Id} & \PP [2n] \arrow{r} \arrow{rd}{\Id} & \PP [n] \arrow{r} \arrow{rd}{\Id} & \PP  \arrow{r} \arrow{rd}{0} & 0 \arrow{r} \arrow{rd}{0} & \dots\\
\phantom{\PP [8]}  \arrow{r}  & \PP [2n] \arrow{r}  & \PP [n] \arrow{r} & \PP \arrow{r}  & 0 \arrow{r} & \dots
\end{tikzcd}
\]
Write $\sigE\colonequals \bigoplus_{i\in\mathbb{Z}}\sigE^i$, where
$\sigE^i \colonequals \{ x \in \scrE^i \mid \upsigma \circ x =(-1)^{|\upsigma||x|} x \circ \upsigma \}$ and $\scrE^i$ is the $i$th graded piece of $\scrE$, recalled in \ref{key setup}.  There is a similar version $\nsigE$, which accounts for an additional sign; see \ref{def nSig} for details.  

The trivial extension $\DG$-algebra $\scrT$ is then defined as follows.  As a graded vector space, $\scrT \colonequals \sigE \oplus \nsigE[1-n]$, whilst multiplication is defined on homogeneous elements by
\[
(x,y) \cdot (a,b) \colonequals  \big( x \circ a, \, x \circ b + (-1)^{|a|(|\upsigma|+1)} y \circ a \big).
\] 
Up to ignoring signs and shifts, the above is precisely the multiplication on the classical trivial extension algebra. The main point here, and perhaps the main novelty, is that with the signs and shifts, there exists a differential $\upxi$ on $\scrT$ defined by 
\[
\upxi (x,y) \colonequals \big(\updelta(x) - (-1)^{|x|} y \circ \upsigma, \,\updelta(y)\big).
\]
We explain in \ref{rem:actually_a_cone} that $\upxi$ can be viewed as the differential on a particular cone.

Our first result asserts that the multiplication and differential are compatible.
\begin{prop}[\ref{Triv DG is DG}]\label{Triv DG is DG intro}
$(\scrT,\cdot,\upxi)$ is a $\DG$-algebra.
\end{prop}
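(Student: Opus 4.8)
The plan is to verify the three defining conditions of a $\DG$-algebra in turn: that $(\scrT,\cdot)$ is a graded associative unital algebra, that $\upxi^{2}=0$, and that $\upxi$ is a derivation for $\cdot$. Two features of $\scrE$ will be used throughout: it is itself a $\DG$-algebra $(\scrE,\circ,\updelta)$, and the element $\upsigma$ is a cocycle, $\updelta\upsigma=0$ --- a direct check on the diagram that defines it, once the Koszul signs $(-1)^{n\ell}$ carried by the shifts $[n\ell]$ on the layers of $\scrP$ are accounted for. Combining $\updelta\upsigma=0$ with the (anti)commutation relations then yields the bookkeeping statements that recur below: $\updelta$ preserves both $\sigE$ and $\nsigE$; one has $\sigE\circ\sigE\subseteq\sigE$, $\sigE\circ\nsigE\subseteq\nsigE$ and $\nsigE\circ\sigE\subseteq\nsigE$; and right composition $z\mapsto z\circ\upsigma$ sends $\nsigE$ into $\sigE$ --- which is exactly what the ``extra sign'' in the definition of $\nsigE$ in \ref{def nSig} is engineered to achieve.

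Granting these, that $(\scrT,\cdot)$ is a graded associative unital algebra is routine. The closure statements show $(x,y)\cdot(a,b)$ lies in $\sigE\oplus\nsigE[1-n]=\scrT$; the product is bilinear and of degree $0$; and $(1_{\scrE},0)$ is a two-sided unit since $|1_{\scrE}|=0$. Associativity is the classical trivial-extension calculation run with Koszul signs: in each of $\big((x,y)\cdot(a,b)\big)\cdot(c,e)$ and $(x,y)\cdot\big((a,b)\cdot(c,e)\big)$ the first slot is $x\circ a\circ c$ and the second slot is the sum of the same three monomials $x\circ a\circ e$, $x\circ b\circ c$, $y\circ a\circ c$, so one needs only the three sign prefactors --- whose exponents are linear in $|a|,|b|,|c|,|\upsigma|$ --- to coincide. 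For $\upxi^{2}=0$, the second slot of $\upxi^{2}(x,y)$ is $\updelta\updelta y=0$, while the first slot, after $\updelta^{2}=0$ and the Leibniz rule for $\updelta$ in $\scrE$, reduces to
\[
\updelta\big(\updelta x-(-1)^{|x|}y\circ\upsigma\big)-(-1)^{|x|+1}(\updelta y)\circ\upsigma \;=\; -(-1)^{|x|+|y|}\,y\circ\updelta\upsigma,
\]
which vanishes because $\updelta\upsigma=0$.

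The real content is the Leibniz identity $\upxi\big((x,y)\cdot(a,b)\big)=\upxi(x,y)\cdot(a,b)+(-1)^{|x|}(x,y)\cdot\upxi(a,b)$. I would expand all three expressions using the product formula, the Leibniz rule for $\updelta$, and associativity of $\circ$. The summands free of $\upsigma$ cancel at once. Among the rest, those obtained by applying $\upxi$ last to a product, together with those from $(x,y)\cdot\upxi(a,b)$, carry $\upsigma$ on the outer right --- for instance $(x\circ b)\circ\upsigma$ and $(y\circ a)\circ\upsigma$ --- whereas $\upxi(x,y)\cdot(a,b)$ produces $\upsigma$ in the interior, as $y\circ\upsigma\circ a$ and $y\circ\upsigma\circ b$. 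To reconcile the two shapes, move $\upsigma$ outward past $a$ via the relation $\upsigma\circ a=(-1)^{|\upsigma||a|}a\circ\upsigma$ defining $\sigE$, and past $b$ via the analogous relation from \ref{def nSig} defining $\nsigE$, and then match all Koszul signs; a decisive point here is that for a homogeneous element $(a,b)$ the $\scrE$-degrees of $a$ and $b$ differ by $n-1$, and it is this relation that makes the accumulated signs cancel.

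The only genuine obstacle I anticipate is this final sign reconciliation: the cancellations succeed solely because the degree of $\upsigma$, the shift $1-n$, the product sign $(-1)^{|a|(|\upsigma|+1)}$, and the sign in the commutation relation defining $\nsigE$ are mutually calibrated --- which is really the crux of the construction and the reason for the specific shifts and signs introduced above. As a useful consistency check, and a more conceptual packaging of $\upxi^{2}=0$, one can observe (this is \ref{rem:actually_a_cone}) that $(\scrT,\upxi)$ is the mapping cone of the cochain map $z\mapsto z\circ\upsigma$ from a shift of $\nsigE$ to $\sigE$: being a cochain map is precisely $\updelta\upsigma=0$, so $\upxi^{2}=0$ is automatic, and only the derivation property --- the Leibniz step --- still demands the sign bookkeeping.
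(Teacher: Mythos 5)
Your proposal is correct and takes essentially the same route as the paper: the paper's proof verifies, in the same order, well-definedness of the product (the closure relations $\sigE\circ\sigE\subseteq\sigE$, $\sigE\circ\nsigE\subseteq\nsigE$, $\nsigE\circ\sigE\subseteq\nsigE$ of \ref{Lemma: sigE is DG} and \ref{Lemma: nsigE is DG}), unitality and associativity by the trivial-extension computation, well-definedness of $\upxi$ via $\updelta(\upsigma)=0$ from \ref{Lemma: deltasigma is zero} together with $y\circ\upsigma\in\sigE$, then $\upxi^{2}=0$ and the graded Leibniz rule by the sign bookkeeping you describe, with the key relation $|a|=|b|+n-1$ driving the cancellations. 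Your cone observation is precisely the paper's \ref{rem:actually_a_cone}.
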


Constructing $\DG$-algebras which are simultaneously explicit and useful is surprisingly difficult.  One way of interpreting \ref{Triv DG is DG intro} is that it gives a new way of constructing a $\DG$-algebra, whenever one is faced with some form of periodic behaviour.

One of the features of the $\DG$-algebra $\scrT$ is that it has cohomology in both positive and negative degrees.
\begin{prop}[\ref{eq.coh-t}]\label{eq.coh-t_intro}
If the natural map $\Hom_{\Upgamma}(M,M) \to \Ext^{n}_{\Upgamma}(M,M)$ in \textnormal{\ref{lem.map-pi}} is injective,  then the only non-zero cohomology groups of $\scrT$ are the following, where in each degree take the direct sum of the groups stated.
\[
\begin{tikzpicture}
\node (a2) at (-4.6,-0.2) {\scalebox{0.8}{$\HH^{-(n-1)}(\scrT)$}};
\node (a3) at (-3.35,-0.3) {\scalebox{0.8}{$\hdots$}};
\node (a4) at (-2.1,-0.2) {\scalebox{0.8}{$\HH^{-1}(\scrT)$}};
\node (a5) at (0,-0.2) {\scalebox{0.8}{$\HH^{0}(\scrT)$}};
\node (a6) at (2,-0.2) {\scalebox{0.8}{$\HH^{1}(\scrT)$}};
\node (a7) at (3.1,-0.3) {\scalebox{0.8}{$\hdots$}};
\node (a8) at (4.4,-0.2) {\scalebox{0.8}{$\HH^{n-2}(\scrT)$}};
\node (a8) at (6.5,-0.2) {\scalebox{0.8}{$\HH^{n-1}(\scrT)$}};

\node (b2) at (-4.6,-0.95) {\scalebox{0.8}{$\Hom_{\Upgamma}(M,M)$}};
\node (b3) at (-3.35,-1.05) {\scalebox{0.8}{$\hdots$}};
\node (b4) at (-2.1,-0.95) {\scalebox{0.8}{$\Ext^{n-2}_{\Upgamma}(M,M)$}};
\node (b5) at (0,-0.95) {\scalebox{0.8}{$\Ext^{n-1}_{\Upgamma}(M,M)$}};
\node (b5c) at (0,-1.5) {\scalebox{0.8}{$\Hom_\Upgamma(M,M)$}};
\node (b6) at (2,-1.5) {\scalebox{0.8}{$\Ext^1_\Upgamma(M,M)$}};
\node (b7) at (3.1,-1.6) {\scalebox{0.8}{$\hdots$}};
\node (b8) at (4.4,-1.5) {\scalebox{0.8}{$\Ext^{n-2}_\Upgamma(M,M)$}};
\node (b9c) at (6.5,-1.5) {\scalebox{0.8}{$\Ext^{n-1}_\Upgamma(M,M)$}};
\end{tikzpicture}
\]
\end{prop}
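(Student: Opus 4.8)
The plan is to present $(\scrT,\upxi)$ as a mapping cone, reduce to computing the cohomology of its two constituents $\sigE$ and $\nsigE$, and then glue by a short degree count. By \ref{rem:actually_a_cone}, $\upxi$ is the differential on the cone of right multiplication $(-)\circ\upsigma\colon\nsigE[-n]\to\sigE$; concretely $\sigE\times\{0\}$ is a sub-DG-algebra of $\scrT$ with quotient complex $(\nsigE[1-n],\updelta)$, so there is a short exact sequence of complexes $0\to\sigE\to\scrT\to\nsigE[1-n]\to 0$. Its long exact cohomology sequence is
\[
\cdots\longrightarrow\HH^i(\sigE)\longrightarrow\HH^i(\scrT)\longrightarrow\HH^{\,i+1-n}(\nsigE)\xrightarrow{\ \partial\ }\HH^{i+1}(\sigE)\longrightarrow\cdots,
\]
where the connecting homomorphism $\partial$ is, up to sign, induced by right multiplication by the cocycle $\upsigma$.

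\textbf{Step 2 (cohomology of $\sigE$ and $\nsigE$ — the crux).} I would prove that
\[
\HH^i(\sigE)\ \cong\ \HH^i(\nsigE)\ \cong\ \begin{cases}\Ext^{\,i+n-1}_\Upgamma(M,M),&-(n-1)\le i\le 0,\\[1mm]0,&\text{otherwise,}\end{cases}
\]
via a small model. A $\upsigma$-central endomorphism of $\scrP=\bigoplus_{\ell\ge 0}\PP[n\ell]$ is precisely a $\upsigma$-translation-invariant family of endomorphisms of a single period $\PP$, so as a graded vector space $\sigE\cong\scrEnd_\Upgamma(\PP)\otimes_\K\K[t]$ with $|t|=n$, and the transported differential is $\updelta_{\scrEnd_\Upgamma(\PP)}$ plus a correction $[\widetilde{d_0},-]\cdot t$, where $\widetilde{d_0}=\upbeta\circ\upalpha\in\scrEnd_\Upgamma(\PP)^{-(n-1)}$ is the connecting cocycle. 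The filtration of $\sigE$ by powers of $t$ is finite in each cohomological degree, so its spectral sequence converges; the $E_1$-page is $\HH^\bullet\big(\RHom_\Upgamma(\PP,\PP)\big)\otimes\K[t]$ and $d_1$ is the graded commutator with the class $[\widetilde{d_0}]$. Using the distinguished triangle $M[n-1]\xrightarrow{\upbeta}\PP\xrightarrow{\upalpha}M\xrightarrow{[\upsigma]}M[n]$ — so that $[\upsigma]$ is the class of the $n$-extension, i.e.\ the periodicity operator, and $[\widetilde{d_0}]$ is the composite $\PP\xrightarrow{\upalpha}M\xrightarrow{\upbeta}\PP[1-n]$ — one expands $\HH^\bullet\RHom_\Upgamma(\PP,\PP)$ in terms of $\Ext^\bullet_\Upgamma(M,M)$, all gluing maps being Yoneda products with $[\upsigma]$. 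Since $[\upsigma]$ acts invertibly on $\Ext^i_\Upgamma(M,M)$ for $i\ge 1$ and — this is the only point where the hypothesis enters — injectively on $\Ext^0_\Upgamma(M,M)=\Hom_\Upgamma(M,M)$ via the map of \ref{lem.map-pi}, the $d_1$-cohomology collapses onto exactly one period $\Ext^0,\Ext^1,\dots,\Ext^{n-1}$ of the $\Ext$-algebra, sitting in degrees $-(n-1),\dots,0$, and the higher differentials then vanish for degree reasons. The identical argument, incorporating the extra sign of \ref{def nSig} (which changes none of the ranks), computes $\HH^\bullet(\nsigE)$.

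\textbf{Step 3 (degree count and conclusion).} With Step 2 in hand, the connecting map $\partial\colon\HH^{\,i+1-n}(\nsigE)\to\HH^{i+1}(\sigE)$ of Step 1 has source non-zero only for $0\le i\le n-1$ and target non-zero only for $-n\le i\le -1$; these ranges are disjoint, so $\partial=0$ for every $i$. The long exact sequence therefore splits into short exact sequences $0\to\HH^i(\sigE)\to\HH^i(\scrT)\to\HH^{\,i+1-n}(\nsigE)\to 0$, whence
\[
\HH^i(\scrT)\ \cong\ \Ext^{\,i+n-1}_\Upgamma(M,M)\ \oplus\ \Ext^{\,i}_\Upgamma(M,M),
\]
with the convention that $\Ext^{\,j}_\Upgamma(M,M)=0$ for $j\notin\{0,\dots,n-1\}$. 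Letting $i$ run over $-(n-1),\dots,n-1$, the first summand sweeps out the top row of the stated table and the second the bottom row, overlapping only in degree $0$, where $\HH^0(\scrT)\cong\Ext^{n-1}_\Upgamma(M,M)\oplus\Hom_\Upgamma(M,M)$.

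\textbf{Main obstacle.} Steps 1 and 3 are formal bookkeeping. The genuine work is Step 2: setting up the $t$-filtration model of $\sigE$ (and $\nsigE$) correctly and controlling its spectral sequence, i.e.\ proving that $\sigE$ records exactly one period of $\Ext^\bullet_\Upgamma(M,M)$. The injectivity of $\Hom_\Upgamma(M,M)\to\Ext^n_\Upgamma(M,M)$ is precisely what prevents an extra quotient of $\Hom_\Upgamma(M,M)$ from surviving in $\HH^0(\sigE)$, which is exactly why it is imposed in the hypothesis.
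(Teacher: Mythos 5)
Your Step 1 is correct and is a genuinely different starting point from the paper: the sub-DG-algebra $\sigE\times\{0\}\subset\scrT$ with quotient $\nsigE[1-n]$ gives a valid short exact sequence of complexes (this is what \ref{rem:actually_a_cone} is pointing at). The paper instead bypasses the cone entirely: it builds an explicit quasi-isomorphism $\scrEnd_\Upgamma(\PP)\to\scrT$ (Theorem \ref{thm:qis of scrT}) and computes $\HH^\bullet(\scrEnd_\Upgamma(\PP))$ from the $3\times 3$ diagram of $\scrHom$'s in \S\ref{sec.cohomology_end}. So far, the approaches diverge but both are sensible.

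The gap is in Step 2, and it is fatal: the claimed formula $\HH^i(\sigE)\cong\Ext^{i+n-1}_\Upgamma(M,M)$ for $-(n-1)\le i\le 0$ and zero otherwise is \emph{false}. Concretely, take $\Upgamma=\mathbb{C}[x]/(x^2)$, $M=\mathbb{C}$, $n=2$, $\PP=(\Upgamma\xrightarrow{x}\Upgamma)$, with $\upbeta\upalpha=$ multiplication by $x$. The injectivity hypothesis holds since $\circ d_0$ is the zero map $\Hom_\Upgamma(P_1,M)\to\Hom_\Upgamma(P_0,M)$, and indeed $\HH^0(\scrEnd_\Upgamma(\PP))\cong\mathbb{C}^2=\Ext^1\oplus\Hom$ as the paper predicts. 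But for $\sigE$ one computes directly: $\sigE^{-1}\cong\Upgamma$, $\sigE^0\cong\Upgamma^2$, $\sigE^1\cong\Upgamma^2$, with $\updelta^{-1}(f)=(xf,fx)$, $\updelta^0(g_0,g_1)=\bigl((g_1-g_0)x,(g_0-g_1)x\bigr)$, so
\[
\HH^0(\sigE)=\Ker\updelta^0/\Im\updelta^{-1}\cong\mathbb{C}^3/\mathbb{C}\cong\mathbb{C}^2,
\qquad\text{not }\Ext^1_\Upgamma(M,M)\cong\mathbb{C},
\]
and similarly $\HH^1(\sigE)\cong\mathbb{C}^2\neq 0$. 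In fact your own spectral-sequence setup already shows why: the $E_1$ page $\HH^\bullet(\scrEnd_\Upgamma(\PP))\otimes\K[t]$ is unbounded above, and $d_1$ is the graded commutator with $[\widetilde{d_0}]$. When $\Ext^\bullet_\Upgamma(M,M)$ is graded-commutative (as in this example), that commutator vanishes identically, the spectral sequence degenerates at $E_1$, and $\HH^\bullet(\sigE)$ has nonzero pieces in every degree $\ge -(n-1)$. The asserted ``collapse onto one period'' does not happen; it would need $[\widetilde{d_0}]$ to act by something like an isomorphism, whereas as a commutator it acts by zero on anything commuting with it, including the unit. Because $\HH^\bullet(\sigE)$ and $\HH^\bullet(\nsigE[1-n])$ are both unbounded above, their degree ranges are not disjoint, and Step 3's ``for degree reasons $\partial=0$'' breaks down as well — the connecting maps in the long exact sequence are genuinely nonzero and do the work of cancelling the unbounded tails. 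To repair this route you would need to compute the connecting homomorphism rather than argue it away, at which point you are essentially redoing what the paper does via the quasi-isomorphism $\scrEnd_\Upgamma(\PP)\to\scrT$ and the $3\times3$ diagram.
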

In (almost) all of our applications the map $\Hom_{\Upgamma}(M,M) \to \Ext^{n}_{\Upgamma}(M,M)$ is indeed injective, however we do also compute the cohomology of $\scrT$ in general.  When the map is not injective the answer is just mildly more technical to state; for full details, see \ref{eq.coh-t}.

\medskip
To prove \ref{eq.coh-t_intro} we show that $\scrT$ is quasi-isomorphic to a more homological $\DG$-algebra.  The complex $\PP$ has cohomology in two degrees, but regardless, it is just a bounded complex of projectives, and so we can consider its endomorphism $\DG$-algebra $ \scrEnd_\Upgamma(\PP)$.

\begin{thm}[\ref{thm:qis of scrT}]
There is a natural morphism $ \scrEnd_\Upgamma(\PP)\to\scrT$, which is a quasi-isomorphism of $\DG$-algebras.
\end{thm}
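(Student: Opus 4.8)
The plan is to write the morphism down by hand, using that $\scrP$ is the periodic continuation of the fundamental domain $\PP$, and then to upgrade it to a homotopy equivalence by exploiting that $\upsigma$ realises $\PP$ as a genuine subcomplex of $\scrP$. Write $\scrP=\bigoplus_{\ell\geq 0}\PP[n\ell]$, so $\upsigma$ is the ``shift one block down'' map; it is a closed element of degree $n$ (its class in $\Ext^{n}_{\Upgamma}(M,M)$ being the periodicity class), and as a map of complexes $\upsigma\colon\scrP\to\scrP[n]$ it is a \emph{degreewise split epimorphism} whose kernel is the copy of $\PP$ sitting as block $0$. To a homogeneous $f\in\scrEnd_{\Upgamma}(\PP)$ attach its block-diagonal extension $\widetilde f\colonequals\bigoplus_{\ell}f[n\ell]\in\scrEnd_{\Upgamma}(\scrP)$; one checks, in the sign convention of \ref{key setup}, that $\widetilde f$ supercommutes with $\upsigma$, so $\widetilde f\in\sigE$. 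In general $\widetilde f$ is not closed, but the defect $c(f)\colonequals\updelta(\widetilde f)-\widetilde{\updelta(f)}$ equals $\upsigma$ post-composed with the block-diagonal extension of the $\scrEnd_{\Upgamma}(\PP)$-commutator of $f$ with the connecting map $\upbeta\circ\upalpha$, up to sign; in particular $c(f)$ vanishes on block $0$ (where $\upsigma$ does), so it lies in the image of right multiplication $R_{\upsigma}=(-)\circ\upsigma$ on $\scrEnd_{\Upgamma}(\scrP)$. Since $\upsigma$ is an epimorphism, $R_{\upsigma}$ is injective, so there is a \emph{unique} $Y(f)\in\nsigE[1-n]$ with $Y(f)\circ\upsigma=(-1)^{|f|}c(f)$, and we define $\Theta(f)\colonequals(\widetilde f,\,Y(f))$.

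\emph{It is a morphism of $\DG$-algebras.} For multiplicativity: $f\mapsto\widetilde f$ is an algebra homomorphism, and since $c$ is (the restriction to $\sigE$ of) the inner derivation $\pm[\upsigma\circ\widetilde{\upbeta\upalpha},-]$ of $\scrEnd_{\Upgamma}(\scrP)$, the defect obeys the Leibniz rule $c(fg)=\widetilde f\,c(g)\pm c(f)\,\widetilde g$; cancelling $\upsigma$ on the right (again by injectivity of $R_{\upsigma}$) turns this into exactly the product rule for the trivial-extension multiplication applied to the pairs $(\widetilde f,Y(f))$. For compatibility with differentials: the first slot of the identity $\upxi(\Theta f)=\Theta(\updelta f)$ is precisely the equation $Y(f)\circ\upsigma=(-1)^{|f|}c(f)$ that defines $Y(f)$, and the second slot $\updelta(Y(f))=Y(\updelta f)$ follows by applying $\updelta$ to that equation, using $\updelta(\upsigma)=0$ and that $[d_0,c(f)]=0$ since $d_0^{2}=0$, and cancelling $\upsigma$ once more. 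The only genuine work here is the signs — which is what the shift $[1-n]$ and the extra sign in the definition of $\nsigE$ (see \ref{def nSig}) are designed to carry.

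\emph{It is a quasi-isomorphism.} Here I would realise $\Theta$ as half of a homotopy equivalence. Because $\PP=\ker(\upsigma)\simeq\operatorname{fib}(\upsigma\colon\scrP\to\scrP[n])$, the algebra $\scrEnd_{\Upgamma}(\PP)$ has a description as an iterated (co)fibre assembled from $\scrE=\scrEnd_{\Upgamma}(\scrP)$ together with left and right multiplication by $\upsigma$ — a $\DG$-algebra built from four shifted copies of $\scrE$. The degreewise splitting of $\upsigma$ makes $R_{\upsigma}$ a split monomorphism and $L_{\upsigma}$ a split epimorphism of complexes, and a Gaussian elimination of the resulting contractible piece collapses this model onto one supported on only \emph{two} copies of $\scrE$; the surviving summands are exactly the $\upsigma$-(anti)commutants $\sigE$ and $\nsigE[1-n]$, the residual twisted differential is $\upxi$, the residual product is the trivial-extension product, and unwinding the retraction recovers the explicit $\Theta$ above. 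Equivalently, one can build the homotopy inverse $\Psi\colon\scrT\to\scrEnd_{\Upgamma}(\PP)$ directly from restriction to block $0$ (on the $\sigE$-summand this is the restriction $\DG$-algebra map $\sigE\twoheadrightarrow\scrEnd_{\Upgamma}(\PP)$, and the $\nsigE[1-n]$-summand is fed in through a chosen section of $\upsigma$), and check $\Theta\Psi$ and $\Psi\Theta$ are chain-homotopic to the identity. I would avoid concluding from a cohomology count, since the cohomology of $\scrT$ (Proposition~\ref{eq.coh-t_intro}) is not yet available at this point: it is deduced \emph{from} the present theorem after computing $H^{*}(\scrEnd_{\Upgamma}(\PP))=\Hom_{\D(\Upgamma)}(\PP,\PP[*])$ from the triangle $M[n-1]\to\PP\to M\to M[n]$.

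\emph{The main obstacle} is this collapse from the four-copy model of $\scrEnd_{\Upgamma}(\PP)$ down to the two (anti)commutant summands $\sigE\oplus\nsigE[1-n]$: it is the step that genuinely uses that $\scrP$ is the infinite periodic resolution, so that $\upsigma$ is an honest epimorphism of complexes with finite kernel $\PP$, and the Gaussian elimination must be carried out compatibly with the multiplication and not merely with the differential. Running alongside it — and pervading the definition of $Y$ and the verification that $\Theta$ is a $\DG$-morphism — is the sign-and-shift bookkeeping, unglamorous but unavoidable, and precisely the reason $\nsigE$ and the shift $[1-n]$ were introduced.
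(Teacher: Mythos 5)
Your construction of the morphism is, up to presentation, exactly the paper's map $\mathbb{G}\colon\scrEnd_\Upgamma(\PP)\to\scrT$, $g\mapsto\big(\widebar{g},\,(-1)^{|g|}\Updelta(g)\circ\uptau\big)$: your $\widetilde f$ is the graded repetition $\widebar f$, your defect $c(f)$ is $\Updelta(f)$, and your defining equation $Y(f)\circ\upsigma=(-1)^{|f|}c(f)$ has the unique solution $Y(f)=(-1)^{|f|}\Updelta(f)\circ\uptau$ precisely because $\uptau$ is a section of $\upsigma$ with $\Updelta(f)\circ\uptau\circ\upsigma=\Updelta(f)$ (the paper's \ref{prep for DG}\eqref{prep for DG 6}). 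Your observation that $R_\upsigma=(-)\circ\upsigma$ is injective is the same fact, packaged as $R_\uptau\circ R_\upsigma=\Id$. Likewise your derivation argument for multiplicativity is a reformulation of \ref{graded repetition comp ok}\eqref{graded repetition comp 2}, and the differential compatibility is \ref{prep for DG}\eqref{prep for DG 5}--\eqref{prep for DG 7}. So the morphism and the verification that it is a $\DG$-algebra map are essentially the paper's proof, just phrased implicitly via the defining equation for $Y(f)$ rather than the explicit $\uptau$-formula.

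The gap is in the quasi-isomorphism step. Your primary route --- realising $\scrEnd_\Upgamma(\PP)$ as a four-block model built on $\scrE$ from $\PP=\operatorname{fib}(\upsigma)$ and then running a multiplicative Gaussian elimination --- is only a sketch, and its claimed endpoint does not match: $\sigE$ and $\nsigE[1-n]$ are \emph{proper graded subspaces} of $\scrE$, not shifted copies of it, so the assertion that the model ``collapses onto two copies of $\scrE$'' whose ``surviving summands are exactly the (anti)commutants'' is not coherent as written. Moreover, carrying out such an elimination multiplicatively (you flag this yourself) is exactly where the hard work would be, and nothing in the proposal does it. Your alternative route --- define $\Psi\colon\scrT\to\scrEnd_\Upgamma(\PP)$ by restriction to block $0$ and check $\Theta\Psi$, $\Psi\Theta$ are homotopic to the identity --- is the paper's actual strategy, but you leave it unverified and misdescribe the map: the paper's $\mathbb{F}(x,y)=x_0$ simply \emph{discards} the $\nsigE[1-n]$-coordinate rather than ``feeding it in through a section of $\upsigma$''. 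With that $\mathbb{F}$ one gets $\mathbb{F}\circ\mathbb{G}=\Id$ on the nose, and for the other composite one only needs to exhibit, for each cocycle $(x,y)\in\scrT$, an explicit element $z=\big(0,(-1)^{|x|}(x-\widebar{x_0})\circ\uptau\big)$ with $(x,y)-\mathbb{G}\mathbb{F}(x,y)=\upxi(z)$; this is a short computation using $\ker\upxi\ni(x,y)\Rightarrow y=(-1)^{|x|}\updelta(x)\circ\uptau$, and avoids invoking any homotopy-transfer machinery. Your instinct not to argue via a cohomology count is sound (and is not what the paper does), but you should replace the Gaussian-elimination sketch by this concrete cocycle-level argument.
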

On one hand, this means we can interpret $\scrT$ simply as a more explicit $\DG$ model of the more homological $\scrEnd_\Upgamma(\PP)$.  On the other hand, the naturality of the definition of $\scrT$ gives some motivation to then consider the slightly less motivated algebra $\scrEnd_\Upgamma(\PP)$.

\subsection{The unitally positive algebra}\label{subsec: intro UPsub}
This subsection introduces $\scrN$, the unitally positive $A_\infty$-algebra.  This object can be associated to (almost) any $\DG$-category, but for simplicity this introduction restricts to the case of $\DG$-algebras.   

The motivation is simple: given a $\DG$-algebra $\scrB$, it is well known that in general the good truncation $\uptau_{\geq 0}\scrB$ need not be a $\DG$-algebra.  This is not the only issue.  Regardless of whether it is a $\DG$-algebra or not, in degree zero $\HH^0(\uptau_{\geq 0}\scrB)=\HH^0(\scrB)$.  Often in examples (e.g.\ $\scrB=\scrT$ from the previous section) the cohomology in degree zero is a direct sum, and we only want to `access' part of it.

\medskip
For this reason, we now pass to the $A_\infty$-level, and consider a strictly unital minimal model of $\scrB$.  It is elementary to see, in \ref{lem.connective-truncation}, that this $A_\infty$-structure on the cohomology of $\scrB$ restricts to give an $A_\infty$-structure on $\mathbb{C}\mathsf{1} \oplus \HH^{>0}(\scrB)$, where $\mathsf{1}\colonequals [\Id_\scrB] \in \HH^0(\scrB)$. 

\begin{defin}[\ref{definition nonneg subalg}]\label{definition nonneg subalg intro}
Given a unital $\DG$-algebra $\scrB$ such that the unit is not a coboundary, we call the $A_\infty$-algebra $\scrN_\scrB\colonequals \mathbb{C}\mathsf{1} \oplus \HH^{>0}(\scrB)$, with $A_\infty$-structure induced from a strictly unital minimal model of $\scrB$ (see \ref{lem.connective-truncation}), the \emph{unitally positive $A_\infty$-algebra} associated to $\scrB$.
\end{defin}
Returning to the running setting of the periodic extension $\DG$-algebra $\scrT$, at least in the case when $\Hom_\Upgamma(M,M)\cong \mathbb{C}$, the unitally positive $A_\infty$-algebra $\scrN_\scrT$ has the same cohomology as just the bottom row of \ref{eq.coh-t_intro}.  Thus, \emph{very} roughly speaking, $\scrN_\scrT$ can be viewed as a `suitably positive' truncation of the $\DG$-algebra $\scrT$. Given it also contains the unit, this explains the name.

Of course, we show that in general the unitally positive $A_\infty$-algebra is well defined, up to quasi-isomorphism in the category of unital $\DG$-algebras.

\begin{prop}[\ref{scrN is well-defined}]
If two unital $\DG$-algebras $\scrA$, $\scrB$ are quasi-isomorphic as unital $\DG$-algebras, there exists an $A_\infty$-quasi-isomorphism between $\scrN_\scrA$ and $\scrN_\scrB$.
\end{prop}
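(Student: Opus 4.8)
The plan is to reduce the statement to two inputs: the strictly unital homotopy theory of minimal models, and the degree count already used to build $\scrN_\scrB$ in \ref{lem.connective-truncation}. The point is that a strictly unital $A_\infty$-quasi-isomorphism between minimal models of $\scrA$ and $\scrB$ restricts, component by component, to a strictly unital $A_\infty$-quasi-isomorphism $\scrN_\scrA\to\scrN_\scrB$, by exactly the same reasoning that lets the $A_\infty$-operations of a minimal model restrict to $\mathbb{C}\mathsf{1}\oplus\HH^{>0}$.

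First I would produce a strictly unital $A_\infty$-quasi-isomorphism $\Phi\colon\HH^*(\scrA)\to\HH^*(\scrB)$ between strictly unital minimal models. Fix strictly unital minimal models, i.e. strictly unital $A_\infty$-quasi-isomorphisms $\HH^*(\scrA)\to\scrA$ and $\HH^*(\scrB)\to\scrB$ (these exist, and are exactly what is used in \ref{lem.connective-truncation}). Given a quasi-isomorphism $f\colon\scrA\to\scrB$ of unital $\DG$-algebras, the composite $\HH^*(\scrA)\to\scrA\xrightarrow{f}\scrB$ is a strictly unital $A_\infty$-quasi-isomorphism; composing with a strictly unital homotopy inverse of $\HH^*(\scrB)\to\scrB$ — strictly unital $A_\infty$-quasi-isomorphisms are invertible up to strictly unital homotopy — yields $\Phi$. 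If $\scrA$ and $\scrB$ are only connected by a zig-zag of unital $\DG$-algebra quasi-isomorphisms, I would iterate and compose, again using this invertibility. Note that the hypothesis `the unit is not a coboundary' passes along any unital quasi-isomorphism, so $\scrN_\scrA$ and $\scrN_\scrB$ are both defined.

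Next I would restrict $\Phi$ to the subspaces $\mathbb{C}\mathsf{1}\oplus\HH^{>0}$. Strict unitality gives $\Phi_1(\mathsf{1})=\mathsf{1}$ and $\Phi_n(x_1,\dots,x_n)=0$ whenever $n\geq2$ and some $x_i\in\mathbb{C}\mathsf{1}$, so the only case to examine is $x_1,\dots,x_n\in\HH^{>0}(\scrA)$, where the degree identity $|\Phi_n(x_1,\dots,x_n)|=\textstyle\sum_i|x_i|+1-n\geq1$ forces the output into $\HH^{>0}(\scrB)$. Hence each $\Phi_n$ restricts to a map $(\mathbb{C}\mathsf{1}\oplus\HH^{>0}(\scrA))^{\otimes n}\to\mathbb{C}\mathsf{1}\oplus\HH^{>0}(\scrB)$. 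By \ref{lem.connective-truncation} the $A_\infty$-operations on source and target already preserve these subspaces, so every intermediate term in the $A_\infty$-morphism relations for $\Phi$ stays inside them; those relations then restrict verbatim, and $\Phi$ restricts to a strictly unital $A_\infty$-morphism $\scrN_\scrA\to\scrN_\scrB$. Its linear part is $\mathrm{id}$ on $\mathbb{C}\mathsf{1}$ together with the degree-wise isomorphism $\Phi_1\colon\HH^k(\scrA)\xrightarrow{\sim}\HH^k(\scrB)$ for each $k>0$, hence an isomorphism; as both $\scrN$'s carry the zero differential, this makes the restricted morphism an $A_\infty$-quasi-isomorphism.

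The hard part is entirely in the first step: assembling, from the literature or the paper's earlier sections, the strictly unital refinement of Kadeishvili's theorem and the homotopy-invertibility of strictly unital $A_\infty$-quasi-isomorphisms, together with the (easy but fiddly) compatibility of these choices with strict units. Once that machinery is in place, everything else is the same bookkeeping as \ref{lem.connective-truncation}, now applied to morphisms rather than just to the algebra structure.
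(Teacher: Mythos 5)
Your proposal is correct and takes essentially the same route as the paper: replace the zigzag by a single strictly unital $A_\infty$-quasi-isomorphism between strictly unital minimal models, using \ref{prop.strictly-unital-HT} and the fact that the inverse of a strictly unital $A_\infty$-isomorphism of minimal $A_\infty$-algebras is strictly unital, then apply strict unitality together with the degree count already used in \ref{lem.connective-truncation} to see that every component lands in $\mathbb{C}\mathsf{1}\oplus\HH^{>0}$. The only difference is organizational --- you work directly with a morphism $\Phi\colon\HH^*(\scrA)\rightsquigarrow\HH^*(\scrB)$ between minimal models rather than a $\DG$-level map sandwiched by $i$ and $p$, and you make slightly more explicit that the $A_\infty$-morphism relations restrict once all operations preserve the subspaces --- but this is cosmetic.
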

In fact, all constructions and results in this subsection extend to (almost) all $\DG$-categories, and this extension is needed to cover the multi-curve version of the application in the next section; the reader is referred to \S\ref{subsec:unitalposCat} for full details.

\subsection{Application: The Donovan--Wemyss conjecture}
Recall that to every  $3$-fold flopping contraction $X\to\Spec R$, one can associate a finite dimensional noncommutative algebra $\Acon$, known as the contraction algebra \cite{DW1, DW3}. In this introduction we will restrict to the single-curve case, where it is well-known that $\Acon$ is local and the unique simple $\Acon$-module $S$ admits a length four complex $
\PP$ of finitely generated projective $\Acon$-modules which induce a periodic projective resolution; the multi-curve case is also solved in this paper, but details are left to \S\ref{subsec:DWmulti}.

Applying the above \S\ref{subsec: intro TEDGA} and \S\ref{subsec: intro UPsub}  to $S\in\mod\Acon$ constructs the unitally positive $A_\infty$-algebra $\scrN_S\colonequals \mathbb{C}\mathsf{1}\oplus \HH^{>0}(\scrEnd_{\Acon}(\PP))$.  In this flops setting we can also view $S$ as a module over the corresponding noncommutative crepant resolution $\rmA$ in \eqref{eqn:NCCR}, where it has a finite projective resolution $\scrQ\to S$.  Our main technical result is the following.

\begin{thm}[\ref{Main DG result}]\label{Main DG result intro}
There is an $A_\infty$-quasi-isomorphism between $\scrN_S$ and 
$\scrEnd_{\rmA}(\scrQ)$.
\end{thm}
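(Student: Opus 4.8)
The plan is to show that although $\scrEnd_{\rmA}(\scrQ)$ a priori depends on the NCCR $\rmA$, it can be recovered from $\Acon$-data by a purely formal comparison, the only geometric input being the identification of a single base-changed complex. First observe that $\scrN_S=\scrN_{\scrEnd_{\Acon}(\PP)}$ by definition, and that since $S$ is the simple module over the basic $\mathbb{C}$-algebra $\Acon$ one has $\HH^0(\scrEnd_{\rmA}(\scrQ))=\Hom_{\rmA}(S,S)=\mathbb{C}\mathsf{1}$; hence the ``$\mathbb{C}\mathsf{1}\oplus\HH^{>0}$''-construction changes nothing and $\scrEnd_{\rmA}(\scrQ)$ is $A_\infty$-quasi-isomorphic to $\scrN_{\scrEnd_{\rmA}(\scrQ)}$. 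Running the argument of \ref{scrN is well-defined} through \ref{lem.connective-truncation} also yields a mild strengthening: any morphism of unital DG-algebras that sends unit to unit and is an isomorphism on $\HH^{>0}$ induces, after passing to strictly unital minimal models and restricting to the ``$\mathbb{C}\mathsf{1}\oplus\HH^{>0}$''-subalgebras, an $A_\infty$-quasi-isomorphism of unitally positive algebras (strict unitality kills all $A_\infty$-terms involving $\mathsf{1}$, and the remaining higher components land in $\HH^{>0}$ for degree reasons). So it suffices to produce a unital DG-algebra morphism $\scrEnd_{\rmA}(\scrQ)\to\scrEnd_{\Acon}(\PP)$, or a zig-zag of such, that is an isomorphism on $\HH^{>0}$.

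To build it, write $\Acon=\rmA/\rmA e\rmA$ with $e$ the idempotent of $\rmA$ in \eqref{eqn:NCCR} cutting out the summand $R$. Base change along $\rmA\to\Acon$ gives a DG-enhanced functor $\Acon\otimes^{\mathbb{L}}_{\rmA}(-)\colon\D(\rmA)\to\D(\Acon)$, hence a unital DG-algebra morphism $\scrEnd_{\rmA}(\scrQ)=\RHom_{\rmA}(S,S)\to\RHom_{\Acon}(\widetilde{S},\widetilde{S})$, where $\widetilde{S}\colonequals\Acon\otimes^{\mathbb{L}}_{\rmA}S$. The crucial --- and only non-formal --- point is to identify $\widetilde{S}$ with $\PP$ in $\D(\Acon)$. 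Since $\Acon\otimes_{\rmA}\rmA e=0$ and $\Acon\otimes_{\rmA}\rmA(1-e)=\Acon$, the complex $\widetilde{S}=\scrQ\otimes_{\rmA}\Acon$ is just $\scrQ$ with its $\rmA e$-summands deleted; one must check that $\Tor^{\rmA}_i(S,\Acon)$ vanishes for $0<i<n-1$ and is $\cong S$ for $i=n-1$, and that the connecting class of the resulting triangle $S[n-1]\to\widetilde{S}\to S\to S[n]$ matches the periodicity class of $\PP$. This is exactly the assertion that the finite $\rmA$-resolution of $S$ descends, after killing $e$, to one period of the $\Acon$-periodic resolution of $S$ --- a known structural feature of NCCRs of $3$-fold flops (via the matrix-factorisation description of the cDV singularity $R$, or the explicit superpotential resolution of the vertex simple). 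Granting this, $\scrEnd_{\Acon}(\PP)\simeq\RHom_{\Acon}(\widetilde{S},\widetilde{S})$ as DG-algebras --- one may equally route through $\scrEnd_{\Acon}(\PP)\simeq\scrT$ of \ref{thm:qis of scrT} --- and composing gives the wanted morphism $\phi\colon\scrEnd_{\rmA}(\scrQ)\to\scrEnd_{\Acon}(\PP)$.

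That $\phi$ is an isomorphism on $\HH^{>0}$ is then a truncation argument: apply $\RHom_{\Acon}(\widetilde{S},-)$ to the triangle $S[n-1]\to\widetilde{S}\to S$ and use the adjunction isomorphism $\RHom_{\Acon}(\widetilde{S},S)\cong\RHom_{\rmA}(S,S)$ for $\Acon\otimes^{\mathbb{L}}_{\rmA}(-)\dashv\mathrm{res}$. The correction term $\RHom_{\Acon}(\widetilde{S},S[n-1])\cong\RHom_{\rmA}(S,S)[n-1]$ has cohomology only in degrees $\le 0$ because $\gldim\rmA=n-1=3$, so the long exact sequence identifies $\HH^{\ge1}$ of $\RHom_{\Acon}(\widetilde{S},\widetilde{S})$ with $\Ext^{\ge1}_{\rmA}(S,S)$ compatibly with $\phi$; equivalently, the truncation $\widetilde{S}\to S$ together with the triangle identity for the unit of the adjunction exhibits a one-sided cohomological inverse to $\phi$. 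Combined with the first paragraph this concludes the single-curve case, and the multi-curve case of \S\ref{subsec:DWmulti} is formally identical once algebras are replaced by the DG-categories of \S\ref{subsec:unitalposCat}.

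The step I expect to be the genuine obstacle is the identification $\widetilde{S}\simeq\PP$ in $\D(\Acon)$ --- not the vanishing of the intermediate $\Tor$'s, which is a dimension count, but the matching of the two connecting classes in $\Ext^n_{\Acon}(S,S)$, i.e.\ the statement that base change along $\rmA\to\Acon$ turns the finite resolution into precisely one period of the canonical periodic one. Everything else --- the comparison morphism, the cohomological estimate, and the passage to unitally positive algebras --- is formal homological algebra.
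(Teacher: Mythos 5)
Your strategy is essentially the same as the paper's: both proofs hinge on the restriction-of-scalars map $i^*\colon\scrEnd_\rmA(\scrQ)\to\scrEnd_{\Acon}(\PP)$, the adjunction giving $\scrHom_\rmA(\scrQ,S)\cong\scrHom_{\Acon}(\PP,S)$, and the observation that strict unitality plus a degree argument lets one restrict an $A_\infty$-quasi-isomorphism of minimal models to the unitally positive subalgebras. Where you differ is in how you pin down $\HH^{\geq 1}(\scrEnd_{\Acon}(\PP))$. The paper uses the explicit cohomology computation of \ref{eq.coh-t} (built from the commutative $3\times 3$ diagram in \S\ref{sec.cohomology_end}) and then matches dimensions against \eqref{eq.coh.ext2} to upgrade the split injection $\HH(i^*)$ to an isomorphism on the unitally positive part. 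You instead run the long exact sequence of $\RHom_{\Acon}(\PP,-)$ applied to the triangle $S[n-1]\to\PP\to S$, using that $\RHom_\rmA(S,S)[n-1]$ sits in degrees $\leq 0$; this is a valid alternative and arguably cleaner since it avoids quoting the full computation of $\HH^*(\scrT)$. The careful point you should note (and which your phrasing slightly glosses) is that at $i=1$ the flanking groups in the long exact sequence are $\Ext^n_\rmA(S,S)$ and $\Ext^{n+1}_\rmA(S,S)$, which both vanish since $\scrQ$ has length $n-1$; this is why degree $1$ is fine while degree $0$ picks up a correction that the $\mathbb{C}\mathsf{1}$-replacement is designed to absorb.

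Where your proposal goes astray is in your final paragraph. The step you flag as ``the genuine obstacle'' — identifying $\widetilde S=\Acon\otimes^{\mathbb L}_\rmA S$ with $\PP$ in $\D(\Acon)$, and in particular matching the connecting class of the triangle $S[n-1]\to\widetilde S\to S$ with ``the periodicity class of $\PP$'' — is not actually an obstacle at all, because there is no independently-defined periodic resolution to compare against. The paper \emph{defines} $\PP\colonequals i^*\scrQ$ (so $\widetilde S\cong\PP$ is literally the definition of $\PP$), and then \emph{builds} the periodic resolution $\scrP$ from $\PP$ by gluing shifted copies along $d_0=\upbeta\circ\upalpha$; the Yoneda class of $\scrP$ is, by construction, precisely the connecting class of the triangle you wrote. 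Questions of whether a ``different'' periodic resolution gives the same answer are genuine but are deferred to \ref{lem.qisom-endomo-simple} and \ref{cor:main_reconstruct}, using that $S$ is simple; they play no role in \ref{Main DG result} itself. What \emph{is} a genuine input in the flops application is the vanishing of the intermediate $\Tor^\rmA_i(S,\Acon)$ together with $\Tor^\rmA_{n-1}(S,\Acon)\cong S$, which is the hypothesis of Setup~\ref{Acon setup}; you dismiss this as ``a dimension count,'' but the paper proves it (\ref{key Tor descends}) via the symmetry of $\Acon$, the $3$-CY property of $\rmA$, and sphericality of $\Acon$. Your appeal to matrix factorisations or a superpotential resolution would also work in principle, but it is not what the paper does and is the less robust route.
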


Roughly speaking, the point is that the $3$-CY object $\scrEnd_{\rmA}(\scrQ)$ can be reconstructed from the finite dimensional algebra $\Acon$, via considering its simple module $S$ and its associated unitally positive $A_\infty$-algebra $\scrN_S$. The precise statement is \ref{cor:main_reconstruct}.

\medskip
The following is the main consequence, which reduces the classification of $3$-fold flops in birational geometry to that of finite dimensional algebras.

\begin{cor}[\ref{main text local}, Donovan--Wemyss conjecture]\label{main intro}
Suppose that $X_1\to\Spec R_1$ and $X_2\to\Spec R_2$ are two $3$-fold flopping contractions, where both $X_i$ are smooth, both $R_i$ are complete local, and both contractions have precisely one curve above the origin.  Write $\Acon$ and $\Bcon$ for their corresponding contraction algebras.  Then
\[
R_1\cong R_2\iff \Acon\cong\Bcon.
\]
\end{cor}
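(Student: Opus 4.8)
The plan is to prove the two implications separately: $(\Rightarrow)$ is essentially formal and already available, while $(\Leftarrow)$ is where the machinery of \S\ref{subsec: intro TEDGA}--\S\ref{subsec: intro UPsub} does its work. For $(\Rightarrow)$, the contraction algebra depends, up to isomorphism, only on the complete local ring $R$: any two crepant resolutions of $R$ are connected by a finite sequence of flops, under which the contraction algebra is invariant, so an isomorphism $R_1\cong R_2$ transports $X_1\to\Spec R_1$ to some crepant resolution of $R_2$ and forces $\Acon\cong\Bcon$. No part of this paper's new machinery is needed for this direction.

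The substance is $(\Leftarrow)$, which I would prove by a chain of quasi-isomorphisms. Assume $\Acon\cong\Bcon$. Since $\Acon$ is local with unique simple $S_1$, the length-four periodic complex $\PP_1$ is the minimal projective resolution of $S_1$ and is thus determined, up to isomorphism of complexes, by $\Acon$ alone; hence an isomorphism $\Acon\cong\Bcon$ matches $S_1\leftrightarrow S_2$ and $\PP_1\leftrightarrow\PP_2$ and induces an isomorphism of unital $\DG$-algebras $\scrEnd_{\Acon}(\PP_1)\cong\scrEnd_{\Bcon}(\PP_2)$. Feeding this into the unitally positive construction and invoking \ref{scrN is well-defined} yields an $A_\infty$-quasi-isomorphism $\scrN_{S_1}\simeq\scrN_{S_2}$. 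Applying the Main DG result \ref{Main DG result intro} to each side, $\scrN_{S_i}\simeq\scrEnd_{\rmA_i}(\scrQ_i)$, where $\scrQ_i\to S_i$ is a finite projective resolution over the NCCR $\rmA_i$ of $R_i$ from \eqref{eqn:NCCR}, so $\scrEnd_{\rmA_i}(\scrQ_i)$ computes $\RHom_{\rmA_i}(S_i,S_i)$. Combining, we obtain an $A_\infty$-quasi-isomorphism
\[
\RHom_{\rmA_1}(S_1,S_1)\;\simeq\;\RHom_{\rmA_2}(S_2,S_2)
\]
of the two $3$-Calabi--Yau $A_\infty$-algebras.

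It then remains to recover the complete local ring $R_i$ from the $A_\infty$-algebra $\scrEnd_{\rmA_i}(\scrQ_i)$, which is the role of the reconstruction statement \ref{cor:main_reconstruct}. Because $\rmA_i$ has finite global dimension and is a $3$-Calabi--Yau NCCR, the completed algebra $\widehat{\rmA_i}$, and with it the equivalence $\Db(\coh X_i)\simeq\Db(\rmA_i)$, can be reconstructed from $\RHom_{\rmA_i}(S_i,S_i)$ by (pseudocompact) Koszul/derived Morita duality; one then reads off $R_i$ intrinsically, as the centre $Z(\rmA_i)$, equivalently as $\HH^0$ of the structure sheaf on $X_i$. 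An $A_\infty$-quasi-isomorphism of the two $A_\infty$-algebras therefore produces an equivalence of the reconstructed categories, hence an isomorphism $Z(\rmA_1)\cong Z(\rmA_2)$, i.e.\ $R_1\cong R_2$; completeness of the $R_i$ is what allows one to promote this to an honest ring isomorphism rather than merely a derived statement. The multi-curve case runs identically once $\scrN$, the Main DG result, and the reconstruction are read in the $\DG$-category formulation of \S\ref{subsec:DWmulti}.

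I expect the main obstacle to be this last reconstruction step: showing that a single module $S_i$ and its $A_\infty$-algebra of self-extensions remember the infinite-dimensional ring $R_i$. Philosophically this is the heart of the whole conjecture --- a finite-dimensional algebra controlling the singularity --- and technically it demands careful Koszul duality over the pseudocompact completion, control of the $3$-Calabi--Yau structure, and a verification that nothing is lost on restricting from $\Db(\rmA_i)$ to the part generated by $S_i$. Every earlier step in the chain is, by contrast, a formal consequence of the results already established above.
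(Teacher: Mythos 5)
Your chain from $\Acon\cong\Bcon$ to an $A_\infty$-quasi-isomorphism $\scrEnd_{\rmA_1}(\scrQ_1)\simeq\scrEnd_{\rmA_2}(\scrQ_2)$ is, in substance, what the paper's Corollary~\ref{cor:main_reconstruct} supplies, so you have misattributed that corollary: it \emph{is} the step you describe (quasi-isomorphism of the $\rmA$-side endomorphism DG-algebras), not the later recovery of $R$. Your shortcut (claiming $\PP$ is forced up to isomorphism by $\Acon$ because it is the truncated minimal resolution) plausibly works in the flops setting where $\scrQ$ is taken minimal, but the paper instead proves uniqueness of $\PP$ up to quasi-isomorphism via the Yoneda/extension-class machinery of \S\ref{sec:Yoneda} and Lemma~\ref{lem.qisom-endomo-simple}; that route also handles the ambiguity in the identification $\Omega^4 S\cong S$ (scaling of the Yoneda class), which your isomorphism-of-complexes claim silently absorbs. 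On the direction $(\Rightarrow)$: the contraction algebra is \emph{not} invariant under flops in general (flops induce mutation); the paper simply cites \cite{DW1} for this implication.

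The genuine gap is in your final reconstruction step. You propose to recover the completed algebra $\widehat{\rmA}_i$ from $\RHom_{\rmA_i}(S_i,S_i)$ by Koszul/derived Morita duality and then read off $R_i$ as the centre. This cannot work as stated. The NCCR $\rmA_i$ has at least two simple modules: $S_i$ (attached to the curve) and a second simple corresponding to the summand $R_i=f_*\scrO_{X_i}$ of the tilting module. Koszul duality over the pseudocompact completion would reconstruct $\rmA_i$ from the derived endomorphisms of the \emph{sum of all} simples, not from the single simple $S_i$. Worse, $\rmA_i$ depends on the choice of crepant resolution $X_i$ and changes under flop, whereas the quasi-isomorphism class of $\scrEnd_{\rmA_i}(\scrQ_i)$ is being shown to be a flop invariant --- so no such reconstruction of $\rmA_i$ is possible even in principle. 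The paper's route sidesteps this: the Koszul dual of $\scrEnd_{\rmA}(\scrQ_\rmA)$ is identified with Booth's derived contraction algebra; a quasi-isomorphism of derived contraction algebras gives an equivalence of the relative and hence classical singularity categories by \cite[8.3.3]{Booth}; and the Hua--Keller recovery theorem \cite[5.9]{HuaKeller}, $R\cong\mathrm{HH}^0(\Dsg(R))$, then yields $R_1\cong R_2$. What is reconstructed is the singularity category of $R$, never the algebra $\rmA$, and $R$ is read off as $\mathrm{HH}^0$ of that category rather than as a centre of $\rmA$.
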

The multi-curve version of \ref{main intro} is also proved, in \ref{main text}.   In either case, the point is that it is an immediate consequence of \ref{Main DG result intro} and Koszul duality that, in this smooth setting, the contraction algebra $\Acon$ recovers its derived/DG version. This last statement is what originally motivated this paper: the construction of $\scrN_S$ and then \ref{Main DG result intro} should be viewed simply as the very direct way to achieve it. With this result in hand, both \ref{main intro} and its multi-curve analogue \ref{main text} then follow quickly, since by \cite{Booth, HuaKeller}, building on work of Kalck--Yang \cite{KalckYang} and others, it is well-known that the derived contraction algebra determines the isomorphism class of $R$.

%

\subsection*{Relation to Literature}
This paper gives the second proof of \ref{main intro}; the first was \cite{JM}.  The main benefits here are (a) the directness of the proof, and (b) the fact that the constructions made along the way are general. Indeed, perhaps the most remarkable aspect is that aside from both papers proving \ref{main intro}, there is no technical overlap in either the approaches or in the settings considered.  Consequently, there is no overlap in any of the results that lead to~\ref{main intro}. The paper \cite{JM} restricts to a cluster tilting setting, and proves very strong bijection results in that particular setting; here the setting is broadly much more general (e.g.\ $\scrT$ requires only a single periodic module, not a periodic algebra, and the construction of $\scrN$ is very general indeed), and so the results here necessarily have a different flavour, given that bijection results do not hold in greater generality.

That said, it does seem notable that neither of the two approaches to \ref{main intro} can prove the conjecture whilst staying entirely within the world of $\DG$-algebras and categories.  Both need to pass to the $A_\infty$-level in some fundamental way, in order to access structure which is currently not visible within the $\DG$-world.

\medskip
Versions of the unitally positive $A_\infty$-algebra $\scrN$ have appeared implicitly in the literature in various settings, albeit (to the best of our knowledge) always applied to $\DG$-endomorphism rings of objects in particular abelian categories, which automatically have no negative Ext groups. One example can be found in highest weight categories, where the `box algebra' associated to the direct sum of the standard modules can be interpreted as an $\scrN$ as constructed here; see e.g.\ \cite[under 4.35]{Kulshammer} and the references therein to \cite{Ovsienko, KKO}. A similar construction, in deformation theory, appears in the talks  of Bodzenta--Bondal.  Neither applies directly to our setting: the general construction of $\scrN$ in \ref{definition nonneg subalg} below applies to (almost) any $\DG$-algebra, and the novelty in this paper is to use this generality to both remove negative cohomology, and also to tweak the degree zero part.

\subsection*{Conventions}

Throughout, script font means that the object should be considered together with its $\DG$ or $A_\infty$ structure, whichever is clear from the context.  So, $\scrEnd(P)$ and $\scrHom$ are graded over $\mathbb{Z}$. All $\DG$-algebras $\scrA$ will be unital, such that the identity is not a coboundary.   We will denote $A_\infty$-morphisms and functors by $\scrA \rightsquigarrow \scrB$.

We will always draw complexes with arrows pointing to the right, namely
\[
x\colonequals \hdots\to a\to b\to c\to \hdots
\]
Irrespective of how the objects in the complex $x$ are notated or numbered, $\HH^{-1}(x)$ will always be the cohomology immediately to the left of the zero position.  So, if $b$ is in degree zero above, $\HH^{-1}(x)$ is the cohomology at $a$. Write $[1]$ for the shift to the left, with a sign on the differential.  Thus 
\[
x[1]\colonequals \hdots\xrightarrow{-\updelta} b\xrightarrow{-\updelta} c\xrightarrow{-\updelta} d\to \hdots
\]
where $c$ is now in degree zero. Given an ring $\Upgamma$, modules will be right modules. We write $\circ f$ for precomposition with the map $f$, i.e., $g \mapsto g\circ f$, and write $f\circ$ for postcomposition with  $f$, i.e., $g \mapsto f \circ g$.

\subsection*{Acknowledgments} The authors would like to thank Matt Booth, Alberto Canonaco, Will Donovan, Zhuang Hua, Gustavo Jasso, and Bernhard Keller for helpful conversations. Thanks also to Julian K\"ulshammer for pointing out the connections to box algebras, and to Fernando Muro for explaining Remark~\ref{rem.adjoint-to-incl}.  Special thanks are due to Martin Kalck for patiently explaining many aspects of \cite{KalckYang} to the third author in 2012.  Many years later, this formed the strategy to classification, via relative singularity categories.

\subsection*{Funding} J.K.\ was supported by EPSRC grant EP/M017516/2, both E.L.\ and M.W.\ by ERC Grant 101001227 (MMiMMa), and M.W.\ additionally by EP/R034826/1. E.L.\ is also a member of the GNSAGA group of INDAM.

\subsection*{Open Access} For the purpose of open access, the authors have applied a Creative Commons Attribution (CC:BY) licence to any Author Accepted Manuscript version arising from this submission.

\section{Trivial extension DG-algebras}

Given any algebra $\Upgamma$, and any $\Upgamma$-module that admits a periodic projective resolution (in a rather weak sense), this section constructs an associated \emph{trivial extension $\DG$-algebra}.  In particular, this setup applies to $\Upgamma$ itself.

\subsection{Generalities}
Throughout, let $\Upgamma$ be a $k$-algebra, where $k$ is some field.

\begin{setup}\label{key setup}
Suppose that $M\in\mod \Upgamma$ admits a length $n\geq 1$ periodic projective resolution, in the sense that there exists a complex of finitely generated projective $\Upgamma$-modules
\[
\PP \colonequals \quad0 \to P_{n-1} \xrightarrow{d_{n-1}} P_{n-2} \xrightarrow{d_{n-2}} \dots \xrightarrow{d_2} P_{1} \xrightarrow{d_{1}} P_0 \to 0 
\]
such that 
\[
0 \to M \xrightarrow{\upbeta}  P_{n-1} \to P_{n-2} \to \dots \to P_{1} \to P_0 \xrightarrow{\upalpha} M  \to 0
\]
is exact.  Mildly abusing notation, by shifting the complex $\PP $, and using the connecting map $d_0\colonequals \upbeta\circ\upalpha$, we construct the complex
\[
\scrP\colonequals\quad \dots\to  \PP [2n] \xrightarrow{d_0[n]} \PP [n] \xrightarrow{d_0} \PP  \to 0
\]
where each component $\PP [\ell n]$ represents a length $n$ complex of projective $\Upgamma$-modules.  For calibration, the complex $\scrP$ is written explicitly in the cases $n=1,2,3$ in \ref{ex:spelloutscrP} below.

From this, consider the $\DG$-algebra 
\[
\scrE\colonequals \scrEnd_{\Upgamma}(\scrP)
\]
in its usual graded pieces $\scrE=\bigoplus_{i\in\mathbb{Z}}\, \scrE^i$, where for calibration $\scrE^1$ is the collection of all maps that point one position to the right
\[
\begin{tikzpicture}
\node (a1) at (0,0) {$0$};
\node (a2) at (-1,0) {$P_0$};
\node (a3) at (-2,0) {$P_1$};
\node (a4) at (-3,0) {$P_2$};
\node (a5) at (-4,0) {$P_3$};
\node (a6) at (-5,0) {$\hdots$};
\node (a7) at (-6.1,0) {$P_0$};
\node (a8) at (-7.1,0) {$P_1$};
\node (a9) at (-8,0) {$\hdots$};
\node (b1) at (0,-1) {$0$};
\node (b2) at (-1,-1) {$P_0$};
\node (b3) at (-2,-1) {$P_1$};
\node (b4) at (-3,-1) {$P_2$};
\node (b5) at (-3.9,-1) {$\hdots$};
\node (b6) at (-5,-1) {$P_{n-1}$};
\node (b7) at (-6.2,-1) {$P_0$};
\node (b8) at (-7.2,-1) {$P_1$};
\node (b9) at (-8,-1) {$\hdots$};
\draw[->] (a2)--(b1);
\draw[->] (a3)--(b2);
\draw[->] (a4)--(b3);
\draw[->] (a5)--(b4);
\draw[->] (a7)--(b6);
\draw[->] (a8)--(b7);
\draw[->] (a2)--(a1);
\draw[->] (a3)--(a2);
\draw[->] (a4)--(a3);
\draw[->] (a5)--(a4);
\draw[->] (a6)--(a5);
\draw[->] (a7)--(a6);
\draw[->] (a8)--(a7);
\draw[->] (b2)--(b1);
\draw[->] (b3)--(b2);
\draw[->] (b4)--(b3);
\draw[->] (b5)--(b4);
\draw[->] (b6)--(b5);
\draw[->] (b7)--(b6);
\draw[->] (b8)--(b7);

\end{tikzpicture}
\]
and there are no commutativity (or other) conditions on this collection.  We will write $\scrE=\bigoplus_{i\in\mathbb{Z}}\, \scrE^i$ with grading $|\,|$, multiplication $\circ$, and differential $\updelta$ (see e.g.\ \cite[\S1.1]{DerivingDG}).
\end{setup}

The $\DG$-algebra $\scrE$ depends on both $\Upgamma$ and $M$, but this is suppressed from the notation.  We also remark that it is well-known that different choices of projective resolutions produce quasi-isomorphic $\DG$-algebras $\scrE$.

Implicit in the above setup is the convention that shifting the complex $\PP $ by an odd homological degree introduces a minus sign on the differentials.  Thus, when $n$ is odd, some minus signs occur.   

\begin{example}\label{ex:spelloutscrP}
For $n=1$ the complex $\scrP$ is 
\[
\dots\xrightarrow{\upbeta\circ\upalpha} P_0  \xrightarrow{-\upbeta\circ\upalpha} P_{0} \xrightarrow{\upbeta\circ\upalpha} P_0\to 0.
\]
For $n=2$ the complex $\scrP$ is
\[
\dots\to P_1\xrightarrow{d_1} P_{0} \xrightarrow{\upbeta\circ\upalpha} P_{1} \xrightarrow{d_1} P_0  \xrightarrow{\upbeta\circ\upalpha} P_{1} \xrightarrow{d_1} P_0\to 0 ,
\]
and for $n=3$ it is
\[
\dots\xrightarrow{d_2} P_1\xrightarrow{d_1} P_{0} \xrightarrow{-\upbeta\circ\upalpha} P_{2} \xrightarrow{-d_2} P_{1} \xrightarrow{-d_1} P_0  \xrightarrow{\upbeta\circ\upalpha} P_{2} \xrightarrow{d_2} P_{1} \xrightarrow{d_1} P_0\to 0 .
\]
\end{example}

\begin{example} \label{Example: Acon periodic resolution}
One module that always has such a periodic resolution (for any $n\geq 2$) is the projective module $\Upgamma$ itself: we can take the complex
\[
\PP \colonequals \quad0\to \Upgamma \to 0 \to\dots\to 0\to\Upgamma\to 0
\]
with $\upalpha=\upbeta=\Id$. 
\end{example} 

\begin{notation}
Under Setup~\ref{key setup}, consider the two elements $\upsigma, \uptau \in \scrE$ defined as
\[
\upsigma\colonequals
\begin{tikzcd}
\phantom{\PP [2n]}  \arrow{r}\arrow{rd}{\Id} & \PP [2n] \arrow{r} \arrow{rd}{\Id} & \PP [n] \arrow{r} \arrow{rd}{\Id} & \PP  \arrow{r} \arrow{rd}{0} & 0 \arrow{r} \arrow{rd}{0} & \dots\\
\phantom{\PP [2n]}  \arrow{r}  & \PP [2n] \arrow{r}  & \PP [n] \arrow{r} & \PP \arrow{r}  & 0 \arrow{r} & \dots
\end{tikzcd}
\]
and 
\[
\uptau\colonequals
\begin{tikzcd}
\phantom{\PP [2n]} \arrow{r} & \PP [2n] \arrow{r} \arrow{ld}{\Id} & \PP [n] \arrow{r} \arrow{ld}{\Id} & \PP  \arrow{r} \arrow{ld}{\Id} & 0 \arrow{r} \arrow{ld}{0} & \dots \arrow{ld}{0}\\
\phantom{\PP [2n]}  \arrow{r}   & \PP [2n] \arrow{r}  & \PP [n] \arrow{r} & \PP  \arrow{r} & 0 \arrow{r} & \dots
\end{tikzcd}
\]
\end{notation}

Note that $|\upsigma\kern 1pt|=n$ and $|\uptau\kern 1pt|=-n$. It is a straightforward calculation to see that
\begin{equation}
\upsigma \circ \uptau = \Id \quad \text{ and} \quad \uptau \circ \upsigma = \Id_{\ge n}\label{sig tau basic}
\end{equation}
where $\Id=\Id_{\scrE}$ and $\Id_{\ge n}$ denotes the degree 0 map obtained as the truncation of $\Id$ that restricts to the identity on the components $\PP [\ell n]$ for all $\ell\ge 1$ and the zero map on the component $\PP $. 

\begin{lemma}\label{Lemma: deltasigma is zero}
Under Setup~\textnormal{\ref{key setup}}, $\updelta (\upsigma) = 0$.
\end{lemma}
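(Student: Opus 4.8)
The plan is to unwind the definition of the differential $\updelta$ on the endomorphism $\DG$-algebra $\scrE=\scrEnd_\Upgamma(\scrP)$. For a homogeneous $f\in\scrE$ one has $\updelta(f)=d_\scrP\circ f-(-1)^{|f|}f\circ d_\scrP$, where $d_\scrP$ denotes the total differential of the complex $\scrP$. Since $|\upsigma|=n$, the assertion $\updelta(\upsigma)=0$ is equivalent to the identity $d_\scrP\circ\upsigma=(-1)^n\,\upsigma\circ d_\scrP$, i.e.\ to $\upsigma$ being a chain map up to the Koszul sign $(-1)^n$, and I would verify this one component at a time.

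For the bookkeeping I would split $d_\scrP$ into its two kinds of components: the \emph{internal} differentials, i.e.\ the maps $P_i\to P_{i-1}$ for $1\le i\le n-1$ living inside a single block $\PP[\ell n]$, which carry the shift sign $(-1)^{\ell n}$; and the \emph{connecting} maps $d_0=\upbeta\circ\upalpha\colon P_0\to P_{n-1}$ from block $\PP[\ell n]$ to block $\PP[(\ell-1)n]$, which by the displayed description of $\scrP$ in Setup~\ref{key setup} carry the sign $(-1)^{(\ell-1)n}$. The element $\upsigma$ is the identity from $\PP[\ell n]$ to $\PP[(\ell-1)n]$ for every $\ell\ge1$ and is zero on the bottom block $\PP$. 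Hence on a component of $\PP[\ell n]$ with $\ell\ge2$, the composite $d_\scrP\circ\upsigma$ first passes to the block $\PP[(\ell-1)n]$ and then applies that block's internal (resp.\ connecting) differential, acquiring the sign $(-1)^{(\ell-1)n}$ (resp.\ $(-1)^{(\ell-2)n}$), whereas $(-1)^n\,\upsigma\circ d_\scrP$ applies block $\PP[\ell n]$'s differential first, acquiring $(-1)^{n+\ell n}=(-1)^{(\ell+1)n}$ (resp.\ $(-1)^{n+(\ell-1)n}=(-1)^{\ell n}$). In each case the two signs differ by $(-1)^{2n}$, hence coincide, so these components of $\updelta(\upsigma)$ vanish.

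It then remains to treat the boundary cases where $\upsigma$ behaves asymmetrically. On the bottom block $\PP$ the element $\upsigma$ is zero, so both $d_\scrP\circ\upsigma$ and $\upsigma\circ d_\scrP$ vanish there. On $\PP[n]$ the internal components work exactly as above with $\ell=1$ (the two signs being $(-1)^0=1$ and $(-1)^n(-1)^n=1$), while the connecting map out of the $P_0$-component of $\PP[n]$ lands in the bottom block $\PP$: thus $\upsigma\circ d_\scrP$ vanishes on that component because $\upsigma$ kills $\PP$, and $d_\scrP\circ\upsigma$ vanishes there too, since $\upsigma$ carries that component into $\PP$ out of which $d_\scrP$ is zero (there is no block $\PP[-n]$). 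Assembling the cases yields $\updelta(\upsigma)=0$.

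I expect no conceptual obstacle here; the only thing requiring care is the sign bookkeeping forced by the odd shifts $[\ell n]$ when $n$ is odd (as flagged in the remark following Setup~\ref{key setup}), together with the observation that the Koszul sign $(-1)^{|\upsigma|}=(-1)^n$ in the formula for $\updelta$ is precisely what absorbs the sign mismatch between the differentials of adjacent blocks.
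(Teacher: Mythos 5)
Your proof is correct and takes essentially the same approach as the paper: both verify $\updelta(\upsigma)=0$ component by component on the blocks $\PP[\ell n]$, distinguishing the internal differentials $d_i$ ($i\geq 1$) from the connecting maps $d_0=\upbeta\circ\upalpha$, and showing the Koszul sign $(-1)^{|\upsigma|}=(-1)^n$ exactly compensates the sign discrepancy $(-1)^{2n}=1$ between adjacent blocks, with the boundary case $\ell=1$, $i=0$ vanishing because $\upsigma$ is zero on $\PP$. The only difference is cosmetic: you organize by internal versus connecting components, while the paper organizes by $i\neq 0$ versus $i=0$, but these are the same split.
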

\begin{proof}
This is an easy verification.  In components, either $\upsigma=\Id\colon P_i[\ell n] \rightarrow P_{i}[(\ell-1) n]$ with $\ell>0$, or $\upsigma=0$, so we just need to check that $\updelta(\upsigma)=0$ on the components $P_i[\ell n]$ of $\scrP$ with $\ell>0$.  When $i \neq 0$, for any $\ell>0$ we have
\begin{align*}
\updelta(\upsigma) & = d_i[(\ell-1)n] \circ \Id - (-1)^{|\upsigma|} \Id \circ\, d_i[\ell n] \\
& = (-1)^{(\ell -1)n}d_i - (-1)^{n+\ell n} d_i \tag{$|\upsigma|=n$}\\
& = (-1)^{\ell n +n}d_i - (-1)^{n+\ell n}d_i\tag{$(-1)^n=(-1)^{-n}$}\\
& =0.
\end{align*}
Similarly, for $i=0$ with $\ell>1$ then
\begin{align*}
\updelta(\upsigma) & = (\upbeta\circ\upalpha)[(\ell-1)n] \circ \Id - (-1)^{|\upsigma|} \Id \circ (\upbeta\circ\upalpha)[\ell n]  \\
& = (-1)^{(\ell -1)n}\upbeta\circ\upalpha -(-1)^{n+\ell n} \upbeta\circ\upalpha  \\
&=0,
\end{align*}
and for $i=0$ with $\ell=1$, we have $\updelta(\upsigma) = 0 \circ \Id - (-1)^{|\upsigma|}\, 0 \circ (\upalpha \circ \upbeta)[n]=0$.
\end{proof}

\subsection{The trivial extension $\DG$-algebra}
 
 The construction of the trivial extension $\DG$-algebra involves   isolating two periodic pieces of $\scrE$, the first of which is the following.

\begin{defin}
Under Setup~\ref{key setup}, write $\sigE\colonequals \bigoplus_{i\in\mathbb{Z}}\sigE^i$, where
\[
\sigE^i \colonequals \{ x \in \scrE^i \mid \upsigma \circ x = (-1)^{|\upsigma||x|}x \circ \upsigma \}.
\]
\end{defin}

\begin{lemma} \label{Lemma: sigE is DG} 
With notation as above, the following hold.
\begin{enumerate}
\item\label{Lemma: sigE is DG 1}   If $x\in\sigE$ is homogeneous, then $\upsigma\circ\updelta(x)=(-1)^{|\upsigma|(|x|+1)}\updelta(x)\circ\upsigma$.
\item\label{Lemma: sigE is DG 2}   $\sigE$ is a $\DG$-subalgebra of $\scrE$.
\end{enumerate}
\end{lemma}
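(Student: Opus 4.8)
The plan is to derive both parts from two facts already available: the graded Leibniz rule for the differential $\updelta$ on $\scrE=\scrEnd_\Upgamma(\scrP)$, and the vanishing $\updelta(\upsigma)=0$ from \ref{Lemma: deltasigma is zero}. No auxiliary constructions are needed.

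For part \ref{Lemma: sigE is DG 1}, I would simply apply $\updelta$ to the defining relation $\upsigma\circ x=(-1)^{|\upsigma||x|}x\circ\upsigma$ for a homogeneous $x\in\sigE$. Using $\updelta(f\circ g)=\updelta(f)\circ g+(-1)^{|f|}f\circ\updelta(g)$ on both sides, and then cancelling the terms involving $\updelta(\upsigma)=0$, the left-hand side becomes $(-1)^{|\upsigma|}\upsigma\circ\updelta(x)$ and the right-hand side becomes $(-1)^{|\upsigma||x|}\updelta(x)\circ\upsigma$. Dividing by $(-1)^{|\upsigma|}$ yields $\upsigma\circ\updelta(x)=(-1)^{|\upsigma|(|x|-1)}\updelta(x)\circ\upsigma$. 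Since $|\updelta(x)|=|x|+1$ and the exponents $|\upsigma|(|x|-1)$ and $|\upsigma|(|x|+1)$ differ by $2|\upsigma|$, this is precisely the asserted identity $\upsigma\circ\updelta(x)=(-1)^{|\upsigma|(|x|+1)}\updelta(x)\circ\upsigma$.

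For part \ref{Lemma: sigE is DG 2}, note that $\sigE$ is by definition a graded $\mathbb{C}$-subspace of $\scrE$, so it remains only to check that it contains the unit and is closed under $\updelta$ and under $\circ$. The unit lies in $\sigE^0$ since $\upsigma\circ\Id=\Id\circ\upsigma$ and $(-1)^{|\upsigma|\cdot 0}=1$. Closure under $\updelta$ is exactly part \ref{Lemma: sigE is DG 1}, extended $\mathbb{C}$-linearly to non-homogeneous elements. For closure under $\circ$, given homogeneous $x,y\in\sigE$ I would move $\upsigma$ past $x$ and then past $y$ inside $\upsigma\circ(x\circ y)$, picking up the signs $(-1)^{|\upsigma||x|}$ and $(-1)^{|\upsigma||y|}$ in turn, to land on $(-1)^{|\upsigma|(|x|+|y|)}(x\circ y)\circ\upsigma=(-1)^{|\upsigma||x\circ y|}(x\circ y)\circ\upsigma$; bilinearity then handles the general (non-homogeneous) case.

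There is essentially no obstacle here: the only point requiring any care is the sign bookkeeping in part \ref{Lemma: sigE is DG 1}, specifically the observation that $(-1)^{|\upsigma|(|x|-1)}=(-1)^{|\upsigma|(|x|+1)}$, which reconciles the exponent produced by the Leibniz computation with the exponent in the statement.
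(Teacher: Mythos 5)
Your proof is correct and takes essentially the same route as the paper: for part (1) both you and the authors compute $\updelta(\upsigma\circ x)$ in two ways (directly via Leibniz, and via the defining relation for $\sigE$ combined with Leibniz), cancel the $\updelta(\upsigma)=0$ terms, and reconcile the signs via $(-1)^{|\upsigma|(|x|-1)}=(-1)^{|\upsigma|(|x|+1)}$; for part (2) both arguments check $\Id\in\sigE$, closure under $\circ$ by moving $\upsigma$ past each factor, and closure under $\updelta$ as a direct consequence of part (1).
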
 
\begin{proof}
(1) On one hand, by the graded Leibniz rule  
\[
\updelta(\upsigma \circ x)=\updelta(\upsigma) \circ x +(-1)^{|\upsigma|}\upsigma \circ \updelta(x)\stackrel{\scriptstyle\ref{Lemma: deltasigma is zero}}{=}(-1)^{|\upsigma|}\upsigma \circ \updelta(x).
\]
On the other hand,   
\begin{align*}
\updelta(\upsigma \circ x)&=(-1)^{|\upsigma||x|}\updelta(x\circ\upsigma)\tag{since $x\in\sigE$}\\
&=(-1)^{|\upsigma||x|}\big(\updelta(x)\circ \upsigma +(-1)^{|x|}x\circ \updelta(\upsigma)\big)\tag{graded Leibniz rule}\\
&=(-1)^{|\upsigma||x|}\updelta(x)\circ \upsigma\tag{$\updelta(\upsigma)=0$ by \ref{Lemma: deltasigma is zero}}.
\end{align*}
Equating terms and multiplying by $(-1)^{|\upsigma|}$ gives $\upsigma\circ\updelta(x)=(-1)^{|\upsigma|(|x|+1)}\updelta(x)\circ\upsigma$.\\
(2) Clearly $\Id\in\sigE$, and further $\sigE$ is a graded subspace of $\scrE$.  It is closed under multiplication since for homogeneous $x,y\in {^{\upsigma}\scrE}$ 
\[
\upsigma \circ (x \circ y) 
=
(-1)^{|\upsigma||x|}x \circ \upsigma \circ y = (-1)^{|\upsigma|(|x|+|y|)}(x \circ y) \circ \upsigma.
\]
The differential $\updelta$ on $\scrE$ restricts to a differential on $\sigE$ since for homogeneous $x\in \sigE$, by part (1) $\upsigma\circ\updelta(x)=(-1)^{|\upsigma|(|x|+1)}\updelta(x)\circ\upsigma=(-1)^{|\upsigma||\updelta(x)|}\updelta(x)\circ\upsigma$, and hence $\updelta(x)\in\sigE$.
\end{proof}

The second periodic piece of the trivial extension algebra is the following, which is very similar to the above, up to a sign.
\begin{defin}\label{def nSig}
Under Setup~\ref{key setup}, write $\nsigE\colonequals\bigoplus_{i\in\mathbb{Z}}\nsigE^i$, where
\[
\nsigE^i \colonequals  \{ y \in \scrE^i \mid \upsigma \circ y = (-1)^{|\upsigma|(|y|+1)}y \circ \upsigma \}
\]
\end{defin}

The following is the analogous version of \ref{Lemma: sigE is DG}.  Recall that a $\DG$  $\sigE$-module is a $\mathbb{Z}$-graded $\sigE$-module endowed with a compatible differential \cite[p1]{DerivingDG}.
\begin{lemma} \label{Lemma: nsigE is DG}
With notation as above, the following hold.
\begin{enumerate}
\item\label{Lemma: nsigE is DG 1} If $y\in\nsigE$ is homogeneous, then $\upsigma\circ\updelta(y)=(-1)^{|\upsigma||y|}\updelta(y)\circ\upsigma$.
\item\label{Lemma: nsigE is DG 2} $\nsigE$ is a $\DG$ $\sigE$-bimodule.
\end{enumerate}
\end{lemma}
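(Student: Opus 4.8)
The plan is to mirror the proof of Lemma~\ref{Lemma: sigE is DG} step by step, making the sign adjustments dictated by the extra $+1$ in the defining relation for $\nsigE$. For part~(1), I would compute $\updelta(\upsigma\circ y)$ in two ways. On one hand, the graded Leibniz rule together with $\updelta(\upsigma)=0$ from Lemma~\ref{Lemma: deltasigma is zero} gives $\updelta(\upsigma\circ y)=(-1)^{|\upsigma|}\upsigma\circ\updelta(y)$. On the other hand, using $y\in\nsigE$ to rewrite $\upsigma\circ y=(-1)^{|\upsigma|(|y|+1)}y\circ\upsigma$, then applying Leibniz and $\updelta(\upsigma)=0$ again, gives $\updelta(\upsigma\circ y)=(-1)^{|\upsigma|(|y|+1)}\updelta(y)\circ\upsigma$. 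Equating the two expressions and cancelling $(-1)^{|\upsigma|}$ yields $\upsigma\circ\updelta(y)=(-1)^{|\upsigma||y|}\updelta(y)\circ\upsigma$, which is exactly the $\nsigE$-relation in degree $|y|+1$ (since $|\updelta(y)|=|y|+1$, the exponent $|\upsigma|(|\updelta(y)|+1)=|\upsigma|(|y|+2)\equiv|\upsigma||y|$ mod~$2$). So part~(1) says precisely that $\updelta$ preserves $\nsigE$, which will be reused in part~(2).

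For part~(2), I need to check that $\nsigE$ is a graded $\sigE$-subbimodule of $\scrE$ closed under the differential, and that the differential is compatible with the bimodule structure. Gradedness is immediate. For the left action: if $x\in\sigE$ and $y\in\nsigE$ are homogeneous, then
\[
\upsigma\circ(x\circ y)=(-1)^{|\upsigma||x|}x\circ\upsigma\circ y=(-1)^{|\upsigma||x|}(-1)^{|\upsigma|(|y|+1)}(x\circ y)\circ\upsigma=(-1)^{|\upsigma|(|x\circ y|+1)}(x\circ y)\circ\upsigma,
\]
so $x\circ y\in\nsigE$; the right action $y\circ x$ is handled symmetrically, moving $\upsigma$ past $y$ first (picking up $(-1)^{|\upsigma|(|y|+1)}$) and then past $x$ (picking up $(-1)^{|\upsigma||x|}$), giving the same total sign $(-1)^{|\upsigma|(|y|+|x|+1)}$. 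Closure under $\updelta$ is part~(1). Finally, compatibility of $\updelta$ with the bimodule structure is inherited verbatim from the graded Leibniz rule on $\scrE$ — there is nothing new to prove there, since $\sigE$ and $\nsigE$ are both subspaces of $\scrE$ and the actions and differential are the restrictions of those on $\scrE$.

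I do not expect any genuine obstacle here: the whole statement is a bookkeeping exercise in tracking signs, and the only thing that needs care is confirming that the exponent $|\upsigma|(|y|+1)$ in the definition of $\nsigE$ is the one that makes the left and right $\sigE$-actions land back in $\nsigE$ (rather than in $\sigE$), and that the shift by one in homological degree under $\updelta$ flips the parity of $|\upsigma||y|$ versus $|\upsigma|(|y|+1)$ in exactly the way required. Both are visible from the computations above. If anything, the mild subtlety worth spelling out is why $\nsigE$ is a \emph{bimodule} and not just a left (or right) module — i.e.\ that the left and right actions of $\sigE$ are the restrictions of a single associative action of $\scrE$ on itself, so associativity and the two-sided compatibility are automatic.
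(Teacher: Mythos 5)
Your proposal is correct and follows essentially the same route as the paper: part (1) is obtained by computing $\updelta(\upsigma\circ y)$ two ways (Leibniz plus $\updelta(\upsigma)=0$, versus first rewriting via the $\nsigE$-relation), and part (2) checks closure of $\nsigE$ under the two $\sigE$-actions by the same sign computation $\upsigma\circ x\circ y=(-1)^{|\upsigma|(|x|+|y|+1)}x\circ y\circ\upsigma$ (and symmetrically), then uses part (1) for closure under $\updelta$ and inherits the Leibniz compatibility from $\scrE$. The only difference is that the paper leaves part (1) as "very similar to \ref{Lemma: sigE is DG}\eqref{Lemma: sigE is DG 1}" whereas you spell it out, including the helpful parity check that $|\upsigma|(|\updelta(y)|+1)\equiv|\upsigma||y|\pmod 2$.
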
 
\begin{proof}
(1) The proof follows in a very similar manner to \ref{Lemma: sigE is DG}\eqref{Lemma: sigE is DG 1}.\\
(2) We first claim that $\nsigE$ is a $\mathbb{Z}$-graded $\sigE$-module, on both the left and right.  This holds since it is clear that $\nsigE$ is a graded vector subspace of $\scrE$, and further $\nsigE$ is closed under left and right multiplication by $\sigE$, since 
for homogeneous $x \in \sigE$ and $y \in \nsigE$
\begin{align*}
\upsigma \circ x \circ y &= (-1)^{|\upsigma|(|x|+|y|+1)} x \circ y \circ \upsigma,\\
\upsigma \circ y \circ x &= (-1)^{|\upsigma|(|x|+|y|+1)} y \circ x \circ \upsigma.
\end{align*}
This makes $\nsigE$ a $\mathbb{Z}$-graded right and left $\sigE$-module. The bimodule compatibility condition for the left and right action, $a \circ ( m \circ b) = (a \circ m ) \circ b$ for $a,b \in \sigE$ and $m \in \nsigE$, holds since the actions are those inherited from the associative multiplication in $\scrE$. 

Now the differential on $\scrE$ restricts to a differential on $\nsigE$ since for homogeneous $y\in \nsigE$, by (1), $\upsigma\circ\updelta(y)=(-1)^{|\upsigma||y|}\updelta(y)\circ\upsigma=(-1)^{|\upsigma|(|\updelta(y)|+1)}\updelta(y)\circ\upsigma$, and so $\updelta(y)\in\nsigE$.   This differential is compatible with both the left and right action, namely
\[
\updelta(a \circ m \circ b )= \updelta(a) \circ m \circ b + (-1)^{|a|} a \circ \updelta(m) \circ b + (-1)^{|a|+|m|} a \circ m \circ \updelta(b)
\]
holds for $a \in \sigE^i$, $m \in \nsigE^j$, and $b \in \sigE$, since all differentials are inherited from $\scrE$, which obeys the Leibniz rule.  
\end{proof}

\begin{remark} 
If $|\upsigma|$ is even, $\sigE=\nsigE$. In this case, $\nsigE$ is clearly a $\DG$ $\sigE$-bimodule.
\end{remark}

The following is our main new object.
\begin{defin}\label{Definition: TEA}
Under the Setup~\ref{key setup}, the trivial extension $\DG$-algebra $\scrT$ of $M\in\mod\Upgamma$ is defined as follows.  As a graded vector space,
\[
\scrT \colonequals \sigE \oplus \nsigE[1-n].
\]
To avoid confusion on signs associated with the shift, we will always view a homogenous element of $\scrT$ of degree $j$ to be a pair $(x,y)$ where $x \in \sigE^j$ and $y \in \nsigE^{j-n+1}$ are homogeneous elements of degrees $|x|=j$ and $|y|=j+1-n$ respectively; equivalently $j=|x|=|y|+n-1$.  Thus, in this convention both $x$ and $y$ will be viewed in the fixed $\scrE$, and so the fixed differential $\updelta_\scrE=\updelta$ can be applied to both.

Multiplication is defined on homogeneous elements by
\[
(x,y) \cdot (a,b) = \big( x \circ a, \, x \circ b + (-1)^{|a|(|\upsigma|+1)} y \circ a \big),
\]
and extended linearly. The differential $\upxi$ on $\scrT$ is defined on homogeneous elements by 
\[
\upxi (x,y) \colonequals \big(\updelta(x) - (-1)^{|x|} y \circ \upsigma, \,\updelta(y)\big)
\]
and extended linearly.
\end{defin}
A priori, $\scrT$ depends on $\PP$, $\upalpha$ and $\upbeta$, but this is suppressed from the notation.  As the above multiplication is a graded version of the multiplication of the usual trivial extension algebra, this partially justifies the name.

\begin{remark}\label{rem:actually_a_cone}
The differential $\upxi$ is just the differential of the cone of the degree $n$ morphism $\nsigE \to \sigE$ sending $y$ to $(-1)^{|\upsigma|} y \circ \upsigma$ and as such, it is very natural to consider. Here, instead of the usual sign convention on the differential of the cone of $f \colon V \to W$, which is $d_C(x,y)= (d_Wx + f(y), -d_V y)$, for convenience and comparison with $\upxi$ we are using the alternative convention that $d'_C(x,y)=(d_Wx + (-1)^{|y|}f(y), d_V y)$.
\end{remark}

\begin{prop}\label{Triv DG is DG}
Under the Setup~\textnormal{\ref{key setup}}, $(\scrT,\cdot,\upxi)$ is a $\DG$-algebra.
\end{prop}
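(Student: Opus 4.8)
The plan is to verify the four things that make $(\scrT,\cdot,\upxi)$ a DG-algebra: that $\cdot$ and $\upxi$ genuinely define operations on $\scrT$ of degrees $0$ and $+1$; that $(\scrT,\cdot)$ is associative and unital; that $\upxi^2=0$; and that $\upxi$ satisfies the graded Leibniz rule with respect to $\cdot$. For well-definedness, take homogeneous $x,a\in\sigE$ and $y,b\in\nsigE$. Lemma~\ref{Lemma: sigE is DG}\eqref{Lemma: sigE is DG 2} gives $x\circ a\in\sigE$ and Lemma~\ref{Lemma: nsigE is DG}\eqref{Lemma: nsigE is DG 2} gives $x\circ b, y\circ a\in\nsigE$, so a degree count places the two coordinates of $(x,y)\cdot(a,b)$ in $\sigE^{|x|+|a|}$ and $\nsigE^{|x|+|a|-n+1}$, as required for an element of $\scrT^{|x|+|a|}$. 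For $\upxi$ one has $\updelta(x)\in\sigE$ and $\updelta(y)\in\nsigE$ by the same two lemmas, and the one genuinely new point is that $y\circ\upsigma\in\sigE$ for $y\in\nsigE$: using $\upsigma\circ y=(-1)^{|\upsigma|(|y|+1)}y\circ\upsigma$ together with $|\upsigma|=n$ and $n^2\equiv n\pmod 2$ shows $\upsigma\circ(y\circ\upsigma)=(-1)^{|\upsigma||y\circ\upsigma|}(y\circ\upsigma)\circ\upsigma$. A further degree count confirms $\upxi$ has degree $+1$.

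Associativity of $\cdot$ follows by expanding $((x,y)\cdot(a,b))\cdot(c,d)$ and $(x,y)\cdot((a,b)\cdot(c,d))$: both first coordinates are $x\circ a\circ c$, and in the second coordinate the terms $x\circ a\circ d$, $x\circ b\circ c$, $y\circ a\circ c$ occur with matching signs since $|a|(|\upsigma|+1)+|c|(|\upsigma|+1)=(|a|+|c|)(|\upsigma|+1)$. The pair $(\Id,0)$ is a two-sided unit by inspection, using $\Id\in\sigE$ and that the sign $(-1)^{|a|(|\upsigma|+1)}$ is trivial when $|a|=0$. For $\upxi^2=0$, the second coordinate of $\upxi^2(x,y)$ is $\updelta^2(y)=0$, while the first coordinate collapses to $\updelta^2(x)-(-1)^{|x|+|y|}\,y\circ\updelta(\upsigma)$ after applying $\updelta^2=0$ and the graded Leibniz rule for $\updelta$ on $\scrE$; this vanishes because $\updelta(\upsigma)=0$ by Lemma~\ref{Lemma: deltasigma is zero}.

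The main work, and the step where I expect the signs to need the most care, is the Leibniz rule $\upxi((x,y)\cdot(a,b))=\upxi(x,y)\cdot(a,b)+(-1)^{|x|}(x,y)\cdot\upxi(a,b)$. I would expand all three expressions into monomials formed from $\updelta(x),x,y$, then $\updelta(a),a,b$, and at most one $\upsigma$, and compare coordinatewise. The six ``$\updelta$-type'' monomials $\updelta(x)\circ a$, $x\circ\updelta(a)$, $\updelta(x)\circ b$, $x\circ\updelta(b)$, $\updelta(y)\circ a$, $y\circ\updelta(a)$ match directly, the only slightly nontrivial sign comparison (that for $y\circ\updelta(a)$) reducing to the two exponents differing by the even number $2n$. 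There remain exactly two points. First, in the first coordinate the monomial $y\circ\upsigma\circ a$ coming from $\upxi(x,y)\cdot(a,b)$ must agree, up to the predicted sign, with the monomial $y\circ a\circ\upsigma$ coming from $\upxi((x,y)\cdot(a,b))$; this is precisely the relation $\upsigma\circ a=(-1)^{|\upsigma||a|}a\circ\upsigma$ expressing $a\in\sigE$. Second, in the second coordinate the two cross terms $y\circ\upsigma\circ b$ and $y\circ b\circ\upsigma$ on the right-hand side must cancel each other; this is precisely the relation $\upsigma\circ b=(-1)^{|\upsigma|(|b|+1)}b\circ\upsigma$ expressing $b\in\nsigE$, used once more with $n^2\equiv n\pmod 2$. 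These two cancellations are exactly why the construction singles out $\sigE$ and the shifted piece $\nsigE[1-n]$, and why the sign $-(-1)^{|x|}$ and the shift $1-n$ appearing in $\upxi$ and in $\scrT$ are the correct ones; conceptually this is the assertion of Remark~\ref{rem:actually_a_cone} that $\scrT$ is the square-zero (cone) extension of $\sigE$ by the DG $\sigE$-bimodule $\nsigE[1-n]$ along the bimodule map $y\mapsto(-1)^{|\upsigma|}y\circ\upsigma$, but given how delicate the sign conventions are I would carry out this last verification directly rather than appeal to a general principle.
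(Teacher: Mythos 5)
Your proposal is correct and follows essentially the same route as the paper's proof: same decomposition into well-definedness, associativity/unitality, $\upxi^2=0$, and Leibniz; same reliance on Lemmas~\ref{Lemma: deltasigma is zero}, \ref{Lemma: sigE is DG}, \ref{Lemma: nsigE is DG}; same identification that $y\circ\upsigma\in\sigE$ is the new ingredient in well-definedness of $\upxi$; and the same two cancellations in the Leibniz check (the $y\circ\upsigma\circ a$ term matching via $a\in\sigE$, and the $y\circ\upsigma\circ b$, $y\circ b\circ\upsigma$ terms cancelling via $b\in\nsigE$). The one small streamlining is that you deduce well-definedness of the product's second coordinate directly from the $\DG$ $\sigE$-bimodule structure of $\nsigE$ established in Lemma~\ref{Lemma: nsigE is DG}\eqref{Lemma: nsigE is DG 2}, whereas the paper reproves that compatibility by an explicit sign computation; both are fine, and yours avoids a small redundancy.
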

\begin{proof}
We first check that multiplication is well-defined, namely that the two components remain in $\sigE\oplus\nsigE[1-n]$ under multiplication. This is clear in the first component, and in the second component this follows since
\begin{align*}
\hspace{3em}&\hspace{-3em} \upsigma \circ (x \circ b + (-1)^{|a|(|\upsigma|+1)} y \circ a)\\
& = \upsigma \circ x \circ b + (-1)^{|a|(|\upsigma|+1)} \upsigma \circ  y \circ a \\ 
& = (-1)^{|\upsigma||x|} x \circ \upsigma \circ b + (-1)^{|a|(|\upsigma|+1)+ |\upsigma|(|y|+1)}  y \circ \upsigma \circ  a \\ 
&=  (-1)^{|\upsigma|(|x|+|b|+1)} x \circ b \circ \upsigma  + (-1)^{|a|(|\upsigma|+1)+|\upsigma|(|y|+|a|+1)} y \circ a  \circ \upsigma \\ 
&=  (-1)^{|\upsigma|(|x|+|b|+1)}(x \circ b  + (-1)^{|a|(|\upsigma|+1)} y \circ a ) \circ \upsigma
\end{align*}
as $|x|+|b|=|y|+|a|$.

Now $\scrT$ is clearly a $\mathbb{Z}$-graded vector space, with a  multiplication that is compatible with the grading.  To check that $\scrT$ forms an algebra, we note that $(\Id,0)\in\scrT$ acts as the identity element, and associativity follows from the calculation
\begin{align*}
\hspace{3em}&\hspace{-3em}\big((a,b)\cdot (c,d)\big) \cdot (e,f)\\
 & =\big(a \circ c,a \circ d +(-1)^{(|\upsigma|+1)|c|}b \circ c\big) \cdot (e,f) \\
 & =(a\circ c \circ e,a \circ c \circ f + (-1)^{(|\upsigma|+1)|e|}a \circ d \circ e +(-1)^{(|\upsigma|+1)(|c|+|e|)}b \circ c \circ e) \\
 &= (a,b) \cdot \big(c \circ e, c \circ f + (-1)^{(|\upsigma|+1)|e|}  d \circ  e \big) \\
 &= (a,b) \cdot \big((c,d) \cdot (e,f)\big).
\end{align*}
Multiplication distributes over addition by definition, and thus $\scrT$ is an algebra.

We next claim that $\upxi$ is a differential.  First, we claim that it is well defined on the level of sets. For homogeneous $(x,y)\in\scrT$, we know from \ref{Lemma: sigE is DG} that $\updelta(x)\in\sigE$, and from \ref{Lemma: nsigE is DG} that $\updelta(y)\in\nsigE$.  Also, $|y\circ\upsigma|=|y|+n=|x|+1=|\updelta(x)|$ with
\begin{align*}
\upsigma \circ ( y \circ \upsigma) & = \upsigma \circ y \circ \upsigma \\
& = (-1)^{ |\upsigma|(|y|+1)} y \circ \upsigma \circ \upsigma \tag{$y\in\nsigE$} \\
& = (-1)^{ |\upsigma|(|x| + |\upsigma|)} (y \circ \upsigma )\circ \upsigma  \tag{$|y|+1 \equiv_2 |x|+|\upsigma| $}\\
& = (-1)^{ |\upsigma|(|x| + 1)} (y \circ \upsigma )\circ \upsigma. \tag{$|\upsigma|^2 \equiv_2 |\upsigma| $}\\
& = (-1)^{ |\updelta(x)||\upsigma|} (y \circ \upsigma )\circ \upsigma. 
\end{align*}
Combining, $\upxi (x,y) \colonequals \big(\updelta(x) - (-1)^{|x|} y \circ \upsigma, \,\updelta(y)\big)\in\sigE\oplus\nsigE$ with the correct degree in the two factors, and so $\upxi$ is well defined.  It is clear that $\upxi$ has degree one on $\scrT$.

It is easy see that $\upxi^2=0$, by just observing
\begin{align*}
\upxi^2(x,y) & = \upxi \left(\updelta(x) - (-1)^{|x|} y \circ \upsigma, \,\updelta(y) \right) \\
&= \left(\updelta^2(x) -(-1)^{|x|}\updelta(y) \circ \upsigma - (-1)^{|\updelta(x)|}\updelta(y) \circ \upsigma, \,\updelta^2(y) \right)\\ 
&=(0,0).
\end{align*}
where we have used the graded Leibniz rule, and the fact that $\updelta(\upsigma)=0$ by \ref{Lemma: deltasigma is zero}. Thus $\upxi$ is a well-defined differential.

To establish that $\scrT$ is a $\DG$-algebra, it remains to check that the graded Leibniz rule
\[
\upxi \big((x,y)\cdot(a,b) \big) = \upxi(x,y) \cdot (a,b) + (-1)^{|x|}(x,y) \cdot \upxi (a,b)
\]
is satisfied, where $(x,y)$ and $(a,b)$ are assumed to be homogeneous elements, so $|x|=|y|+n-1$ and similarly $|a|=|b|+n-1$.  On one hand,
\begin{multline*}
\upxi\big((x,y)\cdot(a,b)\big) = \upxi \big(x \circ a, x \circ b + (-1)^{|a|(|\upsigma|+1)} y \circ a \big) = \\
 \left(
\begin{array}{c} 
\updelta(x) \circ a + (-1)^{|x|}x \circ \updelta(a) - (-1)^{|x|+|a|}x \circ b \circ \upsigma  - (-1)^{|a|(|\upsigma|+1) + |a|+ |x|}y \circ a \circ \upsigma ,\\
 \updelta(x) \circ b + (-1)^{|x|} x \circ \updelta(b) + (-1)^{|a|(|\upsigma|+1)} \updelta(y) \circ a + (-1)^{|a|(|\upsigma|+1)+|y|} y \circ \updelta(a) \kern 14pt
\end{array}
\right).
\end{multline*}
On the other hand,
\begin{multline*}
 \upxi(x,y) \cdot (a,b) + (-1)^{|x|}(x,y) \cdot \upxi (a,b) = \\
 \left(\updelta(x) - (-1)^{|x|}y \circ \upsigma, \updelta(y) \right) \cdot (a,b) + (-1)^{|x|}(x,y) \cdot \left( \updelta(a) - (-1)^{|a|}b \circ \upsigma, \updelta(b) \right).
\end{multline*}
Displaying the first and second component separately, we see that the first component is
\[
\updelta(x) \circ a - (-1)^{|x|} y \circ \upsigma \circ  a + (-1)^{|x|} x \circ \updelta(a) - (-1)^{|x|+|a|}x \circ b \circ \upsigma
\]
and the second component is
\begin{multline*}
\updelta(x) \circ b - (-1)^{|x|} y \circ \upsigma \circ b + (-1)^{|a|(|\upsigma|+1)}\updelta(y) \circ a 
+ (-1)^{|x|} x \circ \updelta(b)  \\+ (-1)^{|x|+|\updelta(a)|(|\upsigma|+1)}y \circ \updelta(a) - (-1)^{|x|+|a|+|\updelta(a)|(|\upsigma|+1)} y \circ b \circ \upsigma.
\end{multline*}
As such, to satisfy the graded Leibniz rule the required equality in the first component reduces to the identity
\[
(-1)^{|a|(|\upsigma|+2)}y \circ a \circ \upsigma =  (-1)^{|a||\upsigma|}y \circ a \circ \upsigma = y \circ \upsigma \circ  a,
\]
which is true since $a \in \sigE$.  The required equality in the second component reduces to 
\begin{multline*}
(-1)^{|a|(|\upsigma|+1)+|y|} y \circ \updelta(a) =- (-1)^{|x|} y \circ \upsigma \circ b \\+ (-1)^{|x|+|\updelta(a)|(|\upsigma|+1)}y \circ \updelta(a) - (-1)^{|x|+|a|+|\updelta(a)|(|\upsigma|+1)} y \circ b \circ \upsigma.
\end{multline*}
which is true by the following two calculations. First,
\begin{align*}
(-1)^{|a|(|\upsigma|+1) + |y|} y \circ \updelta(a)
&= (-1)^{|a|(|\upsigma|+1) + |x|- |\upsigma| +1 } y \circ \updelta(a) \tag{$|y|=|x|-|\upsigma| +1$}\\
&= (-1)^{(|a|+1)(|\upsigma|+1) + |x| } y \circ \updelta(a) \tag{$|\upsigma| \equiv_2 - |\upsigma|$} \\
& = (-1)^{|x|+|\updelta(a)|(|\upsigma|+1)} y \circ \updelta(a), \tag{$|\updelta(a)|=|a|+1$}
\end{align*}
and second
\begin{align*}
y \circ \upsigma \circ b & = (-1)^{|\upsigma|(|b|+1)} y \circ b \circ \upsigma  \tag{$\upsigma \circ b = (-1)^{|\upsigma|(|b|+1)}b \circ \upsigma$}\\
& = (-1)^{|\upsigma|(|a|+ |\upsigma|)} y \circ b \circ \upsigma \tag{$|b|+1 \equiv_2 |a| + |\upsigma| $} \\
& = (-1)^{|\upsigma|(|a|+ 1)} y \circ b \circ \upsigma   \tag{$|\upsigma|^2 \equiv_2 |\upsigma| $} \\
& = (-1)^{(|\upsigma|+1)(|a|+ 1) + |a|+1} y \circ b \circ \upsigma \\
&= -(-1)^{|a|+|\updelta(a)|(|\upsigma|+1)} y \circ b \circ \upsigma. \tag{$|\updelta(a)|=|a|+1$}
\end{align*}
Hence the graded Leibniz rule is satisfied, and $\scrT$ is a $\DG$-algebra.
\end{proof}

\section{Quasi-isomorphisms and cohomology}

Under the running Setup~\ref{key setup}, in \S\ref{subsec:qisTtoEnd} we show that the trivial extension $\DG$-algebra $\scrT$ is quasi-isomorphic, as unital $\DG$-algebras, to $\scrEnd_\Upgamma(\PP)$.  This then allows us in \S\ref{sec.cohomology_end} to compute the cohomology of $\scrT$, which it turns out has a very elegant description.

\subsection{Notation and preliminaries}\label{general qis section}
With notation as in \ref{key setup}, consider the inclusion $\upiota\colon \PP\to\scrP$, namely
\[
\begin{tikzpicture}[xscale=1,yscale=1.5]
\node (A) at (2,0) {$\PP$};
	\node (A2) at (4.5,0) {$\hdots$};
	\node (A3) at (6,0) {$0$};
	\node (A4) at (7.5,0) {$0$};
	\node (A5) at (9,0) {$P_{n-1}$};
	\node (A6) at (10.5,0) {$\hdots$};
	\node (A7) at (12,0) {$P_0$};
	\node (A8) at (13,0) {$0$};
	\draw[->] (A2)-- (A3);
	\draw[->] (A3)-- (A4);
	\draw[->] (A4)--(A5);
	\draw[->] (A5)--node[above]{$\scriptstyle d_{n-1}$}(A6);
	\draw[->] (A6)--node[above]{$\scriptstyle d_{1}$}(A7);
	\draw[->] (A7)--(A8);
\node (B) at (2,-1) {$\scrP$};
	\node (B2) at (4.5,-1) {$\hdots$};
	\node (B3) at (6,-1) {$P_1$};
	\node (B4) at (7.5,-1) {$P_{0}$};
	\node (B5) at (9,-1) {$P_{n-1}$};
	\node (B6) at (10.5,-1) {$\hdots$};
	\node (B7) at (12,-1) {$P_0$};
	\node (B8) at (13,-1) {$0$};
	\draw[->] (B2)--node[above]{$\scriptstyle (-1)^nd_2$} (B3);
	\draw[->] (B3)--node[above]{$\scriptstyle (-1)^nd_1$} (B4);
	\draw[->] (B4)--node[above]{$\scriptstyle \upbeta\circ\upalpha$}(B5);
	\draw[->] (B5)--node[above]{$\scriptstyle d_{n-1}$}(B6);
	\draw[->] (B6)--node[above]{$\scriptstyle d_{1}$}(B7);
	\draw[->] (B7)--(B8);
	\draw[double equal sign distance] (A7)--(B7);
	\draw[double equal sign distance] (A5)--(B5);
	\draw[->] (A4)--node[right]{$\scriptstyle $}(B4);
	\draw[->] (A3)--node[right]{$\scriptstyle $}(B3);
\draw[->] (A) --node[right]{$\scriptstyle \upiota$}(B);
\end{tikzpicture}
\]
where the rightmost $P_0$ of the complex is in degree zero.

It is clear from the above diagram that $d_{\scrP} \circ \upiota = \upiota \circ d_{\PP}$, so that $\upiota $ induces an injective morphism of complexes of vector spaces
\[\upiota \circ  \colon \scrEnd_\Upgamma(\PP) \to  \scrHom_{\Upgamma}(\PP , \scrP),\]
so that 
\begin{equation}\label{eq.removes U}
	\upiota \circ  \colon \scrEnd_\Upgamma(\PP) \xrightarrow{\sim} \{ \upiota\circ g\mid g\in\scrEnd_{\Upgamma}(\PP)\}\subseteq\scrHom_{\Upgamma}(\PP,\scrP)\end{equation}
is an isomorphism of complexes of vector spaces.

\begin{notation}\label{notation for DG morph}
	Consider the Setup~\ref{key setup}, and recall that $\scrE=\scrEnd_{\Upgamma}(\scrP)$.
	\begin{enumerate}
		\item\label{notation for DG morph 1} Given $x \in \scrE$, write $x_0 \in \scrHom_{\Upgamma}(\PP , \scrP)$ for the map obtained as the restriction of $x\colon \scrP \rightarrow \scrP$ to the first component $\PP  \rightarrow \scrP$.
		\item\label{notation for DG morph 2} For $f \in \scrHom_{\Upgamma}(\PP , \scrP)$, write $\widebar{f}$ for the element of $\scrE$ obtained by graded repetition.  More precisely,
		\[
		\widebar{f}\colonequals  \left( (-1)^{|\ell n||f|}f_\ell \right)_{\ell=0}^\infty
		\]
		where $f_\ell\colon \colon \PP [\ell n] \rightarrow \scrP$  is just $f$ viewed out of $\PP[\ell n]$ instead of out of $\PP$, which we can do since the codomain $\scrP$ is periodic. If $|f|>0$ then some of the components of $f$ are maps $P_i\to 0$.   These get repeated as the zero map, although their codomain under repetition is no longer $0$.
		Visually, 
		\[
		\widebar{f}=
		\begin{array}{c}
			\begin{tikzpicture}[xscale=0.75,yscale=1.25]
				\node (A0) at (-4,0) {$\hdots$};
				\node (A1) at (-1.75,0) {$\mathrm{P}[3n]$};
				\node (A2) at (0.75,0) {$\mathrm{P}[2n]$};
				\node (A3) at (3,0) {$\mathrm{P}[n]$};
				\node (A4) at (5,0) {$\mathrm{P}$};
				\node (A5) at (7,0) {$0$};
				\node (A6) at (9,0) {$\hdots$};
				\node (a0) at (-4,0) {$\hdots$};
				\draw[->] (A0)--(A1);
				\draw[->] (A1)--(A2);
				\draw[->] (A2)--(A3);
				\draw[->] (A3)--(A4);
				\draw[->] (A4)--(A5);
				\draw[->] (A5)--(A6);
				\node (a1) at (-1.75,-0.1) {};
				\node (a2) at (0.75,-0.1) {};
				\node (a3) at (3,-0.1) {};
				\node (a4) at (5,-0.1) {};
				\node (a5) at (7,-0.1) {};
				\node (a6) at (9,-0.1) {};
				\node (b0) at (-4.5,-1) {};
				\node (b1) at (-2.25,-1) {};
				\node (b2) at (0.25,-1) {};
				\node (b3) at (2.5,-1) {};
				\node (b4) at (4.5,-1) {};
				\node (b5) at (6.5,-1) {};
				\node (b6) at (8.5,-1) {};
				
				\node (B1) at (-4,-1.1) {};
				\node (B2) at (3,-1.1) {$\scrP$};
				\node (B3) at (9,-1.1) {};
				\draw (B1)--(B2);
				\draw (B2)--(B3);
				\draw[->] (a1) --node[gap] {$\scriptstyle (-1)^{|\upsigma||f|}f$} (b1);
				\draw[->] (a2) -- node[gap] {$\scriptstyle f$}(b2);
				\draw[->] (a3) -- node[gap] {$\scriptstyle (-1)^{|\upsigma||f|}f$}(b3);
				\draw[->] (a4) -- node[gap] {$\scriptstyle f$} (b4);
				\draw[->] (a5) -- (b5);
			\end{tikzpicture}
		\end{array}
		\]
		since $|\upsigma|=n$. For $f \in \scrEnd_\Upgamma(\PP)$, likewise write $\widebar{f}$ for the element of $\scrE$ obtained by graded repetition of $\upiota \circ f \in \scrHom_{\Upgamma}(\PP , \scrP)$.
		\item\label{notation for DG morph 3} For $f \in \scrHom_{\Upgamma}(\PP , \scrP)$ or in $ \scrEnd_\Upgamma(\PP)$, set $\Updelta(f)\colonequals\updelta (\widebar{f}\,) - \widebar{\updelta(f)}$.
	\end{enumerate}
\end{notation}

The following is elementary, but important.
\begin{lemma}\label{def U prop}
With notation as above, if $x\in\sigE$, then $x_0$ belongs to $\{ \upiota\circ g\mid g\in\scrEnd_{\Upgamma}(\PP)\}\subseteq\scrHom_{\Upgamma}(\PP,\scrP)$. By the isomorphism \eqref{eq.removes U}, we can then think of $x_0$ as belonging to $\scrEnd_{\Upgamma}(\PP)$.
\end{lemma}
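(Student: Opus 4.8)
We want to show that for $x\in\sigE$, the restriction $x_0\colon\PP\to\scrP$ actually factors through the subcomplex $\{\upiota\circ g\mid g\in\scrEnd_\Upgamma(\PP)\}$; equivalently, that the image of $x_0$ lies inside the copy of $\PP$ sitting as the rightmost component of $\scrP$ (the one containing the degree-zero $P_0$). The key structural fact to exploit is the defining relation $\upsigma\circ x=(-1)^{|\upsigma||x|}x\circ\upsigma$ together with the identities $\upsigma\circ\uptau=\Id$ and $\uptau\circ\upsigma=\Id_{\ge n}$ from \eqref{sig tau basic}, where $\Id_{\ge n}$ is the projection of $\scrP$ onto its components $\PP[\ell n]$, $\ell\ge 1$. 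The point is that $\upsigma$ is the "shift by one period" map, so the commutation relation forces $x$ to be compatible with this shift, and on the bottom period $\PP$ this rigidity pins down where things can land.

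**Key steps.** First I would set up notation: write $\pi\colon\scrP\to\PP$ for the projection onto the rightmost (degree $\le 0$) component $\PP$, and note $\pi$ is a map of graded modules (not of complexes, but that is irrelevant here since \ref{def U prop} is only a statement about the underlying graded pieces). The claim "$x_0\in\{\upiota\circ g\}$" is exactly the claim that $x_0$ has image contained in $\upiota(\PP)\subseteq\scrP$, i.e. that $(\IdOne-\upiota\circ\pi)\circ x_0=0$; equivalently, that $x$ maps the bottom component $\PP$ of $\scrP$ into the bottom component $\PP$ of $\scrP$. Second, I would test the relation $\upsigma\circ x=(-1)^{|\upsigma||x|}x\circ\upsigma$ against the bottom component. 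Since $\upsigma$ restricted to the bottom component $\PP$ of $\scrP$ is the zero map (reading off the definition of $\upsigma$: the component $\PP\to\PP$ of $\upsigma$ is $0$, and $\PP\to 0$ beyond that), the right-hand side $(x\circ\upsigma)|_{\PP}=0$. Hence $(\upsigma\circ x)|_{\PP}=0$, i.e. $\upsigma\circ x_0=0$ as a map $\PP\to\scrP$. Third, I would decompose $x_0=\upiota\circ\pi\circ x_0+(\IdOne-\upiota\circ\pi)\circ x_0$, i.e. into its "bottom-period part" and "higher-period part" inside the codomain $\scrP$. Apply $\upsigma$: since $\upsigma$ is the identity on each higher period $\PP[\ell n]$ ($\ell\ge 1$) shifted down by one, and annihilates the bottom period, $\upsigma\circ x_0=0$ forces the higher-period part of $x_0$, which is the only part $\upsigma$ sees nontrivially, to vanish — one can make this precise by composing further with $\uptau$ and using $\uptau\circ\upsigma=\Id_{\ge n}$: from $\upsigma\circ x_0=0$ we get $0=\uptau\circ\upsigma\circ x_0=\Id_{\ge n}\circ x_0$, which says exactly that the component of $x_0$ landing in all the higher periods $\PP[\ell n]$, $\ell\ge 1$, is zero. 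Therefore $x_0=\upiota\circ\pi\circ x_0$, so $x_0$ lands in the bottom $\PP$, and setting $g\colonequals\pi\circ x_0\in\scrHom_\Upgamma(\PP,\PP)=\scrEnd_\Upgamma(\PP)$ gives $x_0=\upiota\circ g$. Fourth, the identification via \eqref{eq.removes U} is then immediate.

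**Main obstacle.** The only genuinely delicate point is bookkeeping: being careful about what "restriction to the bottom component" means when $x$ has nonzero degree, so that $x_0$ may shift which period of the codomain the bottom $\PP$ lands in, and making sure the projections $\pi$ and $\Id_{\ge n}$ are applied in the codomain consistently. In particular one should double-check that $\upsigma\circ\uptau=\Id$ really does recover the full identity (so that $\Id_{\ge n}\circ x_0=0$ together with the fact that $\upsigma\circ(\upiota\circ\pi\circ x_0)=0$ automatically, gives no information loss), and that the sign $(-1)^{|\upsigma||x|}$ plays no role precisely because the right-hand side is zero on the bottom component regardless of sign. Once the indexing is pinned down, everything is a one-line consequence of \eqref{sig tau basic}; I expect no real difficulty beyond this, which is presumably why the authors call it "elementary, but important."
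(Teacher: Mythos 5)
Your proof is correct and follows essentially the same route as the paper's: both start from the defining relation $\upsigma\circ x=(-1)^{|\upsigma||x|}x\circ\upsigma$, observe that $x\circ\upsigma$ vanishes on the bottom component $\PP$ because $\upsigma|_\PP=0$, conclude $\upsigma\circ x_0=0$, and deduce that the image of $x_0$ lies in $\PP\subseteq\scrP$. The only cosmetic difference is that you pass from $\upsigma\circ x_0=0$ to $\Id_{\ge n}\circ x_0=0$ by composing with $\uptau$ and invoking \eqref{sig tau basic}, whereas the paper argues the same point in words ("any of the non-zero maps which constitute $x_0$ cannot land in $\PP[\ell n]$ with $\ell>0$, since then $\upsigma\circ x_0\neq 0$"); this is a slightly tidier formalization of the identical idea, not a genuinely different argument.
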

\begin{proof}
Consider the equation $\upsigma \circ x = (-1)^{|\upsigma||x|}x \circ \upsigma$ restricted to $\PP$.  Since $\upsigma$ maps $\PP$ to zero,  $x \circ \upsigma$ is zero on $\PP$, and  it follows that $\upsigma \circ x$ is zero on $\PP$, so $\upsigma\circ x_0=0$.  In particular, any of the non-zero maps which constitute $x_0$ cannot land in $\PP[\ell n]$ with $\ell>0$, since then $\upsigma \circ x_0\neq 0$.  The non-zero maps cannot land to the right of $\PP$, since that is zero.  Hence $x_0$ is a collection of maps, and the non-zero ones necessarily take $\PP$ to $\PP$.  The claim follows.  
\end{proof}

In the remainder of this subsection, working under Setup~\ref{key setup}, we will work towards building a quasi-isomorphism of $\DG$-algebras
\[
\scrEnd_{\Upgamma}(\PP)\to \scrT.
\]
This requires the following easy, direct, verifications.

\begin{lemma}\label{graded repetition comp ok}
	For $f,g \in \scrEnd_{\Upgamma}(\PP)$, the following hold:
	\begin{enumerate}
		\item\label{graded repetition comp 1}	$\widebar{f\circ g}=\widebar{f}\circ\,\widebar{g}$. 
		\item\label{graded repetition comp 2} $\Updelta(f \circ g)= \Updelta(f)\circ \widebar{g} + (-1)^{|f|} \widebar{f}\circ \Updelta(g)$.
	\end{enumerate}
\end{lemma}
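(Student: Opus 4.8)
The plan is to prove the first identity by an explicit component-by-component comparison on the periodic blocks of $\scrP$, and then to extract the second identity as a formal consequence of the first together with the graded Leibniz rules in $\scrE$ and in $\scrEnd_\Upgamma(\PP)$. By linearity of graded repetition it suffices to treat homogeneous $f,g$, and throughout one uses that $\widebar{\,\cdot\,}$ is linear and degree-preserving, so $|\widebar f|=|f|$.

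For the first identity, I would evaluate both $\widebar f\circ\widebar g$ and $\widebar{f\circ g}$ on each block $\PP[\ell n]\subseteq\scrP$ with $\ell\geq 0$ and compare. The one thing that needs a word is that although a component $P_i\to 0$ of a shifted map acquires a nonzero codomain after graded repetition, it stays the zero map; hence only the genuinely nonzero components of $g$ contribute, and since $g$ maps $\PP$ into $\PP$ these translate to maps that keep each block $\PP[\ell n]$ inside itself. Thus $\widebar g$ restricted to the block $\PP[\ell n]$ acts as $(-1)^{\ell n|g|}$ times the relabelled copy $g_\ell$, and likewise $\widebar f$ acts there as $(-1)^{\ell n|f|}f_\ell$. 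Relabelling $\PP$ as $\PP[\ell n]$ is compatible with composition, so $f_\ell\circ g_\ell=(f\circ g)_\ell$, and composing the two blockwise actions the signs multiply to $(-1)^{\ell n|f|}(-1)^{\ell n|g|}=(-1)^{\ell n|f\circ g|}$. Hence $(\widebar f\circ\widebar g)|_{\PP[\ell n]}=(-1)^{\ell n|f\circ g|}(f\circ g)_\ell=\widebar{f\circ g}\,|_{\PP[\ell n]}$ for every $\ell$, which is the first identity.

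For the second identity nothing further is computed; it is the standard observation that the deviation of a multiplicative, degree-preserving map from commuting with the differentials is automatically a twisted derivation over it. Explicitly, applying the graded Leibniz rule for $\updelta$ in $\scrE$ to $\widebar{f\circ g}=\widebar f\circ\widebar g$ gives
\[
\updelta(\widebar{f\circ g})=\updelta(\widebar f)\circ\widebar g+(-1)^{|f|}\,\widebar f\circ\updelta(\widebar g),
\]
and applying $\widebar{\,\cdot\,}$ to the graded Leibniz rule for $\updelta$ in $\scrEnd_\Upgamma(\PP)$ and then the first identity gives
\[
\widebar{\updelta(f\circ g)}=\widebar{\updelta(f)}\circ\widebar g+(-1)^{|f|}\,\widebar f\circ\widebar{\updelta(g)}.
\]
Subtracting the second display from the first and regrouping yields $\Updelta(f\circ g)=\Updelta(f)\circ\widebar g+(-1)^{|f|}\,\widebar f\circ\Updelta(g)$, as required.

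The main (and essentially only) difficulty is the bookkeeping in the first identity: one must be careful to check that graded repetition really does preserve each periodic block $\PP[\ell n]$ even though the repeated ``overflow'' components now have nonzero codomain, and that the sign $(-1)^{\ell n|f|}$ built into the definition of $\widebar f$---precisely the one that records the convention of a sign on the differential after an odd shift---is multiplicative in $f$ under composition, so the signs telescope to $(-1)^{\ell n|f\circ g|}$. Granting the first identity, the second is immediate.
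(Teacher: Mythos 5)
Your proof is correct and follows essentially the same route as the paper's. For part (1) you argue block-by-block on each $\PP[\ell n]$ — noting that the nonzero components of $\widebar g$ and $\widebar f$ preserve each block, so the signs $(-1)^{\ell n|f|}(-1)^{\ell n|g|}=(-1)^{\ell n|f\circ g|}$ telescope — which is the same computation the paper carries out component-by-component on the individual $P_j$; for part (2) you perform the identical two applications of the graded Leibniz rule (in $\scrE$ and in $\scrEnd_\Upgamma(\PP)$) together with part (1), exactly as the paper does.
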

\begin{proof}
For (1), since the complex $\PP$ has precisely $n$ non-zero entries, $f$ and $g$ are encoded by a collection of $n$ maps, say $(\uppsi_i)$ and $(\upvarphi_i)$ respectively, where $\uppsi_i, \upvarphi_i$ have domain $P_i$. Both $\widebar{f\circ g}$ and $\widebar{f}\circ\,\widebar{g}$ are encoded by an infinite number of maps, and we must show that the statement holds for each of these.
	
Choose an arbitrary $P_j$ in the component $\PP[\ell n]$, for some $0\leq j\leq n-1$ and some $\ell\geq 0$.  If $\upvarphi_j=0$ then $\widebar{f\circ g}=\widebar{f}\circ\,\widebar{g}$ out of this $P_j$, since both are zero.  Hence we can assume that $\upvarphi_j\neq 0$, and so since $\PP$ has length $n$, necessarily we can write $\upvarphi_j\colon P_j\to P_k$ for some $0\leq k\leq n-1$.  It follows that $(-1)^{|\ell n||g|}\upvarphi_j$ maps $P_j$ (in the component $\PP[\ell n]$) into the same component $\PP[\ell n]$ of $\scrP$.   Thus, under composition, the sign on  $\widebar{f}$ is $(-1)^{|\ell n||f|}$, and so overall on this $P_j$ we have 
	\begin{align*}
		\widebar{f}\circ\,\widebar{g}&=(-1)^{|\ell n|(|f|+|g|)}\,\uppsi_k\circ \upvarphi_j=\widebar{f\circ g}.
	\end{align*}
	The result follows. 
	
	For (2), 
	\begin{align*}
		\Updelta(f \circ g) &\colonequals \updelta\big(\widebar{f\circ g}\big)-
		\widebar{\big(\updelta(f\circ g)\big)}\\
		&=\updelta\big(\widebar{f}\circ \widebar{g}\big)-
		\widebar{\big(\updelta(f\circ g)\big)}\tag{Part \eqref{graded repetition comp 1}}\\
		&=\updelta\big(\widebar{f}\big)\circ \widebar{g} +(-1)^{|f|}\widebar{f}\circ\updelta\big(\widebar{g}\big)
		-
		\widebar{\big(\updelta(f)\circ g+(-1)^{|f|}f\circ \updelta(g)\big)}\tag{graded Leibniz, twice}\\
		&=\updelta\big(\widebar{f}\big)\circ \widebar{g} +(-1)^{|f|}\widebar{f}\circ\updelta\big(\widebar{g}\big)
		-
		\big(	\widebar{\updelta(f)}\circ \widebar{g}+(-1)^{|f|}\widebar{f}\circ \widebar{\updelta(g)}\big)\tag{Part \eqref{graded repetition comp 1}}\\
		&=\big(\updelta(\widebar{f})-\widebar{\updelta(f)}\big)\circ\widebar{g}
		+(-1)^{|f|}\widebar{f}\circ\big( \updelta(\widebar{g})-\widebar{\updelta(g)} \big)
		\\
		&=\Updelta(f) \circ \widebar{g} + (-1)^{|f|} \widebar{f} \circ \Updelta(g).\tag*{\qedhere}
	\end{align*} 
\end{proof}

\begin{lemma}\label{prep for DG}
	Given $x \in \scrE$, $g \in \scrHom_{\Upgamma}(\PP , \scrP) $, the following hold.
	\begin{enumerate}
		\item\label{prep for DG 1} If $x = y \circ \upsigma$ for some $y\in\scrE$, then $x_0= (y \circ \upsigma)_0=0$.
		\item\label{prep for DG 2} Restriction commutes with the differentials, namely $\updelta(x_0)=\big(\updelta(x)\big)_0$.
		\item\label{prep for DG 3}  $(\widebar{g})_0=g$.
		\item\label{prep for DG 4} $\uptau\circ \widebar{g} = (-1)^{|\upsigma||g|} \widebar{g} \circ \uptau$.
		\item\label{prep for DG 5} $\updelta(\Updelta(g))=-\Updelta(\updelta(g))$.
		\item\label{prep for DG 6} $\Updelta(g) \circ \uptau \circ \upsigma = \Updelta(g)$.
		\item\label{prep for DG 7} If $x$ satisfies  $x \circ \uptau \circ \upsigma = x$ (e.g.\ $x=\Updelta(g)$ by \textnormal{(6)} above), then $x \circ \updelta(\uptau) =0$.
		\item\label{prep for DG 8} If $x\in\sigE$ and $x \circ \uptau \circ \upsigma = x$, then $x\circ \uptau\in\nsigE$.  
	\end{enumerate}
\end{lemma}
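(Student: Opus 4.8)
The plan is to prove the eight items in order; all are direct verifications built from three ingredients: the restriction-to-first-component map $x\mapsto x_0$ and the graded-repetition map $g\mapsto\widebar{g}$ of \ref{notation for DG morph}, together with the structural identities $\updelta(\upsigma)=0$ (\ref{Lemma: deltasigma is zero}), $\upsigma\circ\uptau=\Id$ and $\uptau\circ\upsigma=\Id_{\ge n}$ from \eqref{sig tau basic}. The only step that requires an actual idea is (7), so I single it out below; the rest is bookkeeping.

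First, (1)--(3) are essentially immediate. For (1), $\upsigma$ annihilates the first component $\PP$ of $\scrP$, so $(y\circ\upsigma)_0=y\circ(\upsigma|_{\PP})=0$. For (2), recall $x_0=x\circ\upiota$ for the inclusion $\upiota\colon\PP\hookrightarrow\scrP$, which is a chain map ($d_{\scrP}\circ\upiota=\upiota\circ d_{\PP}$); then $(\updelta(x))_0=(d_{\scrP}\circ x-(-1)^{|x|}x\circ d_{\scrP})\circ\upiota=d_{\scrP}\circ x_0-(-1)^{|x|}x\circ\upiota\circ d_{\PP}=d_{\scrP}\circ x_0-(-1)^{|x|}x_0\circ d_{\PP}=\updelta(x_0)$. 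For (3), the $\ell=0$ term of $\widebar{g}=\big((-1)^{|\ell n||g|}g_\ell\big)_{\ell\ge 0}$ is $g$ itself with sign $+1$, and restriction to the first component of $\scrP$ extracts exactly this term.

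Items (4)--(6) are sign manipulations. For (4) I would compare $\uptau\circ\widebar{g}$ and $\widebar{g}\circ\uptau$ on each summand $\PP[\ell n]$ of $\scrP$: both act by the same shift of $g$ (functoriality of the shift), differing only in the prefactor dictated by the definition of $\widebar{g}$ --- namely $(-1)^{|\ell n||g|}$ in the first composite and $(-1)^{|(\ell+1)n||g|}$ in the second, since $\uptau$ first moves $\PP[\ell n]$ to $\PP[(\ell+1)n]$ --- and these differ by $(-1)^{n|g|}=(-1)^{|\upsigma||g|}$. For (5), expand $\Updelta(g)=\updelta(\widebar{g})-\widebar{\updelta(g)}$, apply $\updelta$, and use $\updelta^2=0$ on $\scrE$ to get $\updelta(\Updelta(g))=-\updelta(\widebar{\updelta(g)})$; likewise $-\Updelta(\updelta(g))=-\updelta(\widebar{\updelta(g)})+\widebar{\updelta^2(g)}=-\updelta(\widebar{\updelta(g)})$ by $\updelta^2=0$ on $\PP$, so the two sides agree. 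For (6), $\uptau\circ\upsigma=\Id_{\ge n}$ is the idempotent vanishing on the first summand $\PP\subseteq\scrP$, so $\Updelta(g)\circ\uptau\circ\upsigma=\Updelta(g)$ is equivalent to $(\Updelta(g))_0=0$; and by (2) and (3), $(\Updelta(g))_0=(\updelta(\widebar{g}))_0-(\widebar{\updelta(g)})_0=\updelta(g)-\updelta(g)=0$.

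For (7) --- the one step that is not purely formal --- the hypothesis $x\circ\uptau\circ\upsigma=x$, i.e.\ $x\circ\Id_{\ge n}=x$, says precisely that $x$ vanishes on the first summand $\PP\subseteq\scrP$. Separately, differentiating $\upsigma\circ\uptau=\Id$ by the graded Leibniz rule and using $\updelta(\upsigma)=0$ gives $(-1)^{|\upsigma|}\upsigma\circ\updelta(\uptau)=0$, hence $\upsigma\circ\updelta(\uptau)=0$; since $\Ker\upsigma$ is exactly the first summand $\PP$ of $\scrP$, this forces $\image\big(\updelta(\uptau)\big)\subseteq\PP$. Composing, $x\circ\updelta(\uptau)$ factors as $\scrP\xrightarrow{\,\updelta(\uptau)\,}\PP\xrightarrow{\,x|_{\PP}=0\,}\scrP$ and is therefore zero; the parenthetical case $x=\Updelta(g)$ is covered by (6). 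Finally, for (8), with $x\in\sigE$ and $x\circ\uptau\circ\upsigma=x$ one computes $\upsigma\circ(x\circ\uptau)=(\upsigma\circ x)\circ\uptau=(-1)^{|\upsigma||x|}x\circ\upsigma\circ\uptau=(-1)^{|\upsigma||x|}x$ using $\upsigma\circ\uptau=\Id$, and $(x\circ\uptau)\circ\upsigma=x\circ(\uptau\circ\upsigma)=x$ using the hypothesis; since $|\upsigma|\big(|x\circ\uptau|+1\big)\equiv|\upsigma|\big(|x|+|\upsigma|+1\big)\equiv|\upsigma||x|\pmod 2$ (because $|\upsigma|^2\equiv|\upsigma|$), this reads $\upsigma\circ(x\circ\uptau)=(-1)^{|\upsigma|(|x\circ\uptau|+1)}(x\circ\uptau)\circ\upsigma$, i.e.\ $x\circ\uptau\in\nsigE$. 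The genuine obstacle throughout is (7): one must notice that the failure of $\uptau$ to be a chain map, $\updelta(\uptau)$, is forced by $\upsigma\circ\updelta(\uptau)=0$ to take values in $\PP$, which is precisely where the relevant $x$ vanishes.
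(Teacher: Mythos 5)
Your proof is correct and takes essentially the same approach as the paper, establishing each of the eight items by direct verification from the definitions of $(-)_0$, $\widebar{(-)}$, $\Updelta$, and the identities $\updelta(\upsigma)=0$, $\upsigma\circ\uptau=\Id$, $\uptau\circ\upsigma=\Id_{\geq n}$. The only stylistic divergence is in part (7), where the paper computes directly via the chain $x\circ\updelta(\uptau)=x\circ\uptau\circ\upsigma\circ\updelta(\uptau)=(-1)^{|\upsigma|}x\circ\uptau\circ\updelta(\upsigma\circ\uptau)=0$, whereas you first isolate the intermediate fact $\upsigma\circ\updelta(\uptau)=0$ and interpret it geometrically as $\image(\updelta(\uptau))\subseteq\PP=\Ker\upsigma$ before invoking the vanishing of $x$ on $\PP$; both routes rest on the same Leibniz calculation, so this is a reorganisation rather than a different argument.
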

\begin{proof}
	Part (1) holds since $\upsigma$ maps $\PP $ to zero, hence $y\circ\upsigma$ maps $\PP$ to zero.  Parts (2) and (3) are immediate consequences of the definitions. For Part (4), $g$ is encoded by finitely many morphisms $(\upvarphi_i)_{i=0}^{n-1}$, where each $\upvarphi_i$ has domain $P_i$.  To show the statement, consider the complex $\scrP$, and choose an arbitrary $P_j$ in the component $\PP[\ell n]$, for some $0\leq j\leq n-1$ and some $\ell\geq 0$. If $\upvarphi_j=0$, then $\widebar{g}$ restricted to all $P_j$ is zero, hence $\uptau\circ \widebar{g} = 0=(-1)^{|\upsigma||g|} \widebar{g} \circ \uptau$ holds on $P_j$.  Else $\upvarphi_j\neq 0$, in which case $\upvarphi_j$ lands in the non-zero part of the complex $\scrP$.  As $\uptau$ points to the left, the fact that $\uptau\circ \widebar{g} = (-1)^{|\upsigma||g|} \widebar{g} \circ \uptau$ holds on $P_j$ is then clear.
	
Part (5) follows since $\updelta^2=0$, and so using this twice gives 
	\[
	\updelta\big(\Updelta(g)\big)=\updelta\big(\updelta (\widebar{g}) - \widebar{\updelta(g)}\big)=-\updelta\big(\, \widebar{\updelta(g)}\,\big)=-\updelta\big(\, \widebar{\updelta(g)}\,\big)+\widebar{\updelta(\updelta(g))}=-\Updelta(\updelta(g)).
	\]
	Part (6) holds since by \eqref{sig tau basic} $(\Id -\uptau \circ \upsigma)$ is the identity on $\PP$ and is zero elsewhere, and furthermore $\updelta (\widebar{g})$ and $\widebar{\updelta(g)}$ agree on $\PP $, since there they both equal $\updelta(g)$.  Hence $\Updelta(g)\circ (\Id - \uptau \circ \upsigma) =0$.  Part (7) is
	\begin{align*}
		x \circ \updelta(\uptau) & = x \circ \uptau \circ \upsigma \circ \updelta(\uptau) \tag{by assumption on $x$}\\
		&= (-1)^{|\upsigma|} x \circ \uptau \circ \updelta(\upsigma \circ \uptau) \tag{Leibniz Rule, and $\updelta (\upsigma) = 0$ by \ref{Lemma: deltasigma is zero}}\\
		&= (-1)^{|\upsigma|} x \circ \uptau \circ \updelta(\Id)\tag{$\upsigma \circ \uptau  = \Id$ by \eqref{sig tau basic}} \\
		&=0.
	\end{align*}
	For Part (8), observe that
	\begin{align*}
		\upsigma \circ x \circ \uptau & = (-1)^{|\upsigma||x|} x \circ \upsigma \circ \uptau \tag{$x\in\sigE$ by assumption} \\
		& = (-1)^{|\upsigma||x|} x \tag{$\upsigma \circ \uptau  = \Id$ by \eqref{sig tau basic}}  \\
		& = (-1)^{|\upsigma|(|x|+|\uptau|+1)} x \tag{$|\upsigma||\uptau| \equiv_2 -|\upsigma|$, since $-n^2 \equiv_2 -n$}  \\  
		& = (-1)^{|\upsigma|(|x|+|\uptau|+1)} x \circ \uptau \circ \upsigma, \tag{by assumption on $x$}
	\end{align*} 
	and so $x\circ \uptau\in\nsigE$.
\end{proof}

The above result holds for all $g\in\scrHom_{\Upgamma}(\PP,\scrP)$.  When $g\in\scrEnd_\Upgamma(\PP)$,  we can say more.

\begin{lemma}\label{prep for DG U}
	If $g \in\scrEnd_\Upgamma(\PP)$, then the following hold.
	\begin{enumerate}
		\item\label{prep for DG U 1} $\widebar{g}\in\sigE$.
		\item\label{prep for DG U 2} $\Updelta(g)\in\sigE$. 
		\item\label{prep for DG U 3} $\Updelta(g) \circ \uptau\in\nsigE$.  
	\end{enumerate}
\end{lemma}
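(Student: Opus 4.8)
The plan is to handle the three parts in the order stated, with part~(1) the only one requiring a hands-on computation and parts~(2)--(3) then following formally from it together with results already in hand.

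First I would prove part~(1), i.e.\ $\widebar{g}\in\sigE^{|g|}$, by checking the defining relation $\upsigma\circ\widebar{g}=(-1)^{|\upsigma||g|}\widebar{g}\circ\upsigma$ slot by slot along $\scrP=\dots\to\PP[2n]\to\PP[n]\to\PP\to 0$. Since $g$ is an endomorphism of the length-$n$ complex $\PP$, its graded repetition $\widebar{g}$ restricted to the slot $\PP[\ell n]$ is $(-1)^{\ell n|g|}$ times a copy of $g$ that stays inside the same slot $\PP[\ell n]$ (zero components of $g$ simply being repeated as zero). Because $\upsigma$ is the identity $\PP[\ell n]\to\PP[(\ell-1)n]$ for $\ell\geq 1$ and is zero on the slot $\PP$, comparing $\upsigma\circ\widebar{g}$ with $\widebar{g}\circ\upsigma$ on a slot $\PP[\ell n]$ with $\ell\geq 1$ yields exactly the relative sign $(-1)^{n|g|}=(-1)^{|\upsigma||g|}$, while on $\PP$ both composites vanish ($\widebar{g}$ preserves $\PP$ and $\upsigma$ kills $\PP$). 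This gives $\widebar{g}\in\sigE$. If a sign-free route is preferred, the same relation can instead be extracted from \ref{prep for DG}\eqref{prep for DG 4} together with the identities \eqref{sig tau basic}: conjugating $\uptau\circ\widebar{g}=(-1)^{|\upsigma||g|}\widebar{g}\circ\uptau$ by $\upsigma$, using $\upsigma\circ\uptau=\Id$, and using that $\upsigma\circ\widebar{g}$ already annihilates $\PP$ (because $\widebar{g}$ maps $\PP$ into $\PP$) delivers the claim.

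For part~(2), I would use the decomposition $\Updelta(g)=\updelta(\widebar{g})-\widebar{\updelta(g)}$. As $\updelta(g)$ again lies in $\scrEnd_\Upgamma(\PP)$, part~(1) applies to both $g$ and $\updelta(g)$, so $\widebar{g}\in\sigE$ and $\widebar{\updelta(g)}\in\sigE$; and since $\sigE$ is a $\DG$-subalgebra of $\scrE$ by \ref{Lemma: sigE is DG}\eqref{Lemma: sigE is DG 2}, it is in particular closed under $\updelta$, so $\updelta(\widebar{g})\in\sigE$ as well. Being a graded subspace, $\sigE$ then contains the difference $\Updelta(g)$.

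For part~(3), I would invoke \ref{prep for DG}\eqref{prep for DG 8} with $x=\Updelta(g)$: its hypotheses are precisely $\Updelta(g)\in\sigE$, which is part~(2), and $\Updelta(g)\circ\uptau\circ\upsigma=\Updelta(g)$, which is \ref{prep for DG}\eqref{prep for DG 6}; the conclusion $\Updelta(g)\circ\uptau\in\nsigE$ is then exactly what is wanted. The only step carrying genuine content is the sign bookkeeping in part~(1)---everything after it is formal---so that is where I expect the (minor) friction to lie.
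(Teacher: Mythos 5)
Your proposal is correct and follows essentially the same route as the paper's proof: part~(1) is checked slot by slot, disposing of the $\PP$ slot by the fact that $\upsigma$ annihilates it while $\widebar{g}$ preserves it, and of the slots $\PP[\ell n]$ with $\ell\geq 1$ by the sign $(-1)^{\ell n|g|}$ built into graded repetition; part~(2) follows by applying part~(1) to both $g$ and $\updelta(g)$ together with $\DG$-closure of $\sigE$; and part~(3) is exactly \ref{prep for DG}\eqref{prep for DG 8} with hypotheses supplied by \ref{prep for DG}\eqref{prep for DG 6} and part~(2). The alternative route you sketch for part~(1) via \ref{prep for DG}\eqref{prep for DG 4} and the identities \eqref{sig tau basic} does work --- pre- and post-compose with $\upsigma$, use $\upsigma\circ\uptau=\Id$, then remove $\Id_{\geq n}$ by observing that $\upsigma\circ\widebar{g}$ already vanishes on $\PP$ --- but it only relocates the sign computation into the proof of \ref{prep for DG}\eqref{prep for DG 4} rather than eliminating it, so it is best regarded as a cosmetic variant of the same argument.
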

\begin{proof}
For Part (1), by \ref{prep for DG}\eqref{prep for DG 3} $(\upsigma\circ \widebar{g})_0=\upsigma\circ (\widebar{g})_0=\upsigma\circ g$, thus since  $g$ takes $\PP$ to $\PP$ and $\upsigma$ points to the right, $(\upsigma\circ \widebar{g})_0=0$. 
By \ref{prep for DG}\eqref{prep for DG 1}, $(\widebar{g}\circ\upsigma)_0=0$.  Combining, we see that $\upsigma\circ \widebar{g} =(-1)^{|\upsigma||g|} \widebar{g} \circ \upsigma$ holds on $\PP$, since both equal $0$.

Since $g\in\scrEnd_\Upgamma(\PP)$, the only non-zero morphisms that constitute $\upiota \circ g$ take $\PP$ to $\PP$, hence the only non-zero morphisms that constitute $\widebar{g}$ map components $\PP[\ell n]$ to themselves. We finally claim that $\upsigma\circ \widebar{g} =(-1)^{|\upsigma||g|} \widebar{g} \circ \upsigma$ holds on all components $\PP[\ell n]$ with $\ell>0$. Since the only non-zero morphisms that constitute $\widebar{g}$ map $\PP[\ell n]$ to itself, $\upsigma$ points to the right, and $\ell>0$, $\upsigma\circ \widebar{g} =(-1)^{|\upsigma||g|} \widebar{g} \circ \upsigma$ follows.
	
For Part (2), $\widebar{g}\in\sigE$ by (1), so since $\sigE$ is closed under the differential by \ref{Lemma: sigE is DG}\eqref{Lemma: sigE is DG 2}, necessarily $\updelta(\widebar{g})\in\sigE$.  Also, $\updelta(g)\in\scrEnd_\Upgamma(\PP)$, 
    so applying (1) to $\updelta(g)$ gives $\widebar{\updelta(g)}\in\sigE$.  Combining, $\Updelta(g)=\updelta (\widebar{g}) - \widebar{\updelta(g)}\in\sigE$.  
	
	Part (3) is just \ref{prep for DG}\eqref{prep for DG 8}, using (2) together with \ref{prep for DG}\eqref{prep for DG 6} to see that the two assumptions on $x=\Updelta(g)$ hold.
\end{proof}

\subsection{The quasi-isomorphism}\label{subsec:qisTtoEnd}
The following is the main result of this subsection.

\begin{thm}\label{thm:qis of scrT}
	Under Setup~\textnormal{\ref{key setup}} and Notation~\textnormal{\ref{notation for DG morph}}, the following are morphisms of $\DG$ $\mathbb{Z}$-modules
	\[
	\begin{array}{rcl}
		\scrT & \longleftrightarrow&  \scrEnd_\Upgamma(\PP) \\
		(x,y) & \mapsto &x_0\\
		(\widebar{g}, \,(-1)^{|g|}\Updelta(g) \circ \uptau)& \mapsfrom&  g
	\end{array}
	\]
	which induce bijections on cohomology. Moreover, the bottom map is a morphism of $\DG$-algebras. 
\end{thm}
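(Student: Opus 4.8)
The theorem has four assertions packaged together: (a) the map $g \mapsto (\widebar g, (-1)^{|g|}\Updelta(g)\circ\uptau)$ lands in $\scrT$; (b) this is a morphism of $\DG$ $\mathbb{Z}$-modules (and in fact of $\DG$-algebras); (c) the map $(x,y) \mapsto x_0$ is a morphism of $\DG$ $\mathbb{Z}$-modules; (d) both induce bijections on cohomology. I would attack these roughly in that order, but concentrate most effort on (b), which I expect to be the crux.

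For (a): that $\widebar g \in \sigE$ and $\Updelta(g)\circ\uptau \in \nsigE$ is exactly Lemma~\ref{prep for DG U}(1),(3); the only thing to verify is the degree bookkeeping, i.e.\ that $|\Updelta(g)\circ\uptau| = |\widebar g| + 1 - n$. Since $|\Updelta(g)| = |g|+1$ (the differential raises degree by one) and $|\uptau| = -n$, we get $|\Updelta(g)\circ\uptau| = |g| + 1 - n = |\widebar g| + 1 - n$, as required by the pairing convention in Definition~\ref{Definition: TEA}. For (b), the $\DG$-algebra claim, I would split into two checks. First, multiplicativity: compute $(\widebar f, (-1)^{|f|}\Updelta(f)\circ\uptau)\cdot(\widebar g, (-1)^{|g|}\Updelta(g)\circ\uptau)$ using the product formula of Definition~\ref{Definition: TEA}, and compare with $(\widebar{f\circ g}, (-1)^{|f\circ g|}\Updelta(f\circ g)\circ\uptau)$. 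The first component matches by Lemma~\ref{graded repetition comp ok}(1). For the second component, expand $\Updelta(f\circ g)\circ\uptau$ using Lemma~\ref{graded repetition comp ok}(2): $\Updelta(f\circ g)\circ\uptau = \Updelta(f)\circ\widebar g\circ\uptau + (-1)^{|f|}\widebar f\circ\Updelta(g)\circ\uptau$. The second term is already in the right shape; for the first, one rewrites $\widebar g\circ\uptau$ via Lemma~\ref{prep for DG}(4) ($\uptau\circ\widebar g = (-1)^{|\upsigma||g|}\widebar g\circ\uptau$, hence $\widebar g\circ\uptau = (-1)^{|\upsigma||g|}\uptau\circ\widebar g$ since $|\upsigma|^2\equiv|\upsigma|$) to move $\uptau$ past $\widebar g$, then uses $\Updelta(f)\circ\uptau\circ\widebar g$ — but wait, $\Updelta(f)\circ\uptau\circ\upsigma = \Updelta(f)$ by Lemma~\ref{prep for DG}(6), which doesn't directly help; instead I expect the cleanest route is to note $\widebar g$ acts on the $\PP[\ell n]$ components diagonally, so $\Updelta(f)\circ\widebar g\circ\uptau$ already has the needed periodicity, and the product formula's sign $(-1)^{|a|(|\upsigma|+1)}$ with $a = \widebar g$ is precisely engineered to absorb the discrepancy. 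This is where the sign grinding lives. Second, compatibility with differentials: check $\upxi(\widebar g, (-1)^{|g|}\Updelta(g)\circ\uptau) = (\widebar{\updelta(g)}, (-1)^{|\updelta(g)|}\Updelta(\updelta(g))\circ\uptau)$. The second component of $\upxi$ applied to the pair is $\updelta((-1)^{|g|}\Updelta(g)\circ\uptau) = (-1)^{|g|}(\updelta(\Updelta(g))\circ\uptau + (-1)^{|\Updelta(g)|}\Updelta(g)\circ\updelta(\uptau))$; the second summand vanishes by Lemma~\ref{prep for DG}(7) (applicable since $\Updelta(g)\circ\uptau\circ\upsigma = \Updelta(g)$ by (6)), and $\updelta(\Updelta(g)) = -\Updelta(\updelta(g))$ by Lemma~\ref{prep for DG}(5), giving $(-1)^{|g|+1}\Updelta(\updelta(g))\circ\uptau = (-1)^{|\updelta(g)|}\Updelta(\updelta(g))\circ\uptau$ as desired. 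The first component of $\upxi$ applied to the pair is $\updelta(\widebar g) - (-1)^{|g|}\cdot(-1)^{|g|}\Updelta(g)\circ\uptau\circ\upsigma = \updelta(\widebar g) - \Updelta(g) = \updelta(\widebar g) - (\updelta(\widebar g) - \widebar{\updelta(g)}) = \widebar{\updelta(g)}$, using Lemma~\ref{prep for DG}(6) and the definition of $\Updelta$. So that check is actually short once the lemmas are in place.

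For (c): that $(x,y)\mapsto x_0$ is well-defined into $\scrEnd_\Upgamma(\PP)$ is Lemma~\ref{def U prop}; that it commutes with differentials follows from Lemma~\ref{prep for DG}(2) together with the observation that the $y\circ\upsigma$ correction term in $\upxi$ restricts to zero on $\PP$ by Lemma~\ref{prep for DG}(1). Grading is preserved by construction. For (d): it suffices to show the composite $g \mapsto (\widebar g, (-1)^{|g|}\Updelta(g)\circ\uptau) \mapsto (\widebar g)_0 = g$ is the identity — which is exactly Lemma~\ref{prep for DG}(3) — and that the other composite is a quasi-isomorphism, or alternatively that one of the two maps is a quasi-isomorphism and use the retraction. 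Since $(x,y)\mapsto x_0$ followed by $g\mapsto(\dots)$ need not be the identity on $\scrT$ on the nose, the cleanest argument is: the bottom map is a chain map with a one-sided inverse (the top map) also a chain map, so it is enough to show the top map is a quasi-isomorphism; this I would do by a filtration/spectral-sequence argument on $\scrT$ by the $\nsigE$-part, or by directly computing $\HH^*(\scrEnd_\Upgamma(\PP))$ and $\HH^*(\scrT)$ and checking the induced map is iso degreewise — though the latter risks circularity with the next section, so the filtration argument is safer. I expect the sign verification in the multiplicativity step of (b) to be the genuine obstacle; everything else is assembling the preparatory lemmas.
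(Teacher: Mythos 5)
Your parts (a), (c), and the differential-compatibility half of (b) match the paper's proof essentially step for step, citing the same preparatory lemmas. Your multiplicativity argument for (b) is on the right track: the paper indeed commutes $\uptau$ past $\widebar{g}$ using \ref{prep for DG}\eqref{prep for DG 4}, checks that the sign exponents $|g|(|\upsigma|+1) + |f| + |\upsigma||g|$ collapse to $|f|+|g|$, and then applies \ref{graded repetition comp ok}\eqref{graded repetition comp 2}; your brief doubt about \ref{prep for DG}\eqref{prep for DG 6} being ``not directly helpful'' is correct and the paper does not use it there.

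The genuine gap is in (d). You correctly note that $\mathbb{F}\circ\mathbb{G} = \mathrm{Id}$ reduces the problem to showing one of the two maps is a quasi-isomorphism, and you correctly flag that comparing the cohomology groups degreewise would be circular with the next section. But you then propose an uncarried-out filtration or spectral sequence argument as a stand-in. The paper instead makes two concrete observations that you miss, and these dissolve the problem directly without any cohomology computation. First, any cocycle $(x,y)\in\Ker\upxi$ satisfies $\updelta(x) - (-1)^{|x|}y\circ\upsigma = 0$; right-multiplying by $\uptau$ and using $\upsigma\circ\uptau = \Id$ forces $y = (-1)^{|x|}\updelta(x)\circ\uptau$, so the $y$-part of a cocycle is completely determined by its $x$-part. (As a byproduct, $\updelta(x_0) = (\updelta(x))_0 = (-1)^{|x|}(y\circ\upsigma)_0 = 0$ by \ref{prep for DG}\eqref{prep for DG 1}, so $\widebar{\updelta(x_0)} = 0$, which is what makes $\mathbb{G}(x_0)$ take the simple form it does.) Second, the paper writes down the explicit element $z = (0, (-1)^{|x|}(x - \widebar{x_0})\circ\uptau)$ and verifies $\upxi(z) = (x - \widebar{x_0}, \,(-1)^{|x|}\updelta(x - \widebar{x_0})\circ\uptau)$, using \ref{prep for DG}\eqref{prep for DG 7} to kill the $\updelta(\uptau)$ term; this immediately gives $(x,y) = \mathbb{G}(x_0) + \upxi(z)$, i.e.\ $\mathbb{G}\circ\mathbb{F} = \mathrm{Id}$ on cohomology. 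Your filtration proposal (the short exact sequence $0 \to \sigE \to \scrT \to \nsigE[1-n] \to 0$ coming from the cone structure of \ref{rem:actually_a_cone}) is a legitimate starting point but would still require you to identify the connecting map and compare it against some analogous structure on $\scrEnd_\Upgamma(\PP)$, which has no matching decomposition; this is a genuinely harder route, and the paper's direct homotopy argument is both shorter and self-contained.
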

\begin{proof}
Write $\mathbb{F}$ for the top map, and $\mathbb{G}$ for the bottom map.  The map $\mathbb{F}$ is well defined since by \ref{def U prop}, the restiction $x_0$ belongs to $\scrEnd_\Upgamma(\PP)$. The map $\mathbb{G}$ is well defined since $\widebar{g} \in \sigE$ by \ref{prep for DG U}\eqref{prep for DG U 1} and $(-1)^{|g|} \Updelta(g) \circ \uptau \in \nsigE$ by \ref{prep for DG U}\eqref{prep for DG U 3}.  Thus both $\mathbb{F}$ and $\mathbb{G}$ are well defined, and they are clearly morphisms of graded $\mathbb{Z}$-modues.
	
	We now claim that $\mathbb{F}$ and $\mathbb{G}$ respect the differentials.  This is true for $\mathbb{F}$ since
	\begin{align*}
		\mathbb{F}(\upxi(x,y)) = (\updelta(x) - (-1)^{|x|}y \circ \upsigma )_0 
		\stackrel{\scriptstyle\ref{prep for DG}\eqref{prep for DG 1}}{=}\updelta(x)_0
		\stackrel{\scriptstyle\ref{prep for DG}\eqref{prep for DG 2}}{=} \updelta(x_0) = 
		\updelta\, \mathbb{F} (x,y),
	\end{align*}
	and it is true for $\mathbb{G}$ since
	\begin{align*}
		\upxi\, \mathbb{G}(g)  & = \upxi( \widebar{g}, (-1)^{|g|}\Updelta(g) \circ \uptau) \\
		& = \big( \updelta(\widebar{g}) -(-1)^{|g|+|g|} \Updelta(g) \circ \uptau \circ \upsigma, (-1)^{|g|} \updelta(\Updelta(g) \circ \uptau)\big) \\
		& = \big( \updelta(\widebar{g}) -\Updelta(g) , (-1)^{|g|} \updelta(\Updelta(g)) \circ \uptau + (-1)^{|g|+|g|+1} \Updelta(g) \circ \updelta(\uptau) \big)\tag{by \ref{prep for DG}\eqref{prep for DG 6}} \\
		& =\big( \updelta(\widebar{g}) -(\updelta(\widebar{g}) - \widebar{\updelta(g)} ), (-1)^{|g|} \updelta(\Updelta(g)) \circ \uptau\big) \tag{by \ref{prep for DG}\eqref{prep for DG 7}}\\
		& =( \widebar{\updelta(g)} ,  (-1)^{|\updelta(g)|}\Updelta(\updelta(g)) \circ \uptau ) \tag{by \ref{prep for DG}\eqref{prep for DG 5}}\\
		&= \mathbb{G}(\updelta(g)).
	\end{align*}

	Next, we claim that $\mathbb{F}$ and $\mathbb{G}$ restrict to an inverse bijection on cohomology.  It is clear that $\mathbb{F}( \mathbb{G}(g)) = (\widebar{g})_0=g$ by \ref{prep for DG}\eqref{prep for DG 3}, and so we just need to establish that $\mathbb{G}\circ\mathbb{F}$ is the identity  when restricted to cohomology. 
	
	Now, any $x \in \sigE$ can be written in the form 
	\[
	x =  \widebar{x_0} + (x  - \widebar{x_0} ) = \widebar{x_0} + (x  - \widebar{x_0} ) \circ \Id_{\ge n} 
	\stackrel{{\scriptstyle\eqref{sig tau basic}}}{=} 
	\widebar{x_0} + (x  - \widebar{x_0} ) \circ \uptau \circ \upsigma\label{split x complex}
	\]
	since $\widebar{x_0}$ and $x$ agree on $\PP $.  Hence $(x-\widebar{x_0})\circ \uptau \circ \upsigma=(x-\widebar{x_0})$, so by \ref{prep for DG}\eqref{prep for DG 7}, $(x-\widebar{x_0}) \circ \updelta(\uptau)=0$.   Furthermore, as $x\in\sigE$ by assumption, $\widebar{x_0}\in\sigE$ by combining \ref{def U prop} 
	 and \ref{prep for DG U}\eqref{prep for DG U 1}.  Thus $(x-\widebar{x_0})\in\sigE$.  Hence, by \ref{prep for DG}\eqref{prep for DG 8} applied to $x-\widebar{x_0}$, we deduce that $(x-\widebar{x_0})\circ \uptau\in\nsigE$.
	
	Hence $z\colonequals (0,  (-1)^{|x|}(x  - \widebar{x_0}) \circ \uptau ) \in \scrT$ is an element of degree $|x|-1$ such that
	\begin{align*}
		\upxi(z )& =\left( -(-1)^{|x|-1}(-1)^{|x|}(x  - \widebar{x_0}) \circ \uptau\circ \upsigma  ,  (-1)^{|x|}\updelta((x  - \widebar{x_0}) \circ \uptau )\right) \\
		& =\left((x  - \widebar{x_0})  ,  (-1)^{|x|}\updelta(x  - \widebar{x_0})\circ \uptau + (-1)^{|x|+|x|} (x-\widebar{x_0}) \circ \updelta(\uptau) \right) \\
		& =\left((x  - \widebar{x_0}) ,  (-1)^{|x|}\updelta(x  - \widebar{x_0})\circ \uptau\right) \\
		& =\left((x  - \widebar{x_0}),  (-1)^{|x|}\updelta(x) \circ \uptau - (-1)^{|x|} \updelta(\widebar{x_0})\circ \uptau \right).
	\end{align*}
	Using this we then calculate 
	\begin{align*}
		\left( x, (-1)^{|x|} \updelta(x) \circ \uptau  \right)  
		& = \left( \widebar{x_0},  (-1)^{|x|} \updelta(\widebar{x_0})\circ \uptau \right)  +  \upxi(z) \\
		& = \mathbb{G}( x_0 ) +  \upxi(z).
	\end{align*}
	
	Now an element $(x,y) \in \scrT$ belongs to the kernel of $\upxi$ if and only if
	\[
	\updelta(x) - (-1)^{|x|} y \circ \upsigma =0 \quad \text{ and } \quad\updelta(y)=0.
	\]
	Right multiplying the first identity by $\uptau$, and recalling that $\upsigma \circ \uptau = \Id$ by \eqref{sig tau basic}, we deduce that for $(x,y)$ to be in the kernel of $\upxi$ it is necessary that $y=(-1)^{|x|}\updelta(x) \circ \uptau$. Hence if $(x,y) \in \Ker \upxi$, then $(x,y)=(x, (-1)^{|x|}\updelta(x) \circ \uptau)$, so by the above
	\begin{align*}
		\mathbb{G}(\mathbb{F}(x,y))&= \mathbb{G}(x_0) = (x,y)- \upxi(z).
	\end{align*}
	As such, $\mathbb{G}$ and $\mathbb{F}$ induce a bijection on cohomology. 
	
	Finally, to show that  $\mathbb{G}$ is a morphism of $\DG$-algebras, we just need to check it preserves the products. This follows since
		\begin{align*}
			\mathbb{G}(f) \cdot\mathbb{G}(g) & = \big(\widebar{f}, (-1)^{|f|}\Updelta(f) \circ \uptau ) \cdot (\widebar{g}, (-1)^{|g|}\Updelta(g) \circ \uptau\big) \\
			& = \big(\widebar{f\circ g}, (-1)^{|g|} \widebar{f} \circ \Updelta(g) \circ \uptau + (-1)^{|g|(|\upsigma|+1)+|f|} \Updelta(f) \circ \uptau \circ \widebar{g}\big) \tag{by definition of $\cdot$, and \ref{graded repetition comp ok}\eqref{graded repetition comp 1}}\\
			& = \big(\widebar{f\circ g}, (-1)^{|g|} \widebar{f} \circ \Updelta(g) \circ \uptau + (-1)^{|g|(|\upsigma|+1)+|f|+|\upsigma||g|} \Updelta(f) \circ  \widebar{g} \circ \uptau\big)  \tag{by \ref{prep for DG}\eqref{prep for DG 4}} \\
			& = \big(\widebar{f\circ g}, (-1)^{|g|} \widebar{f} \circ \Updelta(g) \circ \uptau + (-1)^{|f|+|g|} \Updelta(f) \circ  \widebar{g} \circ \uptau\big)\\
			& = \big(\widebar{f\circ g}, (-1)^{(|f|+|g|)}(\Updelta(f)\circ \widebar{g}  + (-1)^{|f|}  \widebar{f} \circ \Updelta(g) )\circ \uptau\big)  \\
			& = \big(\widebar{f\circ g}, (-1)^{(|f|+|g|)}\Updelta(f\circ g)\circ \uptau\big) \tag{by \ref{graded repetition comp ok}\eqref{graded repetition comp 2}}  \\
			& = \mathbb{G}(f \circ g).\qedhere
		\end{align*}
\end{proof}

\subsection{Computation of cohomology}\label{sec.cohomology_end}
In this section we compute the cohomology of the $\DG$-algebra $\scrEnd_\Upgamma(\PP)$, and thus via \ref{thm:qis of scrT} the cohomology of $\scrT$.  Again, consider the complex $\PP$ of projectives, which induces an exact sequence
\[
0 \to M \xrightarrow{\upbeta}  P_{n-1} \to P_{n-2} \to \dots \to P_{1} \to P_0 \xrightarrow{\upalpha} M  \to 0.
\]
Using the connecting map $d_0\colonequals \upbeta \circ \upalpha$, accounting for signs correctly as in \ref{ex:spelloutscrP}, we obtain the infinite projective resolution $\scrP$ of the ${\Upgamma}$-module $M$.

As in \S\ref{general qis section}, write  $\upiota$ for the inclusion $\upiota \colon \PP \to \scrP$. This induces a short exact sequence of complexes
\[
0 \to \PP \xrightarrow{\upiota} \scrP \xrightarrow{\uppi}\scrP_{\leq n }\to 0,
\]
where the complex $\scrP_{\leq n} $ is the brutal truncation of $\scrP$, as illustrated in the following diagram.
\begin{equation}
\begin{tikzcd}[column sep=15pt]
\PP\arrow[d, "\upiota"]&  &     &   & P_{n-1} \arrow[r] \arrow[d,equals] & P_{n-2} \arrow[r] \arrow[d,equals] & \hdots \arrow[r] \arrow[d,equals] & P_0 \arrow[d,equals] \\
\scrP\arrow[d,"\uppi"]& \hdots \arrow[d,equals] \arrow[r] & P_{1} \arrow[d,equals] \arrow[r] & P_0 \arrow[d,equals] \arrow[r] & P_{n-1} \arrow[r]                    & P_{n-2} \arrow[r]                    & \hdots \arrow[r]                    & P_0                    \\
\scrP_{\leq n}&\hdots \arrow[r]  & P_{1} \arrow[r]  & P_0  &     &   &  & 
\end{tikzcd}\label{eqn:uppidef}
\end{equation}
Notice that the construction of $\scrP$, in particular the sign choices spelled out in \ref{ex:spelloutscrP}, together with the fact that $[1]$ shifts to the left with a sign, implies that there is an equality of complexes 
\begin{equation}\label{eq.P-less-n}
\scrP_{\leq n }[-n] = \scrP.
\end{equation}

\begin{lemma}\label{lem.map-pi}
		Given $M$, $\PP$ and $\scrP$ as above, let $N$ be a $\Upgamma$-module, and consider the map $\circ\uppi \colon \scrHom_{\Upgamma}(\scrP_{\leq n}, N) \to \scrHom_{\Upgamma}(\scrP, N)$.
		\begin{enumerate}
			\item\label{lem.map-pi 1} The map
			\[ 	
			\Hom_{\Upgamma}(M,N) \to \Ext^{n}_{\Upgamma}(M,N)
			\]
			induced by $\HH^{n}(\circ\uppi)$ is surjective, and has kernel $\image(\circ d_0) \subseteq \Hom_{\Upgamma}(P_0,N)$.
			\item\label{lem.map-pi 2}  For every $i >0$, the map
			\[
			\Ext^{i}_{\Upgamma}(M,N) \to \Ext^{i+n}_{\Upgamma}(M,N)
			\]
			induced by $\HH^{i+n}(\circ\uppi)$ is an isomorphism.
		\end{enumerate}
	\end{lemma}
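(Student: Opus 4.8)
The plan is to extract both claims from the short exact sequence of complexes $0 \to \PP \xrightarrow{\upiota} \scrP \xrightarrow{\uppi} \scrP_{\leq n} \to 0$ by applying $\scrHom_\Upgamma(-, N)$ and examining the resulting long exact sequence in cohomology. Since each $P_i$ is projective, $\scrHom_\Upgamma(-, N)$ sends this degreewise-split short exact sequence of complexes to a short exact sequence of complexes of $\K$-vector spaces
\[
0 \to \scrHom_\Upgamma(\scrP_{\leq n}, N) \xrightarrow{\circ\uppi} \scrHom_\Upgamma(\scrP, N) \xrightarrow{\circ\upiota} \scrHom_\Upgamma(\PP, N) \to 0,
\]
hence a long exact sequence in cohomology. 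The first task is to identify the cohomology of each of the three terms. Since $\scrP$ is a projective resolution of $M$, we have $\HH^i(\scrHom_\Upgamma(\scrP, N)) = \Ext^i_\Upgamma(M, N)$ for $i \geq 0$ (and $0$ for $i<0$). The complex $\PP$ is concentrated in degrees $0, \dots, n-1$ (with $P_0$ in degree $0$), and by exactness of the defining sequence $0 \to M \to P_{n-1} \to \dots \to P_0 \to M \to 0$ one reads off that $\HH^0(\PP) = M$ (cokernel of $P_1 \to P_0$, which is $M$ via $\upalpha$), $\HH^{n-1}(\PP) = M$ (kernel of $P_{n-1} \to P_{n-2}$, which is $M$ via $\upbeta$), and $\HH^i(\PP) = 0$ for $0 < i < n-1$; therefore $\HH^i(\scrHom_\Upgamma(\PP, N))$ is $\Hom_\Upgamma(M,N)$ for $i = 0$ and for $i = n-1$, and vanishes otherwise in the relevant range. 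Finally, by the identification of complexes $\scrP_{\leq n}[-n] = \scrP$ from \eqref{eq.P-less-n}, we get $\HH^i(\scrHom_\Upgamma(\scrP_{\leq n}, N)) = \HH^{i-n}(\scrHom_\Upgamma(\scrP, N)) = \Ext^{i-n}_\Upgamma(M, N)$, so this is $\Hom_\Upgamma(M,N)$ in degree $n$ and $\Ext^{i-n}_\Upgamma(M,N)$ in degrees $> n$, vanishing below degree $n$.

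With these identifications in hand, both statements fall out of the long exact sequence by a degree count. For part (2): in degrees $i + n$ with $i > 0$, the long exact sequence reads
\[
\HH^{i+n-1}(\scrHom_\Upgamma(\PP, N)) \to \HH^{i+n}(\scrHom_\Upgamma(\scrP_{\leq n}, N)) \xrightarrow{\HH^{i+n}(\circ\uppi)} \HH^{i+n}(\scrHom_\Upgamma(\scrP, N)) \to \HH^{i+n}(\scrHom_\Upgamma(\PP, N)),
\]
i.e.
\[
\HH^{i+n-1}(\scrHom_\Upgamma(\PP, N)) \to \Ext^i_\Upgamma(M,N) \xrightarrow{\HH^{i+n}(\circ\uppi)} \Ext^{i+n}_\Upgamma(M,N) \to \HH^{i+n}(\scrHom_\Upgamma(\PP, N)),
\]
and since $i > 0$ forces $i + n - 1 \geq n$ with equality only when $i=1$, the only place $\HH^\bullet(\scrHom_\Upgamma(\PP,N))$ could be nonzero in these two slots is degree $n-1$, which is strictly below $i+n-1$ since $i>0$ — wait, one must be slightly careful: $i+n-1 = n-1$ would require $i = 0$, excluded; and $i+n = n-1$ is impossible. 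Hence both flanking terms vanish and $\HH^{i+n}(\circ\uppi)$ is an isomorphism, giving (2). For part (1): in degrees $n-1, n$ the sequence reads
\[
\HH^{n-1}(\scrHom_\Upgamma(\scrP_{\leq n}, N)) \to \Ext^{n-1}_\Upgamma(M,N) \to \HH^{n-1}(\scrHom_\Upgamma(\PP, N)) \xrightarrow{\partial} \HH^{n}(\scrHom_\Upgamma(\scrP_{\leq n}, N)) \xrightarrow{\HH^n(\circ\uppi)} \Ext^n_\Upgamma(M,N) \to \HH^n(\scrHom_\Upgamma(\PP, N)),
\]
i.e., using $\HH^{n-1}(\scrHom_\Upgamma(\scrP_{\leq n},N)) = 0$, $\HH^{n-1}(\scrHom_\Upgamma(\PP,N)) = \Hom_\Upgamma(M,N)$, $\HH^n(\scrHom_\Upgamma(\scrP_{\leq n},N)) = \Hom_\Upgamma(M,N)$, and $\HH^n(\scrHom_\Upgamma(\PP,N)) = 0$,
\[
0 \to \Ext^{n-1}_\Upgamma(M,N) \to \Hom_\Upgamma(M,N) \xrightarrow{\partial} \Hom_\Upgamma(M,N) \xrightarrow{\HH^n(\circ\uppi)} \Ext^n_\Upgamma(M,N) \to 0.
\]
Surjectivity of $\HH^n(\circ\uppi) \colon \Hom_\Upgamma(M,N) \to \Ext^n_\Upgamma(M,N)$ is then immediate from exactness at $\Ext^n_\Upgamma(M,N)$ together with the vanishing of $\HH^n(\scrHom_\Upgamma(\PP,N))$.

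It remains to identify the kernel of $\HH^n(\circ\uppi)$ with $\image(\circ d_0) \subseteq \Hom_\Upgamma(P_0, N)$, which by exactness is the image of the connecting map $\partial$. Here I must trace through what $\partial$ does concretely: a class in $\HH^{n-1}(\scrHom_\Upgamma(\PP, N)) = \Hom_\Upgamma(M,N)$ is represented, under the identification $\HH^{n-1}(\PP) = M$ via $\upbeta$, by a map $P_{n-1} \to N$ that factors through $\upbeta$ — lift it along $\circ\upiota$ to a cochain on $\scrP$, apply the differential of $\scrP$, which at the relevant spot involves the connecting map $d_0 = \upbeta \circ \upalpha$ passing from the "second copy" of $P_0$ into $P_{n-1}$, and then push down to $\scrP_{\leq n}$; the upshot is that a map $\varphi \colon M \to N$ is sent to $\varphi \circ \upalpha \circ (\text{something})$, and chasing the signs one gets exactly precomposition with $d_0$ read as an element of $\Hom_\Upgamma(P_0, N) = \HH^n(\scrHom_\Upgamma(\scrP_{\leq n}, N))$ after identifying the latter cohomology group correctly. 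The bookkeeping here — matching the explicit description of $\scrP$ and its differential from \ref{ex:spelloutscrP}, the shift identification \eqref{eq.P-less-n}, and the standard zig-zag definition of the connecting homomorphism, all with consistent signs — is the one genuinely fiddly point; everything else is a formal consequence of the long exact sequence and the cohomology computations above. I expect the sign-chase in the connecting-map identification to be the main obstacle, though it is entirely mechanical.
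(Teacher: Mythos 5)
Your approach is genuinely different from the paper's. You apply $\scrHom_\Upgamma(-,N)$ to the short exact sequence $0 \to \PP \xrightarrow{\upiota} \scrP \xrightarrow{\uppi} \scrP_{\leq n} \to 0$ and argue via the long exact sequence in cohomology. The paper instead reads the claims directly off the diagram \eqref{eqn:uppidef}: after applying $\Hom_\Upgamma(-,N)$ one sees $\circ\uppi$ is an equality in degrees $\geq n$ and zero in degrees $<n$, so $\HH^j(\circ\uppi)$ is trivially an isomorphism for $j>n$, and $\HH^n(\circ\uppi)$ is exactly the quotient map $\Ker(\circ d_1)\to\Ker(\circ d_1)/\image(\circ d_0)$, from which both surjectivity and the kernel are immediate with no diagram chase.

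There is a genuine error in your computation. You write that since $\HH^*(\PP)$ is $M$ concentrated in two degrees, ``therefore $\HH^i(\scrHom_\Upgamma(\PP, N))$ is $\Hom_\Upgamma(M,N)$ for $i=0$ and $i=n-1$, and vanishes otherwise.'' This inference is false: $\PP$ is a bounded complex of projectives, so $\scrHom_\Upgamma(\PP,N)$ computes $\RHom_\Upgamma(\PP,N)$, and you cannot obtain its cohomology by applying $\Hom_\Upgamma(-,N)$ termwise to $\HH^*(\PP)$. Using the triangle $M[n-1]\to\PP\to M$ and applying $\RHom_\Upgamma(-,N)$ gives a long exact sequence showing $\HH^i(\scrHom_\Upgamma(\PP,N))\cong\Ext^i_\Upgamma(M,N)$ for $0<i<n-1$ (generally nonzero), and in degree $n-1$ an extension of $\Ker\bigl(\Hom_\Upgamma(M,N)\to\Ext^n_\Upgamma(M,N)\bigr)$ by $\Ext^{n-1}_\Upgamma(M,N)$ rather than $\Hom_\Upgamma(M,N)$ itself. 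Indeed, the correct answer is recorded in \eqref{eq.coh-ext}, which the paper deduces \emph{using this lemma as input}, so your claimed shortcut is circular as well as incorrect.

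The error happens to be harmless to the conclusions you actually draw: your proofs of part (2) and of surjectivity in part (1) only use $\HH^j(\scrHom_\Upgamma(\PP,N))=0$ for $j\geq n$, which is true for the trivial reason that the complex is concentrated in degrees $0,\dots,n-1$. But the kernel identification in part (1) is not proved: you rely on the connecting map $\partial$ having domain $\Hom_\Upgamma(M,N)$ in degree $n-1$, which is the misidentified group, and you then explicitly defer the chase. That chase is not ``entirely mechanical'' as stated, because the domain you would be chasing through is wrong; the paper's explicit description of $\HH^n(\circ\uppi)$ in \eqref{eqn:forHn} sidesteps this entirely.
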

\begin{proof}
Since $\scrP$ is a projective resolution of $M$, we can use the complex $\scrHom_\Upgamma(\scrP,N)$ to calculate the groups $\Ext^i_\Upgamma(M,N)$, since with our conventions on grading, 
\[
\HH^i(\scrHom_\Upgamma(\scrP,M))\cong \Ext^i(M,N).
\] 
By \eqref{eq.P-less-n}, we can also use $\scrHom_\Upgamma(\scrP_{\leq n},N)$ to calculate the same (shifted) Ext groups, via $\Ext^i_\Upgamma(M,N)\cong \HH^{i+n}(\scrHom_{\Upgamma}(\scrP_{\leq n}, N)) $.
		
Applying $\Hom_\Upgamma(-,N)$ to the bottom half of \eqref{eqn:uppidef} gives a commutative diagram
\begin{equation}
\begin{array}{c}
\begin{tikzpicture}
\node (a1) at (7.5,0.5) {${}_{\Upgamma}(P_0,N)$};
\node (a2) at (9.5,0.5) {${}_{\Upgamma}(P_{1},N)$};
\node (a3) at (11,0.5) {$\hdots$};
\node (b4) at (5.15,-1) {${}_{\Upgamma}(P_{n-1},N)$};
\node (b5) at (7.5,-1) {${}_{\Upgamma}(P_{0},N)$};
\node (b6) at (9.5,-1) {${}_{\Upgamma}(P_{1},N)$};
\node (b7) at (11,-1) {$\hdots$,};
\draw[->] (a1) -- node[above]{$\scriptstyle \circ d_1 $}(a2);
\draw[->] (a2) -- node[above]{$\scriptstyle $}(a3);
\draw[->] (b4) -- node[above]{$\scriptstyle \circ d_0 $}(b5);
\draw[->] (b5) -- node[above]{$\scriptstyle \circ d_1 $}(b6);
\draw[->] (b6) -- node[above]{$\scriptstyle $}(b7);
\draw[double equal sign distance] (a1) -- (b5);
\draw[double equal sign distance] (a2) -- (b6);				
\end{tikzpicture}
\end{array}\label{eqn:forHn}
\end{equation}
where ${}_{\Upgamma}(P_k,N)$ is shorthand for $\Hom_{\Upgamma}(P_k, N)$, and ${}_{\Upgamma}(P_0,N)$ is in degree $n$.  It is clear from \eqref{eqn:forHn} that $\HH^{j}(\circ\uppi)$ is the identity when $j>n$, and $\HH^j(\circ\uppi)$ is zero if $j<n$.  In particular, if $i>0$, then $\HH^{i+n}(\circ\uppi)=\mathrm{Id}$ and so it induces morphisms
		\[
		\Ext^i_\Upgamma(M,N)\cong \HH^{i+n}(\scrHom_{\Upgamma}(\scrP_{\leq n}, N)) \xrightarrow{\mathrm{Id}}\HH^{i+n}(\scrHom_{\Upgamma}(\scrP, N))\cong \Ext^{i+n}_{\Upgamma}(M,N).
		\]
		The composition is an isomorphism, proving (2).  
		
		For (1), notice from \eqref{eqn:forHn} that $\HH^{n}(\circ\uppi)$ is the induced map
		\[
		\Ker(\circ d_1)\to \Ker(\circ d_1)/\image(\circ d_0).
		\]
		Since this is induced by the identity, it is clearly surjective with claimed kernel.
	\end{proof}

Consider now the following commutative diagram, where all the rows and columns are exact, because $\PP, \scrP$ and $\scrP_{\leq n}$ are bounded-above complexes of projective modules:
\begin{center}
\begin{tikzcd}
& 0 \arrow[d]     & 0 \arrow[d]     & 0 \arrow[d]        &   \\
0 \arrow[r] & {\scrHom_{\Upgamma}(\scrP_{\leq n}, \PP)} \arrow[r, "\circ\uppi"] \arrow[d, "\upiota\circ"]  & {\scrHom_{\Upgamma}(\scrP, \PP)} \arrow[r, "\circ\upiota"] \arrow[d, "\upiota\circ"] & \scrEnd_{\Upgamma}(\PP) \arrow[r] \arrow[d, "\upiota\circ"]           & 0 \\
0 \arrow[r] & {\scrHom_{\Upgamma}(\scrP_{\leq n}, \scrP)} \arrow[r, "\circ\uppi"] \arrow[d, "\uppi\circ"] & \scrEnd_{\Upgamma}(\scrP) \arrow[r, "\circ\upiota"] \arrow[d, "\uppi\circ"]   & {\scrHom_{\Upgamma}(\PP, \scrP)} \arrow[r] \arrow[d, "\uppi\circ"] & 0 \\
0 \arrow[r] & \scrEnd_{\Upgamma}(\scrP_{\leq n }) \arrow[r, "\circ\uppi"] \arrow[d]      & {\scrHom_{\Upgamma}(\scrP,\scrP_{\leq n})} \arrow[r, "\circ\upiota"] \arrow[d]    & {\scrHom_{\Upgamma}(\PP,\scrP_{\leq n})} \arrow[r] \arrow[d]     & 0 \\
			& 0                                                                               & 0                                                                         & 0.                                                             &  
\end{tikzcd}
\end{center}
This diagram will allow us to compute the cohomology of $\scrEnd_{\Upgamma}(\PP)$ and of $\scrHom_{\Upgamma}(\PP, \scrP)$.  As in the proof of \ref{lem.map-pi}, the map
\[ 
\Ext^{i-n}_\Upgamma(M,M)\cong\HH^{i}(\scrHom_{\Upgamma}(\scrP_{\leq n}, \scrP)) \to \HH^{i}(\scrEnd_{\Upgamma}(\scrP)) \cong \Ext^{i}_\Upgamma(M,M) 
\]
is an isomorphism for $i > n$, and is surjective for $i=n$, with kernel $\image(\circ d_0)\subseteq\Hom_\Upgamma(P_0,M)$.
Likewise, the map
\[ 
\Ext^{i}_\Upgamma(M,M) \cong\HH^{i}(\scrEnd_{\Upgamma}(\scrP_{\leq n})) \to \HH^{i}(\scrHom_{\Upgamma}(\scrP, \scrP_{\leq n})) \cong \Ext^{i+n}_\Upgamma(M,M) 
\]
is an isomorphism for $i > 0$, and is surjective for $i=0$, with kernel $\image(\circ d_0)\subseteq\Hom_\Upgamma(P_0,M)$.

Notice that we cannot obtain the same kind of information for the map induced by $\circ\uppi$ in the first row of the diagram, because the complexes in the first row do not calculate Ext groups, since $\PP$ is not a projective resolution. Therefore, to calculate the underlying graded vector space of the cohomology of $\scrEnd_\Upgamma(\PP)$, we proceed by calculating the cohomology of the complexes in the last column in the second and third rows, using the information we know about the maps induced by $\circ\uppi$ in the second and third row. 
Thus we obtain from the second row an isomorphism of vector spaces
\begin{equation}\label{eq.coh-ext}
\HH^{i}(\scrHom_{\Upgamma}(\PP, \scrP)) = \begin{cases} 0 &\quad i < 0,\ i \geq n\\ 
\Ext^i_{\Upgamma}(M,M) &\quad i=0,  \hdots , n-2 \\ \Ext^{n-1}_{\Upgamma}(M,M) \oplus \image(\circ d_0) &\quad i =n-1,
 \end{cases}
\end{equation}
and from the third row an isomorphism of vector spaces
\begin{equation}
\HH^{i}(\scrHom_{\Upgamma}(\PP,\scrP_{\leq n})) =\begin{cases} 0 &\quad i < -n,\ i \geq 0\\ 
\Ext^{i+n}_{\Upgamma}(M,M) &\quad i=-n,  \hdots , -2\\ \Ext^{n-1}_{\Upgamma}(M,M) \oplus \image(\circ d_0) &\quad i =-1. 
\end{cases}\label{eq.coh-ext2}
\end{equation}
From these, using the third column in the diagram above, we can calculate the cohomology of $\scrEnd_{\Upgamma}(\PP)$, which, by \ref{thm:qis of scrT}, coincides with the cohomology of $\scrT$. In fact, from the third column one obtains a long exact sequence in cohomology relating $\HH^*(\scrEnd_\Upgamma(\PP))$, \eqref{eq.coh-ext} and \eqref{eq.coh-ext2}. From the form of \eqref{eq.coh-ext} and \eqref{eq.coh-ext2}, one notices that the map induced by $\uppi\circ$ in cohomology is always zero, because either the source or the target is zero. This immediately implies the following.

\begin{thm}\label{eq.coh-t}
The only non-zero cohomology groups of the $\DG$-algebras $\scrT$ and $\scrEnd_{\Upgamma}(\PP)$ are the following, where in each degree we take the direct sum of the vector spaces stated.
\[
\begin{tikzpicture}
\node (a2) at (-4.6,0) {\scalebox{0.8}{$\HH^{-n+1}(\scrT)$}};
\node (a3) at (-3.35,-0.1) {\scalebox{0.8}{$\hdots$}};
\node (a4) at (-2.1,0) {\scalebox{0.8}{$\HH^{-1}(\scrT)$}};
\node (a5) at (0,0) {\scalebox{0.8}{$\HH^{0}(\scrT)$}};
\node (a6) at (2,0) {\scalebox{0.8}{$\HH^{1}(\scrT)$}};
\node (a7) at (3.1,-0.1) {\scalebox{0.8}{$\hdots$}};
\node (a8) at (4.4,0) {\scalebox{0.8}{$\HH^{n-2}(\scrT)$}};
\node (a8) at (6.5,0) {\scalebox{0.8}{$\HH^{n-1}(\scrT)$}};

\node (b2) at (-4.6,-0.75) {\scalebox{0.8}{$\Hom_{\Upgamma}(M,M)$}};
\node (b3) at (-3.35,-0.85) {\scalebox{0.8}{$\hdots$}};
\node (b4) at (-2.1,-0.75) {\scalebox{0.8}{$\Ext^{n-2}_{\Upgamma}(M,M)$}};
\node (b5) at (0,-0.75) {\scalebox{0.8}{$\Ext^{n-1}_{\Upgamma}(M,M)$}};
\node (b5b) at (0,-1.25) {\scalebox{0.8}{$\image(\circ d_0)$}};
\node (b5c) at (0,-1.75) {\scalebox{0.8}{$\Hom_\Upgamma(M,M)$}};
\node (b6) at (2,-1.75) {\scalebox{0.8}{$\Ext^1_\Upgamma(M,M)$}};
\node (b7) at (3.1,-1.85) {\scalebox{0.8}{$\hdots$}};
\node (b8) at (4.4,-1.75) {\scalebox{0.8}{$\Ext^{n-2}_\Upgamma(M,M)$}};
\node (b9c) at (6.5,-1.75) {\scalebox{0.8}{$\Ext^{n-1}_\Upgamma(M,M)$}};
\node (b9b) at (6.5,-1.25) {\scalebox{0.8}{$\image(\circ d_0)$}};

\end{tikzpicture}
\]
\end{thm}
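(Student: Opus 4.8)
The plan is to read off the cohomology of $\scrEnd_{\Upgamma}(\PP)$ from the third column of the large commutative diagram just displayed, namely the short exact sequence of complexes of $k$-vector spaces
\[
0 \to \scrEnd_{\Upgamma}(\PP) \xrightarrow{\upiota\circ} \scrHom_{\Upgamma}(\PP,\scrP) \xrightarrow{\uppi\circ} \scrHom_{\Upgamma}(\PP,\scrP_{\leq n}) \to 0,
\]
whose two outer terms have their cohomology computed explicitly in \eqref{eq.coh-ext} and \eqref{eq.coh-ext2}, and then to transfer the answer to $\scrT$ via the quasi-isomorphism of \ref{thm:qis of scrT}.

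First I would write out the long exact sequence in cohomology attached to this short exact sequence. The crucial point is that $\HH^{i}(\uppi\circ)\colon \HH^{i}(\scrHom_{\Upgamma}(\PP,\scrP)) \to \HH^{i}(\scrHom_{\Upgamma}(\PP,\scrP_{\leq n}))$ is the zero map for every $i$: by \eqref{eq.coh-ext} the source vanishes outside degrees $0,\dots,n-1$, while by \eqref{eq.coh-ext2} the target vanishes outside degrees $-n,\dots,-1$, so in each degree one of the two groups is already zero. Hence the long exact sequence breaks into short exact sequences
\[
0 \to \HH^{i-1}(\scrHom_{\Upgamma}(\PP,\scrP_{\leq n})) \to \HH^{i}(\scrEnd_{\Upgamma}(\PP)) \to \HH^{i}(\scrHom_{\Upgamma}(\PP,\scrP)) \to 0
\]
for all $i$, and since we are working over the field $k$ each of these splits, giving a (non-canonical) identification $\HH^{i}(\scrEnd_{\Upgamma}(\PP)) \cong \HH^{i-1}(\scrHom_{\Upgamma}(\PP,\scrP_{\leq n})) \oplus \HH^{i}(\scrHom_{\Upgamma}(\PP,\scrP))$.

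Next I would substitute the values from \eqref{eq.coh-ext} and \eqref{eq.coh-ext2}, taking care of the degree shift by one in the first summand. One finds that both summands vanish unless $-n+1 \le i \le n-1$; for $i = -n+1$ only the first summand survives, contributing $\Ext^{0}_{\Upgamma}(M,M) = \Hom_{\Upgamma}(M,M)$; for $-n+1 < i < 0$ only the first summand survives, contributing $\Ext^{i+n-1}_{\Upgamma}(M,M)$; in degree $i = 0$ both summands contribute, giving $\Hom_{\Upgamma}(M,M) \oplus \image(\circ d_{0}) \oplus \Ext^{n-1}_{\Upgamma}(M,M)$; for $0 < i < n-1$ only the second summand survives, contributing $\Ext^{i}_{\Upgamma}(M,M)$; and for $i = n-1$ the second summand gives $\image(\circ d_{0}) \oplus \Ext^{n-1}_{\Upgamma}(M,M)$. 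This is precisely the list in the statement, and the identical statement for $\scrT$ is immediate from \ref{thm:qis of scrT}.

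There is no serious obstacle here: once the vanishing of $\HH^{\bullet}(\uppi\circ)$ is observed, everything is degree bookkeeping. The only place demanding a little attention is the degree-zero slot, where the $i=0$ piece of \eqref{eq.coh-ext} and the (shifted) $i=-1$ piece of \eqref{eq.coh-ext2} both feed in and must be seen to add rather than interact, which is exactly what the splitting of the short exact sequence guarantees. I would also sanity-check the shift by confirming that the extreme group $\HH^{-n+1}(\scrT)$ comes out as $\Hom_{\Upgamma}(M,M)$ and that $\HH^{n-1}(\scrT)$ carries the copy of $\image(\circ d_{0})$ at the right-hand end of the picture.
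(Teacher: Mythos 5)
Your proposal is correct and takes essentially the same route as the paper: the paper likewise uses the third column of the displayed $3\times 3$ diagram, observes that the map induced by $\uppi\circ$ on cohomology vanishes for degree reasons from \eqref{eq.coh-ext} and \eqref{eq.coh-ext2}, and reads off $\HH^{i}(\scrEnd_{\Upgamma}(\PP))$ from the resulting split short exact sequences before transferring to $\scrT$ via \ref{thm:qis of scrT}. Your write-up just makes the long exact sequence and the splitting over $k$ explicit, which the paper leaves implicit.
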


\section{Yoneda and Uniqueness}\label{sec:Yoneda}
This section is concerned with basic properties of periodic resolutions, in terms of Yoneda extension classes, and from this answers questions on the dependence of the $\DG$-algebras $\scrT$ and $\scrEnd_\Upgamma(\PP)$ on the choice of input data.

\medskip
Let ${\Upgamma}$ be a $\mathbb{C}$-algebra and let $M$ be a periodic $\Upgamma$-module, as in \ref{key setup}.  We retain the notation there, in particular the complex $\PP$ of finitely generated projectives which induces an exact sequence 
\begin{equation}
0 \to M \xrightarrow{\upbeta}  P_{n-1} \to P_{n-2} \to \dots \to P_{1} \to P_0 \xrightarrow{\upalpha} M  \to 0.\label{eq:Yon1}
\end{equation}

\begin{remark}\label{rem.uniqueness}
Let $\PP,\PP'$ be two periodic projective resolutions of $M$, as in \eqref{eq:Yon1}, of the same length. 
\begin{enumerate}
\item\label{rem.uniqueness1} There exists a chain map $g \colon \PP \to\PP'$ lifting the identity on $M$, which is unique up to chain homotopy in the following sense: given any other chain map $h \colon  \PP \to \PP'$ lifting the identity on $M$ there exists a chain homotopy $k$ such that
\[
g_i-h_i=
\begin{cases}
d_{1}' \circ k_0&\mbox{if } i=0\\
d_{i+1}'\circ k_i + k_{i-1}\circ d_i&\mbox{if } 1\leq i\leq n-2\\
 \upbeta'\circ k_{n-1} + k_{n-2}\circ d_{n-1}&\mbox{if } i=n-1\\
\end{cases}
\]
as illustrated in the following diagram
\[
\begin{tikzpicture}[xscale=0.75,yscale=1.5]
\node (A1) at (-1.75,0) {$P_{n-1}$};
\node (A2) at (0.75,0) {$P_{n-2}$};
\node (A3) at (3,0) {$\hdots$};
\node (A4) at (5,0) {$P_1$};
\node (A5) at (7,0) {$P_0$};
\draw[->] (A1)--node[above]{$\scriptstyle d_{n-1}$}(A2);
\draw[->] (A2)--(A3);
\draw[->] (A3)--(A4);
\draw[->] (A4)--node[above]{$\scriptstyle d_1$}(A5);
\node (Bm1) at (-5.5,-1) {$0$};
\node (B0) at (-4,-1) {$M$};
\node (B1) at (-1.75,-1) {$P'_{n-1}$};
\node (B2) at (0.75,-1) {$P'_{n-2}$};
\node (B3) at (3,-1) {$\hdots$};
\node (B4) at (5,-1) {$P'_1$};
\node (B5) at (7,-1) {$P'_0$};
\draw[->] (Bm1)--(B0);
\draw[->] (B0)--node[below]{$\scriptstyle\upbeta'$}(B1);
\draw[->] (B1)--node[below]{$\scriptstyle d_{n-1}'$}(B2);
\draw[->] (B2)--(B3);
\draw[->] (B3)--(B4);
\draw[->] (B4)--node[below]{$\scriptstyle d_1'$}(B5);
\draw[->] (A1) --(B1);
\draw[->] (A2) -- node[right] {$\scriptstyle $}(B2);
\draw[->] (A4) -- node[gap] {$\scriptstyle g_{1}-h_1$} (B4);
\draw[->] (A5) -- node[gap] {$\scriptstyle g_{0}-h_0$} (B5);
\draw[->] (A1) --node[gap] {$\scriptstyle k_{n-1}$}(B0);
\draw[->] (A2) -- node[gap] {$\scriptstyle k_{n-2}$}(B1);
\draw[->] (A4) -- node[gap] {$\scriptstyle k_{1}$} (B3);
\draw[->] (A5) -- node[gap] {$\scriptstyle k_{0}$} (B4);

\end{tikzpicture}
\]
\item It is important to note that in \eqref{rem.uniqueness1}, the chain map $g \colon \PP \to\PP'$ that lifts the identity on $M$ is not necessarily a quasi-isomorphism. This is because the map 
\[
\begin{tikzpicture}[xscale=0.75,yscale=1.5]
\node (Am1) at (-5.5,0) {$0$};
\node (A0) at (-4,0) {$M$};
\node (A1) at (-1.75,0) {$P_{n-1}$};
\draw[->] (Am1)--(A0);
\draw[->] (A0)--node[above]{$\scriptstyle\upbeta$}(A1);
\draw[->] (A1)--node[above]{$\scriptstyle d_{n-1}$}(A2);
\node (Bm1) at (-5.5,-1) {$0$};
\node (B0) at (-4,-1) {$M$};
\node (B1) at (-1.75,-1) {$P'_{n-1}$};
\draw[->] (Bm1)--(B0);
\draw[->] (B0)--node[above]{$\scriptstyle\upbeta'$}(B1);
\draw[->] (B1)--node[above]{$\scriptstyle d_{n-1}'$}(B2);
\draw[densely dotted,->]  (A0) --node[left] {$\scriptstyle $}(B0);
\draw[->] (A1) --node[left] {$\scriptstyle g_{n-1}$}(B1);
\end{tikzpicture}
\]
 induced by $g_{n-1}$ is not necessarily an isomorphism.  We will show in \ref{lem.qisom-endomo-simple} that this problem can be overcome in the case when $M$ is simple.
\item\label{rem.uniqueness3} Regardless of the above two points, which both involve lifting the identity of $M$, if there happens to exist a quasi-isomorphism $g \colon \PP \to \PP'$,  then $\scrEnd_{\Upgamma}(\PP)$ and $\scrEnd_{\Upgamma}(\PP')$  are quasi-isomorphic  $\DG$-algebras.  This follows by e.g.\ \cite[4.4]{ST}, since $g$ is a homotopy equivalence by the projectivity of all the modules involved. 
\end{enumerate}
\end{remark}

A periodic projective resolution of $M$ may alternatively be thought of as an $n$-fold extension of $M$ with itself formed by projective modules. In particular, according to the theory of $n$-fold extensions of Yoneda \cite{Yo}, it gives a class $\upvartheta \in \Ext^n_{\Upgamma}(M,M)$.

\begin{defin}\label{def.periodic-qiso}
Let $\PP, \PP'$ be two periodic projective  resolutions of $M$, as in \eqref{eq:Yon1}. A periodic quasi-isomorphism $g \colon \PP \to \PP'$ is a collection of maps which combine to give the following commutative diagram.
\[
\begin{tikzpicture}[xscale=0.75,yscale=1.5]
\node (Am1) at (-5.5,0) {$0$};
\node (A0) at (-4,0) {$M$};
\node (A1) at (-1.75,0) {$P_{n-1}$};
\node (A2) at (0.75,0) {$P_{n-2}$};
\node (A3) at (3,0) {$\hdots$};
\node (A4) at (5,0) {$P_1$};
\node (A5) at (7,0) {$P_0$};
\node (A6) at (9,0) {$M$};
\node (A7) at (10.5,0) {$0$};
\draw[->] (Am1)--(A0);
\draw[->] (A0)--node[above]{$\scriptstyle\upbeta$}(A1);
\draw[->] (A1)--(A2);
\draw[->] (A2)--(A3);
\draw[->] (A3)--(A4);
\draw[->] (A4)--(A5);
\draw[->] (A5)--node[above]{$\scriptstyle\upalpha$}(A6);
\draw[->] (A6)--(A7);
\node (Bm1) at (-5.5,-1) {$0$};
\node (B0) at (-4,-1) {$M$};
\node (B1) at (-1.75,-1) {$P'_{n-1}$};
\node (B2) at (0.75,-1) {$P'_{n-2}$};
\node (B3) at (3,-1) {$\hdots$};
\node (B4) at (5,-1) {$P'_1$};
\node (B5) at (7,-1) {$P'_0$};
\node (B6) at (9,-1) {$M$};
\node (B7) at (10.5,-1) {$0$};
\draw[->] (Bm1)--(B0);
\draw[->] (B0)--node[above]{$\scriptstyle\upbeta'$}(B1);
\draw[->] (B1)--(B2);
\draw[->] (B2)--(B3);
\draw[->] (B3)--(B4);
\draw[->] (B4)--(B5);
\draw[->] (B5)--node[above]{$\scriptstyle\upalpha'$}(B6);
\draw[->] (B6)--(B7);
\draw[double distance=2pt]  (A0) --(B0);
\draw[->] (A1) --node[right] {$\scriptstyle g_{n-1}$}(B1);
\draw[->] (A2) -- node[right] {$\scriptstyle g_{n-2}$}(B2);
\draw[->] (A4) -- node[right] {$\scriptstyle g_{1}$} (B4);
\draw[->] (A5) -- node[right] {$\scriptstyle g_{0}$} (B5);
\draw[double distance=2pt] (A6) --(B6);
\end{tikzpicture}
\]
\end{defin}

Note that a periodic quasi-isomorphism is in  fact a quasi-isomorphism, given that it induces an isomorphism on the only two non-zero cohomologies.

\begin{remark}\label{rem.yoneda-eq}
Let $\upvartheta, \upvartheta'$ denote the Ext classes associated to the periodic projective resolutions $\PP, \PP'$ respectively.
According to the theory of $n$-fold extensions of Yoneda, a  periodic quasi-isomorphism $g \colon \PP \to \PP'$ gives an equivalence between the two extensions, and hence we have that $\upvartheta= \upvartheta'$ in $\Ext^n_{\Upgamma}(M,M)$.
\end{remark}
	
The converse is also true.
	
\begin{prop}\label{prop.yoneda-ext}
Two periodic projective resolutions of $M$ of the same length have the same extension class if and only if there exists a periodic quasi-isomorphism between them.
\end{prop}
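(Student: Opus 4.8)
The implication ``$\Leftarrow$'' is precisely Remark~\ref{rem.yoneda-eq}, so the plan is to prove ``$\Rightarrow$'': assuming $\upvartheta=\upvartheta'$, I would construct a periodic quasi-isomorphism $g\colon\PP\to\PP'$ by hand. The idea is to compute $\Ext^n_{\Upgamma}(M,M)$ using $\scrP$ itself, exploiting that $\scrP$ is a projective resolution of $M$ whose term in homological degree $k$ is $P_{k\bmod n}$; in particular its first $n$ terms, with differentials $d_1,\dots,d_{n-1}$ and augmentation $\upalpha$, are exactly $\PP$, and the differential of $\scrP$ from homological degree $n$ to degree $n-1$ is $d_0=\upbeta\circ\upalpha$.

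First I would form the complex $\widetilde{\PP}'\colonequals\big(M\xrightarrow{\upbeta'}P'_{n-1}\xrightarrow{d'_{n-1}}\cdots\xrightarrow{d'_1}P'_0\big)$; exactness of the defining sequence of $\PP'$ shows $\widetilde{\PP}'$ is a resolution of $M$, its only nonzero cohomology being $M$ in degree $0$. Since $\scrP$ is a complex of projectives, the comparison theorem produces a chain map $\theta\colon\scrP\to\widetilde{\PP}'$ lifting $\mathrm{id}_M$, built degree by degree with $\theta_k=0$ automatically for $k>n$. The usual dictionary between Yoneda $n$-extensions and $\Ext$ computed from a projective resolution (cf.\ \cite{Yo}) then identifies the degree-$n$ component $\theta_n\colon P_0\to M$ as a cocycle of $\scrHom_{\Upgamma}(\scrP,M)$ representing $\upvartheta'$. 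Symmetrically, the chain map $\scrP\to\widetilde{\PP}$ that is the identity in homological degrees $<n$ is forced to have degree-$n$ component $\upalpha$, so $\upvartheta$ is represented by the cocycle $\upalpha\colon P_0\to M$.

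The key step is a correction. Since $\upvartheta=\upvartheta'$, the two cocycles differ by a coboundary: $\theta_n-\upalpha=\upmu\circ d_0=\upmu\circ\upbeta\circ\upalpha$ for some $\upmu\colon P_{n-1}\to M$. I would then replace $\theta_{n-1}$ by $\theta_{n-1}-\upbeta'\circ\upmu$ and $\theta_n$ by $\upalpha$, leaving all other components of $\theta$ unchanged; using $d'_{n-1}\circ\upbeta'=0$ and $\upalpha\circ d_1=0$ one checks directly that the result is still a chain map $\scrP\to\widetilde{\PP}'$ lifting $\mathrm{id}_M$, now with degree-$n$ component exactly $\upalpha$. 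Finally I would set $g_i\colonequals\theta_i\colon P_i\to P'_i$ for $0\le i\le n-1$. The chain-map relations of this corrected $\theta$ in degrees $1,\dots,n-1$ give the commuting squares $d'_i\circ g_i=g_{i-1}\circ d_i$; the fact that $\theta$ lifts $\mathrm{id}_M$ gives $\upalpha'\circ g_0=\upalpha$; and the degree-$n$ relation now reads $\upbeta'\circ\upalpha=g_{n-1}\circ\upbeta\circ\upalpha$, whence $g_{n-1}\circ\upbeta=\upbeta'$ since $\upalpha$ is surjective. Thus $g$ is the desired periodic quasi-isomorphism.

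I expect the main obstacle to be bookkeeping rather than anything conceptual: (i) pinning down precisely which cocycle of $\scrHom_{\Upgamma}(\scrP,M)$ represents a given Yoneda $n$-extension class under the resolution $\scrP$ --- this is classical but must be set up with care, and one should observe that the signs carried by the differentials of $\scrP$ when $n$ is odd play no role here, since the only identities used are $d_0=\upbeta\circ\upalpha$ and $\upalpha\circ d_1=0$; and (ii) verifying that the corrected $\theta$ remains a chain map. The actual content of the proposition, however, is squeezed entirely into the correction step: an abstract equality of $\Ext^n$-classes is promoted to a genuine morphism of $n$-extensions that is the identity on both copies of $M$ --- which is exactly what Definition~\ref{def.periodic-qiso} demands.
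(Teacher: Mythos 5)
Your proof is correct and follows essentially the same route as the paper's: both recall the Yoneda $\upchi$-map, represent $\upvartheta$ and $\upvartheta'$ by a lift of $\Id_M$, observe that equality of classes produces a map $P_{n-1}\to M$ witnessing the difference as a coboundary, and then correct the lift at the $(n-1)$st component by subtracting $\upbeta'\circ(-)$ applied to that witness. The only difference is cosmetic: you compute $\Ext^n_{\Upgamma}(M,M)$ from the full periodic resolution $\scrP$, so your cocycles live in $\Hom_\Upgamma(P_0,M)$ (with $\upvartheta$ represented by $\upalpha$ and the coboundary by $\mu\circ\upbeta\circ\upalpha$), whereas the paper works directly with the truncated $n$-extension so that cocycles live in $\Hom_\Upgamma(M,M)$ (with $\upvartheta=[\Id_M]$ and the coboundary by $h\circ\upbeta$); the two pictures match under the standard identification $\Hom_\Upgamma(M,M)\cong\Ker(\circ d_1)\subseteq\Hom_\Upgamma(P_0,M)$ given by $\upphi\mapsto\upphi\circ\upalpha$, and your $\mu$ is precisely the paper's $h$.
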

\begin{proof}
Let $\upvartheta  \in \Ext^n_{\Upgamma}(M,M)$ be the class of the periodic projective  resolution 
\begin{equation}\label{eq.periodic-proj}
0 \to M \xrightarrow{\upbeta}  P_{n-1} \xrightarrow{d_{n-1}}P_{n-2} \to \dots \to P_{1} \xrightarrow{d_{1}} P_0 \xrightarrow{\upalpha} M  \to 0.
\end{equation}		
Following \cite{Yo}, there is a natural map from the set of $n$-fold extensions $E^n(M,M)$ to $\Ext^n_{\Upgamma}(M,M)$. To describe this, first choose a projective resolution of $M$, truncate it at length $n$ to obtain
\[
0 \to \Ker(\updelta_{n-1})\xrightarrow{\upiota}  E_{n-1} \xrightarrow{\updelta_{n-1}}E_{n-2} \to \dots \to E_{1} \xrightarrow{\updelta_{1}} E_0 \xrightarrow{\upvarepsilon} M  \to 0,
\]
then choose a chain map $f$ that lifts the identity on $M$, as follows
\[
\begin{tikzpicture}[xscale=0.75,yscale=1.5]
\node (Am1) at (-6.25,0) {$0$};
\node (A0) at (-4.5,0) {$\Ker(\updelta_{n-1})$};
\node (A1) at (-1.75,0) {$E_{n-1}$};
\node (A2) at (0.75,0) {$E_{n-2}$};
\node (A3) at (3,0) {$\hdots$};
\node (A4) at (5,0) {$E_1$};
\node (A5) at (7,0) {$E_0$};
\node (A6) at (9,0) {$M$};
\node (A7) at (10.5,0) {$0$};
\draw[->] (Am1)--(A0);
\draw[->] (A0)--node[above]{$\scriptstyle\upiota$}(A1);
\draw[->] (A1)--(A2);
\draw[->] (A2)--(A3);
\draw[->] (A3)--(A4);
\draw[->] (A4)--(A5);
\draw[->] (A5)--node[above]{$\scriptstyle\upvarepsilon$}(A6);
\draw[->] (A6)--(A7);
\node (Bm1) at (-6.25,-1) {$0$};
\node (B0) at (-4.5,-1) {$M$};
\node (B1) at (-1.75,-1) {$P_{n-1}$};
\node (B2) at (0.75,-1) {$P_{n-2}$};
\node (B3) at (3,-1) {$\hdots$};
\node (B4) at (5,-1) {$P_1$};
\node (B5) at (7,-1) {$P_0$};
\node (B6) at (9,-1) {$M$};
\node (B7) at (10.5,-1) {$0$};
\draw[->] (Bm1)--(B0);
\draw[->] (B0)--node[above]{$\scriptstyle\upbeta$}(B1);
\draw[->] (B1)--(B2);
\draw[->] (B2)--(B3);
\draw[->] (B3)--(B4);
\draw[->] (B4)--(B5);
\draw[->] (B5)--node[above]{$\scriptstyle\upalpha$}(B6);
\draw[->] (B6)--(B7);
\draw[->]  (A0) --node[right] {$\scriptstyle f_{n}$}(B0);
\draw[->] (A1) --node[right] {$\scriptstyle f_{n-1}$}(B1);
\draw[->] (A2) -- node[right] {$\scriptstyle f_{n-2}$}(B2);
\draw[->] (A4) -- node[right] {$\scriptstyle f_{1}$} (B4);
\draw[->] (A5) -- node[right] {$\scriptstyle f_{0}$} (B5);
\draw[double distance=2pt] (A6) --(B6);
\end{tikzpicture}
\]
Such a chain map always exists and it is unique up to homotopy (see e.g.\ \ref{rem.uniqueness}). Then the Yoneda map \cite{Yo} is 
\begin{equation}\label{eq.chi}
\upchi \colon E^n(M,M) \to \tfrac{\Hom_{\Upgamma}(\Ker(\updelta_{n-1}) ,M)}{\circ\upiota (\Hom_{\Upgamma}(E_{n-1},M))} \cong \Ext^n_{\Upgamma}(M,M), 
\end{equation}
defined by sending $[0 \to M \to P_{n-1} \to \hdots \to P_0 \to M] \mapsto [f_{n}]$.
The map $\upchi$ is an isomorphism between $n$-fold extensions modulo equivalence and $ \Ext^n_{\Upgamma}(M,M).$
		
In our case it is natural to choose as a truncated projective resolution the exact sequence \eqref{eq.periodic-proj}, and the chain map $f$ to be the identity. Therefore the class $\upvartheta \in \Ext^n_{\Upgamma}(M,M)$ corresponds to the class of the identity map  in $\Hom_{\Upgamma}(M,M)/{\circ\upbeta(\Hom_{\Upgamma}(P_{n-1}, M))} $.
		
Consider now another periodic projective  resolution of length $n$ of $M$, namely
\[
0 \to M \xrightarrow{\upbeta'}  P'_{n-1} \xrightarrow{d'_{n-1}} P'_{n-2} \to \dots \to P'_{1} \xrightarrow{d'_{1}} P'_0 \xrightarrow{\upalpha'} M  \to 0,
\]
with extension class $\upvartheta'$.  There exists a map $g$ of complexes
\[
\begin{tikzpicture}[xscale=0.75,yscale=1.5]
\node (Am1) at (-5.5,0) {$0$};
\node (A0) at (-4,0) {$M$};
\node (A1) at (-1.75,0) {$P_{n-1}$};
\node (A2) at (0.75,0) {$P_{n-2}$};
\node (A3) at (3,0) {$\hdots$};
\node (A4) at (5,0) {$P_1$};
\node (A5) at (7,0) {$P_0$};
\node (A6) at (9,0) {$M$};
\node (A7) at (10.5,0) {$0$};
\draw[->] (Am1)--(A0);
\draw[->] (A0)--node[above]{$\scriptstyle\upbeta$}(A1);
\draw[->] (A1)--(A2);
\draw[->] (A2)--(A3);
\draw[->] (A3)--(A4);
\draw[->] (A4)--(A5);
\draw[->] (A5)--node[above]{$\scriptstyle\upalpha$}(A6);
\draw[->] (A6)--(A7);
\node (Bm1) at (-5.5,-1) {$0$};
\node (B0) at (-4,-1) {$M$};
\node (B1) at (-1.75,-1) {$P'_{n-1}$};
\node (B2) at (0.75,-1) {$P'_{n-2}$};
\node (B3) at (3,-1) {$\hdots$};
\node (B4) at (5,-1) {$P'_1$};
\node (B5) at (7,-1) {$P'_0$};
\node (B6) at (9,-1) {$M$};
\node (B7) at (10.5,-1) {$0$};
\draw[->] (Bm1)--(B0);
\draw[->] (B0)--node[above]{$\scriptstyle\upbeta'$}(B1);
\draw[->] (B1)--(B2);
\draw[->] (B2)--(B3);
\draw[->] (B3)--(B4);
\draw[->] (B4)--(B5);
\draw[->] (B5)--node[above]{$\scriptstyle\upalpha'$}(B6);
\draw[->] (B6)--(B7);
\draw[->]  (A0) --node[right] {$\scriptstyle g_{n}$}(B0);
\draw[->] (A1) --node[right] {$\scriptstyle g_{n-1}$}(B1);
\draw[->] (A2) -- node[right] {$\scriptstyle g_{n-2}$}(B2);
\draw[->] (A4) -- node[right] {$\scriptstyle g_{1}$} (B4);
\draw[->] (A5) -- node[right] {$\scriptstyle g_{0}$} (B5);
\draw[double distance=2pt] (A6) --(B6);
\end{tikzpicture}
\]
such that $[g_{n}] = \upvartheta'$.  Then by \cite[3.5]{Yo}
\[ 
\upvartheta = \upvartheta' \iff [g_{n}] = [\Id_M ] \iff g_{n} - \Id_M \in \image (\circ\upbeta).
\]		
Now, suppose that $g_{n} - \Id_M \in \image (\circ \upbeta)$, so there exists $h \in \Hom_{\Upgamma}(P_{n-1}, M)$ such that $g_{n} - \Id_M  = h \circ \upbeta$.  Then consider the map $\mathsf{g} \colon \PP \to \PP' $ defined by
\[ 
\mathsf{g_i} = 
\begin{cases}
g_i &\mbox{ if } 0\leq i\leq n-2,\\
g_{n-1} - \upbeta' \circ h&\mbox{ if }i=n-1.
\end{cases}
\]
Since
\[ 
\mathsf{g}_{n-1} \circ \upbeta = {g_{n-1}} \circ \upbeta - \upbeta' \circ h \circ \upbeta = \upbeta' \circ g_{n} -\upbeta' \circ g_{n} + \upbeta' = \upbeta',
\]
$\mathsf{g}$ is a periodic quasi-isomorphism. The converse is clear, by \ref{rem.yoneda-eq}.
\end{proof}
	
\begin{lemma}\label{lem.surj-map-hom-ext} 
A periodic projective resolution $\PP$ of $M$ determines a surjective map 
\[
\uppi \colon \Hom_{\Upgamma}(M,M) \twoheadrightarrow \Ext^n_{\Upgamma}(M,M),
\] 
given by the Yoneda product $(-)\smile \upvartheta$, where $\upvartheta$ is the extension class of the periodic projective resolution. In particular, if $\upvartheta=0$ then $\Ext^n_{\Upgamma}(M,M)=0$.
\end{lemma}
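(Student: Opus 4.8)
The map in the statement is by construction $f \mapsto f\smile\upvartheta$, so the only real content is its surjectivity; once that is known, the final assertion is automatic, since if $\upvartheta = 0$ the map is identically zero and hence $\Ext^n_\Upgamma(M,M)$ vanishes. The plan is to establish surjectivity by computing $\Ext^*_\Upgamma(M,M)$ with the \emph{periodic} resolution $\scrP$ from \ref{key setup}, rather than an arbitrary one, and reading off what is needed at the cochain level.

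Concretely, I would use $\Ext^i_\Upgamma(M,M) = \HH^i(\scrHom_\Upgamma(\scrP,M))$: the relevant stretch of $\scrHom_\Upgamma(\scrP,M)$ around cohomological degree $n$ is, up to signs, $\Hom_\Upgamma(P_{n-1},M) \xrightarrow{\circ d_0} \Hom_\Upgamma(P_0,M) \xrightarrow{\circ d_1} \Hom_\Upgamma(P_1,M)$, where the middle term sits in degree $n$ and comes from the copy $\PP[n]$ of $\PP$ inside $\scrP$, $d_0 = \upbeta\circ\upalpha$, and the signs are those of \ref{ex:spelloutscrP} (and play no role). Thus a degree-$n$ cocycle is a map $\psi\colon P_0 \to M$ with $\psi\circ d_1 = 0$; since $\image(d_1) = \Ker(\upalpha)$ by exactness of \eqref{eq:Yon1} and $\upalpha$ is surjective, every such $\psi$ factors uniquely as $\psi = f\circ\upalpha$ with $f\in\Hom_\Upgamma(M,M)$. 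Hence every class in $\Ext^n_\Upgamma(M,M)$ is represented by a cocycle of the form $f\circ\upalpha$, and it remains only to recognise $[f\circ\upalpha]$ as $f\smile\upvartheta$. For $f = \Id_M$ this is precisely the identification in \ref{prop.yoneda-ext}: via the Yoneda isomorphism \eqref{eq.chi}, applied with the periodic truncated resolution \eqref{eq.periodic-proj} and the identity comparison map, $\upvartheta$ corresponds to $[\Id_M]$, i.e.\ to the cocycle $\upalpha$. The general case follows because the Yoneda action of $\Hom_\Upgamma(M,M) = \Ext^0_\Upgamma(M,M)$ on $\Ext^n_\Upgamma(M,M)$ is computed on $\scrHom_\Upgamma(\scrP,M)$ by composing the cocycle $\upalpha$ with a chain lift of $f$, which again returns the cocycle $f\circ\upalpha$; so $(-)\smile\upvartheta$ is the canonical surjection $\Hom_\Upgamma(M,M)\twoheadrightarrow\Ext^n_\Upgamma(M,M)$. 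Alternatively, surjectivity alone is immediate from \ref{lem.map-pi}\eqref{lem.map-pi 1} with $N = M$, after checking that the map $\HH^n(\circ\uppi)$ there agrees with $(-)\smile\upvartheta$ — which is the same bookkeeping.

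The step I expect to need the most care, although it is conceptually routine, is this last convention-matching: fixing the sign of the internal differential $P_1\to P_0$ of $\scrP$, and verifying that composing $\upalpha$ with (a lift of) $f$ genuinely computes the Yoneda product $f\smile\upvartheta$ in the convention in force. Everything else is a direct unwinding of the definitions and of the exactness in \eqref{eq:Yon1}.
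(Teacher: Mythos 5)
Your argument is correct and matches the paper's proof in substance: both reduce $\Ext^n_\Upgamma(M,M)$ to $\Hom_\Upgamma(M,M)/\Im(\circ\upbeta)$ via the periodic resolution and then identify $(-)\smile\upvartheta$ with the canonical projection. The convention-matching step you flag as the only one needing care --- that the Yoneda product of $f\in\Hom_\Upgamma(M,M)$ with $\upvartheta$ is computed by post-composition, so that $f\smile\upvartheta$ corresponds to the cocycle $f\circ\upalpha$ --- is precisely what the paper settles by writing out the pushout diagram with $M\oplus_g P_{n-1}$ and reading off that $\upchi(g\smile\upvartheta)=[g]$.
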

	
\begin{proof}
There is a natural surjection $\Hom_\Upgamma(M,M)\twoheadrightarrow \Hom_{\Upgamma}(M,M)/{\circ\upbeta(\Hom_{\Upgamma}(P_{n-1}, M))}$. The claim is, once we identify the right hand side with $E^n(M,M)$ modulo equivalence as in the proof of \ref{prop.yoneda-ext}, that this map is given by the Yoneda product with $\upvartheta$.

This follows directly from the definition of the Yoneda product $g \smile \upvartheta$  for $g \in \Hom_{\Upgamma}(M,M)$, which we recall is defined as the extension class of the second exact sequence in the diagram below:
		\[
\begin{tikzpicture}[xscale=0.75,yscale=1.5]
\node (Am1) at (-6.25,0) {$0$};
\node (A0) at (-4.75,0) {$M$};
\node (A1) at (-2.5,0) {$P_{n-1}$};
\node (A2) at (0.75,0) {$P_{n-2}$};
\node (A3) at (3,0) {$\hdots$};
\node (A4) at (5,0) {$P_1$};
\node (A5) at (7,0) {$P_0$};
\node (A6) at (9,0) {$M$};
\node (A7) at (10.5,0) {$0$};
\draw[->] (Am1)--(A0);
\draw[->] (A0)--node[above]{$\scriptstyle\upbeta$}(A1);
\draw[->] (A1)--node[above]{$\scriptstyle d_{n-1}$}(A2);
\draw[->] (A2)--(A3);
\draw[->] (A3)--(A4);
\draw[->] (A4)--(A5);
\draw[->] (A5)--node[above]{$\scriptstyle\upalpha$}(A6);
\draw[->] (A6)--(A7);
\node (Bm1) at (-6.25,-1) {$0$};
\node (B0) at (-4.75,-1) {$M$};
\node (B1) at (-2.5,-1) {$M\oplus_g P_{n-1}$};
\node (B2) at (0.75,-1) {$P_{n-2}$};
\node (B3) at (3,-1) {$\hdots$};
\node (B4) at (5,-1) {$P_1$};
\node (B5) at (7,-1) {$P_0$};
\node (B6) at (9,-1) {$M$};
\node (B7) at (10.5,-1) {$0$};
\draw[->] (Bm1)--(B0);
\draw[->] (B0)--node[above]{$\scriptstyle$}(B1);
\draw[->] (B1)--node[above]{$\scriptstyle (0,d_{n-1})$}(B2);
\draw[->] (B2)--(B3);
\draw[->] (B3)--(B4);
\draw[->] (B4)--(B5);
\draw[->] (B5)--node[above]{$\scriptstyle\upalpha'$}(B6);
\draw[->] (B6)--(B7);
\draw[->]  (A0) --node[right] {$\scriptstyle g$}(B0);
\draw[->] (A1) --node[right] {$\scriptstyle $}(B1);
\draw[double distance=2pt] (A2)--(B2);
\draw[double distance=2pt] (A4)--(B4);
\draw[double distance=2pt] (A5)--(B5);
\draw[double distance=2pt] (A6) --(B6);
\end{tikzpicture}
\]
where $M \oplus_g P_{n-1}$ is the pushout along $g$ and $\upbeta$, namely
\[ 
M \oplus P_{n-1}/{\{ g(n)- \upbeta (n) \ | \ n \in M\}}.
\] 
Since $g \smile \upvartheta$ is the second sequence, it follows that $\upchi$ sends $g \smile \upvartheta$ to $[g]$, and thus
\[
\begin{tikzpicture}
\node (A) at (0,0) {$\Hom_\Upgamma(M,M)$};
\node (B) at (4,0) {$\Hom_\Upgamma(M,M)/\Im(\circ\upbeta)$};
\node (C) at (4,-1.5) {$E^n(M,M)/\sim$};
\draw[->>] (A) --(B);
\draw[->] (A) --node[below left]{$\scriptstyle (-)\smile\upvartheta$}(C);
\draw[->] (C) --node[right]{$\scriptstyle \upchi$}(B);
\end{tikzpicture}
\]
commutes, as claimed.

Now $\upvartheta =0$ in $\Ext^n_{\Upgamma}(M,M)$ if and only if $\Id_M$ belongs to $\Im(\circ\upbeta)$, which is if and only if $\Id_M = h \circ \upbeta $ for some $h \colon P_{n-1}\to  M$.  In that case, for any $g \in \Hom_{\Upgamma}(M,M)$, $g= g \circ \Id_M = g \circ h \circ \upbeta$, so that $g\in\Im(\circ\upbeta)$ and hence it is zero in the $\Ext$ group. Since $\uppi$ is a surjection, this implies that $\Ext_{\Upgamma}^n(M,M)=0$.
\end{proof}

The following special case will be used in future sections.

\begin{cor}\label{cor.simple-module-lambda}
Let $\PP, \PP'$ be two periodic projective resolutions of length $n$ of a simple $\Upgamma$-module $S$. Then for the associated extension classes $\upvartheta, \upvartheta' \in \Ext_{\Upgamma}^n(S,S)$, either $\upvartheta = \upvartheta'=0$ or $\upvartheta'=\uplambda \upvartheta$ for some $\uplambda\in\mathbb{C}^{\times}$.
\end{cor}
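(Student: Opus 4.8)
The plan is to run Lemma~\ref{lem.surj-map-hom-ext} twice, once for each resolution, and then invoke Schur's lemma. The crucial point to extract from that lemma is that a periodic projective resolution $\PP$ does not merely produce \emph{some} surjection $\Hom_\Upgamma(S,S)\twoheadrightarrow\Ext^n_\Upgamma(S,S)$, but the specific one given by the Yoneda product $(-)\smile\upvartheta$; in particular it sends $\Id_S$ to $\Id_S\smile\upvartheta=\upvartheta$. Likewise $\PP'$ yields a surjection $\Hom_\Upgamma(S,S)\twoheadrightarrow\Ext^n_\Upgamma(S,S)$ sending $\Id_S$ to $\upvartheta'$.

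First I would dispose of the degenerate case. If $\upvartheta=0$, then the last sentence of Lemma~\ref{lem.surj-map-hom-ext} gives $\Ext^n_\Upgamma(S,S)=0$, and hence $\upvartheta'=0$ as well, placing us in the first alternative; by symmetry the same conclusion holds if $\upvartheta'=0$. So from this point on I may assume both $\upvartheta$ and $\upvartheta'$ are nonzero, and in particular $\Ext^n_\Upgamma(S,S)\ne 0$.

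Next I would apply Schur's lemma: since $\Upgamma$ is a $\mathbb{C}$-algebra and $S$ is simple, $\End_\Upgamma(S)\cong\mathbb{C}$. Then the map $\uppi\colon\mathbb{C}\cong\Hom_\Upgamma(S,S)\twoheadrightarrow\Ext^n_\Upgamma(S,S)$ is a surjective $\mathbb{C}$-linear map from a one-dimensional space onto a nonzero space, hence an isomorphism; thus $\Ext^n_\Upgamma(S,S)$ is one-dimensional over $\mathbb{C}$ and is spanned by $\upvartheta=\uppi(\Id_S)$. Applying the same reasoning to $\PP'$ shows $\upvartheta'$ also spans $\Ext^n_\Upgamma(S,S)$. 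Since two nonzero vectors in a one-dimensional $\mathbb{C}$-vector space differ by a scalar in $\mathbb{C}^\times$, we conclude $\upvartheta'=\uplambda\upvartheta$ for some $\uplambda\in\mathbb{C}^\times$, as required.

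There is no genuine obstacle here; the construction has been set up precisely so that everything follows from Lemma~\ref{lem.surj-map-hom-ext}. The only point that deserves care is the appeal to Schur's lemma to identify $\End_\Upgamma(S)$ with the ground field $\mathbb{C}$ — this is exactly what makes the sharp form of the statement (agreement of the classes up to a \emph{scalar} $\uplambda$, rather than up to a unit of a possibly larger division ring) valid, and it is why the hypothesis that $\Upgamma$ is a $\mathbb{C}$-algebra and $S$ is simple is used rather than the weaker setup of~\ref{key setup}.
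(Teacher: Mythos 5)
Your proof is correct and follows essentially the same route as the paper's: both appeal to Lemma~\ref{lem.surj-map-hom-ext} to get the surjection $(-)\smile\upvartheta$, use Schur's lemma to identify $\Hom_\Upgamma(S,S)\cong\mathbb{C}$, and conclude that $\Ext^n_\Upgamma(S,S)$ is either zero or one-dimensional spanned by $\upvartheta$. Your version is marginally cleaner in disposing of the $\upvartheta=0$ case up front (the paper handles it by reaching a contradiction from $\uplambda=0$), but the argument is the same.
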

	
\begin{proof}
By \ref{lem.surj-map-hom-ext}, there is a surjection $\mathbb{C}\cong \Hom_{\Upgamma}(S,S)\twoheadrightarrow \Ext_{\Upgamma}^n(S,S)$.  It follows that $\Ext_{\Upgamma}^n(S,S)$ is either $\mathbb{C}$ or zero.  Write $\upvartheta , \upvartheta' \in \Ext^n_{\Upgamma}(S,S) $ for the extension class associated to $\PP$ and $\PP'$ respectively.  If $\Ext_{\Upgamma}^n(S,S)=0$, then $\upvartheta=\upvartheta'=0$.
The other case is when $\Ext_{\Upgamma}^n(S,S)\cong \mathbb{C}$, in which case $\upvartheta' =\uplambda \upvartheta$ for some $\uplambda \in \mathbb{C}$. If  $\uplambda =0$, then $\upvartheta'=0$, so by \ref{lem.surj-map-hom-ext} necessarily $\Ext_{\Upgamma}^n(S,S)=0$, which is a contradiction.  Thus $\upvartheta' = \uplambda \upvartheta$ with $\uplambda \in \mathbb{C}^{\times}$.
	\end{proof}

\section{Strictly unital  \texorpdfstring{$A_\infty$}{Ainfty}-algebras}\label{sec:StrictlyUnital}

In this section, we recall some definitions and technical results about strictly unital $A_\infty$-algebras, mainly following \cite{LH}.

\begin{defin}
An $A_\infty$-algebra $(\scrA, \{m_i\}_{i\geq 1})$ is \emph{strictly unital} if it is equipped with an element of degree zero $\upeta_\scrA$ such that $m_2(a\otimes\, \upeta_\scrA)= m_2(\upeta_\scrA\otimes a)= a$ for all $a \in \scrA$, and for all $i \neq 2$,
\[ 
m_i(a_1\otimes \hdots \otimes a_{j-1} \otimes\, \upeta_\scrA\otimes a_{j+1} \otimes \hdots \otimes a_i)=0
\]
for all $j \in \{1, \hdots, i\}$ and all $a_1, \hdots, a_{j-1}, a_{j+1}, \hdots,  a_i \in \scrA$.
\end{defin}
A $\DG$-algebra with unity, considered as an $A_\infty$-algebra, is strictly unital.

Given an $A_\infty$-morphism $f \colon \scrA \rightsquigarrow \scrB$, we write $f_i \colon \scrA^{\otimes i} \to \scrB$ for its components, which are of degree $1-i$.

\begin{defin}
Let $\scrA, \scrB$ be two strictly unital $A_\infty$-algebras. An $A_\infty$-morphism $f \colon \scrA \rightsquigarrow \scrB$ is \emph{strictly unital} if $f_1 (\upeta_\scrA) = \upeta_\scrB$ and for every $i \neq 1$,
\[ 
f_i(a_1\otimes \hdots \otimes a_{j-1} \otimes\, \upeta_\scrA\otimes a_{j+1} \otimes \hdots \otimes a_i)=0
\]
for all $j \in \{1, \hdots, i\}$ and all $a_1, \hdots, a_{j-1}, a_{j+1}, \hdots,  a_i \in \scrA$.
\end{defin}

It is clear that composition of strictly unital $A_\infty$-morphisms is again a strictly unital $A_\infty$-morphism. Trivially, any unit-preserving $\DG$-algebra morphism of unital $\DG$-algebras is a strictly unital $A_\infty$-morphism.

Kadeishvili's Theorem, known more generally as the Homotopy Transfer Theorem, equips the cohomology of a $\DG$-algebra with an $A_\infty$-structure, which is unique up to quasi-isomorphism \cite{Ka}. We are interested here in the strictly unital version of this theorem, or to be precise the following strictly unital version of the improved result due to Markl \cite{Markl}.

\begin{prop}[{\cite[3.2.4.1, 3.2.4.2]{LH}}]\label{prop.strictly-unital-HT}
	Any strictly unital $A_\infty$-algebra $\scrA$ satisfying $\HH^*(\scrA)\neq 0$ admits a strictly unital minimal model $\scrA'$ such that the $A_\infty$-quasi-isomorphisms 
	\[ 
	i \colon \scrA' \rightsquigarrow \scrA, \quad p \colon \scrA \rightsquigarrow \scrA'
	\] are strictly unital.
\end{prop}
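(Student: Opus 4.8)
The plan is to run the Homotopy Transfer Theorem with a contraction of the underlying cochain complex of $\scrA$ that is \emph{adapted to the unit}, and then to check that strict unitality propagates through the transfer formulae (the genuinely delicate point being the morphism $p$, which I will repair at the end). First note that, since $\scrA$ is unital and $\HH^*(\scrA)\neq 0$, the class $\mathsf 1\colonequals[\upeta_\scrA]$ is nonzero in $\HH^0(\scrA)$: indeed $\upeta_\scrA$ is a cocycle (by the strict unitality relation with $i=1$, $m_1\upeta_\scrA=0$) and if it were a coboundary then $\mathsf 1=0$, forcing $\HH^*(\scrA)=0$ as $\HH^*(\scrA)$ is unital. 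Thus $\upeta_\scrA$ is a non-coboundary cocycle, and this is precisely the property we shall exploit: the line $\mathbb{C}\upeta_\scrA$ sits among the cocycles but avoids the coboundaries.

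Concretely, I would choose a graded subspace $H\subseteq\Ker m_1$ with $\Ker m_1=H\oplus\image(m_1)$ and $\upeta_\scrA\in H$ (possible exactly because $\upeta_\scrA$ is a non-coboundary cocycle), together with a graded complement $C$ of $\Ker m_1$ in $\scrA$. Identifying $H\cong\HH^*(\scrA)$ gives $i_1\colon\HH^*(\scrA)\hookrightarrow\scrA$ with $i_1\mathsf 1=\upeta_\scrA$, and a projection $p_1\colon\scrA\twoheadrightarrow H\cong\HH^*(\scrA)$ with $p_1 i_1=\Id$ and $p_1\upeta_\scrA=\mathsf 1$. Since $m_1|_C\colon C\xrightarrow{\ \sim\ }\image(m_1)$, letting $h$ be $(m_1|_C)^{-1}$ on $\image(m_1)$ and zero on $H\oplus C$ yields a homotopy with $\Id_\scrA-i_1 p_1=m_1 h+h m_1$ and the ``special'' side conditions $h^2=0$, $h i_1=0$, $p_1 h=0$; crucially $h\upeta_\scrA=0$ because $\upeta_\scrA\in H$.

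Feeding this contraction into the classical Homotopy Transfer Theorem of Kadeishvili \cite{Ka} and Markl \cite{Markl} produces a minimal $A_\infty$-structure $\{m'_n\}$ on $\HH^*(\scrA)$ and $A_\infty$-quasi-isomorphisms $i\colon\scrA'\rightsquigarrow\scrA$, $p\colon\scrA\rightsquigarrow\scrA'$ extending $i_1,p_1$, all given by sums over planar rooted trees whose leaves are decorated by $i_1$ (for $m'_n$ and $i_n$) or by the input (for $p_n$), whose internal edges are decorated by $h$, whose vertices are the $m_k$, and whose root is decorated by $p_1$ (for $m'_n$ and $p_n$) or by $h$ (for $i_n$, $n\geq 2$). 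The heart of the matter is a combinatorial observation: if the unit is plugged into one leaf, so that the vertex $v=m_k$ it feeds receives the input $\upeta_\scrA$, then either $k\neq 2$ and $v$ already outputs $0$ by strict unitality of $\scrA$, or $k=2$ and the output of $v$ equals its other input $y$, which is of the form $i_1(-)$ or $h(-)$; applying the next decoration to $y$ — another $h$ on an internal edge, $h$ at the root, or $p_1$ at the root — then gives $h^2(-)=0$, $h i_1(-)=0$, or $p_1 h(-)=0$ by the side conditions. The only surviving configuration is the $2$-corolla, which yields $m'_2(\mathsf 1\otimes a)=m'_2(a\otimes\mathsf 1)=p_1 m_2(\upeta_\scrA\otimes i_1 a)=a$ and $i_1\mathsf 1=\upeta_\scrA$. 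Hence $(\scrA',\{m'_n\})$ is strictly unital with unit $\mathsf 1$ and $i$ is a strictly unital $A_\infty$-morphism; here one uses that the unit leaf really carries $i_1\mathsf 1=\upeta_\scrA$.

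The remaining obstacle, and the reason the proposition is not completely formal, is that the transferred $p$ need not be strictly unital: already the $2$-leaf tree gives
\[
p_2(\upeta_\scrA\otimes a)=\pm\, p_1 m_2(\upeta_\scrA\otimes a)=\pm\, p_1(a),
\]
which in general does not vanish, the point being that here the unit leaf feeds an $m_2$-vertex whose other input is a bare leaf rather than an $i_1(-)$ or an $h(-)$. I expect overcoming this to be the main work. The remedy — which is exactly what the refinement in \cite{LH} supplies — is to carry out the whole construction after splitting the unit off: write $\scrA=\mathbb{C}\upeta_\scrA\oplus\widebar{\scrA}$ as complexes, with $\widebar{\scrA}$ a subcomplex containing all coboundaries and a chosen complement of $\mathsf 1$ inside $H$ (this is a subcomplex precisely because $\image(m_1)\subseteq\widebar{\scrA}$); strict unitality of $\scrA$ transports to a non-unital $A_\infty$-structure on $\widebar{\scrA}$ — equivalently, one passes to the reduced bar construction, as in \cite[\S1]{LH} — whose cohomology is $\HH^*(\scrA)/\mathbb{C}\mathsf 1$. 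Apply the ordinary (non-unital) minimal-model theorem there to obtain a minimal $\widebar{\scrA}'$ and non-unital quasi-isomorphisms, then readjoin the unit, setting $\scrA'\colonequals\mathbb{C}\mathsf 1\oplus\widebar{\scrA}'$ and extending all structure maps and both morphisms by the strict unitality rules ($i_1\mathsf 1\colonequals\upeta_\scrA$, $p_1\upeta_\scrA\colonequals\mathsf 1$, and $0$ on any higher input involving the unit). By construction $\scrA'$, $i$ and $p$ are then strictly unital, while the underlying quasi-isomorphism of complexes — hence the $A_\infty$-quasi-isomorphism property — is inherited from the non-unital statement. The bookkeeping that makes ``transport strict unitality to $\widebar{\scrA}$'' precise, and that checks the $A_\infty$-morphism equations for the unit-extended $i$ and $p$, is the technical content behind \cite[3.2.4.1, 3.2.4.2]{LH}; the tree analysis of the previous paragraph is exactly the engine that runs inside it once the unit has been split off.
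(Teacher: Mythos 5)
The paper offers no proof of this proposition; it is cited from Lef\'evre--Hasegawa, so there is nothing in the text to compare against. Your reconstruction correctly identifies the two main moves in that source: first choose a contraction of the underlying complex adapted to the unit (i.e.\ with $\upeta_\scrA$ in the chosen harmonic subspace $H$, so that $h\upeta_\scrA=0$, $i_1\mathsf{1}=\upeta_\scrA$, $p_1\upeta_\scrA=\mathsf{1}$, together with the side conditions), and then observe that the genuine obstruction is not the transferred products $m'_n$ nor the inclusion $i$, but the projection $p$; and second, that the clean way to handle $p$ is to work in the augmented/non-unital picture and only readjoin the unit at the end. That is indeed the architecture of \cite[3.2.1, 3.2.4]{LH}, and your tree analysis for $m'_n$ and $i_n$ using the side conditions $h\upeta_\scrA=0$, $hi_1=0$, $h^2=0$, $p_1h=0$ is exactly the standard argument.

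Two imprecisions are worth flagging. The illustrative computation $p_2(\upeta_\scrA\otimes a)=\pm p_1m_2(\upeta_\scrA\otimes a)$ cannot be the transfer formula for $p_2$: in the paper's conventions $p_2$ has degree $-1$ while $p_1 m_2$ has degree $0$, so the tree for $p_2$ must carry an $h$ on one leaf (or on the root edge), and with the adapted contraction one finds $p_2(\upeta_\scrA\otimes a)=0$. The failure of strict unitality of the naive transferred $p$ does occur, but one has to go to $p_n$ for $n\geq 3$, where a tree can have $\upeta_\scrA$ and a second bare leaf feed the same $m_2$-vertex whose output then passes through an internal $h$ and survives (e.g.\ a term of the shape $p_1m_2(hm_2(\upeta_\scrA\otimes a)\otimes hb)=p_1m_2(ha\otimes hb)$). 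So your conclusion is right but your exhibit is not. Second, the phrase ``strict unitality of $\scrA$ transports to a non-unital $A_\infty$-structure on $\widebar{\scrA}$'' understates the difficulty: for an arbitrary complement $\widebar{\scrA}$, the products $m_k$ need not map $\widebar{\scrA}^{\otimes k}$ into $\widebar{\scrA}$ (a strictly unital $A_\infty$-algebra is not automatically augmented), and the quotient tensor coalgebra $T^c(\widebar{\scrA}[1])$ does not inherit the codifferential (already $b(\upeta_\scrA\otimes a)$ has a summand $m_2(\upeta_\scrA\otimes a)=a$ lying outside the kernel of the quotient map). Making this precise --- the equivalence between strictly unital and (non-unital) $A_\infty$-structures --- is the content of \cite[\S3.2.1]{LH}, and it is not a mere naive projection. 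You do gesture at this by deferring the ``bookkeeping'' to \cite{LH}, but it is fair to say that this reduction, rather than the tree combinatorics, is where the real content lies.
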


This implies that given a unital $\DG$-algebra $\scrA$ with non-trivial cohomology, its cohomology $\HH^*(\scrA)$ can be equipped with a strictly unital $A_\infty$-structure $(\HH^*(\scrA), \{m_i\}_{i \geq 2})$ such that the extensions to $A_\infty$-morphisms of the inclusion $\HH^*(\scrA)\to \scrA$ and projection $\scrA \to \HH^*(\scrA)$  (note that the inclusion and projection maps are not unique),  are \emph{both} strictly unital $A_\infty$-quasi-isomorphisms.

\begin{assum}\label{assum:nonzero}
In the following, we will always assume that $\DG$-algebras $\scrA$ are unital and that the identity is not a coboundary, hence $\mathsf{1}\colonequals [\Id_\scrA] \in \HH^0(\scrA)$ is non-zero.
\end{assum} 
For a unital $\DG$-algebra (or for a strictly unital $A_\infty$-algebra), the identity is easily seen to be non-trivial in cohomology if and only if the cohomology is non-trivial. 
This means that we can apply \ref{prop.strictly-unital-HT} to any such $\DG$-algebra.

\begin{lemma}\label{lem.connective-truncation}
Given a unital $\DG$-algebra $\scrA$ satisfying \textnormal{\ref{assum:nonzero}}, the induced strictly unital $A_\infty$-structure on its cohomology restricts to an $A_\infty$-structure on $\mathbb{C}\mathsf{1} \oplus \HH^{>0}(\scrA)$.
\end{lemma}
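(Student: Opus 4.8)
The plan is to show that $\mathbb{C}\mathsf{1}\oplus\HH^{>0}(\scrA)$ is closed under all the higher multiplications $m_i$ of the chosen strictly unital minimal model $(\HH^*(\scrA),\{m_i\}_{i\geq 2})$, so that it inherits the structure of an $A_\infty$-subalgebra. Write $V\colonequals\mathbb{C}\mathsf{1}\oplus\HH^{>0}(\scrA)$. Since each $m_i$ is graded of degree $2-i$, I would split the verification into two cases according to whether all inputs lie in $\HH^{>0}(\scrA)$ or at least one input equals $\mathsf{1}$ (by multilinearity it suffices to treat pure tensors whose entries are each either in $\HH^{>0}$ or are $\mathsf{1}$; homogeneous elements of $V$ are sums of such, using that $\mathsf 1$ spans the degree-zero part of $V$ only if $\HH^0(\scrA)=\mathbb{C}\mathsf 1$ — but in general one just works with the decomposition $V=\mathbb{C}\mathsf 1\oplus\HH^{>0}$ and checks each summand).

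First, suppose all inputs $a_1,\dots,a_i$ lie in $\HH^{>0}(\scrA)$, so each has degree $\geq 1$. Then $m_i(a_1\otimes\cdots\otimes a_i)$ has degree $|a_1|+\cdots+|a_i|+2-i\geq i+2-i=2>0$ when $i\geq 2$, hence lands in $\HH^{>0}(\scrA)\subseteq V$. (For $i=2$ this is just that $\HH^{>0}$ is closed under the degree-zero product, since the product of two elements of positive degree has positive degree.) Second, suppose some input equals $\mathsf{1}$. If $i\neq 2$, strict unitality of the minimal model gives $m_i(\cdots\otimes\mathsf 1\otimes\cdots)=0\in V$ immediately. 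If $i=2$, strict unitality gives $m_2(a\otimes\mathsf 1)=m_2(\mathsf 1\otimes a)=a$, and $a$ is by hypothesis an element of $V$, so the output is in $V$. This exhausts all cases, so $V$ is closed under every $m_i$, and the restriction of the $m_i$ to $V^{\otimes i}$ defines an $A_\infty$-structure on $V$: the $A_\infty$-relations for the restricted operations are simply the restrictions of the $A_\infty$-relations on $\HH^*(\scrA)$, which hold because all the terms appearing are evaluations of the $m_j$ on tensors of elements of $V$, which by the closedness just established again lie in $V$.

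There is no serious obstacle here; the only point requiring a little care is the bookkeeping that every term in the $A_\infty$-associativity relations stays inside $V$ when restricted, which follows formally once closedness under each individual $m_i$ is known. The one genuine input, rather than a formality, is \ref{prop.strictly-unital-HT} (via \ref{assum:nonzero}), which provides the strictly unital minimal model in the first place and hence the vanishing $m_i(\cdots\otimes\mathsf 1\otimes\cdots)=0$ for $i\neq 2$; without strict unitality the subspace $\mathbb{C}\mathsf 1\oplus\HH^{>0}(\scrA)$ would not in general be closed, since $m_i$ applied to a tensor containing the unit could produce a component in $\HH^0(\scrA)$ outside $\mathbb{C}\mathsf 1$. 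I would state explicitly that the resulting $A_\infty$-structure on $V$ depends on the choice of strictly unital minimal model, matching the phrasing of \ref{definition nonneg subalg intro}.
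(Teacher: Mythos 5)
Your proof is correct and takes essentially the same approach as the paper: both use the degree count $\sum|a_j|+2-i\geq 2$ for inputs in $\HH^{>0}(\scrA)$ and strict unitality of the minimal model (via \ref{prop.strictly-unital-HT}) to dispose of tensors containing $\mathsf{1}$. Your additional remark that the $A_\infty$-relations automatically restrict once closedness under each $m_i$ is established is a useful clarification that the paper leaves implicit, but the underlying argument is the same.
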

\begin{proof}
Let $\scrA$ be a unital $\DG$-algebra, and let $i \colon \HH^*(\scrA) \to \scrA$ be a morphism of complexes inducing the identity in cohomology, and such that  $i (\mathsf{1})= \Id_\scrA$. Let $p \colon \scrA \to \HH^*(\scrA)$ be a unit-preserving (so $p(\Id_\scrA)=\mathsf{1}$) morphism of complexes such that $p \circ i = \Id$. By \ref{prop.strictly-unital-HT}, there exists a strictly unital $A_\infty$-structure $(\HH^*(\scrA), \{m_i\}_{i \geq 2})$ such that $i, p$ can be extended to strictly unital $A_\infty$-quasi-isomorphisms. 

We claim that for degree reasons, this $A_\infty$-structure restricts to an $A_\infty$-structure on $\mathbb{C}\mathsf{1} \oplus \HH^{>0}(\scrA)$. First, recall that $m_2(a_1,a_2)$ lives in degree $|a_1| + |a_2|$, which for inputs in $\mathbb{C}\mathsf{1} \oplus \HH^{>0}(\scrA)$ is always strictly greater than zero unless $|a_1|=|a_2|=0$. But in that case $a_1= \uplambda \mathsf{1}, a_2 = \upmu \mathsf{1}$ for some $\uplambda, \upmu \in \mathbb{C}$, so $m_2(\uplambda \mathsf{1}, \upmu \mathsf{1})= \uplambda \upmu \mathsf{1}\in \mathbb{C}\mathsf{1} \oplus \HH^{>0}(\scrA)$. Thus always $m_2$ restricts to $\mathbb{C}\mathsf{1} \oplus \HH^{>0}(\scrA)$.   For the higher products, for inputs in $\mathbb{C}\mathsf{1} \oplus \HH^{>0}(\scrA)$ by strict unitality  $m_i(a_1, \hdots , a_i)$ is non-zero only if $|a_j| \geq 1$ for all $1 \leq j \leq i$, and in that case  
	\[
	|m_i(a_1, \hdots , a_i)| = \sum_{j=1}^{i} |a_j| +2 - i \geq i +2 - i =2.\qedhere
	\]
\end{proof}
\begin{defin}\label{definition nonneg subalg}
Given a unital $\DG$-algebra $\scrB$  satisfying \textnormal{\ref{assum:nonzero}}, we call the $A_\infty$-algebra $\scrN_\scrB=\mathbb{C}\mathsf{1} \oplus \HH^{>0}(\scrB)$ equipped with the $A_\infty$-structure given by \ref{lem.connective-truncation}, the \emph{unitally positive $A_\infty$-algebra} associated to $\scrB$.
\end{defin}

We next show that this new object is well-defined.

\begin{prop}\label{scrN is well-defined}
If two unital $\DG$-algebras $\scrA$, $\scrB$ satisfying \textnormal{\ref{assum:nonzero}} are quasi-isomorphic as unital $\DG$-algebras, then there exists an $A_\infty$-quasi-isomorphism between $\scrN_\scrA$ and $\scrN_\scrB$.
\end{prop}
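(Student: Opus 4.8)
The plan is to obtain the $A_\infty$-quasi-isomorphism by restricting a strictly unital $A_\infty$-quasi-isomorphism between the minimal models out of which $\scrN_\scrA$ and $\scrN_\scrB$ are built. Write $\scrA'$ (resp.\ $\scrB'$) for the strictly unital minimal model of $\scrA$ (resp.\ $\scrB$) used in the construction of $\scrN_\scrA$ (resp.\ $\scrN_\scrB$), so that $\scrN_\scrA=\mathbb{C}\mathsf{1}\oplus\HH^{>0}(\scrA)$ is an $A_\infty$-subalgebra of $\scrA'$ and $\scrN_\scrB=\mathbb{C}\mathsf{1}\oplus\HH^{>0}(\scrB)$ is one of $\scrB'$, by \ref{lem.connective-truncation}. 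By \ref{prop.strictly-unital-HT} there are strictly unital $A_\infty$-quasi-isomorphisms $i_\scrA\colon\scrA'\rightsquigarrow\scrA$ and $p_\scrB\colon\scrB\rightsquigarrow\scrB'$. A unit-preserving quasi-isomorphism of unital $\DG$-algebras $f\colon\scrA\to\scrB$ is trivially a strictly unital $A_\infty$-morphism, so the composite $\varphi\colonequals p_\scrB\circ f\circ i_\scrA$ is a strictly unital $A_\infty$-morphism $\scrA'\rightsquigarrow\scrB'$, and it is an $A_\infty$-quasi-isomorphism since each of its three factors is. (If $\scrA$ and $\scrB$ are only related by a zigzag of quasi-isomorphisms of unital $\DG$-algebras, one applies this to each stage and uses that strictly unital $A_\infty$-quasi-isomorphisms compose and possess strictly unital homotopy inverses, cf.~\cite{LH}.)

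First I would check that $\varphi$ restricts to an $A_\infty$-morphism $\scrN_\scrA\rightsquigarrow\scrN_\scrB$, which is the same degree count as in \ref{lem.connective-truncation}. For $i\neq 1$, strict unitality forces $\varphi_i$ to vanish on any tuple containing $\mathsf{1}$, while on a tuple $a_1\otimes\dots\otimes a_i$ of elements of $\HH^{>0}(\scrA)$ one has $|\varphi_i(a_1,\dots,a_i)|=|a_1|+\dots+|a_i|+1-i\geq 1$, so $\varphi_i(\scrN_\scrA^{\otimes i})\subseteq\HH^{>0}(\scrB)\subseteq\scrN_\scrB$; and $\varphi_1$ has degree $0$ with $\varphi_1(\mathsf{1})=\mathsf{1}$, hence carries $\mathbb{C}\mathsf{1}$ into $\mathbb{C}\mathsf{1}$ and $\HH^{>0}(\scrA)$ into $\HH^{>0}(\scrB)$, so $\varphi_1(\scrN_\scrA)\subseteq\scrN_\scrB$. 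Since $\scrN_\scrA\subseteq\scrA'$ and $\scrN_\scrB\subseteq\scrB'$ are closed under all the operations $m_i$, and every component of $\varphi$ takes $\scrN_\scrA$-inputs into $\scrN_\scrB$, each $A_\infty$-morphism identity for $\varphi$ restricts termwise over $\scrN_\scrA$: every term has the shape $\varphi_{r+1+t}(\Id^{\otimes r}\otimes m_s^{\scrA'}\otimes\Id^{\otimes t})$ or $m_j^{\scrB'}(\varphi_{i_1}\otimes\dots\otimes\varphi_{i_j})$, and all of $m^{\scrA'}$, $m^{\scrB'}$ and $\varphi$ preserve the relevant positive parts. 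Hence the restricted components define an $A_\infty$-morphism $\varphi|_{\scrN_\scrA}\colon\scrN_\scrA\rightsquigarrow\scrN_\scrB$.

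Next I would verify this is a quasi-isomorphism. Both $\scrN_\scrA$ and $\scrN_\scrB$ are minimal, their differentials $m_1$ being restrictions of the vanishing differentials of $\scrA'$ and $\scrB'$, so it suffices that $\varphi_1|_{\scrN_\scrA}\colon\scrN_\scrA\to\scrN_\scrB$ is an isomorphism of graded vector spaces. Now $\varphi_1\colon\HH^*(\scrA)\to\HH^*(\scrB)$ is a degree-zero isomorphism, because $\varphi$ is an $A_\infty$-quasi-isomorphism between minimal algebras, and $\varphi_1(\mathsf{1})=\mathsf{1}$; hence $\varphi_1$ restricts to an isomorphism $\mathbb{C}\mathsf{1}\xrightarrow{\sim}\mathbb{C}\mathsf{1}$ in degree zero and to an isomorphism $\HH^{>0}(\scrA)\xrightarrow{\sim}\HH^{>0}(\scrB)$ in positive degrees. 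Assembling these gives $\varphi_1|_{\scrN_\scrA}\colon\scrN_\scrA\xrightarrow{\sim}\scrN_\scrB$, so $\varphi|_{\scrN_\scrA}$ is the required $A_\infty$-quasi-isomorphism $\scrN_\scrA\rightsquigarrow\scrN_\scrB$.

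The only genuinely delicate point is bookkeeping: one must ensure that strict unitality survives the passage through the homotopy transfer theorem and through composition of $A_\infty$-morphisms, so that $\varphi$ kills every tuple containing $\mathsf{1}$. This is exactly what the strictly unital refinement recalled in \ref{prop.strictly-unital-HT} (and the stability of strict unitality under composition noted above) provides; with that in hand, everything else reduces to the elementary degree estimate already used to prove \ref{lem.connective-truncation}.
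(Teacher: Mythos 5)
Your proof is correct and follows essentially the same route as the paper: pass to strictly unital minimal models via \ref{prop.strictly-unital-HT}, compose to get a strictly unital $A_\infty$-quasi-isomorphism between them, restrict by the degree count of \ref{lem.connective-truncation}, and observe the linear component remains an isomorphism. The only cosmetic difference is that you treat the zigzag case with a citation to \cite{LH}, while the paper explicitly inverts each left-pointing arrow through the minimal models using that strictly unital $A_\infty$-isomorphisms have strictly unital inverses by [LH, 3.2.4.6]; the content is the same.
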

\begin{proof}
	Let $\scrA$ and $\scrB$ be two unital $\DG$-algebras which are quasi-isomorphic as unital $\DG$-algebras, i.e., such that they are connected by a zigzag of quasi-isomorphisms of unital $\DG$-algebras which are unit-preserving. It is well known that one can replace a zigzag of $\DG$-algebra quasi-isomorphisms with an $A_\infty$-quasi-isomorphism $\scrA \rightsquigarrow \scrB$, see e.g. \cite[11.4.9]{LV}, but we could not find a strictly unital version of this result, so we prove it for completeness. 
	Let \[\scrA=\scrA_0 \xleftarrow{f_1} \scrA_1 \xrightarrow{f_2} \scrA_2 \xleftarrow{} \hdots \xrightarrow{} \scrA_n=\scrB\]
	be a zigzag of quasi-isomorphisms of $\DG$-algebras, such that for every $j$, $\scrA_j$ is unital and $f_j$ is unit-preserving. Without loss of generality, we show that we can replace $f_1$ with a strictly unital $A_\infty$-quasi-isomorphism in the opposite direction.
	Denote by $\HH^*(\scrA_i)$ the strictly unital minimal model of $\scrA_i$ obtained by \ref{prop.strictly-unital-HT}, and by $i\colon \HH^*(\scrA_j) \rightsquigarrow \scrA_j$, $p \colon \scrA_j \rightsquigarrow \HH^*(\scrA_j)$ the $A_\infty$-quasi-isomorphisms. The composition
\[
p \circ f_1 \circ i \colon \HH^*(\scrA_1) \rightsquigarrow \scrA_1 \xrightarrow{f_1} \scrA \rightsquigarrow \HH^*(\scrA)
\]
is a strictly unital $A_\infty$-quasi-isomorphism between minimal $A_\infty$-algebras, so it is a  strictly unital $A_\infty$-isomorphism. Then by \cite[3.2.4.6]{LH} its $A_\infty$-inverse $\upbeta \colon \HH^*(\scrA) \rightsquigarrow \HH^*(\scrA_1)$ is also strictly unital, and the composition
\[ i \circ \upbeta \circ p \colon \scrA \rightsquigarrow \HH^*(\scrA) \rightsquigarrow \HH^*(\scrA_1) \rightsquigarrow \scrA_1
\]
is a strictly unital $A_\infty$-quasi-isomorphism. Proceeding in this way for all the maps in the zigzag which point left, we obtain a strictly unital $A_\infty$-quasi-isomorphism $f \colon \scrA \rightsquigarrow \scrB$.

We next claim that the strict unitality implies that the composition
$p \circ f \circ i \colon \HH^*(\scrA) \rightsquigarrow \scrA \rightsquigarrow \scrB \rightsquigarrow \HH^*(\scrB)$, which is a strictly unital $A_\infty$-isomorphism, 
restricts to an $A_\infty$-isomorphism between  $\mathbb{C}\mathsf{1}\oplus \HH^{>0}(\scrA )$ and $\mathbb{C}\mathsf{1}\oplus \HH^{>0}(\scrB )$. In fact, for every $n>1$, if some $x_i$ is of degree zero then $f_n(x_1,\hdots ,x_n)=0$. Else, when applied to inputs in $\HH^{>0}(\scrA)$,
\[
 | f_n(x_1,\hdots ,x_n)| = \sum_{i=1}^{n} |x_i| +1 -n \geq n+1-n=1.
 \]
The linear component $f_1$ is of degree zero, so we just need to check that $f_1(\mathsf{1})=\mathsf{1}$, which follows by strict unitality.  Combining, this proves the claim.  Thus since $f_1$ (which has degree zero) is still an isomorphism after restriction, the result follows.
\end{proof}

\begin{remark}\label{rem.adjoint-to-incl}
The construction of $\scrN$ can be seen to define a right adjoint to the inclusion $ (\rm{HoDGA})^{\geq 1} \hookrightarrow \rm{HoDGA}$ of the full subcategory of DG-algebras $\scrA$ such that $\HH^0(\scrA)=\mathbb{C}$ and  $\HH^{< 0}(\scrA)=0$ into the homotopy category of DG-algebras satisfying \textnormal{\ref{assum:nonzero}}.

An analogue result, at the level of $A_\infty$-algebras (i.e., not on the homotopy category), is shown in \cite[6.3]{Rodriguez Rasmussen}, while the last part of the proof of \ref{scrN is well-defined} follows from 6.2 of \emph{loc.\ cit}.
\end{remark}

\section{The Case of Idempotents}\label{sec:idems}

In this section consider the setting where $\rmA$ is a $\mathbb{C}$-algebra with idempotent $e\in\rmA$.  Set $\Acon\colonequals \rmA/\rmA e\rmA$, then the ring homomorphism $\rmA\to\Acon$ induces the restriction and extension of scalars adjunction
\[
\begin{tikzpicture}[xscale=1]
\node (d1) at (0,0) {$\mod\Acon$};
\node (e1) at (3,0) {$\mod \rmA._{\phantom{con}}$};
\draw[->,transform canvas={yshift=+0.4ex}] (d1) to node[above] {$\scriptstyle i_*$} (e1);
\draw[<-,transform canvas={yshift=-0.4ex}] (d1) to node [below]  {$\scriptstyle i^* $} (e1);
\end{tikzpicture}
\]

\begin{setup}\label{Acon setup}
Given a finite-dimensional simple module $S\in\mod\rmA$, suppose that there exists a complex of finitely generated projective $\rmA$-modules
\[
\scrQ\colonequals\quad  0 \rightarrow Q_{n-1} \rightarrow Q_{n-2} \rightarrow \dots \rightarrow Q_{1} \rightarrow Q_0 \rightarrow 0
\]
which gives a $\rmA$-projective resolution of $S$, such that $\PP\colonequals i^*\scrQ $ satisfies
\[
\mathrm{H}^{-j}(\PP) \cong \left\{ \begin{array}{ll}
 i^*S & \text{ if $j\in \{ 0, n-1\}$ } \\
 0 & \text{ otherwise}.
\end{array}
\right.
\]
As such, we may choose $\upalpha$ and $\upbeta$ such that
\[
0\to i^*S\xrightarrow{\upbeta} P_{n-1}\to P_{n-2}\to\hdots\to P_1\to P_0\xrightarrow{\upalpha} i^*S
\to 0
\]
is exact, and thus induce a length $n$ periodic projective resolution of $i^*S$. Using Setup~\ref{key setup}, and 
\ref{Definition: TEA}, we thus obtain a trivial extension $\DG$-algebra $\scrT$, and the quasi-isomorphic $\DG$-algebra $\scrEnd_{\Acon}(\PP)$.
\end{setup}

\begin{notation}\label{not:abuse1}
We will often abuse notation and simply write $S$ for $i^*(S)$, that is, $S$ viewed as an $\Acon$-module.  This is justified, since the subscript on the Ext groups below will always tell us in which category $S$ should be viewed.
\end{notation}

\begin{remark}\label{rem:allExtsfd}
Since $\Hom_{\Acon}(\Acon,S)\cong S$, and in Setup~\ref{Acon setup} $S$ is assumed to be finite dimensional, it follows that $\Hom_{\Acon}(P,S)$ is finite dimensional for all finitely generated projective $\Acon$-modules $P$.  In particular, the following statements hold.
\begin{enumerate}
\item In the notation of \ref{lem.map-pi} and \ref{eq.coh-t}, necessarily $\Im(\circ d_0)\subseteq\Hom_{\Acon}(P_0,S)$ is finite dimensional.
\item For $i=1,\hdots,n-1$, each $\Ext^i_{\Acon}(S,S)$ is finite dimensional, given that it can be computed as a quotient of a subspace of $\Hom_{\Acon}(P_i,S)$.
\end{enumerate}
\end{remark}

\subsection{Uniqueness for simples}
It is well-known, see e.g.\ \cite[4.4]{ST}, that given another $\rmA$-projective resolution $\scrQ'$ of $S$, the $\DG$-algebras $\scrEnd_\rmA(\scrQ)$ and $\scrEnd_\rmA(\scrQ')$ are quasi-isomorphic. However, the situation we will be interested in is the one of two rings $\rmA,\mathrm{B}$ with idempotents such that $\Acon \cong \Bcon$. Thus, we will need to compare two periodic projective resolutions of length $n$ of $S$ coming respectively from an $\rmA$-projective resolution $\scrQ$ of length $n$ of $S$ and an $\mathrm{B}$-projective 
resolution $\scrQ'$ of length $n$ of $S$. 

This turns out to be possible using the fact that the module is simple, via \ref{cor.simple-module-lambda}. The following works more generally.

\begin{lemma}\label{lem.qisom-endomo-simple}
Let $\Upgamma$ be a $\mathbb{C}$-algebra, $S$ a simple $\Upgamma$-module, and let $\PP, \PP'$ be two periodic projective resolutions of length $n$ of $S$, as in \eqref{eq:Yon1}.
Then there exists a quasi-isomorphism between $\PP$ and $\PP'$, so  $\scrEnd_{\Upgamma}(\PP)$ and $\scrEnd_{\Upgamma}(\PP')$ are quasi-isomorphic as unital $\DG$-algebras. 
\end{lemma}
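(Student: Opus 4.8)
The plan is to handle both alternatives of \ref{cor.simple-module-lambda} by reducing to the existence of a periodic quasi-isomorphism, and then to invoke \ref{rem.uniqueness}\eqref{rem.uniqueness3}. So I would begin by applying \ref{cor.simple-module-lambda}: writing $\upvartheta,\upvartheta'\in\Ext^n_\Upgamma(S,S)$ for the Yoneda extension classes of $\PP,\PP'$, either $\upvartheta=\upvartheta'=0$, or $\upvartheta'=\uplambda\upvartheta$ for some $\uplambda\in\mathbb{C}^\times$.

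If $\upvartheta=\upvartheta'$ — which in particular covers the first alternative — then \ref{prop.yoneda-ext} directly yields a periodic quasi-isomorphism between $\PP$ and $\PP'$, and as noted immediately after \ref{def.periodic-qiso} this is a genuine quasi-isomorphism of complexes. In the remaining case $\upvartheta'=\uplambda\upvartheta$ with $\uplambda\neq 0$, I would first rescale $\PP'$: keeping the complex $P'_{n-1}\to\dots\to P'_0$ and all its internal differentials untouched, replace only the augmentation $\upbeta'$ by $\uplambda\upbeta'$ (so the connecting map becomes $\uplambda\,\upbeta'\circ\upalpha'$). Since $\uplambda\neq 0$ this disturbs neither the injectivity of $\upbeta'$ nor any image or kernel in the sequence, so the result is again a length $n$ periodic projective resolution $\PP''$ of $S$; and by the naturality of Yoneda Ext — concretely, rescaling $\upbeta'$ realises the pushout of the $n$-extension along a scalar endomorphism of $S$, cf.\ the description in \ref{lem.surj-map-hom-ext} — its extension class is $\uplambda^{-1}\upvartheta'=\upvartheta$. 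Now \ref{prop.yoneda-ext} provides a periodic quasi-isomorphism between $\PP$ and $\PP''$; as $\PP''$ and $\PP'$ are literally the same complex, this is a quasi-isomorphism of complexes relating $\PP$ and $\PP'$.

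In either case we have obtained a quasi-isomorphism $g\colon\PP\to\PP'$ of bounded complexes of finitely generated projective $\Upgamma$-modules, which is therefore a homotopy equivalence. Exactly as recorded in \ref{rem.uniqueness}\eqref{rem.uniqueness3}, \cite[4.4]{ST} then shows that $\scrEnd_\Upgamma(\PP)$ and $\scrEnd_\Upgamma(\PP')$ are quasi-isomorphic $\DG$-algebras; and since the cohomology of each is non-zero (e.g.\ by \ref{eq.coh-t}), so that the identities are not coboundaries, this quasi-isomorphism can be taken unit-preserving, whence $\scrEnd_\Upgamma(\PP)$ and $\scrEnd_\Upgamma(\PP')$ are quasi-isomorphic as unital $\DG$-algebras.

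The step requiring genuine care is the rescaling in the second case: beyond the (immediate) check that multiplying $\upbeta'$ by a nonzero scalar preserves exactness, one must verify that doing so scales the associated Yoneda class by the reciprocal scalar, so that $\upvartheta'$ can be transported onto $\upvartheta$. This is where the identification, from the proofs of \ref{prop.yoneda-ext} and \ref{lem.surj-map-hom-ext}, of the extension class with a pushout (or pullback) along a scalar endomorphism of $S$ is used. Everything afterwards — passing from a quasi-isomorphism of complexes to one of the endomorphism $\DG$-algebras, and the unital bookkeeping — is routine given \cite[4.4]{ST}.
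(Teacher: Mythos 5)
Your proposal is correct, and it agrees with the paper on the overall skeleton: both invoke \ref{cor.simple-module-lambda} to split into the cases $\upvartheta=\upvartheta'=0$ and $\upvartheta'=\uplambda\upvartheta$ with $\uplambda\in\mathbb{C}^\times$, and both finish via \cite[4.4]{ST}. Where you diverge is in the second case. The paper constructs a chain map $g\colon\PP\to\PP'$ lifting the identity on the right and with $[g_n]=\upvartheta'=\uplambda\upvartheta$, writes $g_n-\uplambda\Id_S=h\circ\upbeta$, and then modifies $g_{n-1}$ to $g_{n-1}-\upbeta'\circ h$ so that the new chain map $\mathsf{g}$ lifts $\uplambda\Id_S$ on the left --- an isomorphism, hence $\mathsf{g}$ is a quasi-isomorphism. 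You instead rescale the augmentation: replacing $\upbeta'$ by $\uplambda\upbeta'$ changes the Yoneda class to $\uplambda^{-1}\upvartheta'=\upvartheta$ (this is correct --- as you say, the rescaled sequence is the pushout along $\uplambda^{-1}\Id_S$, so its class is $(\uplambda^{-1}\Id_S)\smile\upvartheta'$ in the notation of \ref{lem.surj-map-hom-ext}), and then you can quote \ref{prop.yoneda-ext} verbatim; since the underlying bounded complex is literally unchanged by the rescaling, this yields a quasi-isomorphism $\PP\to\PP'$. The two routes involve essentially the same computation (your reduction hides the chain-map modification inside the proof of \ref{prop.yoneda-ext}), but yours is slightly cleaner in that it collapses the two cases into one.

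One small caveat on the closing unitality step: the reason the quasi-isomorphism of $\DG$-algebras is unit-preserving is not that the cohomologies are nonzero; it is that the roof $\scrEnd_\Upgamma(\PP)\leftarrow N\rightarrow\scrEnd_\Upgamma(\PP')$ constructed in the proof of \cite[4.4]{ST} consists of unit-preserving morphisms of unital $\DG$-algebras by inspection. Non-vanishing of cohomology is a condition on the algebras needed later (for \ref{assum:nonzero}), not a reason the morphisms preserve units. The conclusion you draw is correct, but the justification should be re-attributed.
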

\begin{proof}
Writing $\upvartheta, \upvartheta' \in \Ext_{\Upgamma}^n(S,S)$ for the extension classes of $\PP$ and $\PP'$, then by \ref{cor.simple-module-lambda} either $\upvartheta=\upvartheta'=0$ or $\upvartheta' =\uplambda \upvartheta$ for some $\uplambda \in \mathbb{C}^\times$.

If $\upvartheta=\upvartheta'=0$ then by \ref{prop.yoneda-ext} $\PP$ and $\PP'$ are linked by a periodic quasi-isomorphism, so that by \cite[4.4]{ST} the $\DG$-algebras $\scrEnd_{\Upgamma}(\PP)$ and $\scrEnd_{\Upgamma}(\PP')$ are quasi-isomorphic. Notice that the proof of \cite[4.4]{ST} works by constructing a roof of unit-preserving quasi-isomorphisms of unital $\DG$-algebras $\scrEnd_{\Upgamma}(\PP) \leftarrow N \to \scrEnd_{\Upgamma}(\PP')$, so that $\scrEnd_{\Upgamma}(\PP)$ and $\scrEnd_{\Upgamma}(\PP')$ are quasi-isomorphic as unital $\DG$-algebras.

If $\upvartheta' = \uplambda \upvartheta$ with $\uplambda \in \mathbb{C}^{\times}$, there exists a chain map $g$
\[
\begin{tikzpicture}[xscale=0.75,yscale=1.5]
\node (Am1) at (-5.5,0) {$0$};
\node (A0) at (-4,0) {$S$};
\node (A1) at (-1.75,0) {$P_{n-1}$};
\node (A2) at (0.75,0) {$P_{n-2}$};
\node (A3) at (3,0) {$\hdots$};
\node (A4) at (5,0) {$P_1$};
\node (A5) at (7,0) {$P_0$};
\node (A6) at (9,0) {$S$};
\node (A7) at (10.5,0) {$0$};
\draw[->] (Am1)--(A0);
\draw[->] (A0)--node[above]{$\scriptstyle\upbeta$}(A1);
\draw[->] (A1)--node[above]{$\scriptstyle d_{n-1}$}(A2);
\draw[->] (A2)--(A3);
\draw[->] (A3)--(A4);
\draw[->] (A4)--(A5);
\draw[->] (A5)--node[above]{$\scriptstyle\upalpha$}(A6);
\draw[->] (A6)--(A7);
\node (Bm1) at (-5.5,-1) {$0$};
\node (B0) at (-4,-1) {$S$};
\node (B1) at (-1.75,-1) {$P'_{n-1}$};
\node (B2) at (0.75,-1) {$P'_{n-2}$};
\node (B3) at (3,-1) {$\hdots$};
\node (B4) at (5,-1) {$P'_1$};
\node (B5) at (7,-1) {$P'_0$};
\node (B6) at (9,-1) {$S$};
\node (B7) at (10.5,-1) {$0$};
\draw[->] (Bm1)--(B0);
\draw[->] (B0)--node[above]{$\scriptstyle\upbeta'$}(B1);
\draw[->] (B1)--(B2);
\draw[->] (B2)--(B3);
\draw[->] (B3)--(B4);
\draw[->] (B4)--(B5);
\draw[->] (B5)--node[above]{$\scriptstyle\upalpha'$}(B6);
\draw[->] (B6)--(B7);
\draw[->]  (A0) --node[right] {$\scriptstyle g_{n}$}(B0);
\draw[->] (A1) --node[right] {$\scriptstyle g_{n-1}$}(B1);
\draw[->] (A2) -- node[right] {$\scriptstyle g_{n-2}$}(B2);
\draw[->] (A4) -- node[right] {$\scriptstyle g_{1}$} (B4);
\draw[->] (A5) -- node[right] {$\scriptstyle g_{0}$} (B5);
\draw[double distance=2pt] (A6) --(B6);
\end{tikzpicture}
\]
such that $g_{n} - \uplambda \Id_S = h \circ \upbeta$, for some $h \in \Hom_{\Upgamma}(P_{n-1}, S)$. As in the proof of \ref{prop.yoneda-ext}, we now modify the map $g$.  Consider $\mathsf{g}\colon\PP \to\PP'$ defined by
\[
\mathsf{g}_i = 
\begin{cases}
g_i &\mbox{if } i=0,\hdots, n-2\\
g_{n-1} - \upbeta' \circ h&\mbox{if } i=n-1.
\end{cases}
\]
It is easy to see that
\[
\mathsf{g}_{n-1} \circ \upbeta = {g_{n-1}}\circ \upbeta  - \upbeta' \circ h \circ \upbeta = \upbeta' \circ g_{-n} - \upbeta' \circ h \circ \upbeta = \uplambda \upbeta' .  
\]
From this, $\mathsf{g}$ is easily seen to be a quasi-isomorphism and therefore the  $\DG$-algebras of endomorphisms are again quasi-isomorphic by \cite[4.4]{ST}.
\end{proof}

\begin{cor}\label{scrN is well-defined 2}
Let $\Upgamma$ be a $\mathbb{C}$-algebra, $S\in\mod\Upgamma$ be simple, and let $\PP,\PP'$ be two periodic projective resolutions of the simple module $S$, of the same length.  Then the strictly unital $A_\infty$-algebras $ \mathbb{C}\mathsf{1} \oplus \HH^{>0}(\scrEnd_{\Upgamma}(\PP))$ and $ \mathbb{C}\mathsf{1} \oplus \HH^{>0}(\scrEnd_{\Upgamma}(\PP'))$ are quasi-isomorphic.
\end{cor}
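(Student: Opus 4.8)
The plan is to obtain this directly from Lemma~\ref{lem.qisom-endomo-simple} together with Proposition~\ref{scrN is well-defined}. Lemma~\ref{lem.qisom-endomo-simple}, applied to the simple module $S$ and its two length $n$ periodic projective resolutions $\PP$ and $\PP'$, shows that $\scrEnd_{\Upgamma}(\PP)$ and $\scrEnd_{\Upgamma}(\PP')$ are quasi-isomorphic \emph{as unital $\DG$-algebras}. Proposition~\ref{scrN is well-defined} then upgrades this to an $A_\infty$-quasi-isomorphism between $\scrN_{\scrEnd_{\Upgamma}(\PP)}$ and $\scrN_{\scrEnd_{\Upgamma}(\PP')}$, and by Definition~\ref{definition nonneg subalg} these are precisely $\mathbb{C}\mathsf{1}\oplus\HH^{>0}(\scrEnd_{\Upgamma}(\PP))$ and $\mathbb{C}\mathsf{1}\oplus\HH^{>0}(\scrEnd_{\Upgamma}(\PP'))$.

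Before invoking Proposition~\ref{scrN is well-defined} one must check that $\scrEnd_{\Upgamma}(\PP)$ and $\scrEnd_{\Upgamma}(\PP')$ satisfy Assumption~\ref{assum:nonzero}. They are visibly unital $\DG$-algebras, and by Theorem~\ref{eq.coh-t} the degree zero cohomology $\HH^0(\scrEnd_{\Upgamma}(\PP))$ contains the non-zero summand $\Hom_{\Upgamma}(S,S)$, so the cohomology is non-trivial; hence, by the remark following Assumption~\ref{assum:nonzero}, the identity is not a coboundary. The same applies to $\PP'$. With this verified, Lemma~\ref{lem.qisom-endomo-simple} and Proposition~\ref{scrN is well-defined} apply as stated, and the corollary follows.

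There is no genuine obstacle here: the statement is a formal consequence of the two preceding results, and the only point requiring a line of verification is that Assumption~\ref{assum:nonzero} holds for the endomorphism $\DG$-algebras, which is immediate from Theorem~\ref{eq.coh-t}.
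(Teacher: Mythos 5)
Your proof is correct and follows the paper's argument exactly: invoke Lemma~\ref{lem.qisom-endomo-simple} to get a quasi-isomorphism of unital $\DG$-algebras, then apply Proposition~\ref{scrN is well-defined}. The only addition is your explicit verification of Assumption~\ref{assum:nonzero} via Theorem~\ref{eq.coh-t}, which the paper leaves implicit but which is a reasonable check to record.
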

\begin{proof}
Using the fact that $S$ is simple, $\scrEnd_{\Upgamma}(\PP)$ and $\scrEnd_{\Upgamma}(\PP')$ are quasi-isomorphic as unital $\DG$-algebras by \ref{lem.qisom-endomo-simple}. The conclusion then follows from the general \ref{scrN is well-defined}.
\end{proof}

\subsection{Main reconstruction result}\label{sec:main-recons}
Under Setup~\ref{Acon setup}, since $\PP\colonequals i^*\scrQ$, consider the morphism 
\[
f \in \scrEnd_{\rmA}(\scrQ)\to i^*f \in \scrEnd_{\Acon}(\PP).
\]  
It is clear that $\updelta(i^*f)=i^*\updelta(f)$, and that the above is a morphism of unital $\DG$-algebras.  

This is not a quasi-isomorphism, but it will become one once we pass to the unitally positive  $A_\infty$-algebra.  The point will then be (in \ref{cor:main_reconstruct}) that $\scrEnd_{\rmA}(\scrQ)$, which is built on the large ring $\rmA$, can be recovered from the isomorphism class of the smaller ring $\Acon$.  The following is our main technical result.

\begin{thm}\label{Main DG result}
Under Setup~\textnormal{\ref{Acon setup}}, there is an $A_\infty$-isomorphism between the strictly unital minimal model of $\scrEnd_{\rmA}(\scrQ)$ and  ${\scrN=\mathbb{C}\mathsf{1}\oplus \HH^{>0}(\scrEnd_{\Acon}(\PP))}$. 
\end{thm}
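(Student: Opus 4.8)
The plan is to promote the natural unit‑preserving $\DG$‑algebra morphism $\phi\colon\scrEnd_{\rmA}(\scrQ)\to\scrEnd_{\Acon}(\PP)$, $f\mapsto i^{*}f$, recalled just before the theorem, to the required $A_{\infty}$‑isomorphism. Since $\scrQ$ is a length‑$n$ projective resolution of $S$ over $\rmA$, we have $\HH^{i}(\scrEnd_{\rmA}(\scrQ))\cong\Ext^{i}_{\rmA}(S,S)$, which is finite dimensional for every $i$, vanishes for $i<0$ and for $i>n-1$, and equals $\Hom_{\rmA}(S,S)\cong\mathbb{C}$ for $i=0$ as $S$ is finite dimensional and simple. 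In particular the identity is not a coboundary, so \ref{prop.strictly-unital-HT} applies to both $\scrEnd_{\rmA}(\scrQ)$ and $\scrEnd_{\Acon}(\PP)$; moreover the strictly unital minimal model of $\scrEnd_{\rmA}(\scrQ)$ already coincides with $\mathbb{C}\mathsf{1}\oplus\HH^{>0}(\scrEnd_{\rmA}(\scrQ))$, i.e.\ with its own unitally positive $A_{\infty}$‑algebra. Composing the strictly unital $A_{\infty}$‑quasi‑isomorphisms $i\colon\HH^{*}(\scrEnd_{\rmA}(\scrQ))\rightsquigarrow\scrEnd_{\rmA}(\scrQ)$ and $p\colon\scrEnd_{\Acon}(\PP)\rightsquigarrow\HH^{*}(\scrEnd_{\Acon}(\PP))$ of \ref{prop.strictly-unital-HT} with $\phi$ yields a strictly unital $A_{\infty}$‑morphism $\Psi\colon\HH^{*}(\scrEnd_{\rmA}(\scrQ))\rightsquigarrow\HH^{*}(\scrEnd_{\Acon}(\PP))$ whose linear part is $\HH^{*}(\phi)$.

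The key point is to compute $\HH^{*}(\phi)$. Because $i^{*}=-\otimes^{\mathbb{L}}_{\rmA}\Acon$ is left adjoint to restriction of scalars $i_{*}$, and $\scrQ$ is a bounded complex of projectives, the adjunction identifies $\HH^{i}(\phi)\colon\Hom_{\mathrm{D}(\rmA)}(\scrQ,\scrQ[i])\to\Hom_{\mathrm{D}(\Acon)}(\PP,\PP[i])$ with post‑composition by the unit $\eta\colon\scrQ\to i_{*}i^{*}\scrQ$. I would then compute $i_{*}i^{*}\scrQ$ inside $\mathrm{D}(\rmA)$: the map $\eta$ induces the identity on $\HH^{0}=S$, while by Setup~\ref{Acon setup} the functor $i^{*}$ creates exactly one further copy of $S$ in cohomological degree $-(n-1)$, so a cohomology count shows the cone of $\eta$ has cohomology $S$ concentrated in degree $-(n-1)$, hence (for $n\ge 2$) $\mathrm{cone}(\eta)\cong S[n-1]$; the resulting connecting morphism $S[n-1]\to\scrQ[1]\cong S[1]$ lies in $\Ext^{2-n}_{\rmA}(S,S)$, which vanishes (it is a negative $\Ext$ when $n\ge 3$, and when $n=2$ a nonzero such morphism would force $i_{*}i^{*}\scrQ\simeq 0$, contradicting $\HH^{0}\ne 0$). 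Therefore $i_{*}i^{*}\scrQ\cong\scrQ\oplus S[n-1]$ in $\mathrm{D}(\rmA)$ with $\eta$ the split inclusion of the first summand, and so
\[
\HH^{i}(\phi)\colon\ \Ext^{i}_{\rmA}(S,S)\ \hookrightarrow\ \Ext^{i}_{\rmA}(S,S)\oplus\Ext^{i+n-1}_{\rmA}(S,S)\ \cong\ \HH^{i}(\scrEnd_{\Acon}(\PP))
\]
is the inclusion of the first summand.

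It follows that $\HH^{*}(\phi)$ is injective in every degree; its image is $0$ in negative degrees, equals the unit line $\mathbb{C}\mathsf{1}$ in degree $0$ (as $\phi$ is unital), and in each degree $i\ge 1$ it is an isomorphism, since $\Ext^{i+n-1}_{\rmA}(S,S)=0$ because $i+n-1>n-1$. Hence $\mathrm{Im}\,\HH^{*}(\phi)=\mathbb{C}\mathsf{1}\oplus\HH^{>0}(\scrEnd_{\Acon}(\PP))=\scrN$ (this is also consistent with \ref{eq.coh-t} via the change‑of‑rings sequence comparing $\Ext_{\rmA}$ with $\Ext_{\Acon}$, but that comparison is not needed). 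Next, for $k\ge 2$ the component $\Psi_{k}$ vanishes on any tensor containing a degree‑zero input by strict unitality, and on the remaining inputs, all of degree $\ge 1$, one has $|\Psi_{k}(x_{1},\dots ,x_{k})|\ge k+(1-k)=1$; combined with $\Psi_{1}=\HH^{*}(\phi)$ landing in $\scrN$, this shows $\Psi$ corestricts to a strictly unital $A_{\infty}$‑morphism $\psi\colon\HH^{*}(\scrEnd_{\rmA}(\scrQ))\rightsquigarrow\scrN$. Its linear part $\psi_{1}=\HH^{*}(\phi)$ is a graded‑vector‑space isomorphism onto $\scrN$; since both $A_{\infty}$‑algebras are minimal and an $A_{\infty}$‑morphism with invertible linear part is an $A_{\infty}$‑isomorphism, with strictly unital inverse (cf.\ \cite[3.2.4.6]{LH}), $\psi$ is the desired strictly unital $A_{\infty}$‑isomorphism.

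I expect the middle step to be the main obstacle: identifying $\HH^{*}(\phi)$ with post‑composition by the adjunction unit and pinning down $i_{*}i^{*}\scrQ$. This is exactly where the defining hypothesis of Setup~\ref{Acon setup} is used --- that restriction along $\rmA\to\Acon$ doubles the cohomology of the resolution of $S$ --- and it is what guarantees that $i^{*}$ introduces \emph{only} an extra $S[n-1]$, so that passing to the unitally positive algebra on the $\Acon$‑side, which discards all negative cohomology together with the surplus $\Ext^{n-1}_{\rmA}(S,S)$ sitting in degree $0$, exactly undoes the discrepancy. The remaining $A_{\infty}$‑bookkeeping is then routine given \ref{prop.strictly-unital-HT} and the cited uniqueness results.
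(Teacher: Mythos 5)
Your proof is correct, and it reaches the paper's conclusion by a genuinely different route at the key step: showing that the linear part of the induced $A_\infty$-morphism is an isomorphism onto $\scrN$. The paper proves this by first using the commutative diagram~\eqref{eq.diagram} (with the two projective resolutions $\upvarepsilon\colon\scrQ\to S$ and $\upalpha\colon\scrP\to S$, both inducing quasi-isomorphisms after applying $\scrHom$) to conclude that $\HH^*(i^*)$ is injective, and then comparing the explicit cohomology computations of $\scrEnd_\rmA(\scrQ)$ via~\eqref{eq.coh-ext} with $\scrN$ via~\ref{eq.coh-t}: since these agree as finite-dimensional graded vector spaces, an injective map between them must be an isomorphism. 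You instead invoke the derived adjunction $Li^*\dashv i_*$ to identify $\HH^*(i^*)$ with post-composition by the unit $\eta\colon\scrQ\to i_*i^*\scrQ$, compute the cone of $\eta$, kill the connecting map by a degree argument, and thereby exhibit $i_*i^*\scrQ\cong\scrQ\oplus S[n-1]$ and $\HH^*(i^*)$ as the inclusion of the first summand. This pins down the image of $\HH^*(i^*)$ \emph{exactly} as $\mathbb{C}\mathsf{1}\oplus\HH^{>0}$ without resorting to the finite-dimensionality of the $\Ext$ groups (\ref{rem:allExtsfd}), which is what the paper relies on for its dimension count. The trade-off is that your route imports derived-category / adjunction bookkeeping that the paper avoids, while the paper re-uses the cohomology calculations of~\S\ref{sec.cohomology_end} already built earlier in the text. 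One small point you should make explicit: your cone argument is stated only for $n\geq 2$. For $n=1$ the theorem is degenerate --- both sides are the unit copy of $\mathbb{C}$ in degree zero --- so nothing is lost, but the case distinction is worth a sentence since Setup~\ref{key setup} formally allows $n\geq 1$.
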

\begin{proof}
Let  $\upvarepsilon \colon \scrQ \to  S$  be the $\rmA$-projective resolution of $S$ of length $n$ in Setup \ref{Acon setup}, and define $\upalpha$ as the map
\begin{center}
\begin{tikzcd}
i^*(\scrQ)=\PP \arrow[d, "i^*(\upvarepsilon)"'] \arrow[r, "\iota"] & \scrP \arrow[ld, "\upalpha"] \\
S   ,                                                             &                             
\end{tikzcd}
\end{center}
where $\upiota \colon \PP \to \scrP$ is the inclusion as in \S\ref{general qis section}.

Consider the following diagram
\begin{equation}\label{eq.diagram}
		\begin{tikzcd}
			\scrEnd_\rmA(\scrQ) \arrow[rd, "\upvarepsilon \circ "', bend right=15] \arrow[r, "i^*"] & \scrEnd_{\Acon}(\PP) \arrow[r, "\upiota\circ "]        & {\scrHom_{\Acon}(\PP,\scrP)} \arrow[ld, "\upalpha\circ", bend left=15] \\ & {\scrHom_\rmA(\scrQ, S)\cong \scrHom_{\Acon}(\PP,S)}, &                                                                 
		\end{tikzcd}
	\end{equation}
which commutes because  $\upalpha \circ \upiota = i^*(\upvarepsilon)$. The maps $\upvarepsilon \colon \scrQ \to  S$ and $\upalpha \colon \scrP \to S$ are projective resolutions, hence the maps $\upvarepsilon\circ $ and $ \upalpha \circ $ in the diagram above are quasi-isomorphisms of complexes. In fact, the cone of $\upvarepsilon \circ $ is isomorphic to $\scrHom_\rmA(\scrQ, \operatorname{cone}(\upvarepsilon))$, which is acyclic, because any morphism of complexes from a bounded-above complex of projective modules to an acyclic complex is homotopic to the zero map (see e.g. \cite[III.5.24]{GM}), and analogously for $ \upalpha\circ $.

This implies that we can calculate the cohomology of $\scrEnd_\rmA(\scrQ) $ as a graded vector space using \eqref{eq.coh-ext}, via
\begin{align}\label{eq.coh.ext2}
\Ext^i_\rmA(S, S)&\cong \HH^i(\scrEnd_\rmA(\scrQ))   \nonumber\\
&\cong \HH^i(\scrHom_{\Acon}(\PP, \scrP))  \nonumber\\
&= \begin{cases} 0 &\quad i < 0,\ i \geq n\\ 
\Ext^i_{\Acon}(S,S) &\quad i=0,  \hdots , n-2 \\ \Ext^{n-1}_{\Acon}(S,S) \oplus \image(\circ d_0) &\quad i =n-1.
\end{cases}
\end{align}

Using the diagram  \eqref{eq.diagram} we can relate the cohomology of $\scrEnd_\rmA(\scrQ) $, i.e., $\Ext^*_\rmA(S,S)$, to the cohomology of $\scrEnd_{\Acon}(\PP)$, which was calculated in \S\ref{sec.cohomology_end}. From the commutativity of the diagram \eqref{eq.diagram} and the fact that $\upvarepsilon\circ , \upalpha\circ $ are quasi-isomorphisms, the composition along the top line of \eqref{eq.diagram} is a quasi-isomorphism. Thus $i^* \colon \scrEnd_\rmA(\scrQ) \to \scrEnd_{\Acon}(\PP)$ is injective in cohomology, and  $\upiota\circ \colon \scrEnd_{\Acon}(\PP) \to \scrHom_{\Acon}(\PP, \scrP)$ is surjective in cohomology.

Now consider the unitally positive $A_\infty$-algebra $\scrN= \mathbb{C}\mathsf{1}\oplus \HH^{>0}(\scrEnd_{\Acon}(\PP))$ defined in \ref{definition nonneg subalg}. From \ref{eq.coh-t} we have
	\begin{equation}\label{eq.B}
		\scrN^i = \begin{cases} 0 &\quad i \leq -1,\ i \geq n\\  \mathbb{C}&\quad i = 0\\
			\Ext^i_{\Acon}(S,S) &\quad i=1, \hdots , n-2 \\ \Ext^{n-1}_{\Acon}(S,S) \oplus \image(\circ d_0) &\quad i =n-1
			. \end{cases}
	\end{equation}

Consider the diagram
\[
\begin{tikzcd}
\scrEnd_{\rmA}(\scrQ) \arrow[r, "i^*"] \arrow[d, shift left] & \scrEnd_{\Acon}(\PP) \arrow[d, "p", shift left]     \\
\HH^*(\scrEnd_{\rmA}(\scrQ)) \arrow[u, "j", shift left]      & \HH^*(\scrEnd_{\Acon}(\PP)) \arrow[u, shift left] & {\scrN\colonequals \mathbb{C}\mathsf{1}\oplus \HH^{>0}(\scrEnd_{\Acon}(\PP))}. \arrow[l, hook]
\end{tikzcd}
\]
By \ref{prop.strictly-unital-HT}, the projection $p$ and the inclusion  $j$ can be extended to strictly unital $A_\infty$-quasi-isomorphisms, so we obtain a strictly unital $A_\infty$-morphism  
\[
\upvarphi\colon 	\HH^*(\scrEnd_{\rmA}(\scrQ)) \rightsquigarrow \scrEnd_{\rmA}(\scrQ) \xrightarrow{i^*} \scrEnd_{\Acon}(\PP) \rightsquigarrow \HH^*(\scrEnd_{\Acon}(\PP)), 
\]
with linear component $\upvarphi_1= p \circ i^* \circ j$.  The linear component $\upvarphi_1$ of the  $A_\infty$-morphism $\upvarphi$ can be thought of as the map induced by $i^*$ in cohomology: hence, as remarked above, it is injective. 
	
By the cohomology calculation \eqref{eq.coh.ext2} and by the same argument as in \ref{scrN is well-defined}, since $\upvarphi$ is strictly unital, a simple degree calculation shows that all its components have image contained in $\scrN$, so there is an $A_\infty$-morphism
\begin{equation}
		\upvarphi \colon \HH^*(\scrEnd_{\rmA}(\scrQ)) \rightsquigarrow \scrN= \mathbb{C}\mathsf{1}\oplus \HH^{>0}(\scrEnd_{\Acon}(\PP))\label{eqn:phidef},
\end{equation}
with linear component  $\upvarphi_1= p \circ i^* \circ j$, which is injective.

Now by \ref{rem:allExtsfd} all the vector spaces in \eqref{eq.B} are finite dimensional, and any injective linear map between vector spaces of the same dimension is an isomorphism.  Thus, now by simply comparing the cohomology of $\scrEnd_{\rmA}(\scrQ) $ in \eqref{eq.coh.ext2} with $\scrN^i$ in \eqref{eq.B}, the injective map 
\[
\upvarphi_1 \colon \HH^*(\scrEnd_{\rmA}(\scrQ)) \to \scrN= \mathbb{C}\mathsf{1}\oplus \HH^{>0}(\scrEnd_{\Acon}(\PP))
\] 
is necessarily an isomorphism, which by definition implies that $\upvarphi$ in \eqref{eqn:phidef} is an $A_\infty$-isomorphism.
\end{proof}

The following shows that under Setup~\textnormal{\ref{Acon setup}}, the strictly unital  $A_\infty$-minimal model of $\scrEnd_{\rmA}(\scrQ)$ can be reconstructed from the isomorphism class of $\Acon$.

\begin{cor}\label{cor:main_reconstruct}
Suppose that both $\rmA$ and $\mathrm{B}$ admit simples and idempotents which satisfy the conditions in Setup~\textnormal{\ref{Acon setup}}, via complexes $\scrQ_{\rmA}$ and $\scrQ_{\mathrm{B}}$ of the same length.  If $\Acon\cong\Bcon$ and both are local, then there is an $A_\infty$-quasi-isomorphism between $\scrEnd_{\rmA}(\scrQ_{\rmA})$ and $\scrEnd_{\mathrm{B}}(\scrQ_{\mathrm{B}})$.
\end{cor}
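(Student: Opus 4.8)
The plan is to deduce the statement from the reconstruction theorem~\ref{Main DG result} and the uniqueness corollary~\ref{scrN is well-defined 2}, using the isomorphism $\Acon\cong\Bcon$ only to transport the relevant periodic resolution from one side to the other. Fix an isomorphism $\Phi\colon\Acon\xrightarrow{\sim}\Bcon$, and write $S_\rmA\in\mod\rmA$, $S_\mathrm{B}\in\mod\mathrm{B}$ for the simple modules of Setup~\ref{Acon setup}, together with the associated length $n$ periodic projective resolutions $\PP_\rmA\colonequals i^*\scrQ_\rmA$ of $S_\rmA$ over $\Acon$ and $\PP_\mathrm{B}\colonequals i^*\scrQ_\mathrm{B}$ of $S_\mathrm{B}$ over $\Bcon$, where $n$ is the common length of $\scrQ_\rmA$ and $\scrQ_\mathrm{B}$. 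Since $\Acon$ and $\Bcon$ are local they each have a unique simple module, so under $\Phi$ the module $S_\rmA$ is identified with $S_\mathrm{B}$; concretely, $\Phi_*\PP_\rmA$ is a length $n$ periodic projective resolution of $S_\mathrm{B}$ over $\Bcon$.

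First I would apply~\ref{scrN is well-defined 2}, with $\Upgamma=\Bcon$, to the two length $n$ periodic projective resolutions $\Phi_*\PP_\rmA$ and $\PP_\mathrm{B}$ of the simple $\Bcon$-module $S_\mathrm{B}$, obtaining an $A_\infty$-quasi-isomorphism
\[
\mathbb{C}\mathsf{1}\oplus\HH^{>0}\big(\scrEnd_{\Bcon}(\Phi_*\PP_\rmA)\big)\;\rightsquigarrow\;\mathbb{C}\mathsf{1}\oplus\HH^{>0}\big(\scrEnd_{\Bcon}(\PP_\mathrm{B})\big).
\]
Since $\Phi$ induces an isomorphism of $\DG$-algebras $\scrEnd_{\Acon}(\PP_\rmA)\cong\scrEnd_{\Bcon}(\Phi_*\PP_\rmA)$, hence an $A_\infty$-isomorphism of the associated unitally positive $A_\infty$-algebras, the left-hand side is $A_\infty$-isomorphic to $\scrN_\rmA\colonequals\mathbb{C}\mathsf{1}\oplus\HH^{>0}(\scrEnd_{\Acon}(\PP_\rmA))$, while the right-hand side is $\scrN_\mathrm{B}\colonequals\mathbb{C}\mathsf{1}\oplus\HH^{>0}(\scrEnd_{\Bcon}(\PP_\mathrm{B}))$. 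Thus $\scrN_\rmA$ and $\scrN_\mathrm{B}$ are $A_\infty$-quasi-isomorphic. I would then invoke~\ref{Main DG result} twice: the strictly unital minimal model of $\scrEnd_\rmA(\scrQ_\rmA)$ is $A_\infty$-isomorphic to $\scrN_\rmA$, and that of $\scrEnd_\mathrm{B}(\scrQ_\mathrm{B})$ is $A_\infty$-isomorphic to $\scrN_\mathrm{B}$; and by~\ref{prop.strictly-unital-HT} each of $\scrEnd_\rmA(\scrQ_\rmA)$ and $\scrEnd_\mathrm{B}(\scrQ_\mathrm{B})$ is connected to its own strictly unital minimal model by $A_\infty$-quasi-isomorphisms. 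Concatenating
\[
\scrEnd_\rmA(\scrQ_\rmA)\;\rightsquigarrow\;(\text{min.\ model})\;\cong\;\scrN_\rmA\;\rightsquigarrow\;\scrN_\mathrm{B}\;\cong\;(\text{min.\ model})\;\rightsquigarrow\;\scrEnd_\mathrm{B}(\scrQ_\mathrm{B})
\]
and using that $A_\infty$-quasi-isomorphisms compose (and that an $A_\infty$-isomorphism has an $A_\infty$-inverse, which handles the second occurrence of~\ref{Main DG result}) yields the desired $A_\infty$-quasi-isomorphism $\scrEnd_\rmA(\scrQ_\rmA)\rightsquigarrow\scrEnd_\mathrm{B}(\scrQ_\mathrm{B})$.

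The only genuinely delicate point is the identification of $S_\rmA$ with $S_\mathrm{B}$ under $\Phi$, and this is exactly where the locality hypothesis enters: it forces the simple module to be unique, hence transported canonically by $\Phi$, so that $\Phi_*\PP_\rmA$ and $\PP_\mathrm{B}$ really are two periodic projective resolutions of the \emph{same} simple module over the \emph{same} algebra and~\ref{scrN is well-defined 2} applies verbatim. I would also record the elementary observation that the module $i^*S$ occurring in Setup~\ref{Acon setup} is automatically simple over $\Acon$, since the cohomology condition forces $i^*S\neq 0$, whence $i^*S=S$ and any $\Acon$-submodule of it is an $\rmA$-submodule of the simple $\rmA$-module $S$. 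Everything else is bookkeeping: the lengths agree by hypothesis, and ``$A_\infty$-quasi-isomorphic'' is an equivalence relation, so the zig-zag above collapses to a single $A_\infty$-quasi-isomorphism.
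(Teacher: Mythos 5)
Your proposal is correct and follows essentially the same route as the paper: transport $\PP_\rmA$ across the isomorphism $\Acon\cong\Bcon$, use locality to identify the simples, apply the uniqueness result for periodic resolutions of a simple module (\ref{scrN is well-defined 2}, itself built on \ref{lem.qisom-endomo-simple} and \ref{scrN is well-defined}) to see the two unitally positive $A_\infty$-algebras agree, and then conclude via two applications of \ref{Main DG result}. The only cosmetic difference is that you invoke \ref{scrN is well-defined 2} directly on $\Phi_*\PP_\rmA$ and $\PP_\mathrm{B}$, whereas the paper first assembles a zigzag of unital $\DG$-quasi-isomorphisms (via \eqref{eqn:iso}, \ref{lem.qisom-endomo-simple} and \ref{rem.uniqueness}\eqref{rem.uniqueness3}) and then passes to $\scrN$ using \ref{scrN is well-defined}; the content is identical.
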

\begin{proof}
The simple $\Acon$-module $S$ is equipped with a length $n$ complex $\PP$ which builds $\scrN_S$, and by \ref{Main DG result} there is an $A_\infty$-isomorphism
\[
\HH^*(\scrEnd_{\rmA}(\scrQ_{\rmA}))\rightsquigarrow\scrN_S
\]
where the left hand side is equipped with the $A_\infty$-structure of a strictly unital minimal model. The same analysis holds for $\mathrm{B}$, where $\Bcon$ has simple $S'$ which is equipped with a complex $\PP'$ of the same length $n$ which in turn builds $\scrN_{S'}$. By \ref{Main DG result}, there is an $A_\infty$-isomorphism
\[
\HH^*(\scrEnd_{\mathrm{B}}(\scrQ_\mathrm{B}))\rightsquigarrow\scrN_{S'}.
\]
It suffices to prove that $\scrN_S$ and $\scrN_{S'}$ are $A_\infty$-isomorphic, and for this we will ultimately use \ref{scrN is well-defined}.
Indeed, since $\Acon\cong\Bcon$, there is an (exact) isomorphism of categories $F\colon \mod\Acon\to\mod\Bcon$.  Now $F$ preserves simples, so since both $\Acon$ and $\Bcon$ are local, $F S\cong S'$.  Further, $F$ takes $\PP$ to some length $n$ complex of projectives which builds a periodic projective resolution of $S'$. Clearly this isomorphism of categories induces a $\DG$-algebra isomorphism (not just quasi-isomorphism!)
\begin{equation}
F\colon\scrEnd_{\Acon}(\PP) \to\scrEnd_{\Bcon}(F\PP)\label{eqn:iso}
\end{equation}
which preserves the identity.  Now $F\PP$ is quasi-isomorphic to $\PP'$ by \ref{lem.qisom-endomo-simple}, as both give periodic resolutions of the same simple with the same length, and so again by \ref{rem.uniqueness}\eqref{rem.uniqueness3} $\scrEnd_{\Bcon}(F\PP)$ is quasi-isomorphic to $\scrEnd_{\Bcon}(\PP')$. Combining with \eqref{eqn:iso}, it follows that the unital $\DG$-algebras $\scrEnd_{\Acon}(\PP)$ and $\scrEnd_{\Bcon}(\PP')$ are quasi-isomorphic.  By \ref{scrN is well-defined 2}, $\scrN_S$ and $\scrN_{S'}$ are $A_\infty$-quasi-isomorphic, proving the result.
\end{proof}

\section{Flops}

Throughout, let $X\to\Spec R$ be a $3$-fold flopping contraction, where $X$ is smooth and $R$ is complete local.  Here, we continue to work over the complex numbers.

\subsection{Notation}\label{subsec:flopssetup}
As is well-known \cite{VdB1d}, $X$ admits a tilting bundle $\scrV=\scrO_X\oplus \scrM_1\oplus\hdots\oplus\scrM_t$, where the summands $\scrM_i$ are in bijection with the exceptional curves $C_1,\hdots,C_t$. Each $C_i\cong\mathbb{P}^1$. Writing $\rmA\colonequals \End_X(\scrV)\cong \End_R(f_*\scrV)$, then $\rmA$ is a noncommutative crepant resolution (NCCR), and there is a derived equivalence
\[
\Db(\coh X)\xrightarrow{\RHom_X(\scrV,-)}\Db(\mod \rmA).\label{eqn:NCCR}
\]
Since $R$ is complete local, the category $\mod\rmA$ has finitely many simples, which geometrically are described in \cite[3.5.7, 3.5.8]{VdB1d}.  Matching the notation in \cite{VdB1d}, we will write $S_1,\hdots,S_t$ for the simple $\rmA$-modules that are in bijection with the exceptional curves $C_1,\hdots,C_t$.  Each $S_i\cong\mathbb{C}$, and thus is finite dimensional.

Let $e$ be the idempotent of $\rmA$ corresponding to the summand $R=f_*\scrO_X$ of $f_*\scrV$.  Then by \cite{DW1} the contraction algebra $\Acon$ can be defined simply as
\[
\Acon \colonequals \rmA/\rmA e\rmA.
\]
Since $f$ is a flopping contraction, $\Acon$ is finite dimensional \cite[2.13]{DW1}, and using the same mild abuse of notation as in \ref{not:abuse1}, the simple $\Acon$-modules are $S_1,\hdots,S_t$.  

Since $\rmA$ is an NCCR, by Auslander--Buchsbaum \cite[2.3]{IR} each $S_i\in\mod \rmA$ has projective dimension three, so consider the minimal projective resolution $\scrQ^j\to S_j\to 0$.  Applying $i^*=-\otimes_{\rmA}\Acon$ gives a complex of projective $\Acon$-modules
\begin{equation}
\PP^j\colonequals \quad P^j_3\to P^j_2\to P^j_1\to P^j_0,\label{Pstosum}
\end{equation}
whose cohomology computes $\Tor_*^{\rmA}( S_j,\Acon)$.  The following is a well-known consequence of the fact that $\Acon$ is spherical.
\begin{lemma}\label{key Tor descends}
In this flops setting, for every $j=1,\hdots, t$,
\[
\Tor_k^{\rmA}( S_j,\Acon)\cong
 \left\{ \begin{array}{ll}
 \mathbb{C} & \text{ if $k\in \{ 0, 3\}$ } \\
 0 & \text{ otherwise}.
\end{array}
\right.
\]
\end{lemma}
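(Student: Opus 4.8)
The plan is to read off $\Tor^{\rmA}_k(S_j,\Acon)$ as the cohomology of the four-term complex $\PP^j=i^*\scrQ^j$ of \eqref{Pstosum}, and then to extract the stated shape from the fact (``$\Acon$ is spherical'') that $\Acon$ behaves homologically like $S^3$ relative to $\rmA$. For the length and the elementary bookkeeping: since $\rmA$ is an NCCR of the three-dimensional Gorenstein ring $R$, Auslander--Buchsbaum gives $\mathrm{pd}_{\rmA}S_j=3$ (this is \cite[2.3]{IR}), so $\scrQ^j$ genuinely has four terms and $\Tor^{\rmA}_k(S_j,\Acon)$ vanishes automatically for $k<0$ and for $k\geq 4$. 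The degree-zero term is immediate: $S_j$ is an $\Acon$-module, so $S_je=0$, hence $S_j\cdot(\rmA e\rmA)=(S_j\rmA)e\rmA=S_je\rmA=0$, and therefore $\Tor^{\rmA}_0(S_j,\Acon)=S_j\otimes_{\rmA}\Acon=S_j\cong\mathbb{C}$.

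The content lies in degrees $k=1,2,3$, and here I would invoke the sphericality of $\Acon$ in the smooth flops setting: $\Acon\otimes^{\mathbf{L}}_{\rmA}\Acon$ has cohomology isomorphic to $\Acon$ (possibly twisted by an automorphism, hence free as a one-sided module) in degrees $0$ and $-3$, and zero in degrees $-1,-2$; equivalently $\Tor^{\rmA}_i(\Acon,\Acon)\cong\Acon$ for $i\in\{0,3\}$ and $0$ otherwise. This is precisely the statement that the derived contraction algebra of a smooth flop is concentrated like $S^3$, and it is the single place where the geometry of $X\to\Spec R$ really enters (e.g.\ through the hypersurface structure of $R=e\rmA e$, or through the spherical-twist structure of the flop--flop functor under the equivalence \eqref{eqn:NCCR}). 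Granting it, since $S_j$ is an $\Acon$-module, associativity of the derived tensor product yields
\[
S_j\otimes^{\mathbf{L}}_{\rmA}\Acon\;\cong\;S_j\otimes^{\mathbf{L}}_{\Acon}\bigl(\Acon\otimes^{\mathbf{L}}_{\rmA}\Acon\bigr),
\]
and the hyper-Tor spectral sequence $\Tor^{\Acon}_p\!\bigl(S_j,\HH^{q}(\Acon\otimes^{\mathbf{L}}_{\rmA}\Acon)\bigr)\Rightarrow \HH^{q-p}\bigl(S_j\otimes^{\mathbf{L}}_{\rmA}\Acon\bigr)$ collapses: its $E_2$-page is supported in exactly the two spots $q\in\{0,-3\},\ p=0$, where $\Tor^{\Acon}_0(S_j,\Acon)=S_j$ and $\Tor^{\Acon}_{>0}(S_j,\Acon)=0$ because $\Acon$ is free over itself. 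Hence $\Tor^{\rmA}_k(S_j,\Acon)\cong S_j\cong\mathbb{C}$ for $k\in\{0,3\}$ and $0$ otherwise, which is the claim.

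Thus the only genuinely non-formal ingredient is the sphericality of $\Acon$; everything else is Auslander--Buchsbaum, the annihilation $S_je=0$, and the degeneration of a spectral sequence with only two nonzero entries, none of which uses any geometry. Accordingly I expect the main obstacle to be pinning down (or citing, e.g.\ \cite{DW1}) the $S^3$-shaped $\Tor^{\rmA}_*(\Acon,\Acon)$ for a smooth flopping contraction; I would isolate this as a single cited lemma and keep the deduction of \ref{key Tor descends} as above. If a self-contained argument is preferred, the alternative is to transport the computation through \eqref{eqn:NCCR} to $\Db(\coh X)$, where $\Acon\otimes^{\mathbf{L}}_{\rmA}-$ becomes the mutation functor and the required two-step cohomology is a standard feature of the associated spherical twist on a smooth three-fold flop.
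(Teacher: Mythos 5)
Your approach is a genuinely different route from the paper's, and it would work once the key input is supplied, but the way you've left that input is a real gap and it is not quite the statement the paper cites.

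The paper's proof is a direct three-step duality: $D\Tor^{\rmA}_k(S_j,\Acon)\cong\Ext^k_{\rmA}(S_j,D\Acon)$ by Cartan--Eilenberg, $\cong\Ext^k_{\rmA}(S_j,\Acon)$ because $\Acon$ is symmetric, $\cong D\Ext^{3-k}_{\rmA}(\Acon,S_j)$ because $\rmA$ is $3$-CY, and the last group is $\mathbb{C}$ in degrees $0,3$ and $0$ otherwise by the sphericality result \cite[4.7]{DW3}. Crucially, the statement cited there concerns $\Ext^*_{\rmA}(\Acon,S_j)$, not $\Tor^{\rmA}_*(\Acon,\Acon)$. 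Your proposal instead reduces the claim to the assertion that $\Acon\otimes^{\mathbf{L}}_{\rmA}\Acon$ has cohomology $\Acon$ (up to a bimodule twist, free on one side) in degrees $0$ and $-3$ and zero in between, and then runs the change-of-rings spectral sequence $\Tor^{\Acon}_p(S_j,\Tor^{\rmA}_q(\Acon,\Acon))\Rightarrow\Tor^{\rmA}_{p+q}(S_j,\Acon)$. The deduction from that input is fine (the degree-zero computation and the Auslander--Buchsbaum vanishing are correct, the spectral sequence has exactly two nonzero entries and collapses), but the input itself is not directly citable as "sphericality of $\Acon$", and it is not obviously weaker than the lemma you are trying to prove. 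Of the three vanishings you need, $\Tor^{\rmA}_1(\Acon,\Acon)=0$ is automatic since $\rmA e\rmA$ is an idempotent ideal, but $\Tor^{\rmA}_2(\Acon,\Acon)=0$ and the identification $\Tor^{\rmA}_3(\Acon,\Acon)\cong\Acon$ as a one-sided free module require proof, and the natural route to them is precisely the duality chain the paper uses (symmetry of $\Acon$, $3$-CY of $\rmA$, and the $\Ext$-form of sphericality, bootstrapped from $S_j$ to $\Acon$ by d\'evissage along a composition series). So your argument is internally consistent but logically longer: you would end up re-proving the paper's lemma in the course of establishing your "single cited lemma", plus an extra care point about the bimodule/flatness structure of $\Tor^{\rmA}_3(\Acon,\Acon)$, which you assert but do not justify. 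The paper's version is shorter because it transports the question to $\Ext^*_{\rmA}(\Acon,S_j)$ and stops there.
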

\begin{proof}
Since $\Acon$ is a symmetric finite dimensional algebra \cite[3.3]{August}, $D\Acon\cong\Acon$ as bimodules, where $D$ is the $\mathbb{C}$-dual.  Thus
\begin{align*}
D\Tor_k^{\rmA}( S_j,\Acon)
&\cong \Ext^k_{\rmA}( S_j,D\Acon)\tag{by \cite[VI.5.1]{CE}}\\
&\cong \Ext^k_{\rmA}( S_j,\Acon)\tag{$\Acon$ is symmetric}\\
&\cong D\Ext^{3-k}_{\rmA}(\Acon, S_j)\tag{$\rmA$ is 3-CY}.\\
&\cong
 \left\{ \begin{array}{ll}
 D\,\mathbb{C} & \text{ if $k\in \{ 0, 3\}$ } \\
 0 & \text{ otherwise}.
\end{array}\tag{$\Acon$ is spherical \cite[4.7]{DW3}}
\right.
\end{align*}
The result follows, by cancelling the duality $D$.
\end{proof}

Since $i^*$ is right exact, the cokernel of the rightmost map in \eqref{Pstosum} is $ S_j$. In particular, by \ref{key Tor descends} there is an exact sequence of $\Acon$-modules
\[
0\to \Omega^4 S_j\to P^j_3\to P^j_2\to P^j_1\to P^j_0\xrightarrow{\upalpha} S_j\to 0.
\] 
Since contraction algebras are four-periodic \cite[1.1]{Dugas}, necessarily the module $\Omega^4  S_j$, which is one-dimensional by \ref{key Tor descends}, is isomorphic to $S_j$.  Passing through this isomorphism gives an exact sequence of $\Acon$-modules
\[
0\to S_j\xrightarrow{\upbeta} P^j_3\to P^j_2\to P^j_1\to P^j_0\xrightarrow{\upalpha}S_j\to 0
\] 
for each $j$ such that $1\leq j \leq t$.

We now consider the sum of the simples $S=\bigoplus_{j=1}^t S_j$, the sum of the projective $\rmA$-resolutions $\scrQ_S=\bigoplus_{j=1}^t\scrQ^j$, and the sum $\PP=\bigoplus_{j=1}^t \PP^j$. Thus $S$ is finite dimensional, and there is a projective resolution $\scrQ_{S}\to S$ such that $\PP=i^*\scrQ_{S}$ satisfies 
\begin{equation}
\mathrm{H}^{-j}(\PP) = \left\{ \begin{array}{ll}
 S & \text{ if $j\in \{ 0, 3\}$ } \\
 0 & \text{ otherwise}.
\end{array}
\right.\label{eqn:flopslookabstract}
\end{equation}

\subsection{Single curve Donovan--Wemyss}
The case when there is a single curve, namely $t=1$, now follows easily since the $S$ in \eqref{eqn:flopslookabstract} is both finite dimensional and simple, and so \eqref{eqn:flopslookabstract} and the paragraphs above fall directly within the remit of Setup~\ref{Acon setup}.

\begin{cor}[{Conjecture \cite[1.4]{DW1}}]\label{main text local}
Suppose that $X_1\to\Spec R_1$ and $X_2\to\Spec R_2$ are two $3$-fold flopping contractions, where both $X_i$ are smooth, both $R_i$ are complete local, and both contractions have precisely one curve above the origin.  Write $\Acon$ and $\Bcon$ for their corresponding contraction algebras.  Then
\[
R_1\cong R_2\iff \Acon\cong\Bcon.
\]
\end{cor}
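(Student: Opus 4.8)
The plan is to establish the substantive implication $\Acon\cong\Bcon\Rightarrow R_1\cong R_2$ by reducing it, via the flops dictionary of \S\ref{subsec:flopssetup}, to Corollary~\ref{cor:main_reconstruct}, and then invoking Koszul duality together with the known reconstruction of the base from the derived contraction algebra. The converse implication $R_1\cong R_2\Rightarrow\Acon\cong\Bcon$ I would dispatch in one line: the contraction algebra depends, up to isomorphism, only on the complete local ring $R$ (it is flop-invariant, \cite{DW1, DW3}), so an isomorphism $R_1\cong R_2$ carries $\Acon$ to $\Bcon$.

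For the main direction, assume $\Acon\cong\Bcon$. Since each contraction has exactly one exceptional curve, each of $\Acon$, $\Bcon$ has a single simple module and is therefore local, so the locality hypothesis of Corollary~\ref{cor:main_reconstruct} holds. Next I would verify that Setup~\ref{Acon setup} genuinely applies to both NCCRs: for $i=1,2$ take $\rmA_i$ to be the NCCR of $X_i$, take $e_i\in\rmA_i$ to be the idempotent cutting out the $\scrO_{X_i}$ summand, let $S_i$ be the simple $\rmA_i$-module attached to the exceptional curve, and let $\scrQ_i\to S_i$ be its minimal projective resolution. By Auslander--Buchsbaum \cite{IR}, over the $3$-Calabi--Yau algebra $\rmA_i$ the module $S_i$ has projective dimension three, so $\scrQ_i$ has exactly four nonzero terms; and \ref{key Tor descends} (equivalently \eqref{eqn:flopslookabstract}) furnishes precisely the cohomology condition $\HH^{-j}(i^*\scrQ_i)\cong i^*S_i$ for $j\in\{0,3\}$ and zero otherwise that Setup~\ref{Acon setup} demands, with $n=4$. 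As $\scrQ_1$ and $\scrQ_2$ then have the same length $n=4$, Corollary~\ref{cor:main_reconstruct} applies and produces an $A_\infty$-quasi-isomorphism
\[
\scrEnd_{\rmA_1}(\scrQ_1)\rightsquigarrow\scrEnd_{\rmA_2}(\scrQ_2).
\]
Internally this is the assertion that the unitally positive $A_\infty$-algebra $\scrN_{S_i}$, built purely from $\Acon$, reconstructs the ambient $\DG$-algebra $\scrEnd_{\rmA_i}(\scrQ_i)$ via \ref{Main DG result}.

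Finally I would upgrade this to an isomorphism of base rings. Each $\scrEnd_{\rmA_i}(\scrQ_i)$ is a $\DG$ model of $\RHom_{\rmA_i}(S_i,S_i)$, whose cohomology is finite-dimensional and concentrated in degrees $0,\dots,3$; under Koszul duality it therefore corresponds, as an $A_\infty$-algebra, to the derived contraction algebra of $X_i\to\Spec R_i$ in the sense of \cite{Booth, HuaKeller} — the boundedness and finite-dimensionality making Koszul duality an equivalence in the relevant setting, so that the quasi-isomorphism type of $\scrEnd_{\rmA_i}(\scrQ_i)$ and that of the derived contraction algebra determine one another. Hence the derived contraction algebras of the two flops are quasi-isomorphic, and by \cite{Booth, HuaKeller} — building on Kalck--Yang \cite{KalckYang} — the derived contraction algebra recovers the completed base, whence $R_1\cong R_2$, the $R_i$ being already complete local. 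I expect this final paragraph to be the main obstacle: the real work is in pinning down exactly which Koszul-dual incarnation of $\RHom_{\rmA_i}(S_i,S_i)$ coincides with the derived contraction algebra as it appears in \cite{Booth, HuaKeller, KalckYang}, and in checking that in the present smooth, complete local, single-curve situation all the hypotheses are in force — those making Koszul duality an equivalence, and those under which the cited works extract $R$. Once that bookkeeping is done, the conclusion is immediate from Corollary~\ref{cor:main_reconstruct}.
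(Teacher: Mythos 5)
Your argument follows the paper's proof essentially verbatim: verify that Setup~\ref{Acon setup} applies in the flops setting (locality of $\Acon$, projective dimension three, the cohomology condition from \ref{key Tor descends}), invoke Corollary~\ref{cor:main_reconstruct} to obtain an $A_\infty$-quasi-isomorphism $\scrEnd_{\rmA_1}(\scrQ_1)\rightsquigarrow\scrEnd_{\rmA_2}(\scrQ_2)$, and then conclude via Koszul duality and the Booth/Hua--Keller recovery theorem. The final Koszul-duality bookkeeping you flag as the remaining obstacle is exactly what the paper resolves by citing \cite[8.3.3]{Booth} (the Koszul duals are the derived contraction algebras, and a quasi-isomorphism between them gives an equivalence of relative and classical singularity categories) together with \cite[5.9]{HuaKeller} (which recovers $R$ from the singularity category), so your proposed route is correct and the citations you gesture at point to precisely the right results.
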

\begin{proof}
The implication (1)$\Rightarrow$(2) is contained in the original paper \cite{DW1}, the content is the direction (2)$\Rightarrow$(1).  In addition to notation already introduced, write $\rmA$ and $\mathrm{B}$ for the two NCCRs.  Since for both contractions the corresponding \eqref{eqn:flopslookabstract} falls within the remit of Setup~\ref{Acon setup}, we can simply quote \ref{cor:main_reconstruct} to conclude that there must be an $A_\infty$-quasi-isomorphism between  $\scrEnd_{\rmA}(\scrQ_{\rmA})$ and $\scrEnd_{\mathrm{B}}(\scrQ_{\mathrm{B}})$.

From here, the proof is standard.  As a consequence of the above, the Koszul duals $\scrEnd_{\rmA}(\scrQ_{\rmA})^!$ and $\scrEnd_{\mathrm{B}}(\scrQ_{\mathrm{B}})^!$ are $A_\infty$-quasi-isomorphic.  But these are the derived contraction algebras of Booth \cite{Booth} (see also \cite[2.6, 2.7]{KalckYang}), and so the derived contraction algebras are $\DG$-quasi-isomorphic (see e.g.\ \cite[2.8]{Lunts} or \cite[11.4.9]{LV}).  The result then follows directly from \cite[8.3.3]{Booth}, since a quasi-isomorphism between the derived contraction algebras implies that the relative and classical singularity categories are equivalent, and so we can use the recovery theorem of \cite[5.9]{HuaKeller}.
\end{proof}

\begin{remark}
Although the proof of \ref{main text local} does not split into cases, this is only since the two cases in the proof of \ref{lem.qisom-endomo-simple} both led to the same outcome.  In reality, there are two cases: when $\Acon$ is semisimple (namely $\Acon\cong\mathbb{C}$, the `Atiyah flop') and otherwise. In the Atiyah case the complex $\PP$ is
\[
\mathbb{C}\to 0\to 0 \to\mathbb{C}
\]
whereas in all other cases $\PP$ has four non-zero terms, and the morphisms all lie in the radical.  
Regardless, in both cases we can appeal to \ref{lem.qisom-endomo-simple}, and this is what makes a uniform proof of \ref{main text local} possible.
\end{remark}

The multi-curve version $t>1$ of the conjecture is proved in \ref{main text} below, but to do this requires some constructions to be extended so that they work over a semi-simple base. Indeed, when there is more than one curve, there is more than one simple, so the $S$ in \eqref{eqn:flopslookabstract} is semi-simple (not simple) and thus \ref{cor:main_reconstruct} is not directly applicable.  However, the extension needed in Section~\ref{sec:CatUpgrade} below is very mild, and the main ideas remain the same.

\section{Categorical Upgrade}\label{sec:CatUpgrade}

The purpose of this section is to first extend (in \S\ref{subsec:unitalposCat}) the notion of `unitally positive'  from $A_\infty$-algebras to $A_\infty$-categories, then (in \S\ref{subsec:GenIdem}) to use this to extend the results in \S\ref{sec:idems} to cover semi-simple modules.  Once this is done, in \S\ref{subsec:DWmulti} the multi-curve generalisation of  \ref{main text local} easily follows.

In this section, we remark that instead of the more widely used notion of quasi-equivalence of $A_\infty$-categories, we will instead use the notion of quasi-isomorphism, 
i.e., a quasi-equivalence  such that the map between the sets of objects  is a bijection, see e.g. \cite[1.13]{COS2}. We will mostly consider $A_\infty$-categories with a finite number of objects, and so the objects form a set. Thus we can appeal to \cite[Section 5.1]{LH}, and so the results of Section~\ref{sec:StrictlyUnital} will generalise easily, provided that when we consider strictly unital $A_\infty$-categories we require the property that for all $x \in \Ob \scrA $, the complex $\scrHom_\scrA(x,x)$ has non-trivial cohomology.

\subsection{Unitally positive categories}\label{subsec:unitalposCat}

\begin{defin}
   A strictly unital $A_\infty$-category is an  $A_\infty$-category $\scrA$ such that for every $x \in \Ob \scrA$ there exists a (unique) degree $0$ morphism $e_x\in \scrHom_{\scrA}(x, x) $, called a strict unit of $x$,
satisfying the following properties:
\begin{enumerate}
    \item $m_2(- \otimes e_x)= \Id_{\scrHom_{\scrA}(x, y)}$ and  $m_2(e_x \otimes -)= \Id_{\scrHom_{\scrA}(y, x)}$ for every $x,y \in \Ob \scrA$,
    \item $m_i(f_i \otimes \hdots \otimes f_1)=0$ if $i \neq 2$ and $f_j=e_x$ for some $j \in \{1, \hdots i\}$ and some $x \in \Ob \scrA$.
\end{enumerate}
\end{defin}
A $\DG$-category can be considered as a strictly unital $A_\infty$-algebra, because $\DG$-categories are required by definition to have units to ensure that they are actually categories.

An $A_\infty$-category is called cohomologically unital if $\HH^*(\scrA)$ is a category, in the sense that $\HH^*(\scrA)$ has units. A strictly unital $A_\infty$-category $\scrA$ is in particular cohomologically unital.

For an $A_\infty$-functor $F \colon \scrA \rightsquigarrow \scrB$ we denote by $F_0$ the morphism of sets $F_0 \colon \Ob \scrA \to \Ob \scrB$, and by $F_i$ ($i \geq 1$) the components of $F$. We write $\HH(F) \colon \HH^*(\scrA) \to \HH^*(\scrB) $ for the functor induced by $F_0, F_1$ between the cohomology categories. 
\begin{defin} Let $F \colon \scrA \rightsquigarrow \scrB$ be an $A_\infty$-functor.
\begin{enumerate}
    \item $F$ is cohomologically unital if $\scrA,\scrB$ are cohomologically unital  and $\HH(F)$ is unital.
    \item $F$ is strictly unital if $\scrA,\scrB$ are strictly unital and 
    \begin{itemize}
        \item $F_1(e_x)= e_{F_0(x)}$ for every $x \in \Ob \scrA$;
        \item $F_i(f_i \otimes \hdots \otimes f_1 )=0$ if $i \geq 2$ and $f_j = e_x$ for some $j \in \{1, \hdots, i\}$ and some $x \in \Ob \scrA$.
    \end{itemize}
    \item $F$ is a quasi-isomorphism if $F_0 \colon \Ob \scrA \to \Ob \scrB$ is a bijection, and for every $x,y \in \Ob \scrA$, $F_1 \colon \scrHom_{\scrA}(x,y) \to \scrHom_{\scrB}(F_0(x),F_0(y)) $ is a quasi-isomorphism. 
\end{enumerate}
\end{defin}

From now on, for reasons discussed in \cite{Seidel errata} and in \cite{COS2}, to ensure the existence of a strictly unital minimal model, we require our $\DG$- and $A_\infty$-categories to satisfy the following property.
\begin{property}\label{property-cats}
    For all objects $x$, the complex $\scrHom_{\scrA}(x, x)$ has non-trivial cohomology.
\end{property}
This property is clearly preserved by quasi-isomorphisms. 
Notice that if $\scrA$ is a $\DG$-category, or a strictly unital $A_\infty$-category, \ref{property-cats} is equivalent to requiring that for all objects $x$ the identity map $e_x$ is non-trivial in the cohomology of the complex $\scrHom_{\scrA}(x, x)$.

\begin{prop}\label{minimal cat}
Let $\scrA$ be a strictly unital $A_\infty$-category such that \textnormal{\ref{property-cats}} holds.  Then there exists a strictly unital minimal model for $\scrA$, i.e., a strictly unital  and minimal $A_\infty$-category $\scrA'$ equipped with two strictly unital $A_\infty$-quasi-isomorphisms $G \colon \scrA' \rightsquigarrow \scrA$ and $F\colon \scrA \rightsquigarrow \scrA'$.
\end{prop}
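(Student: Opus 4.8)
The plan is to reduce Proposition \ref{minimal cat} to the already-proved $A_\infty$-algebra statement, Proposition \ref{prop.strictly-unital-HT}, via the standard device of viewing an $A_\infty$-category with (finite) object set $I:=\Ob\scrA$ as an $A_\infty$-algebra over the semisimple commutative ring $R:=\prod_{x\in I}\mathbb{C}\,e_x$, where the $e_x$ are orthogonal idempotents. Concretely one sets $A:=\bigoplus_{x,y\in I}\scrHom_{\scrA}(x,y)$ with operations $\{m_i\}$ those of $\scrA$; these automatically preserve the source--target bigrading, i.e. they are $R$-multilinear. In this dictionary, which is exactly the setup of \cite[Section 5.1]{LH}, strict unitality of $\scrA$ is the same as strict unitality of the $R$-algebra $A$ with strict unit $\sum_{x}e_x$, and Property \ref{property-cats} says precisely that $[e_x]\neq 0$ in $\HH^*(e_xAe_x)$ for every $x$, i.e. that the unit of $A$ is not a coboundary in any diagonal block.

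First I would fix homotopy retract data adapted to the units. For each pair $(x,y)$ choose a splitting $i_{xy}\colon\HH^*(\scrHom_{\scrA}(x,y))\to\scrHom_{\scrA}(x,y)$ of the hom-complex, a retraction $p_{xy}$ with $p_{xy}\,i_{xy}=\Id$, and a contracting homotopy $h_{xy}$; and on the diagonal arrange in addition that $i_{xx}([e_x])=e_x$, $p_{xx}(e_x)=[e_x]$ and $h_{xx}(e_x)=0$, which is possible since $e_x$ is a cocycle that is not a coboundary by Property \ref{property-cats}. Assembling these blockwise gives $R$-bilinear homotopy retract data $(i,p,h)$ between $A$ and $\HH^*(A)=\bigoplus_{x,y}\HH^*(\scrHom_{\scrA}(x,y))$.

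Next I would invoke the strictly unital Homotopy Transfer Theorem, namely Proposition \ref{prop.strictly-unital-HT} (that is, \cite[3.2.4.1, 3.2.4.2]{LH}), which as noted in \cite[Section 5.1]{LH} is developed in exactly this $R$-linear generality, applied to $A$ with the data above. One obtains a strictly unital $A_\infty$-structure $\{m_i'\}_{i\geq 2}$ on $\HH^*(A)$ together with strictly unital $A_\infty$-quasi-isomorphisms $i\colon\HH^*(A)\rightsquigarrow A$ and $p\colon A\rightsquigarrow\HH^*(A)$ with linear components the chosen $i_1$, $p_1$. Since the tree-sum formulas defining $\{m_i'\}$ and the higher components of $i,p$ are built only from the $m_j$, $i_1$, $p_1$ and $h$, each of which preserves the source--target bigrading, the transferred structure is again an $A_\infty$-category structure on the object set $I$: one defines $\scrA'$ by $\Ob\scrA':=I$ and $\scrHom_{\scrA'}(x,y):=\HH^*(\scrHom_{\scrA}(x,y))$ with operations $\{m_i'\}$. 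Then $\scrA'$ is minimal ($m_1'=0$, as each hom-complex carries the zero differential), strictly unital with strict units $e_x':=[e_x]$, and $G:=i$, $F:=p$ have $G_0=F_0=\Id_I$, hence bijective on objects, with $G_1=i_1$ and $F_1=p_1$ quasi-isomorphisms on every hom-complex; so $F,G$ are strictly unital $A_\infty$-quasi-isomorphisms in the sense defined above.

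The only point needing care — and where I expect the (routine) work to lie — is checking that the constructions of \cite[\S3.2.4]{LH} are genuinely $R$-bilinear at every inductive step, so that the transferred operations and the components of $i,p$ vanish on non-composable strings and otherwise land in the correct $\scrHom_{\scrA'}(x,z)$-block, and that the strict-unitality normalisation ($h_{xx}(e_x)=0$, together with the usual choices making $h$ annihilate tensors containing a unit) propagates through the tree formulas to kill $m_i'$ (for $i\neq 2$) and $F_i,G_i$ (for $i\neq 1$) on strings containing some $e_x'$. Because composition in $\scrA$ is blockwise and the chosen splittings and homotopies are blockwise, both checks reduce to the bookkeeping already carried out in the passage from $A_\infty$-algebras to $A_\infty$-categories in \cite[Section 5.1]{LH}; no idea beyond Proposition \ref{prop.strictly-unital-HT} is required.
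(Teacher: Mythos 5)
Your proof is correct, but it takes a genuinely different route from the paper's. The paper follows Seidel's two-step strategy: first it chooses unit-adapted splittings of the hom-complexes as in \cite[1.13]{Seidel} and applies \cite[1.12]{Seidel} to obtain a \emph{cohomologically unital} minimal model $(\HH^*(\scrA),\{\upmu_k\})$ with cohomologically unital $A_\infty$-quasi-isomorphisms; it then strictifies, invoking \cite[2.1]{Seidel} (as corrected in \cite{Seidel errata}) to replace $\{\upmu_k\}$ by a strictly unital structure $\{\uplambda_k\}$ via a formal diffeomorphism, and finally upgrading the cohomologically unital functors to homotopic strictly unital ones via \cite[2.2]{Seidel}, \cite[3.2.2.1, 3.2.4.3]{LH}, or \cite[2.5]{COS} together with \cite[p.~6]{COS2}. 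You instead go in one step: encode $\scrA$ as an $A_\infty$-algebra $A$ over the semisimple ring $R=\prod_{x}\mathbb{C}e_x$, choose blockwise homotopy retract data normalised on the units (using Property~\ref{property-cats} on the diagonal), and apply the $R$-linear strictly unital Homotopy Transfer Theorem of \cite[3.2.4.1, 3.2.4.2, \S 5.1]{LH} directly. This is exactly the reduction the paper foreshadows in the opening paragraph of this section when it says one may ``appeal to \cite[Section 5.1]{LH}, and so the results of Section~\ref{sec:StrictlyUnital} will generalise easily''; the paper simply chose to present the more explicit Seidel-style construction in the actual proof. The trade-off is that your argument is shorter and avoids the strictification step (hence the subtleties addressed in \cite{Seidel errata} and \cite{COS2}), but it leans more heavily on the $R$-bilinearity of Lefèvre-Hasegawa's constructions — which you correctly flag as the bookkeeping that needs to be checked, and which is indeed the content of \cite[\S 5.1]{LH}.
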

\begin{proof}
Following \cite[1.13]{Seidel}, for every pair $x,y$ of objects of $\scrA$ it is possible to choose a splitting of the complex $\scrHom_{\scrA}(x,y)$ into a minimal part and an acyclic part, which is isomorphic to $\HH^*(\scrHom_\scrA(x,y))$, and to construct a contracting homotopy $T_1$:
 \[
\begin{tikzpicture}[>=stealth]
\node (a) at (0,0) {$\HH^*(\scrHom_\scrA(x,y))$};
\node (b) at (5,0) {$\scrHom_{\scrA}(x,y)$};
\node (A) at (1.5,0) {$\phantom{1}$};
\node (B) at (3.8,0) {$\phantom{1}$};
\node (C) at (6.1,0) {$\phantom{1}$};
\draw[->,bend right,looseness=0.8] (A) to node[below]{$\scriptstyle G_1$}(B);
\draw[->,bend right,looseness=0.8] (B) to node[above]{$\scriptstyle F_1$}(A);
\draw[<-]  (C) edge [in=-35,out=35,loop,looseness=7] node[right]{$\scriptstyle T_1$}   (C);
\end{tikzpicture} 
\]
Furthermore, by the hypotheses on $\scrA$, it is possible to choose the splitting in such a way that when $x=y$ the chain maps $F_1$ and $G_1$ are unit-preserving. Then by \cite[1.12]{Seidel}, one can equip the category $\HH^*(\scrA)$ with an $A_\infty$-structure $\{\upmu_k\}_{k \geq 2}$ and extend $F_1$ and $G_1$ to $A_\infty$-quasi-isomorphisms $F \colon \scrA \rightsquigarrow (\HH^*(\scrA), \{\upmu_k\}_{k \geq 2})$ and $G \colon (\HH^*(\scrA), \{\upmu_k\}_{k \geq 2}) \rightsquigarrow \scrA$. Since $\scrA$ is strictly unital, $\HH^*(\scrA)$ is a cohomologically unital $A_\infty$-category, and it is easy to see that because of how we chose $F_1$ and $G_1$, the functors $F,G$ are cohomologically unital (for this, see also \cite[1.14]{COS2}).
		
The $A_\infty$-category $(\HH^*(\scrA), \{\upmu_k\}_{k \geq 2})$ satisfies the corrected hypotheses of \cite{Seidel errata}, so by  \cite[2.1]{Seidel} there is a formal diffeomorphism 
\[
\upphi \colon (\HH^*(\scrA), \{\upmu_k\}_{k \geq 2}) \rightsquigarrow (\HH^*(\scrA), \{\uplambda_k\}_{k \geq 2}),
\]
where $ \{\uplambda_k\}_{k \geq 2}$ is a strictly unital $A_\infty$-structure on  $\HH^*(\scrA)$, induced by $\upphi$. Note that by the proof of \cite[2.1]{Seidel}, $(\HH^*(\scrA), \{\uplambda_k\}_{k \geq 2})$ is still minimal. The formal diffeomorphism $\upphi$ above is such that its linear component is the identity, hence it is cohomologically unital.
		
It follows that there is a diagram of $A_\infty$-functors
\[ 
\scrA \stackrel{F}{\rightsquigarrow} (\HH^*(\scrA), \{\upmu_k\}_{k \geq 2}) \stackrel{\upphi}{\rightsquigarrow} (\HH^*(\scrA), \{\uplambda_k\}_{k \geq 2}),
\]
where both $F$ and $\upphi$ are cohomologically unital $A_\infty$-quasi-isomorphisms, both $\scrA$ and $(\HH^*(\scrA), \{\uplambda_k\}_{k \geq 2})$ are strictly unital $A_\infty$-categories, whilst $(\HH^*(\scrA), \{\upmu_k\}_{k \geq 2})$ is cohomologically unital. 

A cohomologically unital functor $F \colon \scrA \rightsquigarrow \scrB$ between strictly unital $A_\infty$-categories which satisfy \ref{property-cats} can be replaced with a strictly unital functor $F' \colon \scrA \rightsquigarrow \scrB$ homotopic to $F$, by \cite[2.2]{Seidel}, \cite[3.2.2.1, 3.2.4.3, Section 5.1]{LH}, or \cite[2.5]{COS} together with \cite[p. 6]{COS2}. Using this, we can construct a strictly unital $A_\infty$-quasi-isomorphism $F' \colon \scrA \rightsquigarrow \scrA'\colonequals (\HH^*(\scrA), \{\uplambda_k\}_{k \geq 2})$. The same can be done to construct a strictly unital $A_\infty$-quasi-isomorphism $G' \colon \scrA' \rightsquigarrow \scrA$.
	\end{proof}

\begin{construction}\label{const:N-cats}
Let $\scrA$ be a $\DG$-category  such that \ref{property-cats} holds.  Viewing $\scrA$ as a strictly unital $A_\infty$-category, consider its strictly unital minimal model  $\HH^*(\scrA)$, as in \ref{minimal cat}.
\begin{enumerate}
\item\label{Ndef1a} For objects $x \neq y$ in $\scrA$, consider the graded subspace
\[
\HH^{>0} (\scrHom_\scrA(x,y)) \subseteq \HH^{*} (\scrHom_\scrA(x,y)).
\]
\item\label{Ndef1b} For every object $x\in\scrA$, consider the graded subspace
\[
\mathbb{C}e_x \oplus\HH^{>0} (\scrHom_\scrA(x,x)) \subseteq \HH^{*} (\scrHom_\scrA(x,x)).
\]
\end{enumerate}
Together, \eqref{Ndef1a} and \eqref{Ndef1b} defines a vector subspace $\scrN_\scrA$ of $\HH^*(\scrA)$.
\end{construction}
\begin{lemma}\label{N_for_cats}
The strictly unital, minimal $A_\infty$-structure on $\HH^*(\scrA)$ restricts to a minimal $A_\infty$-structure on $\scrN_\scrA$ 
\end{lemma}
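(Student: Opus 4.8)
The plan is to upgrade the degree-counting argument of Lemma~\ref{lem.connective-truncation} so that it keeps track of objects. Since $\scrN_\scrA$ has the same objects as $\HH^*(\scrA)$ and each of its morphism spaces is a graded subspace of the corresponding space in $\HH^*(\scrA)$, and since the $A_\infty$-relations hold in $\HH^*(\scrA)$ and descend automatically to any subspace closed under all the structure maps $\{\uplambda_k\}_{k\geq 1}$, it suffices to prove that each $\uplambda_k$ carries composable tuples of morphisms in $\scrN_\scrA$ into $\scrN_\scrA$. Minimality of the restriction is then immediate, since $\uplambda_1=0$ on $\HH^*(\scrA)$, and strict unitality of the restriction is also inherited because each $e_x$ lies in $\scrN_\scrA$ by \ref{const:N-cats}\eqref{Ndef1b} and the strict unitality axioms are equations valid in $\HH^*(\scrA)$.

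First I would reduce to homogeneous inputs by multilinearity of the $\uplambda_k$, and record the shape of a homogeneous morphism of $\scrN_\scrA$ lying in $\scrHom_\scrA(x,y)$: if $x\neq y$ it has strictly positive degree by \ref{const:N-cats}\eqref{Ndef1a}, while if $x=y$ it is either a scalar multiple of the strict unit $e_x$ (the only part in degree $0$, which is a genuine summand by \ref{property-cats}) or has strictly positive degree, by \ref{const:N-cats}\eqref{Ndef1b}. Then I would run the case analysis on a composable tuple $f_k,\dots,f_1$ of homogeneous morphisms of $\scrN_\scrA$ with $f_j\in\scrHom_\scrA(x_{j-1},x_j)$, so that $\uplambda_k(f_k\otimes\cdots\otimes f_1)\in\scrHom_\scrA(x_0,x_k)$ has degree $\sum_{j=1}^{k}|f_j|+2-k$. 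If some $f_j$ is a scalar multiple of a unit and $k\neq 2$, strict unitality of the minimal model from \ref{minimal cat} forces $\uplambda_k(f_k\otimes\cdots\otimes f_1)=0\in\scrN_\scrA$. If $k=2$ and, say, $f_1=\uplambda e_{x_0}$, then $\uplambda_2(f_2\otimes\uplambda e_{x_0})=\uplambda f_2$ lies in the same space $\scrHom_\scrA(x_0,x_2)\cap\scrN_\scrA$ as $f_2$, and symmetrically if $f_2$ is a scalar multiple of a unit; if both are scalar units then composability forces $x_0=x_1=x_2$ and $\uplambda_2(\uplambda e_{x_0}\otimes\upmu e_{x_0})=\uplambda\upmu e_{x_0}\in\mathbb{C}e_{x_0}\subseteq\scrN_\scrA$. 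In the remaining case no $f_j$ is a scalar multiple of a unit, so by the previous sentence $|f_j|\geq 1$ for every $j$, whence $|\uplambda_k(f_k\otimes\cdots\otimes f_1)|\geq k+2-k=2>0$; this output therefore lies in $\HH^{>0}(\scrHom_\scrA(x_0,x_k))$, which is contained in $\scrN_\scrA$ regardless of whether $x_0=x_k$. In all cases the output lies in $\scrN_\scrA$, so the structure restricts.

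I do not expect a genuine obstacle here; the only point requiring care is the object bookkeeping, namely that the dichotomy ``scalar multiple of a unit versus strictly positive degree'' is a statement about a single fixed space $\scrHom_\scrA(x,x)$, and that composability pins down exactly which objects occur, so that the degree-$0$ part of $\scrN_\scrA$ is genuinely spanned only by the various $e_x$ and never by an ``off-diagonal'' morphism. Once this is in place the verification is formally identical to the algebra case of \ref{lem.connective-truncation}.
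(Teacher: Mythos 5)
Your proposal is correct and takes essentially the same approach as the paper: the paper's proof simply says this is ``exactly the same as \ref{lem.connective-truncation},'' noting that products of positive-degree inputs land in $\scrN_\scrA$ by the degree count and that inputs involving units are handled by strict unitality; you have merely spelled out the object-level bookkeeping (that off-diagonal Hom spaces in $\scrN_\scrA$ have no degree-$0$ part, and that composability pins down the objects so the degree-$0$ elements that occur are genuinely scalar multiples of units), which the paper leaves implicit.
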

 \begin{proof}
This is exactly the same as \ref{lem.connective-truncation}, with the point being that $A_\infty$-products on inputs of degree $\geq 1$ all clearly land within $\scrN_\scrA$.  Products which involve degree zero input necessarily involve the units, which by inspection also always land in $\scrN_\scrA$.  
 \end{proof}

\begin{defin}
We call $\scrN_\scrA$ the \emph{unitally positive $A_\infty$-category} associated to $\scrA$.
\end{defin}

\begin{remark}\label{scrN-well-defined-cats}
 Similarly to \ref{scrN is well-defined}, we can also prove that $\scrN_\scrA$ is well-defined: if $\scrA$ and $\scrB$ are quasi-isomorphic 
$\DG$-categories such that \ref{property-cats} holds, we can replace  the zigzag of quasi-isomorphisms of $\DG$-categories between $\scrA$ and $\scrB$ with  a strictly unital $A_\infty$-quasi-isomorphism $F \colon \scrA \rightsquigarrow \scrB$.
We then obtain an $A_\infty$-quasi-isomorphism between their strictly unital minimal models constructed in \ref{minimal cat}, which by a simple degree argument, as in \ref{scrN is well-defined}, restricts to an $A_\infty$-quasi-isomorphism between
$\scrN_\scrA$ and $\scrN_\scrB$.
\end{remark}

\begin{lemma}\label{ST-cats}
Let $E_i, E'_i$ be bounded complexes of projective $\Upgamma$-modules, for $i \in \{1, \hdots, m\}$, with quasi-isomorphisms $g_i \colon E_i \to E'_i$ for every $i$. Then the $\DG$-categories
$(i,j)\mapsto \scrHom_\Upgamma(E_i, E_j)$ and $(i,j)\mapsto \scrHom_\Upgamma(E'_i, E'_j)$ are quasi-isomorphic. 
\end{lemma}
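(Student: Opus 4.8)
The plan is to prove this by the same mechanism as the single-object statement \ref{rem.uniqueness}\eqref{rem.uniqueness3}: we produce a zigzag of $\DG$-functors between the two $\DG$-categories which are the identity on the object set $\{1,\dots,m\}$ and quasi-isomorphisms on every $\scrHom$-complex, and such a zigzag is exactly a quasi-isomorphism of $\DG$-categories in the sense used in this section. Write $\scrA$ for the $\DG$-category $(i,j)\mapsto\scrHom_\Upgamma(E_i,E_j)$ and $\scrB$ for $(i,j)\mapsto\scrHom_\Upgamma(E'_i,E'_j)$.

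First I would reduce to a situation of degreewise split monomorphisms. Since the $E_i$ and $E'_i$ are bounded complexes of projective $\Upgamma$-modules and $g_i$ is a quasi-isomorphism, $\operatorname{cone}(g_i)$ is a bounded acyclic complex of projectives, hence contractible, so each $g_i$ is a homotopy equivalence. Passing to the mapping cylinder $F_i\colonequals\mathrm{Cyl}(g_i)$, itself a bounded complex of projectives, factors $g_i$ as $E_i\xrightarrow{\,\iota_i\,}F_i\xrightarrow{\,r_i\,}E'_i$ with $\iota_i$ a degreewise split monomorphism, $r_i$ a degreewise split epimorphism, and both $\iota_i$ and $r_i$ homotopy equivalences; moreover the section $s_i\colon E'_i\to F_i$ of $r_i$ is again a degreewise split monomorphism and a homotopy equivalence. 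Writing $\scrF$ for $(i,j)\mapsto\scrHom_\Upgamma(F_i,F_j)$, it therefore suffices to treat the following situation and apply it twice, once with $(X_i,Y_i,j_i)=(E_i,F_i,\iota_i)$ and once with $(X_i,Y_i,j_i)=(E'_i,F_i,s_i)$: given bounded complexes of projectives $X_i,Y_i$ together with degreewise split monomorphisms $j_i\colon X_i\to Y_i$ that are homotopy equivalences, the $\DG$-categories $(i,j)\mapsto\scrHom_\Upgamma(X_i,X_j)$ and $(i,j)\mapsto\scrHom_\Upgamma(Y_i,Y_j)$ are linked by a roof of $\DG$-functors of the required type.

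For this, form the $\DG$-category $\scrC$ with object set $\{1,\dots,m\}$ and
\[
\scrHom_\scrC(i,j)\colonequals\scrHom_\Upgamma(X_i,X_j)\times_{\scrHom_\Upgamma(X_i,Y_j)}\scrHom_\Upgamma(Y_i,Y_j),
\]
the fibre product taken over postcomposition by $j_j$ and precomposition by $j_i$; that is, $\scrHom_\scrC(i,j)$ consists of pairs $(a,b)$ with $j_j\circ a=b\circ j_i$. Since the $j_\bullet$ are genuine chain maps, componentwise composition and componentwise differential preserve this constraint, are strictly associative, and have strict units $(\Id_{X_i},\Id_{Y_i})$, so $\scrC$ is a $\DG$-category and the two projections $\scrC\to\bigl((i,j)\mapsto\scrHom_\Upgamma(X_i,X_j)\bigr)$ and $\scrC\to\bigl((i,j)\mapsto\scrHom_\Upgamma(Y_i,Y_j)\bigr)$ are $\DG$-functors that are the identity on objects. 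On a fixed pair $(i,j)$, the leg $-\circ j_i\colon\scrHom_\Upgamma(Y_i,Y_j)\to\scrHom_\Upgamma(X_i,Y_j)$ is degreewise surjective, because $j_i$ is a degreewise split monomorphism, and a quasi-isomorphism, because $j_i$ is a homotopy equivalence; thus the defining square is a pullback along an acyclic fibration of complexes of vector spaces, whence $\scrHom_\scrC(i,j)\to\scrHom_\Upgamma(X_i,X_j)$ is a quasi-isomorphism, and, pulling the quasi-isomorphism $j_j\circ-$ back along that same fibration, $\scrHom_\scrC(i,j)\to\scrHom_\Upgamma(Y_i,Y_j)$ is a quasi-isomorphism as well. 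Applying this twice produces a zigzag $\scrA\to\scrC_1\leftarrow\scrF\to\scrC_2\leftarrow\scrB$ of $\DG$-functors, identity on objects and quasi-isomorphisms on all $\scrHom$-complexes; hence $\scrA$ and $\scrB$ are quasi-isomorphic $\DG$-categories. (Property~\ref{property-cats}, when assumed, is preserved throughout, since $\scrHom_{\scrC}(i,i)$ has the cohomology of $\scrHom_\Upgamma(X_i,X_i)$.)

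The only point requiring genuine care is the verification that componentwise composition and differential in $\scrC$ remain inside the fibre product and are strictly associative and unital; this is exactly where the reduction to \emph{strict} split monomorphisms pays off, since it makes all the relevant squares commute on the nose rather than merely up to homotopy. Trying instead to build a single $A_\infty$-functor $\scrA\rightsquigarrow\scrB$ directly from the $g_i$ and chosen homotopy inverses would force genuine $A_\infty$-coherence bookkeeping, whereas the roof above is the exact categorical analogue of \cite[4.4]{ST} and its only homological input is the standard fact that a pullback of an acyclic fibration of complexes is again an acyclic fibration.
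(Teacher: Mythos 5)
Your argument is correct, but it follows a genuinely different route from the paper's proof. The paper transcribes \cite[4.4]{ST} directly into the $m$-object setting: setting $C_i\colonequals\operatorname{cone}(g_i)$, the $\DG$-category $(i,j)\mapsto\scrHom_\Upgamma(C_i,C_j)$ decomposes as a $2\times 2$ matrix of $\scrHom$-complexes whose lower-triangular entries form a $\DG$-subcategory $\scrN$, and the two diagonal projections give a two-step roof $\scrA\leftarrow\scrN\to\scrB$ of identity-on-objects $\DG$-functors that are quasi-isomorphisms on every $\scrHom$-complex. You instead pass through the mapping cylinder to reduce to degreewise split monomorphisms that are homotopy equivalences, then build a fibre-product $\DG$-category over a strict commuting-square constraint and verify that both projections are quasi-isomorphisms by a pullback-of-acyclic-fibration argument; applying this twice produces a four-step zigzag. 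Your version trades the paper's explicit lower-triangular bookkeeping for a model-categorical input; it is slightly longer but self-contained and avoids sign considerations in the cone differential. Two small corrections: the arrows in your concluding zigzag are reversed (the projections go \emph{out} of the fibre-product categories, so it should read $\scrA\leftarrow\scrC_1\to\scrF\leftarrow\scrC_2\to\scrB$), and your closing remark that your roof is ``the exact categorical analogue of \cite[4.4]{ST}'' overstates the resemblance --- it is the paper's cone/lower-triangular construction that is the literal $m$-object analogue of \cite[4.4]{ST}, whereas yours is a distinct construction serving the same purpose.
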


\begin{proof}
    This is entirely based on \cite[4.4]{ST}, but for completeness we spell out how it works for $\DG$-categories with $m$ objects. 

    For every $i$, denote $C_i \colonequals \operatorname{cone}(g_i)= (E_i[1]\oplus E_i', d_{C_i})$, and define the $\DG$-category with $m$ objects:
    \[ (i,j) \mapsto \scrHom_\Upgamma(C_i, C_j)=  \left( \begin{array}{cc}
		\scrHom_\Upgamma(E_i[1],E_j[1]) & \scrHom_\Upgamma(E_i',E_j[1])\\
		\scrHom_\Upgamma(E_i[1],E_j')  & \scrHom_\Upgamma(E_i',E_j')
	\end{array} \right)\]
with obvious differential, composition and identities. 
The calculation of \cite[4.4]{ST} shows that lower triangular matrices are closed for the differentials, so that
 \[ (i,j) \mapsto  \left( \begin{array}{cc}
		\scrHom_\Upgamma(E_i[1],E_j[1]) &0\\
		\scrHom_\Upgamma(E_i[1],E_j')  & \scrHom_\Upgamma(E_i',E_j')
	\end{array} \right)\]
with same differentials, composition and identities, is also a $\DG$-category with $m$ objects. Now following the rest of the proof of \cite[4.4]{ST}, projections onto the categories 
\[ (i,j) \mapsto  \scrHom_\Upgamma(E_i[1],E_j[1]) \cong \scrHom_\Upgamma(E_i,E_j)\]
and  \[ (i,j) \mapsto \scrHom_\Upgamma(E_i',E_j') \] give a roof of quasi-isomorphisms of $\DG$-categories. 
\end{proof}
\subsection{The case of idempotents}\label{subsec:GenIdem}

Let $\rmA, \Acon, i^*$ and $i_*$ be as in Section~\ref{sec:idems}.

\begin{setup}\label{setup-multi}
    Let $S_1, \hdots, S_t \in\mod\rmA$ be finite dimensional simple modules, and suppose that there exists for every $j \in \{1, \hdots, t \}$ a complex of finitely generated projective $\rmA$-modules
\[
\scrQ^j\colonequals\quad  0 \rightarrow Q^j_{n-1} \rightarrow Q^j_{n-2} \rightarrow \dots \rightarrow Q^j_{1} \rightarrow Q^j_0 \rightarrow 0
\]
which gives an $\rmA$-projective resolution of $S_j$, such that $\PP^j\colonequals i^*\scrQ ^j$ satisfies
\[
\mathrm{H}^{-k}(\PP^j) \cong \left\{ \begin{array}{ll}
 i^*S_j & \text{ if $k\in \{ 0, n-1\}$ } \\
 0 & \text{ otherwise},
\end{array}
\right.
\]
and hence is a length $n$ periodic projective resolution of $S_j$.
\end{setup}

Under Setup~\ref{setup-multi}, set $\scrA$ to be the $\DG$-category with $t$ objects $\{1, \hdots, t\}$ defined as
\[ 
(i,j) \mapsto \scrHom_{\scrA}(i,j)\colonequals\scrHom_{\Acon} (\PP^i, \PP^j)
\]
with the obvious composition and identities. 
Considering $S_1, \hdots, S_t$  as $\rmA$-modules, we can also consider the $\DG$-category $\scrB$, also with $t$ objects, defined as
\[ 
(i,j) \mapsto \scrHom_{\scrB}(i,j)\colonequals\scrHom_{\rmA} (\scrQ^i, \scrQ^j).
\]

If we choose another set of $\rmA$-projective resolutions $\scrQ'^{j}\to S_j$, it is clear that we obtain another $\DG$-category $\scrB'$ which is quasi-isomorphic to $\scrB$, by \ref{ST-cats}. Checking that $\scrA$ is well-defined up to quasi-isomorphism is the same, with one extra step: it relies on the fact that each $S_j$ is simple, so by \ref{lem.qisom-endomo-simple}  $\PP^j$ and $\PP'^j$ are quasi-isomorphic, and so we can also use \ref{ST-cats}. From this, by \ref{scrN-well-defined-cats} the $A_\infty$-quasi-isomorphism class of the $A_\infty$-category $\scrN_\scrA$ is well defined.

\begin{thm}\label{Main DG result cats}
Under Setup~\textnormal{\ref{setup-multi}}, and with notation as above, there is an $A_\infty$-quasi-isomorphism between the $\DG$-category $\scrB$ and the unitally positive $A_\infty$-category $\scrN_{\scrA}$ associated to $\scrA$. 
\end{thm}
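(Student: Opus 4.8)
The plan is to imitate the proof of \ref{Main DG result} one Hom-complex at a time, indexed over pairs of objects, and then glue everything with the strictly unital categorical machinery of \ref{minimal cat}. For each $j$, Setup~\ref{key setup} applied to the periodic $\Acon$-module $S_j$ produces the periodic projective resolution $\scrP^j$ of $S_j$, the inclusion $\upiota^j\colon\PP^j\to\scrP^j$ of \S\ref{general qis section}, and (as in \ref{Main DG result}) an augmentation $\upalpha^j\colon\scrP^j\to S_j$ with $\upalpha^j\circ\upiota^j=i^*(\upvarepsilon^j)$, where $\upvarepsilon^j\colon\scrQ^j\to S_j$ is the given $\rmA$-resolution. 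For each pair $(i,j)$ one then has the exact analogue of diagram~\eqref{eq.diagram}: a commuting triangle of complexes whose top edge is $\scrHom_\rmA(\scrQ^i,\scrQ^j)\xrightarrow{i^*}\scrHom_{\Acon}(\PP^i,\PP^j)\xrightarrow{\upiota^j\circ}\scrHom_{\Acon}(\PP^i,\scrP^j)$ and whose two slanted edges $\upvarepsilon^j\circ$ and $\upalpha^j\circ$ land in $\scrHom_\rmA(\scrQ^i,S_j)\cong\scrHom_{\Acon}(\PP^i,S_j)$, the isomorphism being the $(i^*\dashv i_*)$-adjunction. As in \ref{Main DG result}, the two slanted maps are quasi-isomorphisms, since their cones are $\scrHom$ out of a bounded-above complex of projectives into an acyclic complex and hence null-homotopic; therefore the top composite is a quasi-isomorphism, so $i^*$ is injective and $\upiota^j\circ$ surjective on every Hom-cohomology.

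The next step is the cohomology computation, which is just \S\ref{sec.cohomology_end} with the single module $M$ replaced throughout by the pair: source data $\PP^i,\scrP^i,\scrP^i_{\leq n}$ built from $S_i$, and target module $S_j$. This transplants painlessly because \ref{lem.map-pi} is already stated for two distinct modules, so the $3\times 3$ grid and all of its consequences carry over verbatim, giving analogues of \eqref{eq.coh-ext} and of \ref{eq.coh-t} for $\scrHom_{\Acon}(\PP^i,\scrP^j)$ and $\scrHom_{\Acon}(\PP^i,\PP^j)$, in which $\Ext^*_{\Acon}(S_i,S_j)$ and a term $\image(\circ d_0^i)\subseteq\Hom_{\Acon}(P^i_0,S_j)$ play the roles of the diagonal data. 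The diagonal $i=j$ is literally \ref{Main DG result}. For $i\neq j$ the simples $S_i,S_j$ are non-isomorphic, so $\Hom_{\Acon}(S_i,S_j)=0$, and consequently $\image(\circ d_0^i)=0$ since every such element factors through $S_i\to S_j$. Feeding this into the generalised \ref{eq.coh-t} and the generalised identification $\HH^*(\scrEnd_\rmA(\scrQ))\cong\HH^*(\scrHom_{\Acon}(\PP,\scrP))$ from the proof of \ref{Main DG result} shows that, as graded vector spaces,
\[
\HH^*(\scrHom_\rmA(\scrQ^i,\scrQ^j))\ \cong\ \bigoplus_{k\geq 1}\Ext^k_{\Acon}(S_i,S_j)\ =\ \scrN_\scrA(i,j)\qquad(i\neq j),
\]
while on the diagonal $\HH^*(\scrEnd_\rmA(\scrQ^j))\cong\mathbb{C}\mathsf{1}\oplus\HH^{>0}(\scrEnd_{\Acon}(\PP^j))=\scrN_\scrA(j,j)$; by the evident multi-object analogue of \ref{rem:allExtsfd} all of these spaces are finite dimensional, and the identifications are degree by degree.

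It remains to upgrade this linear-algebra statement to an $A_\infty$-isomorphism. Both $\scrA$ and $\scrB$ are $\DG$-categories satisfying \ref{property-cats} — on the diagonal the identity is a non-zero cohomology class — so \ref{minimal cat} supplies strictly unital minimal models together with strictly unital $A_\infty$-quasi-isomorphisms $G_\scrB\colon\HH^*(\scrB)\rightsquigarrow\scrB$, $F_\scrB\colon\scrB\rightsquigarrow\HH^*(\scrB)$ and $F_\scrA\colon\scrA\rightsquigarrow\HH^*(\scrA)$. Form the strictly unital composite $\upvarphi=F_\scrA\circ i^*\circ G_\scrB\colon\HH^*(\scrB)\rightsquigarrow\HH^*(\scrA)$, whose (degree-zero) linear component is, on each Hom-cohomology, the map induced by $i^*$, hence injective by the first step. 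Exactly as in \ref{scrN is well-defined} and \ref{Main DG result}, a degree count now shows that every component of $\upvarphi$ lands in $\scrN_\scrA$: strict unitality kills any higher product with a degree-zero input (which on the diagonal is a unit and off the diagonal does not occur), all remaining inputs have degree $\geq 1$ so outputs have degree $\geq 1$, and on the linear level $\mathsf{1}\mapsto\mathsf{1}$ and positive-degree classes map to positive-degree classes. Thus $\upvarphi$ corestricts to an $A_\infty$-functor $\HH^*(\scrB)\rightsquigarrow\scrN_\scrA$ between minimal $A_\infty$-categories whose linear component is an injective graded map between spaces of equal finite dimension in each degree, hence an isomorphism; an $A_\infty$-functor of minimal categories bijective on objects with invertible linear component is an $A_\infty$-isomorphism. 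Precomposing with $F_\scrB$ gives the desired $A_\infty$-quasi-isomorphism $\scrB\rightsquigarrow\scrN_\scrA$.

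The one place that genuinely needs care — and which I would flag as the main obstacle — is the off-diagonal bookkeeping in the cohomology computation: one must verify that for $i\neq j$ there is neither a unit nor an $\image(\circ d_0)$-summand, so that the naive $\HH^{>0}$ prescribed by \ref{const:N-cats}\eqref{Ndef1a} on the off-diagonal Hom-spaces matches $\Ext^*_\rmA(S_i,S_j)$ on the nose and the degree-wise dimension count closes. Beyond that there is no new homological input — \ref{lem.map-pi} was already built for a pair of modules, so \S\ref{sec.cohomology_end} transplants with only index changes — and the categorical strict-unitality subtleties are entirely absorbed into \ref{minimal cat}.
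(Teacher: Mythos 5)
Your proposal is correct and follows essentially the same route as the paper: transplant the cohomology computation of \S\ref{sec.cohomology_end} and the diagram \eqref{eq.diagram} to each pair $(i,j)$, note that $\image(\circ d_0^i)=0$ off the diagonal because $\Hom_{\Acon}(S_i,S_j)=0$, pass to strictly unital minimal models via \ref{minimal cat}, and use the degree count plus injectivity of the linear component plus a dimension comparison to conclude. The only slip is the display $\HH^*(\scrHom_\rmA(\scrQ^i,\scrQ^j))\cong\bigoplus_{k\geq 1}\Ext^k_{\Acon}(S_i,S_j)$, which should read $\bigoplus_{k=1}^{n-1}\Ext^k_{\Acon}(S_i,S_j)$ --- the left side is the cohomology of a bounded complex and vanishes in degrees $\geq n$, whereas $\Ext^k_{\Acon}(S_i,S_j)$ is typically nonzero for all $k$ by periodicity; your own degree-by-degree identification makes clear that the finite range is what is meant, so this does not affect the argument.
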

\begin{proof} 
 Since for every $j$, $\scrQ^j \to S_j$ is an $\rmA$-projective resolution, we have
 \[ \scrHom_{\HH^*(\scrB)}(i,j) = \HH^*(\scrHom_\scrB(i,j)) =\HH^* (\scrHom_{\rmA} (\scrQ^i, \scrQ^j))\cong \Ext^*_{\rmA}(S_i, S_j).\]
 For the $\DG$-category $\scrA$, we can adapt the cohomology calculations of Section~\ref{sec.cohomology_end} to see that for $i\neq j$, the only non-zero cohomology for $\scrHom_{\Acon}(\PP^i, \PP^j)$ is
 \begin{equation}\label{eq.coh-ext-mixedI}
 \HH^{k}(\scrHom_{\Acon}(\PP^i, \PP^j)) = \begin{cases} 
 \Ext^{k+n-1}_{\Acon} (S_i, S_j) &\quad k=1-n, \hdots, 0\\
 \Ext^i_{\Acon}(S_i,S_j) &\quad k=1,  \hdots , n-1 \\ 
  \end{cases}
 \end{equation}
 whilst for $i = j$, the only non-zero cohomology is
 \begin{equation}\label{eq.coh-ext-mixedII}
 \HH^{k}(\scrHom_{\Acon}(\PP^j, \PP^j)) = \begin{cases} 
 \Ext^{k+n-1}_{\Acon} (S_j, S_j) &\quad  k=1-n, \hdots, -1\\
 \Ext^{n-1}_{\Acon} (S_j, S_j)\oplus \mathbb{C} \oplus 
 \image (\circ d_0^j) &\quad k=0 \\
 \Ext^k_{\Acon}(S_j,S_j) &\quad k=1,  \hdots , n-2 \\ \Ext^{n-1}_{\Acon}(S_j,S_j) \oplus \image(\circ d_0^j) &\quad k =n-1,
  \end{cases}
 \end{equation}
 where we are using the fact that $\Hom_{\Acon}(S_j,S_j)\cong \mathbb{C}$, $\Hom_{\Acon}(S_i,S_j)=0$ for all $i \neq j$, and the notation $\image(\circ d_0^j)$ refers to the image of the map
 \[
 \circ d_0^j \colon \Hom_{\Acon} (P_{n-1}^j, S_j) \to \Hom_{\Acon}(P_0^j, S_j)
 \]  
 considered in \ref{lem.map-pi}. Thus the graded components of the unitally positive $A_\infty$-category $\scrN_{\scrA}$ associated to $\scrA$ of \ref{const:N-cats} are
 \begin{equation}\label{eq.N-catsI}
 		\scrHom_{\scrN_{\scrA}}^k(j,j) = \begin{cases} 0 &\quad k \leq -1,\ k \geq n\\ \mathbb{C}&\quad k = 0\\
 			\Ext^k_{\Acon}(S_j,S_j) &\quad k=1, \hdots , n-2 \\ \Ext^{n-1}_{\Acon}(S_j,S_j) \oplus \image(\circ d_0^j) &\quad k =n-1
 			. \end{cases}
 \end{equation}
 whilst for $i \neq j$ the graded components are
  \begin{equation}\label{eq.N-catsII}
      \scrHom_{\scrN_{\scrA}}^k(i,j) =\begin{cases} 0 &\quad k \leq 0,\ i \geq n\\ 
 			\Ext^i_{\Acon}(S_i,S_j) &\quad k=1, \hdots , n-1 
    			. \end{cases}
  \end{equation}
 Extension of scalars gives a morphism of $\DG$-categories $i^* \colon \scrB \to\scrA$ sending
 \[ 
 f \in \scrHom_{A} (\scrQ^i, \scrQ^j) =\scrHom_{\scrB}(i,j)\mapsto i^*f \in \scrHom_{\Acon} (\PP^i, \PP^j)=  \scrHom_{\scrA}(i,j)
 \]
 which is the identity on objects.  The following commutative diagram is a generalisation of \eqref{eq.diagram}
 \begin{equation}\label{eq.diagram2}
 		\begin{tikzcd}
 			\scrHom_\rmA(\scrQ^i, \scrQ^j) \arrow[rd, "\upvarepsilon_j\circ "', bend right=15] \arrow[r, "i^*"] & \scrHom_{\Acon}(\PP^i, \PP^j) \arrow[r, "\upiota_j\circ "]        & {\scrHom_{\Acon}(\PP^i,\scrP^j)} \arrow[ld, "\upalpha_j\circ", bend left=15] \\ & {\scrHom_\rmA(\scrQ^i, S_j)\cong \scrHom_{\Acon}(\PP^i,S_j)}, &                                                             
 		\end{tikzcd}
 	\end{equation}
 and the maps $\upvarepsilon_j \circ$, $\upalpha_j\circ$ are still quasi-isomorphisms. This implies 
 \begin{enumerate}
     \item The morphism of $\DG$-categories $ i^* \colon \scrB \to\scrA$ is injective in cohomology.
     \item The cohomology of $\scrB$ can be calculated explictly, by adapting the cohomology calculation in \S\ref{sec.cohomology_end}, namely
 \begin{align*}
 \Ext^k_\rmA(S_j, S_j)&\cong \HH^k(\scrHom_\rmA(\scrQ^j, \scrQ^j))   \nonumber\\
 &\cong \HH^k(\scrHom_{\Acon}(\PP^j, \scrP^j))\\
&= \begin{cases} 0 &\quad k < 0,\ k \geq n\\ 
 			\Ext^k_{\Acon}(S_j,S_j) &\quad k=0,  \hdots , n-2 \\ \Ext^{n-1}_{\Acon}(S_j,S_j) \oplus \image(\circ d_0^j) &\quad i =n-1
 		\end{cases}
\end{align*}
and if $i\neq j$ then 
 \begin{align*}
   	\Ext^k_\rmA(S_i, S_j)&\cong \HH^k(\scrHom_\rmA(\scrQ^i, \scrQ^j))   \nonumber\\
 		&\cong \HH^k(\scrHom_{\Acon}(\PP^i, \scrP^j))\\
		& = \begin{cases} 0 &\quad k \leq 0,\ k \geq n\\ 
 			\Ext^k_{\Acon}(S_i,S_j) &\quad k=1,  \hdots , n-1.
 		\end{cases}
 	\end{align*}
 \end{enumerate}
 By comparing this with \eqref{eq.N-catsI} and \eqref{eq.N-catsII}, we can see that for every $i, j$, $\Ext^*_{\Acon}(S_i,S_j)$ and  $\scrHom_{\scrN_{\scrA}}(i,j)$  are isomorphic as graded vector spaces. 

 Passing to strictly unital minimal models defined in \ref{minimal cat}, the morphism of $\DG$-categories $ i^* \colon \scrB \to\scrA$ induces a strictly unital $A_\infty$-morphism of strictly unital $A_\infty$-categories $F \colon \HH^*(\scrB) \rightsquigarrow \HH^*(\scrA)$, such that for every $i,j$ its linear component
 \[F_1 \colon \scrHom_{\HH^*(\scrB)}(i,j) \to \scrHom_{\HH^*(\scrA)}(i,j) \] is injective. By the same argument of \ref{Main DG result}, $F$ restricts to an $A_\infty$-morphism  $F \colon \HH^*(\scrB) \rightsquigarrow \scrN_\scrA$, such that its linear component is injective.  Counting dimensions, the linear component is thus an isomorphism, proving the claim.   
\end{proof}

The following is the analogue of \ref{cor:main_reconstruct}.

 \begin{cor}\label{cor:main_reconstruct_cats}
Suppose that both $\rmA$ and $\mathrm{B}$ admit $t$ finite dimensional simples, and idempotents which satisfy the conditions in Setup~\textnormal{\ref{setup-multi}}, via complexes $\scrQ^j_{\rmA}$ and $\scrQ^j_{\mathrm{B}}$ of the same length, for each $j \in \{1, \hdots, t\}$.  If $\Acon\cong\Bcon$, and each has precisely $t$ simple modules, then there is an $A_\infty$-quasi-isomorphism between the $\DG$-categories $(i,j) \mapsto \scrHom_{\rmA}(\scrQ^i_{\rmA}, \scrQ^j_{\rmA})$ and $(i,j) \mapsto \scrHom_{\mathrm{B}}(\scrQ^i_{\mathrm{B}}, \scrQ^j_{\mathrm{B}})$.
\end{cor}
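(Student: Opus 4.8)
The plan is to transcribe the proof of \ref{cor:main_reconstruct} into the categorical setting, using the tools assembled in \S\ref{sec:CatUpgrade}. Write $\scrB_\rmA$ for the $\DG$-category $(i,j)\mapsto\scrHom_\rmA(\scrQ^i_\rmA,\scrQ^j_\rmA)$ and $\scrB_\mathrm{B}$ for $(i,j)\mapsto\scrHom_\mathrm{B}(\scrQ^i_\mathrm{B},\scrQ^j_\mathrm{B})$, and set $\PP^j\colonequals i^*\scrQ^j_\rmA$, $\PP'^j\colonequals i^*\scrQ^j_\mathrm{B}$. Let $\scrA$ be the $\DG$-category $(i,j)\mapsto\scrHom_{\Acon}(\PP^i,\PP^j)$ built from the $\PP^j$ as in Setup~\ref{setup-multi}, and $\scrA'$ the analogous one for $\Bcon$ built from the $\PP'^j$. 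By \eqref{eq.coh-ext-mixedII} each $\HH^0(\scrHom(j,j))$ contains a copy of $\mathbb{C}$, so all four $\DG$-categories satisfy \ref{property-cats}. Applying \ref{Main DG result cats} to $\rmA$ and to $\mathrm{B}$ gives $A_\infty$-quasi-isomorphisms $\scrB_\rmA\rightsquigarrow\scrN_{\scrA}$ and $\scrB_\mathrm{B}\rightsquigarrow\scrN_{\scrA'}$. So it suffices to produce an $A_\infty$-quasi-isomorphism $\scrN_{\scrA}\rightsquigarrow\scrN_{\scrA'}$, and by \ref{scrN-well-defined-cats} it is enough to show that $\scrA$ and $\scrA'$ are quasi-isomorphic as $\DG$-categories.

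For this I would fix an isomorphism $\Acon\cong\Bcon$, inducing an isomorphism of abelian categories $F\colon\mod\Acon\to\mod\Bcon$. Since $F$ preserves simples and both $\Acon$ and $\Bcon$ have exactly $t$ simple modules, $F$ induces a bijection between iso-classes of simples; after relabelling the curves of $\mathrm{B}$ we may assume $FS_j\cong S'_j$ for each $j$. Because $F$ is exact and an isomorphism on objects, it carries the length $n$ complex $\PP^j$ to a length $n$ complex $F\PP^j$ of projective $\Bcon$-modules which is again a length $n$ periodic projective resolution of $S'_j$. Exactly as in \eqref{eqn:iso}, $F$ then determines an isomorphism of $\DG$-categories
\[
\scrA=\big((i,j)\mapsto\scrHom_{\Acon}(\PP^i,\PP^j)\big)\ \xrightarrow{\ \sim\ }\ \big((i,j)\mapsto\scrHom_{\Bcon}(F\PP^i,F\PP^j)\big),
\]
which (after relabelling) is the identity on objects and preserves identities.

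Finally, for each $j$ the complexes $F\PP^j$ and $\PP'^j$ are both length $n$ periodic projective resolutions of the same simple $\Bcon$-module $S'_j$, so by \ref{lem.qisom-endomo-simple} there is a quasi-isomorphism $F\PP^j\to\PP'^j$. Feeding these into \ref{ST-cats} (with index set $\{1,\dots,t\}$, $\Upgamma=\Bcon$, $E_j=F\PP^j$, $E'_j=\PP'^j$) shows that the $\DG$-categories $(i,j)\mapsto\scrHom_{\Bcon}(F\PP^i,F\PP^j)$ and $\scrA'$ are quasi-isomorphic. Composing with the isomorphism of the previous paragraph, $\scrA$ and $\scrA'$ are quasi-isomorphic $\DG$-categories, hence by \ref{scrN-well-defined-cats} there is an $A_\infty$-quasi-isomorphism $\scrN_{\scrA}\rightsquigarrow\scrN_{\scrA'}$. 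Chaining $\scrB_\rmA\rightsquigarrow\scrN_{\scrA}\rightsquigarrow\scrN_{\scrA'}\rightsquigarrow\scrB_\mathrm{B}$, and using that $A_\infty$-quasi-isomorphisms admit $A_\infty$-quasi-inverses to reverse the last arrow, completes the proof. I expect the only real obstacle to be bookkeeping: one must carry the permutation of objects induced by $F$ consistently so that the "same index set" hypotheses of \ref{ST-cats} and \ref{scrN-well-defined-cats} are genuinely satisfied after relabelling; everything else is a direct transcription of \S\ref{sec:idems}.
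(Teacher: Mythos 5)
Your proposal is correct and follows essentially the same route as the paper: apply \ref{Main DG result cats} twice, reduce to showing the two $\DG$-categories over $\Acon$ and $\Bcon$ are quasi-isomorphic, and establish that via the category isomorphism $F$ (giving a $\DG$-category isomorphism as in \eqref{eqn:iso2}), together with \ref{lem.qisom-endomo-simple} and \ref{ST-cats}, then conclude with \ref{scrN-well-defined-cats}. The only additions beyond the paper's argument are the (useful but minor) explicit check of \ref{property-cats} and the closing remark about bookkeeping the relabelling.
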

\begin{proof}
The simple $\Acon$-modules $S_j$ are equipped with length $n$ complexes $\PP^j$ which build $\scrN_{\rmA}$, and by \ref{Main DG result cats} there is an $A_\infty$-quasi-isomorphism
between the $\DG$-category $(i,j) \mapsto \scrHom_{\rmA}(\scrQ^i_{\rmA}, \scrQ^j_{\rmA})$ and the $A_\infty$-category $\scrN_{\rmA}$, equipped with the $A_\infty$-structure of \ref{N_for_cats}.
 The same holds for $\mathrm{B}$, where $\Bcon$ has $t$ simples $S'^j$, equipped with complexes $\PP'^j$ of same length $n$, which in turn build $\scrN_{\mathrm{B}}$. Again by \ref{Main DG result cats}, there is an $A_\infty$-quasi-isomorphism between the $\DG$-category $(i,j) \mapsto \scrHom_{\mathrm{B}}(\scrQ^i_{\mathrm{B}}, \scrQ^j_{\mathrm{B}})$ and the $A_\infty$-category $\scrN_{\mathrm{B}}$, equipped with the $A_\infty$-structure of \ref{N_for_cats}.
 
It now suffices to prove that $\scrN_\rmA$ and $\scrN_{\mathrm{B}}$ are $A_\infty$-quasi-isomorphic, and for this we will ultimately use the fact that the unitally positive category is well-defined up to quasi-isomorphism.
Indeed, since $\Acon\cong\Bcon$, there is an (exact) isomorphism of categories $F\colon \mod\Acon\to\mod\Bcon$.  Now $F$ preserves simples, so without loss of generality we can set $F S^j\cong S'^j$.  Further, $F$ takes $\PP^j$ to some length $n$ complex of projectives which builds a periodic projective resolution of $S'^j$. Clearly this isomorphism of module categories induces an isomorphism of $\DG$-categories 
\begin{equation}
[(i,j) \mapsto \scrHom_{\Acon}(\PP^i, \PP^j)] \xrightarrow{\cong }\ [(i,j) \mapsto \scrHom_{\Bcon}(F\PP^i, F\PP^j)].\label{eqn:iso2}
\end{equation}

Now for every $j$, $F\PP^j$ is quasi-isomorphic to $\PP'^j$ by \ref{lem.qisom-endomo-simple}, as both give periodic resolutions of the same simple with the same length. By \ref{ST-cats}, the $\DG$-categories $(i,j)  \mapsto \scrHom_{\Bcon}(F\PP^i, F\PP^j) $ and $(i,j) \mapsto \scrHom_{\Bcon}(\PP'^i, \PP'^j) $ are then quasi-isomorphic.  

Combining this with
\eqref{eqn:iso2}, it follows that the $\DG$-categories $(i,j) \mapsto \scrHom_{\Acon}(\PP^i, \PP^j)$ and $(i,j) \mapsto \scrHom_{\Bcon}(\PP'^i,\PP'^j)$ are quasi-isomorphic.  By \ref{scrN-well-defined-cats}, $\scrN_\rmA$ and $\scrN_{\mathrm{B}}$ are $A_\infty$-quasi-isomorphic, proving the result.
\end{proof}

\subsection{General Donovan--Wemyss}\label{subsec:DWmulti}
We revert to the general flops setup of \S\ref{subsec:flopssetup}, where there are $t$ curves $C_1,\hdots,C_t$.
\begin{thm}\label{main text}
Suppose that $X_1\to\Spec R_1$ and $X_2\to\Spec R_2$ are two $3$-fold flopping contractions, where both $X_i$ are smooth, and both $R_i$ are complete local.  Denote their corresponding contraction algebras by $\Acon$ and $\Ccon$, respectively.  Then the following conditions are equivalent.
\begin{enumerate}
\item $R_1\cong R_2$.
\item $\Db(\mod\Acon)\simeq\Db(\mod\Ccon)$.
\item $\Acon$ is isomorphic to an iterated mutation of $\Ccon$.
\item $\Acon$ is isomorphic to $\Bcon$, for some other crepant resolution $Y_2\to\Spec R_2$.
\end{enumerate}
\end{thm}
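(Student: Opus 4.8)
The plan is to prove $(1)\Leftrightarrow(4)$ directly, using the categorical reconstruction theorem \ref{cor:main_reconstruct_cats}, and then to read off $(2)\Leftrightarrow(3)\Leftrightarrow(4)$ from known results on mutation of contraction algebras which are external to this paper. The implication $(1)\Rightarrow(4)$ is immediate: an isomorphism $R_1\cong R_2$ exhibits the flopping contraction $X_1\to\Spec R_1$ as a crepant resolution $Y_2\to\Spec R_2$, and by construction its contraction algebra is $\Acon$. Thus the only substantive new step is $(4)\Rightarrow(1)$.

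So suppose $\Acon\cong\Bcon$ for some crepant resolution $Y_2\to\Spec R_2$. Since $\Bcon\cong\Acon$ has exactly $t$ simple modules, $Y_2$ has exactly $t$ exceptional curves above the origin, and as any two crepant resolutions of a fixed $\Spec R_2$ in dimension three are connected by a finite chain of flops, $Y_2$ is smooth. Hence the flops setup of \S\ref{subsec:flopssetup}, in particular \ref{key Tor descends} and \eqref{eqn:flopslookabstract}, applies equally to $X_1\to\Spec R_1$ and to $Y_2\to\Spec R_2$: writing $\rmA$ and $\mathrm{B}$ for their associated NCCRs, with the idempotents corresponding to the structure sheaves, both $\rmA$ and $\mathrm{B}$, together with their $t$ simples $S_1,\dots,S_t$ and the minimal projective resolutions $\scrQ^j$, satisfy Setup~\ref{setup-multi} with $n=4$. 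As $\Acon\cong\Bcon$ and each has precisely $t$ simples, \ref{cor:main_reconstruct_cats} applies and produces an $A_\infty$-quasi-isomorphism between the $\DG$-categories $(i,j)\mapsto\scrHom_{\rmA}(\scrQ^i_{\rmA},\scrQ^j_{\rmA})$ and $(i,j)\mapsto\scrHom_{\mathrm{B}}(\scrQ^i_{\mathrm{B}},\scrQ^j_{\mathrm{B}})$.

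From here the argument follows the single-curve proof of \ref{main text local}, one level up. Koszul duality converts this $A_\infty$-quasi-isomorphism into one between the Koszul duals, which are the derived contraction algebras of $X_1\to\Spec R_1$ and $Y_2\to\Spec R_2$ — now $\DG$-categories over the semisimple base $\mathbb{C}^t$ — in the sense of Booth \cite{Booth} (see also \cite[2.6, 2.7]{KalckYang}); the usual rectification \cite[2.8]{Lunts}, \cite[11.4.9]{LV} then upgrades this to an honest $\DG$-quasi-isomorphism. A quasi-isomorphism between the derived contraction algebras yields an equivalence of the associated relative, hence classical, singularity categories, and so the recovery theorem \cite[5.9]{HuaKeller} gives $R_1\cong R_2$. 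This completes $(1)\Leftrightarrow(4)$.

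For the remaining equivalences: $(4)\Rightarrow(3)$ because $Y_2\to\Spec R_2$ is joined to $X_2\to\Spec R_2$ by a sequence of flops, under which contraction algebras transform by mutation \cite{August}, so $\Bcon\cong\Acon$ is an iterated mutation of $\Ccon$; conversely $(3)\Rightarrow(4)$ since an iterated mutation of $\Ccon$ is realised as the contraction algebra of the variety obtained from $X_2$ by the corresponding chain of flops, which is again a crepant resolution of $R_2$; and $(2)\Leftrightarrow(3)$ is August's description of the derived equivalence class of a contraction algebra as precisely its iterated mutation class \cite{August}. The main obstacle is the $(4)\Rightarrow(1)$ step: beyond invoking \ref{cor:main_reconstruct_cats}, one must confirm that Koszul duality and the Kalck--Yang/Hua--Keller recovery package, which in the single-curve case are phrased for $\DG$-algebras, go through for $\DG$-categories over $\mathbb{C}^t$ — but this is a routine multi-object upgrade with no new phenomena, exactly parallel to the categorical extension carried out in \S\ref{sec:CatUpgrade}.
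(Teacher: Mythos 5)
Your proof follows essentially the same strategy as the paper: reduce to $(4)\Rightarrow(1)$, invoke \ref{cor:main_reconstruct_cats} to obtain an $A_\infty$-quasi-isomorphism of the two $\DG$-categories built from $\rmA$ and $\mathrm{B}$, then Koszul-dualize and appeal to the Kalck--Yang/Hua--Keller recovery package. Your extra paragraph spelling out $(1)\Rightarrow(4)$ and $(2)\Leftrightarrow(3)\Leftrightarrow(4)$ is fine, though the paper simply cites August for those.

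The one place where you diverge from the paper, and where you should be more careful, is the very last step. You sketch the Booth route (quasi-isomorphism of derived contraction algebras $\Rightarrow$ relative then classical singularity categories $\Rightarrow$ recovery) and assert that upgrading this from $\DG$-algebras to $\DG$-categories over $\mathbb{C}^t$ is ``a routine multi-object upgrade with no new phenomena.'' The paper deliberately does \emph{not} do this: Booth's \cite[8.3.3]{Booth} is what is used in the single-curve proof of \ref{main text local}, but for the multi-curve case the paper instead regards $\scrEnd_{\rmA}(\oplus_j\scrQ^j_{\rmA})$ as a $\DG$-algebra over $K=\mathbb{C}^t$, takes Koszul duals over $K$, identifies these with the derived \emph{deformation} algebras of Hua--Keller \cite[2.2]{HuaKeller}, and then follows the final part of the proof of \cite[5.11]{HuaKeller}: perfect modules $\Rightarrow$ cluster categories $\Rightarrow$ singularity categories via \cite[5.12]{HuaKeller} $\Rightarrow$ $\mathrm{HH}^0$ $\Rightarrow$ the recovery theorem \cite[5.9]{HuaKeller}. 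In other words, the multi-object upgrade you flag as needing attention is precisely what forces the switch of references, and it is not a formality: the chain through cluster categories is what carries the argument in the multi-curve case. Your proof would benefit from replacing the hand-waved ``relative, hence classical, singularity categories'' step with the Hua--Keller derived deformation algebra route, or at minimum from verifying that Booth's result really is available over a non-local semisimple base.
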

\begin{proof}
(1)$\Rightarrow$(2)$\Leftrightarrow$(3)$\Leftrightarrow$(4) can all be found in the PhD thesis of August \cite[1.4, 1.5]{August}.  The content in the theorem is the direction (4)$\Rightarrow$(1).   As in \ref{main text local},  write $\rmA$ and $\mathrm{B}$ for the NCCRs corresponding to the two contractions.  For both contractions, \eqref{eqn:flopslookabstract} now falls within the remit of Setup~\ref{setup-multi}, we and so can simply quote \ref{cor:main_reconstruct_cats} to conclude that there must be an $A_\infty$-quasi-isomorphism between the $\DG$-categories $(i,j)\mapsto \scrHom_{\rmA}(\scrQ^i_{\rmA}, \scrQ^j_{\rmA})$ and $(i,j) \mapsto \scrHom_{\mathrm{B}}(\scrQ^i_{\mathrm{B}},\scrQ^j_{\mathrm{B}})$.  

From here, the proof is similar to that of~\ref{main text local}: as a consequence of the above, $\scrEnd_{\rmA}(\oplus\scrQ_{\rmA}^j)$ and $\scrEnd_{\mathrm{B}}(\oplus\scrQ_{\mathrm{B}}^j)$ are quasi-isomorphic, when viewed as $\DG$-algebras over $K=\mathbb{C}\times\hdots\times\mathbb{C}$. Thus taking their Koszul dual over $K$, it follows that $\Upgamma_\rmA\colonequals\scrEnd_{\rmA}(\scrQ_{\rmA})^!$ and $\Upgamma_{\mathrm{B}}\colonequals\scrEnd_{\mathrm{B}}(\scrQ_{\mathrm{B}})^!$ are $A_\infty$-quasi-isomorphic.  But these are the derived deformation algebras of Hua-Keller \cite[2.2]{HuaKeller} (see also \cite[2.6, 2.7]{KalckYang}), and so the derived deformation algebras are $\DG$-quasi-isomorphic (see e.g.\ \cite[2.8]{Lunts}).  

We now simply follow the final five lines of the proof of \cite[5.11]{HuaKeller}, as the quasi-isomorphism between the derived deformation algebras implies that there is an algebraic equivalence between the categories $per(\Upgamma_\rmA)$ and $per(\Upgamma_{\mathrm{B}})$, and thus between their subcategories with finite dimensional cohomology, since those can be characterised intrinsically.  It follows that their quotients, which are called the cluster categories in \cite{HuaKeller}, are also algebraically equivalent, and so by \cite[5.12]{HuaKeller} there is an induced algebraic triangle equivalence between the singularity categories $\mathrm{D}_{\mathsf{sg}}(R_1)$ and $\mathrm{D}_{\mathsf{sg}}(R_2)$.  Thus we obtain an algebra isomorphism $\mathrm{HH}^0(\mathrm{D}_{\mathsf{sg}}(R_1))\cong\mathrm{HH}^0(\mathrm{D}_{\mathsf{sg}}(R_2))$, and so by the recovery theorem \cite[5.9]{HuaKeller}, necessarily $R_1\cong R_2$.
\end{proof}

\end{document}